\newtheorem{thm}{Theorem}[section]
\newtheorem{lem}[thm]{Lemma}
\newtheorem{prop}[thm]{Proposition}
\newtheorem{cor}[thm]{Corollary}
\newtheorem{conj}[thm]{Conjecture}
\theoremstyle{definition}
\newtheorem{rem}[thm]{Remark}
\newtheorem{defn}[thm]{Definition}
\newtheorem{nota}[thm]{Notation}
\newcommand{\N}{\mathbb{N}}
\newcommand{\Z}{\mathbb{Z}}
\newcommand{\Q}{\mathbb{Q}}
\newcommand{\R}{\mathbb{R}}
\newcommand{\C}{\mathbb{C}}
\newcommand{\ot}{\otimes}
\DeclareMathOperator{\B}{\mathcal{B}}
\DeclareMathOperator{\W}{\mathcal{W}}
\DeclareMathOperator{\He}{\mathcal{H}}
\DeclareMathOperator{\bim}{\mathcal{B}\it{im}}
\DeclareMathOperator{\kbim}{\mathrm{Kar}{\mathcal{B}\it{im}}}
\DeclareMathOperator{\dbim}{\mathcal{DB}\it{im}}
\DeclareMathOperator{\ebim}{\mathcal{EB}\it{im}}
\DeclareMathOperator{\kebim}{\mathrm{Kar}{\mathcal{EB}\it{im}}}
\DeclareMathOperator{\debim}{\mathcal{DEB}\it{im}}
\DeclareMathOperator{\kdebim}{\mathrm{Kar}{\mathcal{DEB}\it{im}}}
\DeclareMathOperator{\esbim}{\mathcal{ESB}\it{im}}
\DeclareMathOperator{\Hom}{\mbox{Hom}}
\DeclareMathOperator{\F}{\mathcal{F}}
\DeclareMathOperator{\br}{br}
\DeclareMathOperator{\rb}{rb}
\newcommand{\Ucataff}{\cal{U}(\widehat{\mathfrak sl}_n)}
\newcommand{\Uglcataff}{\cal{U}(\widehat{\mathfrak gl}_n)}
\newcommand{\Ucataffy}{\mathcal{U}(\widehat{\mathfrak sl}_n)_{[y]}}
\newcommand{\Uglcataffy}{\mathcal{U}(\widehat{\mathfrak gl}_n)_{[y]}}
\newcommand{\sseq}{{\rm SSeq}}
\newcommand{\onel}{{\mathbf 1}_{\lambda}}
\newcommand{\onelp}{{\mathbf 1}_{\lambda'}}
\newcommand{\refequal}[1]{\xy {\ar@{=}^{#1}
(-1,0)*{};(1,0)*{}};
\endxy}
\newcommand{\Uaff}{\dot{{\bf U}}(\hat{\mathfrak{sl}}_n)}
\newcommand{\Uglaff}{\dot{{\bf U}}(\hat{\mathfrak{gl}}_n)}
\newcommand{\Uglaffext}{\hat{\dot{{\bf U}}}(\hat{\mathfrak{gl}}_n)}
\newcommand{\SD}{{\bf S}}
\newcommand{\Ucat}{\cal{U}({\mathfrak sl}_n)}
\newcommand{\Scat}{\hat{\mathcal{S}}}
\newcommand{\xsum}[2]{
  \vcenter{\xy
  (0,.4)*{\sum};
  (0,3.8)*{\scs #2};
  (0,-3.2)*{\scs #1};
  \endxy}
}
\newcommand{\Uup}{\xy {\ar (0,-3)*{};(0,3)*{} };(0,0)*{\bullet};(2,0)*{};(-2,0)*{};\endxy}
\newcommand{\Udown}{\xy {\ar (0,3)*{};(0,-3)*{} };(0,0)*{\bullet};(2,0)*{};(-2,0)*{};\endxy}
\newcommand{\Ucupri}{\xy (0,-1)*{\dblue\xybox{(-2,1)*{}; (2,1)*{} **\crv{(-2,-3) & (2,-3)} ?(1)*\dir{>};}} \endxy}
\newcommand{\Ucupli}{\xy (0,-1)*{\dblue\xybox{(2,1)*{}; (-2,1)*{} **\crv{(2,-3) & (-2,-3)}?(1)*\dir{>};}}\endxy}
\newcommand{\Ucapri}{\xy (0,1)*{\dblue\xybox{(-2,-1)*{}; (2,-1)*{} **\crv{(-2,3) & (2,3)}?(1)*\dir{>};}}\endxy\;\;}
\newcommand{\Ucapli}{\xy (0,1)*{\dblue\xybox{(2,-1)*{}; (-2,-1)*{} **\crv{(2,3) &(-2,3) }?(1)*\dir{>};}}\endxy\;}
\newcommand{\Ucrossij}{\xy (0,0)*{\dgreen\xybox{\ar (2.5,-2.5)*{};(-2.5,2.5)*{}}}; (0,0)*{\dblue\xybox{\ar (-2.5,-2.5)*{};(2.5,2.5)*{} }};(4,0)*{};(-4,0)*{};\endxy}
\newcommand{\Ucrossdij}{\xy (0,0)*{\dblue\xybox{\ar (2.5,2.5)*{};(-2.5,-2.5)*{}}}; (0,0)*{\dgreen\xybox{\ar (-2.5,2.5)*{};(2.5,-2.5)*{} }};
(4,0)*{};(-4,0)*{};\endxy}
\newcommand{\BOX}{\hbox {$\sqcap$ \kern -1em $\sqcup$}}
\renewcommand{\to}{\rightarrow}
\newcommand{\maps}{\colon}
\newcommand{\scs}{\scriptstyle}
        \newcommand{\be}{\begin{equation}}
        \newcommand{\ee}{\end{equation}}
        \newcommand{\ba}{\begin{eqnarray}}
        \newcommand{\ea}{\end{eqnarray}}
        \newcommand{\ban}{\begin{eqnarray*}}
        \newcommand{\ean}{\end{eqnarray*}}
        \newcommand{\barr}{\begin{array}}
        \newcommand{\earr}{\end{array}}
\def\emph#1{{\sl #1\/}}
\let\hat=\widehat
\let\phi=\varphi
\let\epsilon=\varepsilon
\def\cal#1{\mathcal{#1}}%
\def\1{\mathbbm{1}}%
\def\nn{\notag}
\def\shuffle{\,\raise 1pt\hbox{$\scriptscriptstyle\cup{\mskip
               -4mu}\cup$}\,}
\newcommand{\lineu}[1]{\xybox{%
  (-2,0)*{};
  (2,0)*{};
  (0,0)*{}; (0,-18)*{} **\dir{-}; ?(.5)*\dir{<}+(1.7,-7)*{\scs #1};
}}
\newcommand{\lined}[1]{\xybox{%
  (-2,0)*{};
  (2,0)*{};
  (0,0)*{}; (0,-18)*{} **\dir{-}; ?(.5)*\dir{>}+(1.7,-7)*{\scs #1};
}}
\newcommand{\lowrru}[1]{\xybox{%
  (-8,0)*{};
  (8,0)*{};
  (-6,-18)*{};(6,-9)*{} **\crv{(-6,-13) & (6,-15)} ?(1)*\dir{>};
  (6,-9)*{};(6,0)*{}  **\dir{-} ?(.3)*\dir{ }+(2,0)*{\scs {\bf j}};
}}
\newcommand{\lowllu}[1]{\xybox{%
  (-8,0)*{};
  (8,0)*{};
  (6,-18)*{};(-6,-9)*{} **\crv{(6,-13) & (-6,-15)} ?(1)*\dir{>};
  (-6,-9)*{};(-6,0)*{}  **\dir{-} ?(.3)*\dir{ }+(-2,0)*{\scs {\bf j}};
}}
\newcommand{\bbe}[1]{\xybox{%
  (-2,0)*{};
  (2,0)*{};
  (0,0);(0,-18) **\dir{-}; ?(.5)*\dir{<}+(2.3,0)*{\scriptstyle{#1}};
}}
\newcommand{\bbsid}{\xybox{%
  (-2,0)*{};
  (2,0)*{};
  (0,10);(0,4) **\dir{-};
}}
\newcommand{\bbpef}[1]{\xybox{%
  (-6,0)*{};
  (6,0)*{};
  (-4,0)*{}="t1";
  (4,0)*{}="t2";
  "t1";"t2" **\crv{(-4,-6) & (4,-6)}; ?(.15)*\dir{>} ?(.9)*\dir{>}
   ?(.5)*\dir{}+(0,-2)*{\scriptstyle{#1}};
}}
\newcommand{\bbpfe}[1]{\xybox{%
  (-6,0)*{};
  (6,0)*{};
  (-4,0)*{}="t1";
  (4,0)*{}="t2";
  "t2";"t1" **\crv{(4,-6) & (-4,-6)}; ?(.15)*\dir{>} ?(.9)*\dir{>}
  ?(.5)*\dir{}+(0,-2)*{\scriptstyle{#1}};
}}
\newcommand{\bbcfe}[1]{\xybox{%
  (-6,0)*{};
  (6,0)*{};
  (-4,0)*{}="t1";
  (4,0)*{}="t2";
  "t1";"t2" **\crv{(-4,6) & (4,6)}; ?(.15)*\dir{>} ?(.9)*\dir{>}
  ?(.5)*\dir{}+(0,2)*{\scriptstyle{#1}};
}}
\newcommand{\bbcef}[1]{\xybox{%
  (-6,0)*{};
  (6,0)*{};
  (-4,0)*{}="t1";
  (4,0)*{}="t2";
  "t2";"t1" **\crv{(4,6) & (-4,6)}; ?(.15)*\dir{>}
  ?(.9)*\dir{>} ?(.5)*\dir{}+(0,2)*{\scriptstyle{#1}};
}}
\newcommand{\ccbub}[2]{
\xybox{%
 (-6,0)*{};
  (6,0)*{};
  (-4,0)*{}="t1";
  (4,0)*{}="t2";
  "t2";"t1" **\crv{(4,6) & (-4,6)}; ?(.7)*\dir{}+(-2,0)*{\scs #2}
  ?(.05)*\dir{>} ?(1)*\dir{>};
  "t2";"t1" **\crv{(4,-6) & (-4,-6)};
   ?(.3)*\dir{}+(0,0)*{\bullet}+(0,-3)*{\scs {#1}};
}}
\newcommand{\cbub}[2]{
\xybox{%
 (-6,0)*{};
  (6,0)*{};
  (-4,0)*{}="t1";
  (4,0)*{}="t2";
  "t2";"t1" **\crv{(4,6) & (-4,6)};?(.7)*\dir{}+(-2,0)*{\scs #2};
   ?(0)*\dir{<} ?(.95)*\dir{<};
  "t2";"t1" **\crv{(4,-6) & (-4,-6)};
   ?(.3)*\dir{}+(0,0)*{\bullet}+(0,-3)*{\scs {#1}};
}}
\newcommand{\ncbub}{
\xybox{%
 (-6,0)*{};
  (6,0)*{};
  (-4,0)*{}="t1";
  (4,0)*{}="t2";
  "t2";"t1" **\crv{(4,6) & (-4,6)}; ?(0)*\dir{<} ?(.95)*\dir{<};
  "t2";"t1" **\crv{(4,-6) & (-4,-6)}; ?(.3)*\dir{};
}}
\newcommand{\bbdl}[1]{\xybox{%
  (2,0);(0,-8) **\crv{(2,-2)&(0,-6)}; ?(.5)*\dir{>}
}}
\newcommand{\bbdlu}[1]{\xybox{%
  (2,0);(0,-8) **\crv{(2,-2)&(0,-6)}; ?(.5)*\dir{<}
}}
\newcommand{\bbdr}[1]{\xybox{%
  (-2,0);(0,-8) **\crv{(-2,-2)&(0,-6)}; ?(.5)*\dir{>}
}}
\newcommand{\bbdru}[1]{\xybox{%
  (-2,0);(0,-8) **\crv{(-2,-2)&(0,-6)}; ?(.5)*\dir{<}
}}
\newcommand{\laii}{\lambda_{i+1}}
\newcommand{\bbox}[1]{\framebox{$\scs #1$}}
\newcommand{\bscs}{\black\scs}
\newcommand{\llambda}{\overline{\lambda}}
\newcommand{\ii}{\underline{\textbf{\textit{i}}}}
\newcommand{\jj}{\underline{\textbf{\textit{j}}}}
\newcommand{\n}{\noindent}
\newcommand{\glcat}{\mathcal{U}(\hat{\mathfrak{gl}}_n)}
\newcommand{\Ugla}{\dot{\bf U}(\hat{\mathfrak{gl}}_n)}
\newcommand{\figins}[3] 
{\raisebox{#1pt}{\includegraphics[height=#2 in]{#3}}}
\newcommand{\bZ}{\mathbb{Z}}
\numberwithin{equation}{section}
\title{Categorifications of the extended affine Hecke algebra 
and the affine $q$-Schur algebra 
$\hat{\SD}(n,r)$ for $3\leq r < n$}
\author{Marco Mackaay and Anne-Laure Thiel}
\thanks{The two authors were supported by the FCT - Funda\c c\~{a}o para a 
Ci\^{e}ncia e a Tecnologia, through project number PTDC/MAT/101503/2008, 
New Geometry and Topology.}
\date{}
\begin{document}

\begin{abstract}
We categorify the extended affine Hecke algebra and the 
affine quantum Schur algebra 
$\hat{\SD}(n,r)$ for $3\leq r < n$, using results on diagrammatic 
categorification in affine type A by Elias-Williamson (extension of 
Elias-Khovanov for finite type A) and Khovanov-Lauda respectively. 
We also define $2$-representations of these categorifications on 
an extension of the $2$-category of affine (singular) Soergel bimodules. 
These results are the affine analogue of the results in~\cite{MSVschur}. 
\end{abstract}

\maketitle

\paragraph*{Acknowledgements}
We thank Jie Du and Qiang Fu for helpful exchanges of emails on the affine 
quantum Schur algebras, Mikhail Khovanov for helpful comments on 
the Krull-Schmidt property of the Karoubi envelope of 
$\mathcal{U}(\hat{\mathfrak{sl}}_n)_{[y]}$, and Ben Elias and 
Geordie Williamson for giving us an early version of their paper~\cite{EW} 
and some helpful comments on the Soergel categories in affine type $A$.

\tableofcontents
%\noindent
%{\sc Keywords:} 

\section*{Introduction}
Khovanov and Lauda~\cite{KL3}, and Rouquier~\cite{R2} following a 
slightly different approach, defined a graded additive $2$-category 
$\mathcal{U}(\mathfrak{g})$ with ``nice properties'' for any root datum. 
The $2$-morphisms are defined by string diagrams with regions labeled by 
$\mathfrak{g}$--weights. They are generated by a finite 
set of elementary diagrams which obey a finite set of relations. 
The split Grothendieck group of the Karoubi envelope of 
$\mathcal{U}(\mathfrak{g})$ is isomorphic 
to the idempotented version of the corresponding quantum group 
$\dot{\mathbf{U}}(\mathfrak{g})$. In Crane and Frenkel's~\cite{CrF} 
terminology, we say that 
$\mathcal{U}(\mathfrak{g})$ {\em categorifies} 
$\dot{\mathbf{U}}(\mathfrak{g})$.

Khovanov and Lauda only proved this {\em categorification theorem} 
for $\mathfrak{g}=\mathfrak{sl}_n$. A key ingredient of that proof was a 
$2$-representation of $\mathcal{U}(\hat{\mathfrak{sl}}_n)$ 
on a $2$-category build out of the 
cohomology rings of partial flag varieties. The equivariant cohomology 
rings of these varieties, which also give rise to a $2$-representation, 
are equivalent to the singular Soergel bimodules of type $A$, introduced and 
studied by Williamson in his PhD thesis in 2008 and published in~\cite{Wi}. 
The general categorification 
theorem was proved by Webster~\cite{Web}, by geometric techniques 
well beyond our understanding. 

In~\cite{MSVschur},  Mackaay, Sto\v si\'{c} and Vaz 
defined a quotient of $\Ucat$, 
denoted $\mathcal{S}(n,r)$, and proved that it 
categorifies the quantum Schur algebra $\SD(n,r)$, 
for any $r\in\mathbb{Z}_{>0}$. If $n\geq r$, then $\mathcal{S}(n,r)$ contains a 
full sub-$2$-category which categorifies the Hecke algebra 
$\He_{A_{r-1}}$. This sub-$2$-category is equivalent to the 
$2$-category of (ordinary) Soergel bimodules 
of type $A_{r-1}$, as was proved in~\cite{MSVschur} using Elias and Khovanov's 
diagrammatic presentation of the Soergel $2$-category~\cite{EKh}. 

In the same paper, Mackaay, Sto\v si\'{c} and Vaz also 
showed that Khovanov and Lauda's $2$-representation of $\Ucat$ 
on the singular Soergel bimodules descends 
to $\SD(n,r)$. Its restriction to the aforementioned sub-$2$-category 
of $\SD(n,r)$ is exactly Elias and Khovanov's $2$-equivalence of their 
diagrammatic $2$-category and the $2$-category of 
(ordinary) Soergel bimodules.     
\vskip0.2cm
Naturally the question arises whether the results in~\cite{MSVschur} 
extend to affine type $A$. In this paper, we show that this is indeed 
the case for $3\leq r < n$: 
\begin{itemize}
\item As Libedinsky explained in~\cite{Li1}, one can define Soergel 
bimodules using the geometric representation of the 
affine Weyl group $\W_{\hat{A}_{r-1}}$ 
or Soergel's extension of that representation~\cite{S3}. 
The geometric representation is not faithful, whereas 
Soergel's representation is reflection faithful. Both 
representations give rise to categories of Soergel bimodules which 
categorify the affine Hecke algebra $\He_{\hat{A}_{r-1}}$, as shown 
in~\cite{Har,Li,Li1,S3}. For more information on this topic, 
see also~\cite{EW}. 

However, the extension of the geometric representation 
to the {\em extended} affine Weyl group $\hat{W}_{\hat{A}_{r-1}}$ is ``too degenerate'' 
and cannot be used to categorify the {\em extended} affine Hecke algebra 
$\hat{\He}_{\hat{A}_{r-1}}$. Soergel's representation 
extends nicely to $\hat{W}_{\hat{A}_{r-1}}$ 
and we show that there is a corresponding 
category of bimodules, denoted $\ebim_{\hat{A}_{r-1}}$, 
which categorifies $\hat{\He}_{\hat{A}_{r-1}}$. 

Since Soergel's representation of $\hat{\W}_{\hat{A}_{r-1}}$ has dimension 
$r+1$, the corresponding bimodules are defined over $\Q[y,x_1,\ldots,x_r]$ 
where $\deg(y)=\deg(x_1)=\ldots=\deg(x_r)=2$. We will show that 
multiplication by $y$ is always a bimodule map of degree two. 
\item We also define a diagrammatic $2$-category $\debim_{\hat{A}_{r-1}}$, 
similar to the ones in~\cite{EKh,EW}, and show that it is $2$--equivalent 
to $\ebim_{\hat{A}_{r-1}}$ (actually they are equivalent 
as monoidal categories, i.e. $2$-categories with one object).
Here we use the corresponding result for the non-extended category 
of affine bimodules and its diagrammatic analogue due to Elias and 
Williamson~\cite{EW}. 
\item We define a $y$-deformation of the level-zero $2$-category 
$\mathcal{U}(\hat{\mathfrak{sl}}_n)$. In order to do that, the homogeneous 
$2$-morphisms are defined over $\Q[y]$ instead of $\Q$, with $y$ a formal 
variable of degree two. We denote this $2$-category by $\Ucataffy$ and 
prove that its Karoubi envelope is Krull-Schmidt. 

We recover $\Ucataff$ when we divide by the ideal generated by $y$. This ideal 
is virtually nilpotent, so the Grothendieck groups of the Karoubi envelopes 
of $\Ucataffy$ and $\Ucataff$ are isomorphic. 
\item We define a quotient of $\Ucataffy$, which we denote 
$\Scat(n,r)_{[y]}$. We prove that $\Scat(n,r)_{[y]}$ categorifies 
the affine quantum Schur algebra $\hat{\SD}(n,r)$. Again, the ideal 
generated by $y$ is virtually nilpotent, so the quotient of $\Scat(n,r)_{[y]}$ 
by this ideal also categorifies $\hat{\SD}(n,r)$. We denote this quotient 
by $\Scat(n,r)$, which can also be obtained as a quotient of 
Khovanov and Lauda's original $\Ucataff$.  
\item We define a $2$-functor 
$$\Sigma_{n,r}\colon \debim^*_{\hat{A}_{r-1}}\to\Scat^*(n,r)_{[y]},$$ 
prove it to be faithful and conjecture it to be full.\footnote{We will explain 
the ${}^*$ notation in Section~\ref{sec:catHeck}. It basically allows us to consider 
$2$--morphisms of arbitrary degree.} 
\item We define the $2$-category of 
extended affine singular Soergel bimodules $\esbim_{\hat{A}_{r-1}}$ and 
give the affine analogue (and $y$-deformation) of Khovanov and Lauda's 
$2$--representation, i.e. a $2$-functor 
$$\mathcal{F}'\colon \Scat(n,r)^*_{[y]}\to \esbim_{\hat{A}_{r-1}}^*.$$
\end{itemize}

\begin{rem}
The case $n=r$ is different, because $\hat{\SD}(n,n)$ is not a quotient of 
$\Uaff$ but only of a strictly larger algebra. Therefore, one has to 
extend the Khovanov-Lauda affine calculus in order to define $\Scat(n,n)$. 
This case is dealt with in a follow-up paper~\cite{MTh2}. 

The case $n<r$ cannot be dealt with at present, because even the 
decategorified story has not been worked out (see Problem 2.4.5 in~\cite{DDF}).
\end{rem}

\begin{rem}
There is a technical detail, which will be fully explained in Section~\ref{sec:AffSchur} 
but should be mentioned here already. Just as in~\cite{MSVschur}, we actually 
define $\Scat(n,r)_{[y]}$ as a quotient of $\Uglcataffy$, 
which is a $2$-category obtained from $\Ucataffy$ 
by switching to degenerate $\hat{\mathfrak{gl}}_n$--weights of 
level zero for the labels of the regions 
in the string diagrams. We conjecture that 
$\Uglcataffy$ categorifies 
the level-zero $\dot{\mathbf U}(\hat{\mathfrak{gl}}_n)$, but do not need 
that fact for the rest of this paper.  
\end{rem}

The results in this paper have several points of interest. 
The categories $\ebim_{\hat{A}_{r-1}}$ and $\debim_{\hat{A}_{r-1}}$ contain the 
new objects $B_{\rho^{\pm}}$ and $\pm$, respectively, and 
the corresponding morphisms. As our results show, these objects and morphisms 
also show up naturally in $\Scat(n,r)_{[y]}$ as $1$ and $2$-morphisms.   

The $y$-deformation $\Ucataff_{[y]}$ and its Schur quotient 
$\Scat(n,r)_{[y]}$ are new. As the results in this paper show, 
they both show up naturally when considering Soergel bimodules over 
$\Q[y,x_1,\ldots,x_r]$, which is the ring of polynomial functions on 
Soergel's reflection faithful representation of the affine Weyl group.  
 
Furthermore, there are interesting (possible) links with 
other categorifications of the (extended) affine Hecke algebra 
and the quantum affine Schur algebra. Lusztig~\cite{LuQG, LuAff} and 
Ginzburg and 
Vasserot~\cite{GV} gave a categorification of $\hat{\SD}(n,r)$ 
using perverse sheaves, 
extending Grojnowski and Lusztig's approach to the categorification 
of $\SD(n,r)$. It would be interesting to find the precise relation with the 
categorification presented here and in our follow-up paper~\cite{MTh2}.   

In this paper we also define an extended version of the 
affine singular Soergel bimodules. Williamson introduced and 
studied the $2$-category of singular 
Soergel bimodules for any Coxeter group in his PhD thesis in 2008, 
the results of which were published in~\cite{Wi}, and proved that 
it categorifies a certain ``new'' algebra, which he called 
the {\em Schur algebroid}. In finite type $A$ the Schur algebroid is 
isomorphic to the quantum Schur algebra. Williamson's 
affine type $A$ Schur algebroid should also be closely related to the 
affine quantum Schur algebra. Whatever the precise relation turns out to 
be, the $2$-representation of $\Scat(n,r)_{[y]}$ on the 
extended affine singular Soergel bimodules 
establishes an interesting relation between Khovanov and Lauda's work and 
Williamson's. 

Another point of interest is related to the possibility of 
categorifying the so called {\em Kirillov-Reshetikhin modules} of $\Uaff$. 
These level zero modules can be defined for any affine quantum group 
and have been 
intensively studied (see~\cite{ChP, DDF, DuFu, Kash} for more information 
and references). 

In affine type $A$ (and only in that type), 
they are special examples of {\em evaluation modules} $V_{\lambda, a}$, 
where $\lambda$ is a dominant weight and $a\in\mathbb{C}^{*}$. 
If $\lambda$ is an $n$-part partition of $r$, then $V_{\lambda, a}$ 
descends to a representation of $\hat{\SD}(n,r)$. More precisely, 
$V_{\lambda, a}$ is defined by pulling back (the technical term is 
{\em inflating}) the action of $\SD(n,r)$ on the irrep $V_{\lambda}$ via 
the so called {\em evaluation map} 
$$\mbox{ev}_a\colon\hat{\SD}(n,r)\to \SD(n,r).$$

If $\lambda=(m^i)$, i.e. $m$ times the $i$-th fundamental 
$\mathfrak{gl}_n$--weight, then 
it is known that $V_{\lambda, q^{i-m+2}}$ is isomorphic to a 
Kirillov-Reshetikhin module and has a canonical basis. 
It seems likely that one can categorify the evaluation map 
$$\mbox{ev}_{q^{i-m+2}}\colon \hat{\SD}(n,mi)\to \SD(n,mi)$$
and therefore $V_{\lambda, q^{i-m+2}}$, but such a categorification is 
beyond the scope of this paper.   
\vskip0.2cm
This paper is organized as follows:
\begin{itemize}
\item In Section~\ref{sec:affine}, we recall the definition of 
and some basic results on affine roots and weights, the (extended) affine 
Weyl group and the (extended) affine Hecke algebra.  
\item In Section~\ref{sec:catHeck}, we define $\ebim_{\hat{A}_{r-1}}$ and recall 
H\"{a}rterich's categorification result. 
We also use Elias-Khovanov type diagrams in order to 
define $\debim_{\hat{A}_{r-1}}$ and show that it gives a diagrammatic 
presentation of $\ebim_{\hat{A}_{r-1}}$, using Elias and Williamson's results 
in~\cite{EW}.
\item In Section~\ref{sec:AffSchur}, we first recall the definition of 
$\hat{\SD}(n,r)$ and its relation to the extended affine Hecke algebra. After 
that, we give the definitions of $\Uglcataffy$, $\Ucataffy$ and 
$\Scat(n,r)_{[y]}$.
\item In Section~\ref{sec:embed}, we give the $2$-functor 
$$\Sigma_{n,r}\colon \debim_{\hat{A}_{r-1}}^*\to \Scat^*(n,r)_{[y]}$$
and prove that it is well-defined.    
\item In Section~\ref{sec:2rep}, we define the affine $2$-representation 
$$\mathcal{F}'\colon \Scat^*(n,r)_{[y]}\to\esbim_{\hat{A}_{r-1}}^*.$$ 
The fact that it is well-defined follows essentially from the well-definedness of the 
analogous $2$-functor for finite type $A$ and a ``conjugation trick'', 
which we will explain. 
\item In Section~\ref{GrAlg}, we use the results in the previous sections 
to prove that $\Scat(n,r)_{[y]}$ categorifies $\hat{\SD}(n,r)$.
\end{itemize}

\section{The affine setting}
\label{sec:affine}
\subsection{Affine roots of level zero}

We use the well-known realization of~$\hat{\mathfrak{sl}}_r$ as the central extension of the loop algebra of~$\mathfrak{sl}_r$ together with a derivation 
(see for example \cite{PS, Kac, Fu}), i.e. the underlying vector space is isomorphic to 
$$\hat{\mathfrak{sl}}_r = \mathcal{L}(\mathfrak{sl}_r) \oplus \Q \langle c \rangle\oplus \Q \langle d \rangle.$$
In order to express its root system, consider the 
Cartan subalgebra 
$$\hat{\mathfrak{h}} = \mathfrak{h} \oplus \Q \langle c \rangle \oplus \Q \langle d \rangle = \Q \langle h_i,c,d | i= 1, \dots,r-1 \rangle$$ 
with~$\mathfrak{h}$ being the Cartan subalgebra of~${\mathfrak{sl}}_r$. 
The roots with respect to~$\hat{\mathfrak{h}}$ are
$$ \alpha = \left( \bar{\alpha}, 0,m \right), \quad \bar{\alpha} \in \Phi \left( \mathfrak{sl}_r \right), \ m \in \Z$$
and
$$ \alpha = \left(0,0,m \right), \quad  m \in \Z \setminus \lbrace 0 \rbrace.$$
The roots of the first family are called the {\em real roots} and the ones of the second family are called the {\em imaginary roots}.

The simple roots are  
$$\alpha_i = \left( \bar{\alpha_i}, 0,0 \right) \quad i=1,\dots,r-1$$
and
$$\alpha_r = \left( -\bar{\theta}, 0,1\right) = \delta - \bar{\theta} $$
where~$\bar{\alpha}_i = \varepsilon_i - \varepsilon_{i+1}$, for~$i=1,\dots,r-1$ are the simple roots of~$\mathfrak{sl}_r$, $\bar{\theta} = \bar{\alpha}_1 + \dots + 
\bar{\alpha}_{r-1} = \varepsilon_1 - \varepsilon_r$ is the highest root and~$\delta$ is the dual element of~$d$. 
The elements~$\varepsilon_i$ for~$i=1,\dots,r$ are the canonical basis vectors in $\mathbb{Z}^r$.

Weights are triples of the form 
$$\kappa=\left( \bar{\kappa},k,m \right),$$
where $\bar{\kappa}$ is an $\mathfrak{sl}_r$--weight and $k$ and $m$ are integers. 
The integer $k$ is called the {\em level} of $\kappa$. The inner product between two 
weights~$\kappa = \left( \bar{\kappa}, k,m \right)$ and~$\kappa' = \left( \bar{\kappa}', k',m' \right)$ is given by
$$\left< \kappa, \kappa' \right> = \left< \bar{\kappa}, \bar{\kappa'} \right>+km'+k'm,$$
where $\left< \bar{\kappa}, \bar{\kappa'} \right>$ is the usual inner product of $\mathfrak{sl}_r$--weights. 
In particular, we have 
$$\left< \alpha_i, \alpha_{i} \right> = 2 \quad \mbox{for all} \ i=1,\dots,r$$
and
$$\left< \alpha_r, \alpha_{1} \right> = \left< \alpha_i, \alpha_{i+1} \right> = -1 \quad \mbox{for all} \ i=1,\dots,r-1.$$
The simple coroots are~$\alpha_i^{\vee} = \alpha_i$ for all~$i=1,\dots,r$.

In the following sections, we will also use $\hat{\mathfrak{gl}_r}$--weights
$$\kappa=(\bar{\kappa},k,m),$$
where $\bar{\kappa}$ denotes a non-affine $\mathfrak{gl}_r$--weight.  

\begin{rem}
In this paper, we will only consider $\hat{\mathfrak{sl}}_r$ and 
$\hat{\mathfrak{gl}}_r$--weights of level zero.
\end{rem}

\subsection{The Weyl group action}

For any root~$\alpha \in \Phi \left( \hat{\mathfrak{sl}}_r \right)$ and $\hat{\mathfrak{sl}}_r$--weight~$\kappa$, the Weyl reflection $\sigma_{\alpha}$ is defined by 
$\sigma_{\alpha}(\kappa) = \kappa - \left< \kappa,\alpha^{\vee} \right> \alpha$. So if~$\alpha = \left( \bar{\alpha}, 0, n \right)$ 
and~$\kappa = \left( \bar{\kappa}, k, m \right)$, we can express $\sigma_{\alpha}(\kappa)$ as follows
\begin{equation}
\sigma_{\alpha}(\kappa) = \left( \bar{\kappa} - \left< \bar{\kappa}, \bar{\alpha}^{\vee} \right> \bar{\alpha} -kn \bar{\alpha}^{\vee}, k, m - \left< \bar{\kappa}, \bar{\alpha}^{\vee} \right> n -\frac{2kn^2}{\left<\alpha, \alpha \right>} \right).
\end{equation}
The {\em affine Weyl group} $\W_{\hat{A}_{r-1}}$ is the group generated by all these reflections. For any simple root $\alpha_i$, we write $\sigma_i:=\sigma_{\alpha_i}$. 

Similarly, one can consider the action of $\sigma_i$ on a level zero~$\hat{\mathfrak{gl}}_r$--weight of the 
form~$\left( \varepsilon_j, 0, m\right)$ with~$j = 1,\dots, r$. If $i \neq r$, one gets
\begin{equation}
\sigma_i(\varepsilon_j,0,m)  = 
\left\lbrace
\begin{array}{ll}
(\varepsilon_{i+1},0,m) &\quad \mbox{if} \ j=i \\
(\varepsilon_{i},0,m)  &\quad \mbox{if} \ j=i+1 \\
(\varepsilon_{j},0,m) &\quad \mbox{otherwise.}  \\
\end{array}
\right.
\end{equation}
The action of~$\sigma_r$ is given by
\begin{equation}
\sigma_r(\varepsilon_j,0,m)  = 
\left\lbrace
\begin{array}{ll}
(\varepsilon_{r},0,m+1) &\quad \mbox{if} \ j=1 \\
(\varepsilon_{1},0,m-1)  &\quad \mbox{if} \ j=r \\
(\varepsilon_{j},0,m) &\quad \mbox{otherwise.}  \\
\end{array}
\right.
\end{equation}

For each simple~$\mathfrak{sl}_r$--root $\bar{\alpha}_i$, $i = 1,\dots,r-1$, there also exists a translation~$t_{\bar{\alpha}_i}$, which acts on the level 
zero~$\hat{\mathfrak{gl}}_r$--weights as follows
\begin{equation}
t_{\bar{\alpha}_i} \left( \bar{\kappa}, 0, m \right) = \left( \bar{\kappa}, 0, m - \left(\kappa_i - \kappa_{i+1} \right) \right)
\end{equation}
where~$\bar{\kappa} =\left( \kappa_1,\dots,\kappa_r \right)$ and the indices are taken to be modulo $r$, e.g. $\kappa_{r+1}=\kappa_1$ by definition.

One can prove that $\W_{\hat{A}_{r-1}}$ is the semidirect product of the finite Weyl group~$\W_{A_{r-1}}$, generated by the 
reflections~$\sigma_i$ for $i=1, \dots, r-1$, and of the abelian group $\left< t_{\bar{\alpha}_1^{\vee}}, \dots,t_{\bar{\alpha}_{r-1}^{\vee}} \right>$ of translations along 
the coroot lattice of~$\mathfrak{sl}_r$.

\subsection{The extended affine Weyl group}

See \cite{Lu}, \cite{DG} or \cite{DDF} for more details about the extended affine Weyl group. For example in \cite{DG}, 
there is a definition of this group different from the following, it is described as a subgroup of permutations of~$\Z$.

Let us now consider the translations~$t_{\varepsilon_i}$ along the simple $\mathfrak{gl}_r$--roots~$\varepsilon_i$,  $i = 1,\dots,r$. Their action on a 
level zero~$\hat{\mathfrak{gl}}_r$--weight~$\kappa$ is given by 
\begin{equation}
t_{\varepsilon_i} \left( \bar{\kappa}, 0, m \right) = \left( \bar{\kappa}, 0, m - \kappa_i \right).
\end{equation}

The {\em extended affine Weyl group} $\hat{\W}_{\hat{A}_{r-1}}$ is defined as the semidirect product of the finite Weyl group~$\W_{A_{r-1}}$ and the abelian 
group~$\left< t_{\varepsilon_1}, \dots,t_{\varepsilon_{r}} \right>$ of translations along the coroot lattice of~$\mathfrak{gl}_r$. It contains the affine Weyl 
group~$\W_{\hat{A}_{r-1}}$ as a normal subgroup. 

The group~$\hat{\W}_{\hat{A}_{r-1}}$ is generated by $\sigma_1, \dots, \sigma_{r-1}$ and~$t_{\varepsilon_1}, \dots, t_{\varepsilon_r}$, which satisfy the following relations:
\begin{equation}
 \sigma_i t_{\varepsilon_j} \sigma_i = t_{\sigma_i(\varepsilon_j)} \quad \mbox{for} \  i=1,\dots, r-1 \ \mbox{and} \ j=1,\dots, r.
\end{equation}
Hence the set of generators is not minimal, e.g. one can obtain any~$t_{\varepsilon_j}$ for $j=2, \dots,r$ by conjugating~$t_{\varepsilon_1}$ by certain reflections.

There is another presentation of~$\hat{\W}_{\hat{A}_{r-1}}$, which is important for this paper. It involves the following specific element 
$$\rho =  t_{\varepsilon_1} \sigma_1 \dots \sigma_{r-1},$$
which acts on a level zero~$\hat{\mathfrak{gl}}_r$--weight~$\kappa = \left(\bar{\kappa}, 0,m \right)$ by 
\begin{equation}
\rho \left( \bar{\kappa}, 0, m \right) = \left(\left(\kappa_r, \kappa_1, \dots, \kappa_{r-1}\right), 0, m - \kappa_r \right).
\end{equation}
The action of its inverse $\rho^{-1} = \sigma^{-1}_{r-1} \dots \sigma^{-1}_{1} t_{-\varepsilon_1} $ is given by
\begin{equation}
\rho^{-1} \left( \bar{\kappa}, 0, m \right) = \left(\left(\kappa_2, \dots, \kappa_r, \kappa_{1}\right), 0, m + \kappa_1 \right).
\end{equation}
One then sees that~$\hat{\W}_{\hat{A}_{r-1}}$ is generated by 
$$\sigma_1, \dots, \sigma_{r},\rho,$$ 
subject to the relations
\begin{align}
\sigma_i^2 & =  1 & & \mbox{for} \ i=1,\dots, r \label{W1}\\
\sigma_i \sigma_ j & =  \sigma_ j \sigma_i & & \mbox{for distant} \ i,j=1,\dots, r \label{W2}\\
\sigma_i \sigma_ {i+1} \sigma_i& =  \sigma_ {i+1} \sigma_i \sigma_ {i+1}& & \mbox{for} \ i=1,\dots, r   \label{W3}\\
\rho \sigma_i \rho^{-1} & =  \sigma_{i+1} & & \mbox{for} \ i=1,\dots, r \label{W4}
\end{align}
where the indices have to be understood modulo $r$, as before. We say that $i$ and $j$ are {\em distant} if 
$j\not\equiv i\pm 1 \mod r$. Using this set of generators, 
any element~$w \in \hat{\W}_{\hat{A}_{r-1}}$ can be written in the following way
\begin{equation}\label{decw}
w = \rho^k w' = \rho^k \sigma_{i_1} \cdots \sigma_{i_l}
\end{equation}
where~$k \in \Z$ is unique and~$\sigma_{i_1} \cdots \sigma_{i_l}$ is a 
reduced expression of the element $w' \in \W_{\hat{A}_{r-1}}$.

Note that the conventions here are opposite to the ones chosen by Doty and Green \cite{DG}.

\subsection{The extended affine braid group and Hecke algebra}

One can form the {\em extended affine braid group}~$\hat{\B}_{\hat{A}_{r-1}}$ associated to~$\hat{\W}_{\hat{A}_{r-1}}$. It admits the same presentation 
as~$\hat{\W}_{\hat{A}_{r-1}}$ except that one omits the involutivity relations of the generators~$\sigma_i$ for~$i=1, \dots,r$. 

One can also define the {\em extended affine Hecke algebra}~$\hat{\He}_{\hat{A}_{r-1}}$, which is the quotient of the $\Q(q)$-group 
algebra of $\hat{\B}_{\hat{A}_{r-1}}$ by the relations
$$T_{\sigma_{i}}^{2} = (q^2 -1)T_{\sigma_i} +q^2 \quad \text{for all } i=1, \dots, r $$
with $q$ being a formal parameter. For more details about this algebra, 
see~\cite{Op1, Op2,OpD,OpS,DDF}.

A $\Q(q)$--basis of $\hat{\He}_{\hat{A}_{r-1}}$ is given 
by the set
$$\lbrace T_w, w \in  \hat{\W}_{\hat{A}_{r-1}} \rbrace$$ 
where 
$$T_w =  T_{\rho}^k T_{w'} = T_{\rho}^k T_{\sigma_{i_1}} \cdots T_{\sigma_{i_l}}$$ 
with~$w$, $k$, $w'$ and $\sigma_{i_j}$ as in \eqref{decw}. 
See \cite{Gr} and \cite{DG}.

The above shows that $\hat{\He}_{\hat{A}_{r-1}}$ is generated by 
$$\lbrace T_{\rho},T_{\rho^{-1}}, T_{\sigma_i}, i=1,\ldots,r\rbrace$$ 
as an algebra. An alternative set of algebraic generators 
of~$\hat{\He}_{\hat{A}_{r-1}}$ is given by 
\begin{equation}
 \lbrace  T_{\rho} , T_{\rho}^{-1}, b_i, i=1, \dots r \rbrace
\end{equation}
where the $b_i := C_{\sigma_i}' = q^{-1}(1+T_{\sigma_i})$ are the 
Kazhdan-Lusztig generators. 
The relations satisfied by these generators are the following:
\begin{align}
 b_i^2 & =  (q + q^{-1})b_i & & \mbox{for} \ i=1,\dots, r \label{H1}\\
b_i b_ j & =  b_ j b_i & & \mbox{for distant} \ i,j=1,\dots, r \label{H2}\\
b_i b_ {i+1} b_i + b_ {i+1}& =  b_ {i+1} b_i b_ {i+1} +  b_i& & \mbox{for} \ i=1,\dots, r   \label{H3}\\
 T_{\rho} b_i T_{\rho}^{-1} & = b_{i+1} & & \mbox{for} \ i=1,\dots, r \label{H4}.
\end{align}

In \cite{GH}, Grojnowski and Haiman show that $\hat{\He}_{\hat{A}_{r-1}}$ has 
the following Kazhdan-Lusztig basis 
\begin{equation}
\label{eq:GH}
\{ T_{\rho}^{k} C_w', k \in \Z \ \mbox{and} \ w \in \W_{\hat{A}_{r-1}}\},
\end{equation}
with the usual positive integrality property.

\section{A categorification of the extended affine Hecke algebra}
\label{sec:catHeck}
\begin{nota}
Let $\mathcal{C}$ be a $\Q$--linear $\mathbb{Z}$--graded additive category (resp. $2$--category) with translation (see Section 5.1 in~\cite{Lau} 
for the technical definitions). 
In all examples in this paper, the vector space of 
morphisms (resp. $2$--morphisms) of any fixed degree is finite-dimensional.  

The Karoubi envelope of $\mathcal{C}$ is denoted by 
$\mathrm{Kar}{\mathcal{C}}$.

By $\mathcal{C}^*$ we denote the category (resp. $2$--category) with the 
same objects (resp. same objects and $1$--morphisms) as $\mathcal{C}$, but 
whose hom-spaces (resp. $2$--hom-spaces) are defined by 
\begin{equation*}
\mathcal{C}^*(x,y)=\oplus_{t\in\Z}\Hom_{\mathcal{C}}(x\{t\},y).
\end{equation*}

A degree preserving functor $\F : \mathcal{C} \rightarrow \mathcal{D}$ between two such categories $\mathcal{C}$ and $\mathcal{D}$ lifts to a functor 
between the enriched categories $\mathcal{C}^* \rightarrow \mathcal{D}^*$ and 
to a functor between the 
Karoubi envelopes $\mathrm{Kar}{\mathcal{C}} \rightarrow 
\mathrm{Kar}{\mathcal{D}}$,  
which are both also denoted $\F$.
\end{nota}

\subsection{An extension of Soergel's categorification}

\subsubsection{Action on polynomial rings}\label{action}

Consider the polynomial ring~$R = \Q[y][x_1, \dots, x_r]$. The extended affine Weyl group~$\hat{\W}_{\hat{A}_{r-1}}$ acts faithfully on~$R$ as follows:
\begin{eqnarray*}
\rho (x_i) & = &
\begin{cases}
 x_{i+1} & \mbox{for} \ i=1,\dots, r-1 \\ 
 x_1 - y & \text{for} \ i = r
\end{cases}
\\
\rho^{-1} (x_i) & = &
\begin{cases}
 x_{i-1} & \mbox{for} \ i=2,\dots, r \\ 
 x_r + y & \text{for} \ i = 1
\end{cases}
\\
t_{\varepsilon_j} (x_i) & = &
\begin{cases}
x_j - y & \text{for} \ i = j \\
x_{i} & \mbox{otherwise}  
\end{cases}
\\
\sigma_j (x_i) & = &
\begin{cases}
x_{j+1} & \text{for} \ i = j \\
x_{j} & \text{for} \ i = j+1 \\
x_{i} & \mbox{otherwise}  
\end{cases} 
\qquad \mbox{for} \ j=1,\dots, r-1
\\
\sigma_r (x_i) & = &
\begin{cases}
x_{r}+y & \text{for} \ i = 1 \\
x_{1}-y & \text{for} \ i = r \\
x_{i} & \mbox{otherwise} 
\end{cases}
\\
\end{eqnarray*}

\begin{rem}\label{hart}
The action above naturally extends the action of the (non-extended) affine 
Weyl group $\W_{\hat{A}_{r-1}}$. We are working here with Soergel's original reflection faithful realization of the affine type $A$ Coxeter system $\W_{\hat{A}_{r-1}}$ considered in \cite{S3} and \cite{Har}. Indeed, when generalized to the extended affine Weyl group~$\hat{\W}_{\hat{A}_{r-1}}$, this representation remains faithful which explains why we choose to use this precise realization to achieve a categorification of the extended affine Hecke algebra $\hat{\He}_{\hat{A}_{r-1}}$ in the present paper.

Let us set $X_i = x_{i+1} - x_i$ for $i = 1, \dots,  r-1$ and $X_r = x_1 - x_r - y$.
\end{rem}

\subsubsection{Extended Soergel bimodules}\label{bim}

For any~$i=1, \dots ,r$, we define the~$R$--bimodule 
\begin{equation}
B_i = R \ot_{R^{\sigma_i}} R 
\end{equation}
where~$R^{\sigma_i}$ is the subalgebra of elements of~$R$ fixed by the reflection~$\sigma_i \in \hat{\W}_{\hat{A}_{r-1}}$:
\begin{eqnarray*}
 R^{\sigma_i} & = & \Q[y][x_1, \dots, x_i +x_{i+1},x_i x_{i+1}, \dots, x_r] \quad \mbox{for} \ i=1, \dots, r-1\\
 R^{\sigma_r} & = & \Q[y]\left[x_2, \dots, x_{r-1}, x_r +x_{1},\left(x_r + y/2 \right) \left(x_{1} - y/2\right)\right] 
\end{eqnarray*}
We also define the {\em twisted~$R$--bimodule}~$B_{\rho}$ (resp.~$B_{\rho^{-1}}$), which coincides with~$R$ as a left~$R$--module but is twisted by $\rho$ 
(resp. by $\rho^{-1}$) as a right $R$--module, i.e. any~$a \in R$ acts on~$B_{\rho}$ on the right by multiplication by~$\rho(a)$ (resp.~$\rho^{-1}(a)$). 

Arkhipov introduced twisted bimodules associated to simple reflections 
and the twisted functors obtained by tensoring with them. These were 
used by several people in their work on category $\mathcal{O}$; 
for some history and references see~\cite{Maz}. Here we (only) consider 
twisted bimodules associated to powers of $\rho$. 

We introduce a grading on $R$, $R^{\sigma_i}$, $B_i$ and~$B_{\rho^{\pm 1}}$ by setting
$$\deg(y) = \deg (x_k)=2$$ 
for all~$k=1,\dots,r$. Curly brackets will indicate a shift of the grading: 
if $M=\underset{i\in \Z}{\bigoplus}M_i$ is a $\Z$-graded bimodule and~$p$ an integer, then 
the $\Z$--graded bimodule~$M\{p\}$ is defined by $M\{p\}_i = M_{i-p}$ for all~$i \in \Z$.

Form now the category monoidally generated by the graded $R$--bimodules defined above. Then allow direct sums and grading shifts of these objects and 
consider only morphisms which are degree-preserving morphisms of $R$--bimodules, and denote this category~$\ebim_{\hat{A}_{r-1}}$. Its Karoubi envelope~$\kebim_{\hat{A}_{r-1}}$ is 
a $\Q$--linear graded additive monoidal category with translation, which we call {\em the category of extended Soergel bimodules of type}~$\hat{A}_{r-1}$. 

As mentioned in Remark~\ref{hart}, we are precisely using Soergel's original realization, so the bimodules $B_i$ considered here are the ones constructed by H\"{a}rterich \cite{Har} and Soergel \cite{S3}. Therefore Soergel's category~$\kbim_{\hat{A}_{r-1}}$ of affine type~$A$ is equivalent to the full subcategory of~$\kebim_{\hat{A}_{r-1}}$ generated by the $B_i$, for $i=1,\ldots,r$. Let us recall H\"{a}rterich's categorification result~\cite{Har}:
\begin{thm}[H\"{a}rterich]\label{thm:hart}
We have 
$$\He_{\hat{A}_{r-1}}\cong K_0^{\Q(q)}(\kbim_{\hat{A}_{r-1}})$$
where 
$$K_0^{\Q(q)}(\kbim_{\hat{A}_{r-1}}):=K_0(\kbim_{\hat{A}_{r-1}})\otimes_{\Z[q,q^{-1}]}\Q(q).$$

For each $w\in \W_{\hat{A}_{r-1}}$, there exists a unique 
indecomposable bimodule $B_w$ in $\kbim_{\hat{A}_{r-1}}$. Conversely, 
any indecomposable bimodule in this category is isomorphic 
to $B_w\{t\}$, for a certain $w\in \W_{\hat{A}_{r-1}}$ and a certain 
grading shift $t\in \Z$. 

Under the isomorphism above the 
Kazhdan-Lusztig basis element $C'_w$ is mapped to $[B_w\{-1\}]$, 
for any $w\in \W_{\hat{A}_{r-1}}$. 
\end{thm}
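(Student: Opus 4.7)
The plan is to follow Soergel's original strategy, adapted to affine type $A$ with his reflection-faithful representation (see Remark~\ref{hart}). First I would construct a $\Z[q,q^{-1}]$-algebra homomorphism $\Phi \colon \He_{\hat{A}_{r-1}} \to K_0(\kbim_{\hat{A}_{r-1}})$ by sending the Kazhdan-Lusztig generator $b_i = C'_{\sigma_i}$ to $[B_i\{-1\}]$. Well-definedness requires checking the relations \eqref{H1}--\eqref{H3}: the idempotency $b_i^2 = (q+q^{-1})b_i$ follows from $B_i \otimes_R B_i \cong B_i\{1\} \oplus B_i\{-1\}$, itself a consequence of $R$ being free of rank two over $R^{\sigma_i}$ with basis $\{1, X_i\}$ after suitable grading shifts; the commutativity for distant indices reduces to the fact that the corresponding reflections act on essentially disjoint variables of $R$; and the cubic braid relation is checked by an explicit bimodule isomorphism, or equivalently by matching graded characters together with the construction of a suitable surjection.

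Next, for each $w \in \W_{\hat{A}_{r-1}}$ I would construct the indecomposable bimodule $B_w$ as follows. Choose a reduced expression $\sigma_{i_1}\cdots\sigma_{i_l}$ of $w$ and form the Bott-Samelson bimodule $B_{i_1} \otimes_R \cdots \otimes_R B_{i_l}$. Decomposing it into indecomposables via Krull-Schmidt (available in $\kbim_{\hat{A}_{r-1}}$ since the morphism spaces are finite-dimensional in each degree), an induction on the Bruhat order identifies $B_w$ as the unique indecomposable summand that does not occur as a direct summand of any Bott-Samelson bimodule built from a shorter expression. Independence from the choice of reduced expression follows from the braid relations verified at the bimodule level in the previous step, and the surjectivity of $\Phi$ follows from the observation that every object of $\kbim_{\hat{A}_{r-1}}$ is a summand of some Bott-Samelson bimodule.

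The central and most delicate step is the linear independence of $\{[B_w] : w \in \W_{\hat{A}_{r-1}}\}$ in $K_0^{\Q(q)}(\kbim_{\hat{A}_{r-1}})$. For this I would invoke the affine analogue, due to H\"{a}rterich, of Soergel's Hom formula, which expresses $\HOM(B_x, B_y)$ as a free graded $R$-module whose graded rank matches a prescribed Hecke-algebraic pairing, and which requires the reflection-faithfulness of the underlying representation in an essential way. This forces the $[B_w]$ to be linearly independent over $\Q(q)$ and allows one to compute the preimage $\Phi^{-1}([B_w\{-1\}])$: it is bar-invariant (using the self-duality of each $B_i$ realized by an appropriate duality functor on bimodules) and unitriangular with respect to the standard basis $\{T_y\}_{y \le w}$, hence must equal $C'_w$ by the uniqueness characterization of the Kazhdan-Lusztig basis. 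The main obstacles I expect are the cubic braid relation at the bimodule level and the control of indecomposable summands of Bott-Samelson bimodules across distinct reduced expressions; both can be handled either by Libedinsky's light-leaves technology (as developed for instance in~\cite{EKh, EW}) or by H\"{a}rterich's original explicit arguments.
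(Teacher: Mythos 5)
The paper does not actually prove this theorem: it is quoted verbatim from H\"{a}rterich~\cite{Har}, so there is no in-paper argument to compare against. Judged on its own merits, your outline correctly reproduces the Soergel-style skeleton (the homomorphism $b_i\mapsto [B_i\{-1\}]$, well-definedness via the isomorphisms \eqref{S1}--\eqref{S3}, construction of $B_w$ from Bott--Samelson bimodules by Krull--Schmidt and induction on the Bruhat order, and linear independence via the Hom formula), and those steps are sound.

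However, there is a genuine gap in your final step, and it sits exactly where the real content of the theorem lies. You claim that $\Phi^{-1}([B_w\{-1\}])$ equals $C'_w$ because it is bar-invariant and unitriangular with respect to $\{T_y\}_{y\le w}$. That is not the characterization of the Kazhdan--Lusztig basis: uniqueness requires, in addition to bar-invariance and unitriangularity, the degree bound on the coefficients of $T_y$ for $y<w$ (in the normalization of this paper, that $q^{\ell(w)}C'_w - T_w$ lies in $\sum_{y<w}q\Z[q]\,T_y$, up to the usual conventions). Without that bound there are many bar-invariant unitriangular elements, e.g.\ $C'_w+(q+q^{-1})C'_y$ for $y<w$. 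Soergel's Hom formula gives you linear independence and unitriangularity, but it does \emph{not} give the degree bound; the assertion that $[B_w\{-1\}]=C'_w$ is precisely Soergel's conjecture for the affine Weyl group, which does not follow from the formal package you describe. H\"{a}rterich closes this gap geometrically, by identifying the indecomposable bimodules with equivariant intersection cohomology of Schubert varieties in the affine flag variety and invoking the decomposition theorem and pointwise purity (the algebraic route via Elias--Williamson's Hodge theory came much later). Your proof as written would establish $\He_{\hat{A}_{r-1}}\cong K_0^{\Q(q)}(\kbim_{\hat{A}_{r-1}})$ and the classification of indecomposables, but not the last sentence of the theorem, which is the part the rest of the paper actually uses (e.g.\ in the proof of Theorem~\ref{catexthe}).
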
 

In the rest of the paper, we keep the convention that subscripts are considered to be modulo $r$, e.g. the bimodule~$B_{r+1}$ is by definition equal to $B_{1}$. 
We will also use the notation~$B_{\rho}^{\ot k}$ for any~$k \in \Z$, where this bimodule is defined to be the tensor product of~$|k|$ copies of~$B_{\rho}$ if $k \geq 0$ 
and of~$B_{\rho^{-1}}$ if $k \leq 0$. In both cases $B_{\rho}^{\ot k}$ is isomorphic to~$B_{\rho^k}$.

\subsubsection{Categorification of~$\hat{\He}_{\hat{A}_{r-1}}$}

\begin{lem}\label{isobim}
For any $i=1,\ldots, r$, there exists an $R$--bimodule isomorphism
\begin{equation}\label{tiso1}
 B_{\rho} \ot_R B_i \cong B_{i+1} \ot_R B_{\rho}. 
\end{equation}
Applying these isomorphisms $r$ times gives an isomorphism
\begin{equation}\label{tiso2}
 B_{\rho}^{\ot r} \ot_R B_i \cong B_{i} \ot_R B_{\rho}^{\ot r},
\end{equation}
for any $i=1,\ldots,r$.
\end{lem}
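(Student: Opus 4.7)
My plan is to write down an explicit candidate for the isomorphism in~\eqref{tiso1} and verify that it is well-defined and a bimodule map. Define
$$
\psi \colon B_\rho \ot_R B_i \lra B_{i+1} \ot_R B_\rho,
\qquad
p \ot_R (q \ot_{R^{\sigma_i}} s) \longmapsto (p \ot_{R^{\sigma_{i+1}}} \rho(q)) \ot_R \rho(s),
$$
where $p$ on the left (resp. the rightmost factor $\rho(s)$ on the right) is viewed through the identification of the underlying set of $B_\rho$ with $R$, and $q, s \in R$. The candidate inverse is the analogous formula with $\rho$ replaced by $\rho^{-1}$.

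The key algebraic point underlying well-definedness is the identity
$$
\rho(R^{\sigma_i}) = R^{\sigma_{i+1}},
$$
which is immediate from the relation $\rho \sigma_i \rho^{-1} = \sigma_{i+1}$ of~\eqref{W4}: if $\sigma_i(f) = f$, then $\sigma_{i+1}(\rho(f)) = \rho \sigma_i \rho^{-1}(\rho(f)) = \rho \sigma_i(f) = \rho(f)$, and conversely. This lets me compatibly exchange tensoring over $R^{\sigma_i}$ on the $B_i$ side with tensoring over $R^{\sigma_{i+1}}$ on the $B_{i+1}$ side: for $f \in R^{\sigma_i}$, the two expressions $\psi(p \ot qf \ot s)$ and $\psi(p \ot q \ot fs)$ coincide in $B_{i+1} \ot_R B_\rho$ after moving the central $\rho(f) \in R^{\sigma_{i+1}}$ across the outer $\ot_R$.

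The remaining well-definedness check (moving $a \in R$ across the middle $\ot_R$ on the source) uses the twist $p \cdot a = p \rho(a)$ on $B_\rho$, which is exactly matched by the $\rho$ applied to the middle factor on the target. The $R$-bimodule property then follows automatically: left multiplication acts by ordinary multiplication both on $B_\rho$ and on the left factor of $B_{i+1}$, while right multiplication is twisted by $\rho$ on the rightmost $B_\rho$ on both sides, and the twist is carried by the $\rho(s)$ appearing in the formula. The map $(u \ot v) \ot w \mapsto u \ot \rho^{-1}(v) \ot \rho^{-1}(w)$ is checked to be well-defined by the same argument (using $\rho^{-1}(R^{\sigma_{i+1}}) = R^{\sigma_i}$), and the two compositions are manifestly the identity.

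For the iterated isomorphism~\eqref{tiso2}, I apply~\eqref{tiso1} exactly $r$ times to get
$$
B_\rho^{\ot r} \ot_R B_i \;\cong\; B_{i+r} \ot_R B_\rho^{\ot r}.
$$
Since all bimodule subscripts in this paper are taken modulo $r$, $B_{i+r} = B_i$, which yields the desired isomorphism. The main obstacle is simply the bookkeeping of which tensor products are formed over $R$ and which over the smaller invariant rings; once the identity $\rho(R^{\sigma_i}) = R^{\sigma_{i+1}}$ is in place everything else is mechanical.
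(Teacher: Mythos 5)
Your overall strategy is the right one and the key algebraic input you isolate, $\rho(R^{\sigma_i})=R^{\sigma_{i+1}}$, is exactly what the paper's proof rests on. However, your explicit formula for $\psi$ is not well-defined, and the flaw is precisely in the step you describe as ``exactly matched by the $\rho$ applied to the middle factor.'' Consider the relation in the source coming from the middle $\ot_R$: for $a\in R$ one has $p\rho(a)\ot_R(q\ot_{R^{\sigma_i}}s)=p\ot_R(aq\ot_{R^{\sigma_i}}s)$. Your map sends the left side to $(p\rho(a)\ot_{R^{\sigma_{i+1}}}\rho(q))\ot_R\rho(s)$ and the right side to $(p\ot_{R^{\sigma_{i+1}}}\rho(a)\rho(q))\ot_R\rho(s)$, and these agree only if $\rho(a)$ can be moved across the tensor product over $R^{\sigma_{i+1}}$, i.e.\ only if $a\in R^{\sigma_i}$. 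Concretely, for $i=1$ the equal elements $1\ot_R(x_1\ot 1)$ and $x_2\ot_R(1\ot 1)$ of $B_\rho\ot_R B_1$ are sent to $(1\ot x_2)\ot 1$ and $(x_2\ot 1)\ot 1$ respectively, which are distinct in $B_2\ot_R B_\rho$ because $x_2\notin R^{\sigma_2}$.

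The repair is to deposit $\rho(q)$ as a multiplier on $p$ rather than as a separate tensor factor: the correct assignment is
\begin{equation*}
p\ot_R(q\ot_{R^{\sigma_i}}s)\longmapsto(p\rho(q)\ot_{R^{\sigma_{i+1}}}1)\ot_R\rho(s).
\end{equation*}
Equivalently (and this is how the paper proceeds), first use the natural collapses $B_\rho\ot_R B_i\cong B_\rho\ot_{R^{\sigma_i}}R$ and $B_{i+1}\ot_R B_\rho\cong R\ot_{R^{\sigma_{i+1}}}B_\rho$, and then define $\psi(a\ot b)=a\ot\rho(b)$; now the only elements that ever need to cross a tensor sign are $f\in R^{\sigma_i}$ on the source and $\rho(f)\in R^{\sigma_{i+1}}$ on the target, so your identity $\rho(R^{\sigma_i})=R^{\sigma_{i+1}}$ does all the work. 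Your treatment of the iterated isomorphism~\eqref{tiso2} (apply~\eqref{tiso1} $r$ times and use that indices are read modulo $r$) is fine and agrees in substance with the paper, which additionally observes that $B_\rho^{\ot r}\cong B_{\rho^r}$ and that $\rho^r$ preserves $R^{\sigma_i}$.
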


\begin{proof} \hfill \\
\noindent $\bullet$ \textit{Isomorphism~\eqref{tiso1}}

First note that there exist natural isomorphisms of~$R$--bimodules:
$$ B_{\rho} \ot_R B_i \cong B_{\rho} \ot_{R^{\sigma_i}} R \quad \text{and} \quad  B_{i+1} \ot_R B_{\rho} \cong R \ot_{R^{\sigma_{i+1}}} B_{\rho} .$$
Define the isomorphism of~$R$--bimodules~$\psi : B_{\rho} \ot_{R^{\sigma_i}} R \rightarrow R \ot_{R^{\sigma_{i+1}}} B_{\rho}$ by 
$$\psi (a \ot b) = a \ot \rho(b).$$
This isomorphism is well--defined, because $\rho$ defines an isomorphism between $R^{\sigma_i}$ and $R^{\sigma_{i+1}}$. 

\noindent $\bullet$ \textit{Isomorphism~\eqref{tiso2}}

Note that $B_{\rho}^{\ot r} \cong B_{\rho^r}$ and that~$\rho^r$ leaves the ring~$R^{\sigma_i}$ invariant.
\end{proof}

\begin{thm}\label{catexthe}
The category~$\kebim_{\hat{A}_{r-1}}$ categorifies the extended affine Hecke algebra, i.e. 
$$\hat{\He}_{\hat{A}_{r-1}} \cong K_0^{\Q(q)}(\kebim_{\hat{A}_{r-1}}),$$
where 
$$K_0^{\Q(q)}(\kebim_{\hat{A}_{r-1}}):=K_0(\kebim_{\hat{A}_{r-1}})\otimes_{\Z[q,q^{-1}]}\Q(q).$$
Under this isomorphism, the indecomposables in~$\kebim_{\hat{A}_{r-1}}$ correspond exactly to the Kazhdan-Lusztig basis of~$\hat{\He}_{\hat{A}_{r-1}}$. In particular, 
$[B_i\{-1\}]$ corresponds to~$b_i$ and~$[B_{\rho^{\pm 1}}]$ corresponds to~$T_{\rho}^{\pm 1}$.  
\end{thm}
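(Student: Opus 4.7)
The plan is to bootstrap from H\"arterich's categorification of the non-extended affine Hecke algebra (Theorem~\ref{thm:hart}) by incorporating the twisted bimodules $B_{\rho^{\pm 1}}$, using Lemma~\ref{isobim} to handle the extra ``extended'' relation~\eqref{H4}.

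First, I would construct a $\Q(q)$-algebra homomorphism
$$\Phi\colon \hat{\He}_{\hat{A}_{r-1}}\longrightarrow K_0^{\Q(q)}(\kebim_{\hat{A}_{r-1}})$$
by setting $\Phi(b_i) := [B_i\{-1\}]$ and $\Phi(T_\rho^{\pm 1}) := [B_{\rho^{\pm 1}}]$, and then verify compatibility with the presentation~\eqref{H1}--\eqref{H4}. Relations \eqref{H1}--\eqref{H3} involve only the $B_i$'s and already hold in $K_0(\kbim_{\hat{A}_{r-1}})$ by Theorem~\ref{thm:hart}. Invertibility of $T_\rho$ translates into $B_\rho\otimes_R B_{\rho^{-1}}\cong R$, which follows directly from the definition of twisted bimodules (since $\rho\rho^{-1}=\mathrm{id}$). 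Relation~\eqref{H4} is exactly encoded, already at the bimodule level, by the isomorphism~\eqref{tiso1} of Lemma~\ref{isobim}.

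Next, I would classify the indecomposables of $\kebim_{\hat{A}_{r-1}}$. Given $w\in\hat{\W}_{\hat{A}_{r-1}}$, use the unique decomposition~\eqref{decw}, $w=\rho^k w'$ with $k\in\Z$ and $w'\in\W_{\hat{A}_{r-1}}$, and set
$$B_w := B_\rho^{\otimes k}\otimes_R B_{w'},$$
where $B_{w'}$ is H\"arterich's indecomposable bimodule. The functor $B_\rho\otimes_R-$ is an autoequivalence of $\kebim_{\hat{A}_{r-1}}$, with inverse $B_{\rho^{-1}}\otimes_R-$, so each $B_w$ is indecomposable. Conversely, using~\eqref{tiso1} to push all $B_{\rho^{\pm 1}}$ factors to the left, any tensor product of generators can be rewritten as $B_\rho^{\otimes k}\otimes_R M$ with $M\in\kbim_{\hat{A}_{r-1}}$; decomposing $M$ via Theorem~\ref{thm:hart} and applying the autoequivalence shows that every indecomposable of $\kebim_{\hat{A}_{r-1}}$ is isomorphic to some $B_w\{t\}$, with $(w,t)$ unique thanks to the uniqueness of the decomposition~\eqref{decw}.

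Finally, I would match bases. The Kazhdan--Lusztig basis~\eqref{eq:GH} found by Grojnowski and Haiman consists of the $T_\rho^k C'_{w'}$ with $k\in\Z$ and $w'\in\W_{\hat{A}_{r-1}}$. Using Theorem~\ref{thm:hart} (which sends $C'_{w'}$ to $[B_{w'}\{-1\}]$) together with the definition of $B_w$, this basis is mapped under $\Phi$ to $\{[B_w\{-1\}]\mid w\in\hat{\W}_{\hat{A}_{r-1}}\}$, which by the classification above is a $\Q(q)$-basis of $K_0^{\Q(q)}(\kebim_{\hat{A}_{r-1}})$. Hence $\Phi$ is a $\Q(q)$-linear isomorphism sending the Kazhdan--Lusztig basis to the classes of the indecomposables (up to the shift $\{-1\}$). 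The main obstacle I expect is the classification step: one must be certain that $B_\rho\otimes_R-$ is genuinely invertible on $\kebim_{\hat{A}_{r-1}}$ and that this invertibility is compatible with the graded Krull--Schmidt structure transported from $\kbim_{\hat{A}_{r-1}}$. Once this is in place, the remainder of the argument reduces to bookkeeping via~\eqref{decw}.
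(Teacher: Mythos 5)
Your proposal is correct and follows essentially the same route as the paper's proof: defining the homomorphism on the generators $b_i\mapsto[B_i\{-1\}]$, $T_\rho^{\pm1}\mapsto[B_{\rho^{\pm1}}]$, checking the relations via the bimodule isomorphisms (with Lemma~\ref{isobim} handling~\eqref{H4}), classifying the indecomposables as $B_\rho^{\otimes k}\otimes_R B_w$ by exploiting the invertibility of $B_\rho\otimes_R-$, and matching them against the Grojnowski--Haiman basis~\eqref{eq:GH}. The only cosmetic difference is that you phrase the invertibility argument as an autoequivalence, whereas the paper spells out the contradiction obtained by tensoring a putative decomposition with $B_\rho^{\otimes -k}$.
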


\begin{proof}
Recall that 
$$\He_{\hat{A}_{r-1}}\cong K_0^{\Q(q)}(\kbim_{\hat{A}_{r-1}}),$$ 
by Theorem~\ref{thm:hart}. 
The indecomposables in~$\kbim_{\hat{A}_{r-1}}$, which are 
denoted by~$B_w\{-1\}$ for~$w\in\ \W_{\hat{A}_{r-1}}$, 
correspond exactly to the Kazhdan-Lusztig basis 
elements~$\{C_w', w \in \W_{\hat{A}_{r-1}}\}$ of~$\He_{\hat{A}_{r-1}}$. 
Moreover, $B_w$ appears as a direct summand of the tensor 
product~$B_{i_1} \ot_R \dots \ot_R B_{i_k}$ where~$\sigma_{i_1} \dots \sigma_{i_k}$ 
is a reduced expression 
of~$w$.

We define the homomorphism of algebras 
$$\hat{\He}_{\hat{A}_{r-1}} \rightarrow K_0^{\Q(q)}(\kebim_{\hat{A}_{r-1}})$$
by
$$b_i\mapsto[B_i\{-1\}]\quad\text{and}\quad T_{\rho}^{\pm 1}\mapsto [B_{\rho^{\pm 1}}].$$ 
The homomorphism is well-defined, as follows from the following isomorphisms in~$\ebim_{\hat{A}_{r-1}}$:
\begin{align}
B_i \otimes_R B_i & \cong   B_i \oplus B_i\{2\}   \label{S1}\\
B_i \otimes_R B_j & \cong B_j \otimes_R B_i \ \mbox{for distant}\; i,j\label{S2}\\
B_i \otimes_R B_{i+1} \otimes_R B_i\oplus B_{i+1} \{2\}& \cong B_{i+1} \otimes_R B_{i} \otimes_R B_{i+1} \oplus B_i \{2\}  \label{S3}\\
B_{\rho} \ot_R B_i &\cong B_{i+1} \ot_R B_{\rho} \label{S4}, 
\end{align}
for~$i,j=1,\dots, r$. 

Note that any tensor product of~$B_{\rho^{\pm 1}}$'s and $B_i$'s can be rewritten in the following way $$B_{\rho}^{\ot k} \ot_R B_{i_1} \ot_R \dots \ot_R B_{i_l},$$ 
by sliding all the~$B_{\rho^{\pm 1}}$'s to the left using the isomophism~\eqref{S4}.  

Let us now look at the indecomposables of the category~$\kebim_{\hat{A}_{r-1}}$. First observe that if the bimodule~$M$ is indecomposable in~$\kebim_{\hat{A}_{r-1}}$ then, for any~$k \in \Z$, 
the tensor product~$B_{\rho}^{\ot k} \ot_R M$ is indecomposable as well. Indeed assume that 
$$B_{\rho}^{\ot k} \ot_R M \cong P \oplus Q,$$ then tensoring on the left by~$B_{\rho}^{\ot -k}$ gives 
$$M \cong B_{\rho}^{\ot -k} \ot_R P \oplus B_{\rho}^{\ot -k} \ot_R Q,$$ 
which contradicts the fact that~$M$ is supposed to be indecomposable.

So let~$M$ be an indecomposable of~$\kebim_{\hat{A}_{r-1}}$. It is a direct summand of some tensor 
product~$B_{\rho}^{\ot k} \ot_R B_{i_1} \ot_R \dots \ot_R B_{i_l}$. Then the indecomposable bimodule~$B_{\rho}^{\ot -k} \ot_R M$ is a direct summand 
of~$B_{i_1} \ot_R \dots \ot_R B_{i_l}$. The latter tensor product belongs to the subcategory~$\kbim_{\hat{A}_{r-1}}$ of~$\kebim_{\hat{A}_{r-1}}$. Thus~$B_{\rho}^{\ot -k} \ot_R M$ is of the form~$B_w$ 
for some~$w\in\ \W_{\hat{A}_{r-1}}$. We can conclude that the indecomposables of the category~$\kebim_{\hat{A}_{r-1}}$ are all of the form 
$$ B_{\rho}^{\ot k} \ot_R B_w \quad \mbox{for} \ k \in \Z \ \mbox{and} \ w\in\ \W_{\hat{A}_{r-1}}.$$
Their Grothendieck classes correspond bijectively to the elements of 
$$\{T_{\rho}^{ k} C_w', k \in \Z \ \mbox{and} \ w \in \W_{\hat{A}_{r-1}}\},$$
which is precisely the Kazhdan-Lusztig basis of~$\hat{\He}_{\hat{A}_{r-1}}$ 
in~\eqref{eq:GH}. 
\end{proof}

\begin{rem}\label{homsp}
In the category $\ebim_{\hat{A}_{r-1}}$, the $B_i$ are self-adjoint and $B_{\rho}$ and $B_{\rho}^{-1}$ form a biadjoint pair. Therefore, there exist isomorphisms 
\begin{eqnarray*}
\Hom_{\ebim_{\hat{A}_{r-1}}}(B_{\rho}^{\ot k} \ot_R B_{i_1} \ot_R \dots \ot_R B_{i_m}, B_{\rho}^{\ot l} \ot_R B_{j_1} \ot_R \dots \ot_R B_{j_n})&\cong\\ 
\Hom_{\ebim_{\hat{A}_{r-1}}} (R, B_{\rho}^{\ot l-k} \ot_R B_{j_1} \ot_R \dots \ot_R B_{j_n} \ot_R B_{i_m} \ot_R \dots \ot_R B_{i_1}).
\end{eqnarray*}
The latter hom-space is equal to zero except when~$k=l$, in which case it is isomorphic to the corresponding hom-space in~$\bim_{\hat{A}_{r-1}}$. 

Indeed any morphism 
$$f \in \Hom_{\ebim_{\hat{A}_{r-1}}} ( R, B_{\rho}^{\ot l-k} \ot_R B_{j_1} \ot_R \dots \ot_R B_{j_n} \ot_R B_{i_m} \ot_R \dots \ot_R B_{i_1})$$ is completely 
determined by the image~$p$ of~$1$. 
Since~$f$ is a morphism of $R$--bimodules, we have $ pa=ap$ for any~$a \in R$. In particular, for~$a = \sum_{i=1}^{r} x_i$ we have 
$$p \left( \sum_{i=1}^{r} x_i \right) = \left( \sum_{i=1}^{r} x_i \right) p.$$
Since~$ \sum_{i=1}^{r} x_i $ is invariant under all the reflections~$\sigma_j$ for~$j=1, \dots, r$, we also have
\begin{eqnarray*}
p \left( \sum_{i=1}^{r} x_i \right)& = & \rho^{l-k}\left( \sum_{i=1}^{r} x_i \right) p \\
& = & \left( \sum_{i=1}^{r} x_i - (l-k)y\right) p.
\end{eqnarray*}
This implies that~$p$, and therefore the morphism~$f$, has to be zero unless~$k=l$.
\end{rem}

\subsubsection{Representation of $\hat{\B}_{\hat{A}_{r-1}}$ in $\mathcal{K}(\kebim_{\hat{A}_{r-1}})$}

Let $\mathcal{K}(\kebim_{\hat{A}_{r-1}})$ be the homotopy category of bounded complexes in $\kebim_{\hat{A}_{r-1}}$.

Rouquier~\cite{R} obtained a representation of~$\B_{\hat{A}_{r-1}}$ in $\mathcal{K}(\kbim_{\hat{A}_{r-1}})$. 
We generalize his result in the extended affine setting. 
\begin{rem}
In what follows, we use by matter of convenience the notation $X_i=x_{i+1}-x_{i}$, for $i=1,\ldots, r-1$, and $X_r=x_1-x_r-y$, 
as defined in Remark~\ref{hart}. 
\end{rem}

To each braid generator $\sigma_i \in \hat{\B}_{\hat{A}_{r-1}}$ we assign 
the cochain complex~$F(\sigma_i)$ of graded $R$--bimodules
\begin{equation}\label{CX sigma}
F(\sigma_i):  0  \longrightarrow  R\{2\}  \xrightarrow{\rb_i}  B_i \longrightarrow  0
\end{equation}
where~$B_i$ sits in cohomological degree 0, and the $R$--bimodule 
morphism~$\rb_i$ maps~$1$ to $\frac{1}{2} \left( X_i\ot 1 + 1 \ot X_i \right)$, for $i=1,\ldots,r$. 

To $\sigma_i^{-1}$ we assign 
the cochain complex~$F(\sigma_i^{-1})$ of graded~$R$--bimodules
\begin{equation}\label{CX sigma-1}
F(\sigma_i^{-1}) :  0  \longrightarrow  B_i\{-2\}  \xrightarrow{\br_i} R\{-2\}  \longrightarrow  0
\end{equation}
where~$B_i\{-2\}$ sits in cohomological degree 0 and the $R$--bimodule 
morphism~$\br_i$ is just the multiplication. 

To~$\rho$ (resp. $\rho^{-1}$) we assign the cochain complex of 
graded $R$--bimodules
\begin{equation}\label{CX rho}
F(\rho)  :  0  \longrightarrow  B_{\rho}  \longrightarrow  0
\end{equation}
\begin{equation}\label{CX rho-1}
\left( \text{resp.} \quad F(\rho^{-1})  :  0  \longrightarrow  B_{\rho^{-1}}  \longrightarrow  0 \right)
\end{equation}
where~$B_{\rho}$ (resp. $B_{\rho^{-1}}$) sits in cohomological degree~$0$. 

To the unit element~$1 \in \hat{\B}_{\hat{A}_{r-1}}$ we assign the complex of 
graded $R$-bimodules
\begin{equation}\label{CX one}
F(1)  :  0  \longrightarrow  R  \longrightarrow  0
\end{equation}
where~$R$ sits in cohomological degree 0; the complex~$F(1)$ is a 
unit for the tensor product of complexes. Finally, to any extended 
affine braid word, we assign the tensor product over~$R$ of the 
complexes associated to the generators appearing in the expression of the 
word.

\begin{prop}
The above defines a categorical representation 
of~$\hat{\B}_{\hat{A}_{r-1}}$ in $\mathcal{K}(\kebim_{\hat{A}_{r-1}})$.
\end{prop}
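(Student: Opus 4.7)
The plan is to verify that, for each defining relation of the extended affine braid group $\hat{\B}_{\hat{A}_{r-1}}$, the tensor products of complexes assigned to the two sides are homotopy equivalent. There are four families of relations to handle: (i) invertibility of the generators, namely $\sigma_i\sigma_i^{-1}=\sigma_i^{-1}\sigma_i=1$ and $\rho\rho^{-1}=\rho^{-1}\rho=1$; (ii) distant commutation $\sigma_i\sigma_j=\sigma_j\sigma_i$; (iii) the braid relation $\sigma_i\sigma_{i+1}\sigma_i=\sigma_{i+1}\sigma_i\sigma_{i+1}$; and (iv) the conjugation relation $\rho\sigma_i\rho^{-1}=\sigma_{i+1}$.

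For (ii) and (iii), and for the $\sigma_i$-part of (i), I would invoke the affine analogues of Rouquier's original computations, which depend only on the Soergel-type isomorphisms \eqref{S1}--\eqref{S3}. These isomorphisms hold verbatim in $\ebim_{\hat{A}_{r-1}}$ since we work with Soergel's reflection-faithful representation (Remark~\ref{hart}), and both H\"{a}rterich~\cite{Har} and Elias--Williamson~\cite{EW} have set up the relevant affine type $A$ Soergel calculus; the index $r$ is treated on exactly the same footing as the other indices. The $\rho$-part of (i) is immediate: both $F(\rho)\otimes_R F(\rho^{-1})$ and $F(\rho^{-1})\otimes_R F(\rho)$ are concentrated in cohomological degree zero, and the $R$-bimodule maps $a\otimes b\mapsto a\rho(b)$ and $a\otimes b\mapsto a\rho^{-1}(b)$ furnish the required isomorphisms $B_{\rho}\otimes_R B_{\rho^{-1}}\cong R\cong B_{\rho^{-1}}\otimes_R B_{\rho}$.

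The main new verification is (iv). Unwrapping the tensor product, $F(\rho)\otimes_R F(\sigma_i)\otimes_R F(\rho^{-1})$ is a two-term complex in cohomological degrees $-1$ and $0$. The degree-$(-1)$ term $B_{\rho}\otimes_R R\{2\}\otimes_R B_{\rho^{-1}}$ collapses to $R\{2\}$ by the $\rho$-part of (i), while two successive applications of Lemma~\ref{isobim} give
\[B_{\rho}\otimes_R B_i\otimes_R B_{\rho^{-1}}\;\cong\;B_{i+1}\otimes_R B_{\rho}\otimes_R B_{\rho^{-1}}\;\cong\;B_{i+1}.\]
It then remains to check that the transported differential coincides with $\rb_{i+1}$. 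Tracing the Rouquier element $\tfrac12(X_i\otimes 1+1\otimes X_i)\in B_i$ through the isomorphism $\psi$ of Lemma~\ref{isobim}, which introduces a $\rho$-twist on the middle factor, its image in $B_{i+1}$ is $\tfrac12\bigl(\rho(X_i)\otimes 1+1\otimes\rho(X_i)\bigr)$. A direct calculation from Section~\ref{action} shows that $\rho$ acts as a cyclic shift on the $X_i$'s: for $i\leq r-2$ one has $\rho(X_i)=x_{i+2}-x_{i+1}=X_{i+1}$, while $\rho(X_{r-1})=(x_1-y)-x_r=X_r$ and $\rho(X_r)=x_2-(x_1-y)-y=X_1$. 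Hence $\rho(X_i)=X_{i+1}$ for all $i$, and the transported differential is precisely $\rb_{i+1}$, as required.

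The main obstacle is this last bookkeeping step: one must carefully unwind the isomorphism of Lemma~\ref{isobim} and match the resulting $\rho$-twist with the explicit action on $R$ given in Section~\ref{action}, keeping track of the cyclic shift of the $X_i$'s. Once the identity $\rho(X_i)=X_{i+1}$ is in hand, compatibility of the differentials is automatic, and all remaining relations reduce to the standard Soergel--Rouquier calculus, already established in affine type $A$ by H\"{a}rterich and Elias--Williamson.
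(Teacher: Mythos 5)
The paper states this proposition without proof, so there is nothing to compare against directly; judged on its own, your argument is correct and is surely the intended one: the Artin relations among the $\sigma_i$ (including $\sigma_r$) reduce to Rouquier's computation, which only uses the Soergel isomorphisms valid for the reflection-faithful affine realization, and the genuinely new relation $\rho\sigma_i\rho^{-1}=\sigma_{i+1}$ is handled by Lemma~\ref{isobim} together with the identity $\rho(X_i)=X_{i+1}$, which you verify correctly in all three cases (note only that the final step $B_{i+1}\otimes_R B_{\rho}\otimes_R B_{\rho^{-1}}\cong B_{i+1}$ uses the cancellation $B_{\rho}\otimes_R B_{\rho^{-1}}\cong R$ rather than a second application of Lemma~\ref{isobim}, a harmless imprecision). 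Your bookkeeping of the $\rho$-twist through the isomorphism $\psi$ is the one point where an error could hide, and it checks out, yielding an isomorphism (not merely a homotopy equivalence) between $F(\rho)\otimes_R F(\sigma_i)\otimes_R F(\rho^{-1})$ and $F(\sigma_{i+1})$.
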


\subsection{The diagrammatic version}

The category of Soergel bimodules of finite type~$A$ is described via planar diagrams 
by Elias and Khovanov in~\cite{EKh}. They associate planar diagrams to certain generating bimodule maps and give a complete set of relations on 
them. Elias and Williamson \cite{EW} worked out the generalization of the diagrammatic approach to Soergel bimodules 
for any Coxeter group which does not contain a standard parabolic subgroup isomorphic to $H_3$.

Our aim is to define and study an extension of Elias and 
Williamson's diagrammatic category for extended affine type~$A$~\cite{EW}. 
In affine type $A$ Elias and Williamson's diagrammatic category is a 
straightforward generalization of Elias and Khovanov's original category, 
which was defined for finite type $A$. In addition to Elias and Williamson's 
diagrams, we also have to introduce a new type of strand. These new strands 
are oriented and their endpoints are labeled $+$ or $-$ depending on 
orientations. The Karoubi envelope of the diagrammatic 
category~$\debim_{\hat{A}_{r-1}}$ obtained in this way is equivalent to 
the category of 
extended Soergel bimodules~$\kebim_{\hat{A}_{r-1}}$ of affine type~$A$, as we will 
show.

\subsubsection{Definition of~$\debim_{\hat{A}_{r-1}}$}\label{cat:debim}

First start with the category whose objects are graded finite sequences of integers belonging to~$\{1, \dots, r\}$ and the symbols~$+$ and~$-$. 
Graphically we represent these sequences by sequences of colored points (read from left to right) of the $x$--axis of the real plane~$\R^2$. 
The morphisms are then equivalence classes of $\Q$--linear combinations of 
graded planar diagrams in~$\R \times [0,1]$ (read from bottom to top) and composition is defined by 
vertically glueing the diagrams and rescaling the vertical coordinate. These morphisms are defined by generators and relations listed below. This category 
possesses a monoidal structure given by stacking sequences and diagrams next to each other.

Let $\debim_{\hat{A}_{r-1}}$ be the category containing all direct sums and grading shifts of these objects and let its morphisms be the degree-preserving diagrams. 
The diagrammatic extended Soergel category is by definition its Karoubi envelope~$\kdebim_{\hat{A}_{r-1}}$.

In the diagrams, the strands whose endpoints are $+$ or $-$--signs are oriented and the other strands are non-oriented. The non-oriented strands 
can be colored with integers belonging to $\{1,\ldots,r\}$. Two colors~$i$ and~$j$ are called {\em adjacent} 
(resp. {\em distant}) if~$i\equiv j\pm 1\mod r$ (resp. $i\not\equiv j\pm 1\mod r $). By convention, no label means that the equation holds for 
any color $i \in \{1, \dots,r\}$.

The morphisms of~$\debim_{\hat{A}_{r-1}}$ are built out of the following generating diagrams. The non-oriented diagrams are the affine analogues of Elias and Khovanov's diagrams, 
the ones involving oriented strands are new. 

\begin{itemize}
\item Generators involving only one color:
\begin{equation*}
\xymatrix@R=1.0mm{
&
\figins{-15}{0.5}{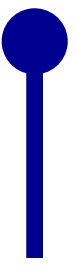}
&
\figins{-9}{0.5}{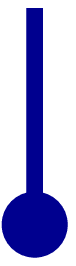}
&
\figins{-15}{0.55}{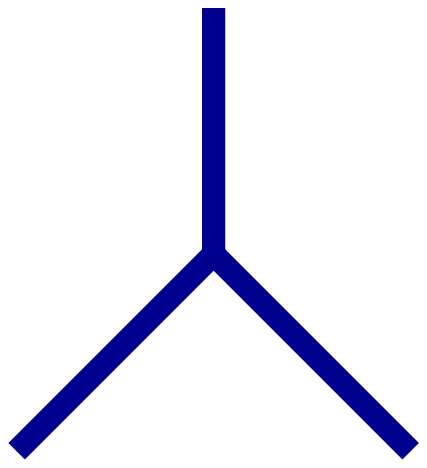}
&
\figins{-15}{0.55}{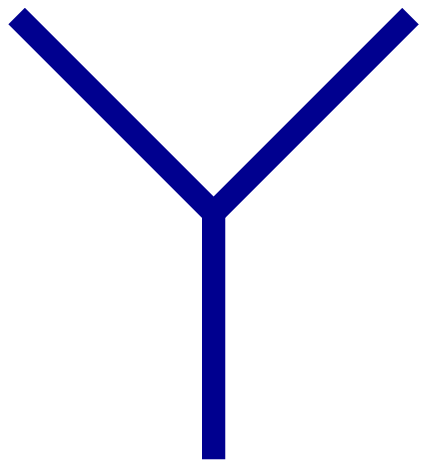}
\\
\text{Degree} & 1 & 1 & -1 & -1
\\
& \text{enddot}_i & \text{startdot}_i & \text{merge}_i & \text{split}_i
}
\end{equation*}

\medskip

It is useful to define the cap and cup as follows
\begin{equation*}
\figins{-17}{0.55}{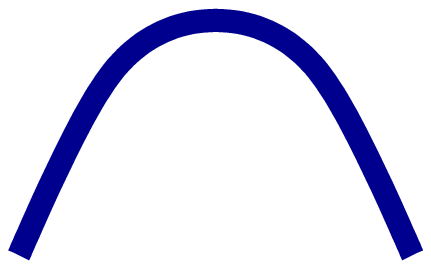}\
:= \
\figins{-17}{0.55}{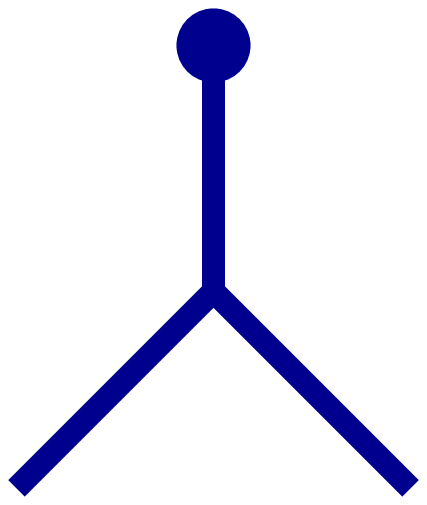}\
\mspace{50mu}
\figins{-17}{0.55}{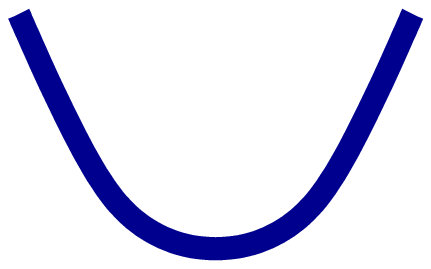} \
:= \
\figins{-17}{0.55}{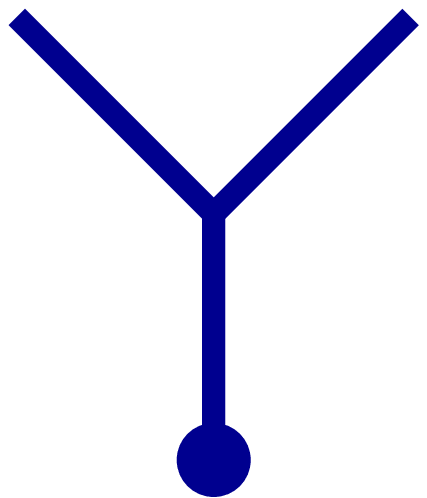}\
\end{equation*}

\medskip

\item Generators involving two colors:
\begin{itemize}
\item the $4$--valent vertex with distant colors, of degree~$0$, denoted~$4\mbox{vert}_{i,j}$
\smallskip
\begin{equation*}
\labellist
\tiny\hair 2pt
\pinlabel $i$ at  -4 -10
\pinlabel $j$ at 134 -10
\pinlabel $i$ at 134 140
\pinlabel $j$ at -4 140
\endlabellist
\figins{-15}{0.55}{4vert}\vspace{1.5ex}
\end{equation*}

\item and the $6$--valent vertices with adjacent colors~$i$ and~$j$, of degree~$0$, denoted~$6\mbox{vert}_{i,j}$ and~$6\mbox{vert}_{j,i}$
\smallskip
\begin{equation*}
\labellist
\tiny\hair 2pt
\pinlabel $i$   at  -4 -10 
\pinlabel $j$ at  66 -12
\pinlabel $i$ at 136 -10
\pinlabel $j$ at -4 140
\pinlabel $i$ at 66 140
\pinlabel $j$ at 136 140
\pinlabel $j$ at 250 -10
\pinlabel $i$ at 320 -10
\pinlabel $j$ at 390 -10
\pinlabel $i$ at 250 140
\pinlabel $j$ at 320 140
\pinlabel $i$ at 390 140
\endlabellist
\figins{-17}{0.55}{6vertd}
\mspace{60mu}
\figins{-17}{0.55}{6vertu} 
\vspace{1.5ex}
\end{equation*}
\end{itemize}

\medskip

\item Generators involving only oriented strands, of degree~$0$:
\smallskip
\begin{equation*}
\labellist
\pinlabel $+$strand at 0 -40
\pinlabel $-$strand at 140 -40
\pinlabel $+$cap at 300 -40
\pinlabel $-$cap at 505 -40
\pinlabel $-$cup at 715 -40
\pinlabel $+$cup at 910 -40
\tiny\hair 2pt
\pinlabel $+$ at 12 120
\pinlabel $+$ at 755 120
\pinlabel $+$ at 855 120
\pinlabel $-$ at 140 120
\pinlabel $-$ at 660 120
\pinlabel $-$ at 950 120
\pinlabel $-$ at 140 -10
\pinlabel $+$ at 255 -10
\pinlabel $-$ at  460 -10
\pinlabel $+$ at  12 -10
\pinlabel $-$ at 350 -10
\pinlabel $+$ at  555 -10
%\pinlabel +strand at 0 -40
\endlabellist
\figins{-15}{0.55}{rho}
\mspace{60mu}
\figins{-15}{0.55}{rho-1}
\mspace{60mu}
\figins{-15}{0.55}{cap-r}
\mspace{60mu}
\figins{-15}{0.55}{cap-l}
\mspace{60mu}
\figins{-15}{0.55}{cup-r}
\mspace{60mu}
\figins{-15}{0.55}{cup-l} 
\end{equation*}
\bigskip
\bigskip

\item Generators involving oriented strands and adjacent colored strands. The mixed $4$--valent vertex of degree~$0$:
\smallskip
\begin{equation*}
\labellist
\pinlabel $4\mbox{vert}_{+,i}$ at 70 -40
\pinlabel $4\mbox{vert}_{i+1,+}$ at 325 -40
\pinlabel $4\mbox{vert}_{i,-}$ at 570 -40
\pinlabel $4\mbox{vert}_{-,i+1}$ at 825 -40
\tiny\hair 2pt
\pinlabel $i+1$ at -5 140
\pinlabel $i$ at 380 140
\pinlabel $i+1$ at 630 140
\pinlabel $i$ at 740 140
\pinlabel $+$ at 136 140
\pinlabel $+$ at 250 140
\pinlabel $-$ at 495 140
\pinlabel $-$ at 885 140
\pinlabel $i$ at 136 -10
\pinlabel $i+1$ at 250 -10
\pinlabel $i$ at  495 -10
\pinlabel $i+1$ at 885 -10
\pinlabel $+$ at -5 -10
\pinlabel $+$ at 380 -10
\pinlabel $-$ at  630 -10
\pinlabel $-$ at 740 -10
\endlabellist
\figins{-15}{0.55}{4mvert-ur}
\mspace{60mu}
\figins{-15}{0.55}{4mvert-ul}
\mspace{60mu}
\figins{-15}{0.55}{4mvert-dr}
\mspace{60mu}
\figins{-15}{0.55}{4mvert-dl}
\end{equation*}

\bigskip
\bigskip

\item Generators involving boxes, of degree~$2$, denoted~$\mbox{box}_i$
\begin{equation*}
{\bbox{i}}
\end{equation*}
for all $i=1,\dots, r$, and denoted~$\mbox{box}_y$
\begin{equation*}
{\bbox{y}}
\end{equation*}
\end{itemize}

\medskip

The generating diagrams are subject to the following relations. The relations involving only non-oriented diagrams are the obvious 
affine analogues of Elias and Khovanov's relations, the ones with diagrams involving oriented strands are new.  
\begin{itemize}
\item Isotopy relations:
\begin{equation}\label{eq:adj}
\figins{-17}{0.55}{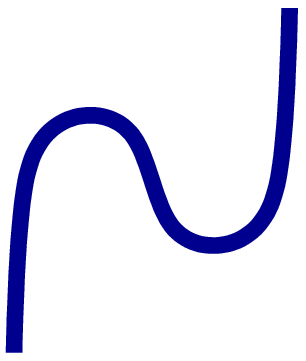}\
=\
\figins{-17}{0.55}{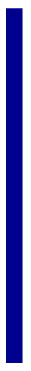}\
=\
\figins{-17}{0.55}{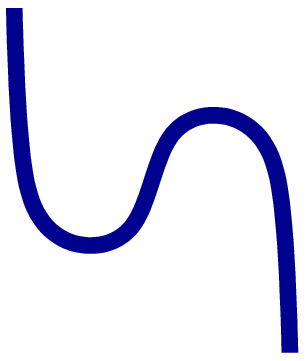}
\end{equation}

\begin{equation}\label{eq:curldot}
\figins{-17}{0.55}{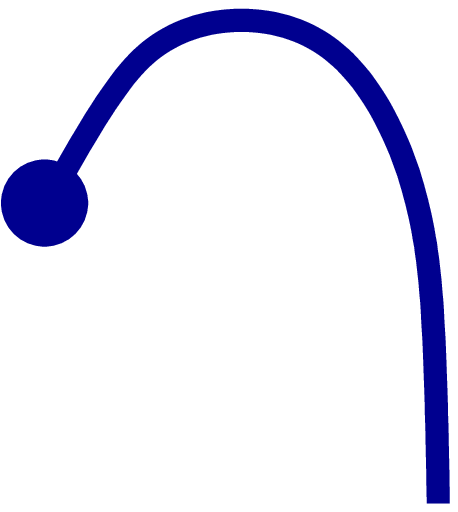}\
=\
\figins{-17}{0.55}{enddot.eps}\
=\
\figins{-17}{0.55}{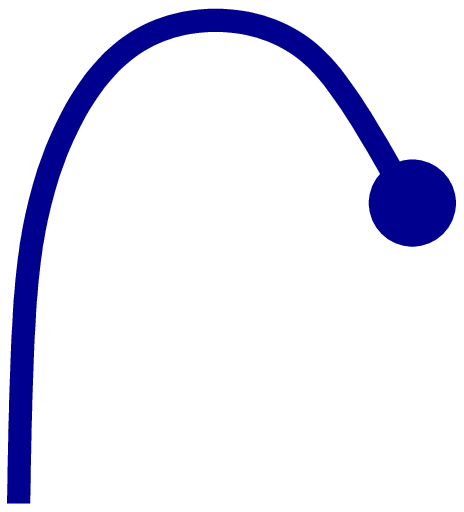}
\end{equation}

\begin{equation}\label{eq:v3rot}
\figins{-17}{0.55}{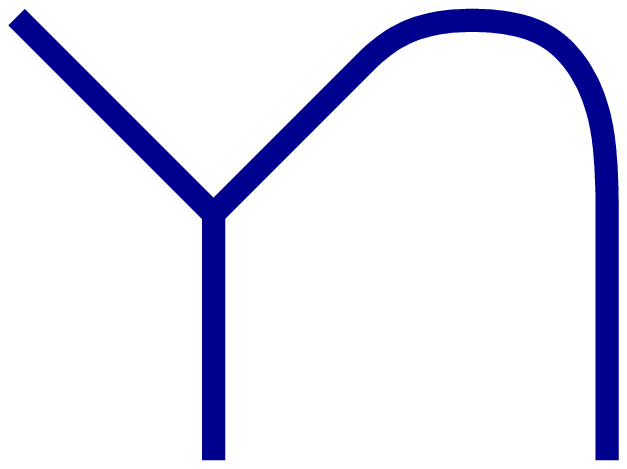}\
=\
\figins{-17}{0.55}{merge.eps}\
=\
\figins{-17}{0.55}{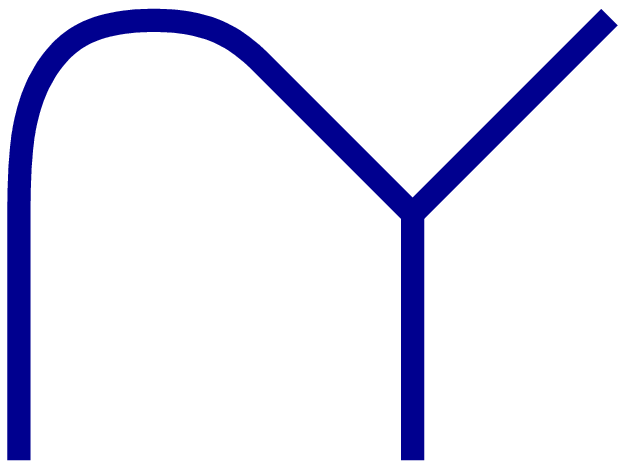}
\end{equation}

\begin{equation}\label{eq:v4rot}
\figins{-17}{0.55}{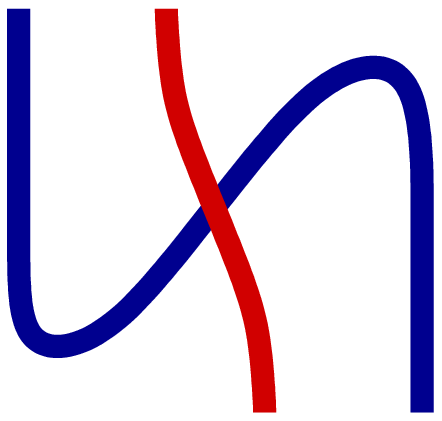}\
=\
\figins{-17}{0.55}{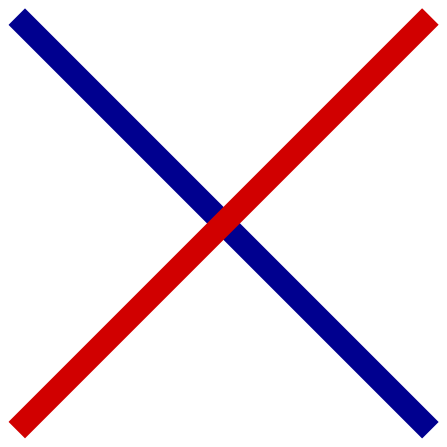}\
=\
\figins{-17}{0.55}{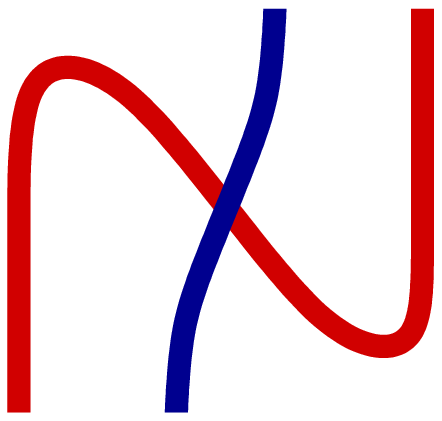}
\end{equation}

\begin{equation}\label{eq:v6rot}
\figins{-17}{0.55}{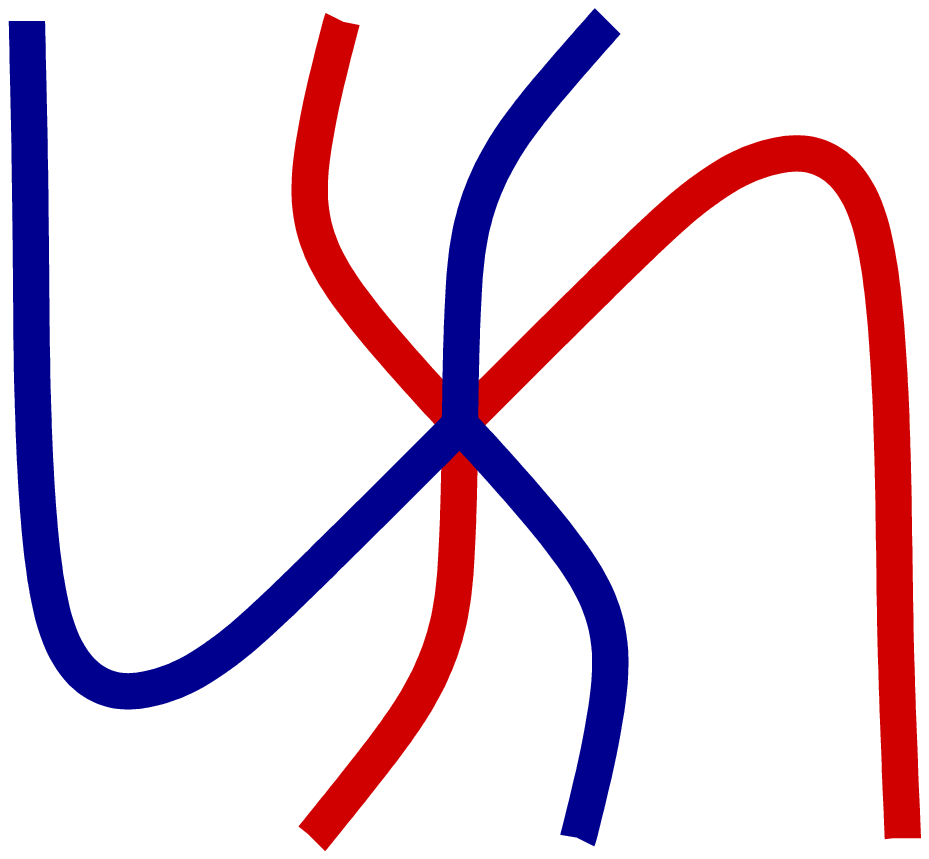}\
=\
\figins{-17}{0.55}{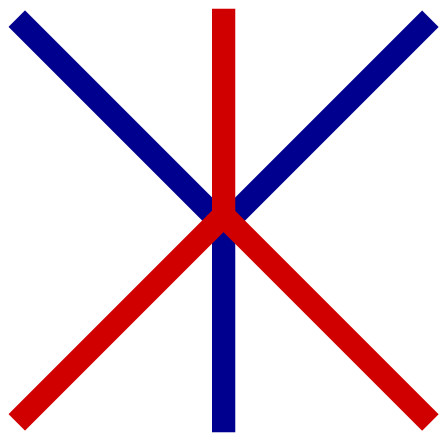}\
=\
\figins{-17}{0.55}{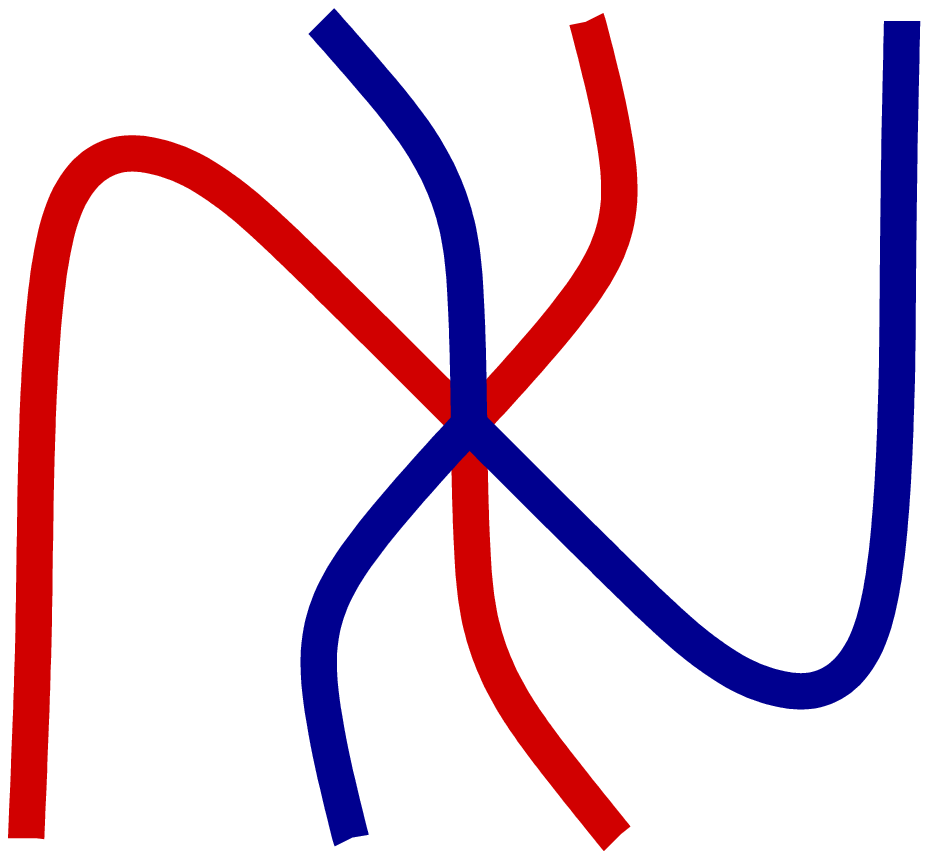}
\end{equation}

\begin{equation}\label{eq:adjmu}
\figins{-17}{0.55}{biadj-lu}\
=\
\figins{-17}{0.55}{rho}\
=\
\figins{-17}{0.55}{biadj-ru}
\end{equation}

\begin{equation}\label{eq:adjmd}
\figins{-17}{0.55}{biadj-ld}\
=\
\figins{-17}{0.55}{rho-1}\
=\
\figins{-17}{0.55}{biadj-rd}
\end{equation}

\begin{equation}\label{eq:v4mrotu}
\figins{-17}{0.55}{4mvert-ur+}\
=\
\figins{-17}{0.55}{4mvert-ur}\
=\
\figins{-17}{0.55}{4mvert-ur-}
\end{equation}

\begin{equation}\label{eq:v4mrotd}
\figins{-17}{0.55}{4mvert-dl+}\
=\
\figins{-17}{0.55}{4mvert-dl}\
=\
\figins{-17}{0.55}{4mvert-dl-}
\end{equation}

%%%%%%%%%%%%%%%%%%%%%%%%%
\bigskip\medskip

\item Relations involving one color:

\begin{equation}\label{eq:dumbrot}
\figins{-16}{0.5}{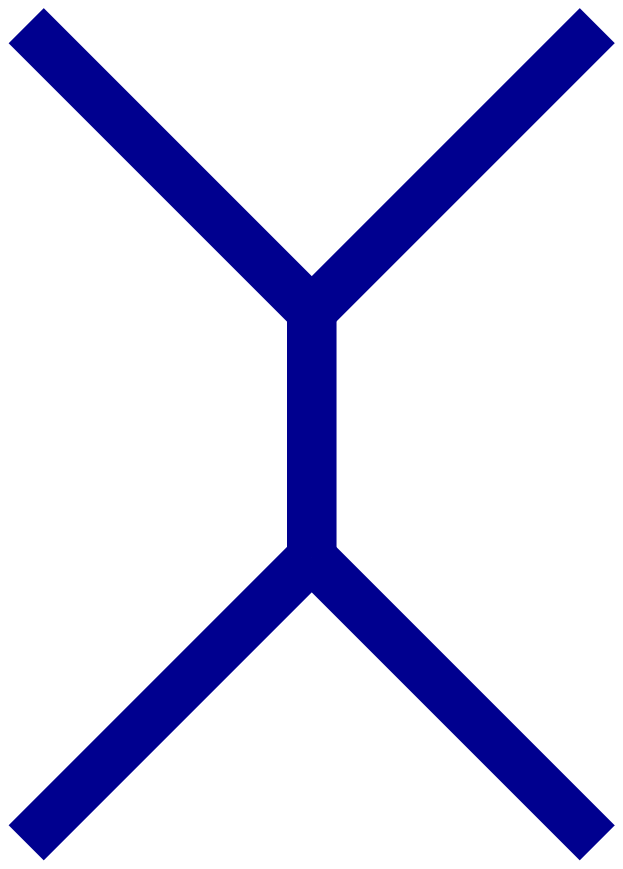}\
=\
\figins{-14}{0.45}{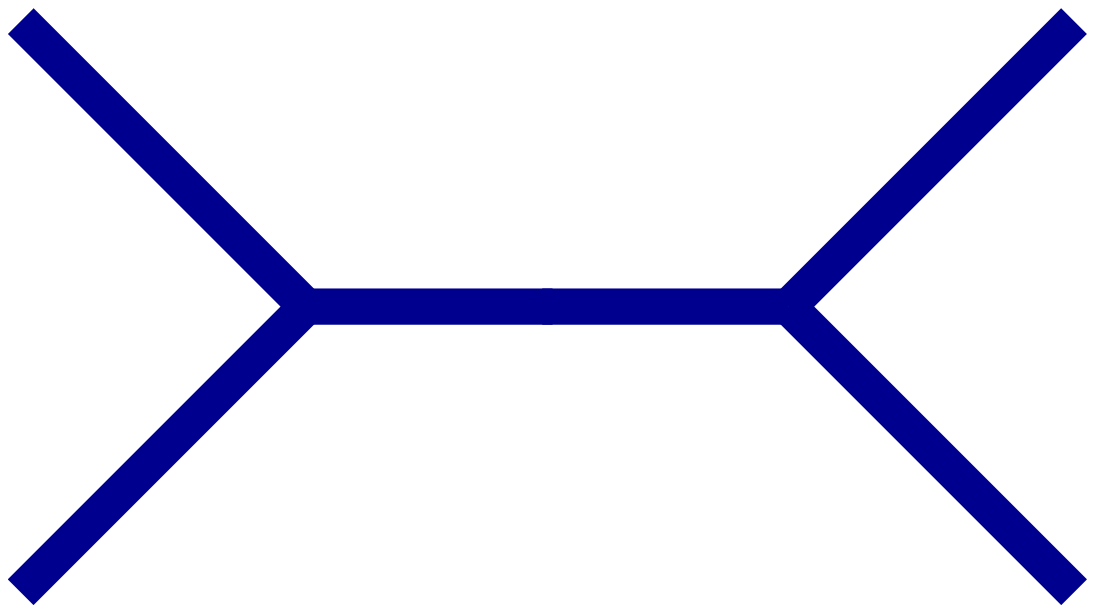}
\end{equation}

\begin{equation}\label{eq:lollipop}
\figins{-17}{0.55}{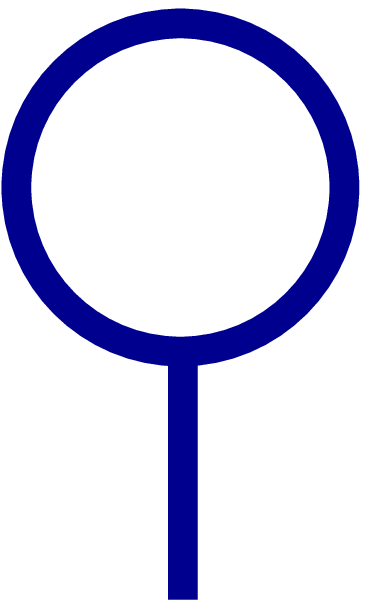}\
=\
0
\end{equation}

\begin{equation}\label{eq:deltam}
\figins{-17}{0.55}{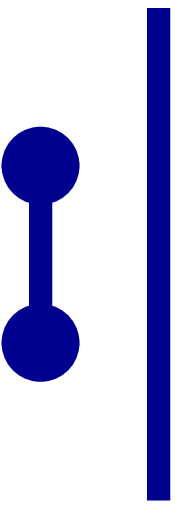}\
+\
\figins{-17}{0.55}{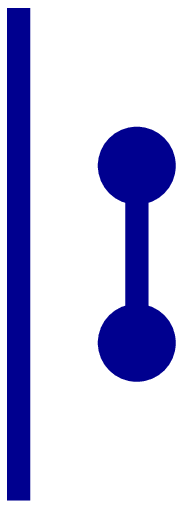}\
=\ 
2 \ \figins{-17}{0.55}{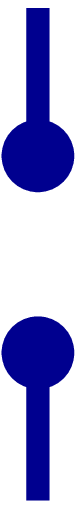}
\end{equation}

%%%%%%%%%%%%%%%%%%%%%%%%
\medskip
\item Relations involving two distant colors:
\begin{equation}\label{eq:reid2dist}
\figins{-32}{0.9}{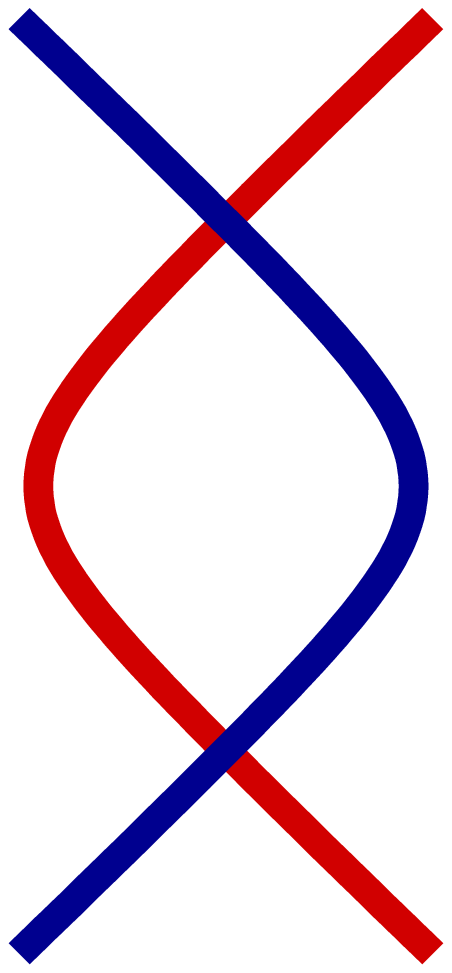}\
=\
\figins{-32}{0.9}{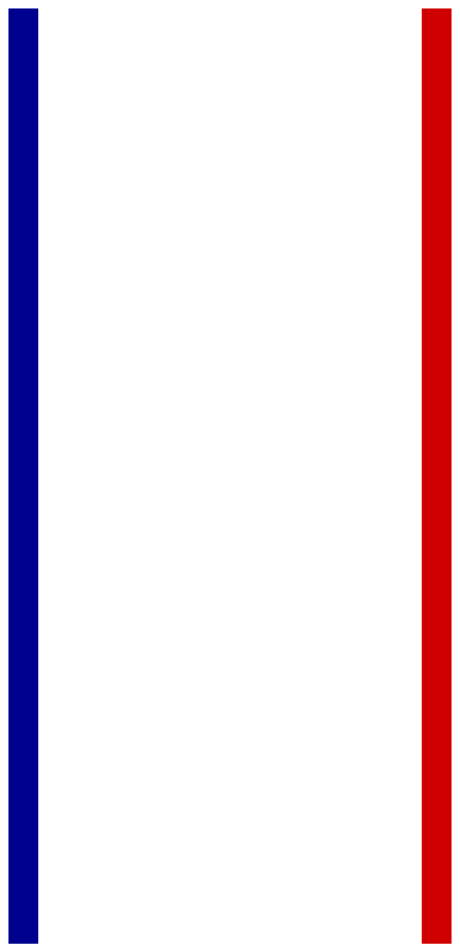}
\end{equation}

\begin{equation}\label{eq:slidedotdist}
\figins{-16}{0.5}{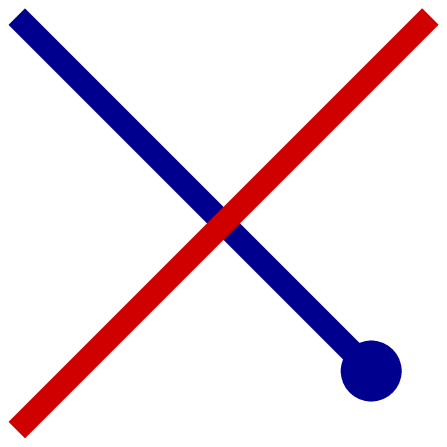}\
=\
\figins{-16}{0.5}{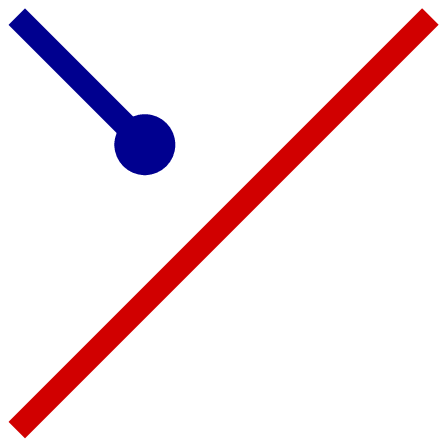}
\end{equation}

\begin{equation}\label{eq:slide3v}
\figins{-17}{0.55}{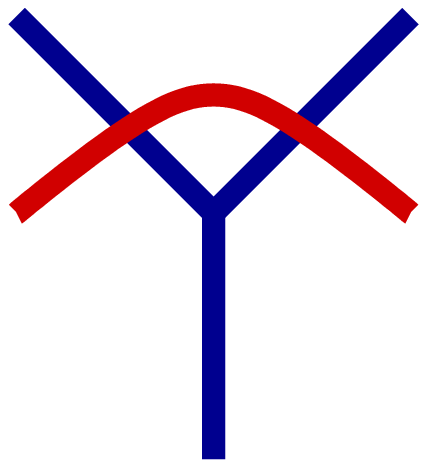}\
=\
\figins{-17}{0.55}{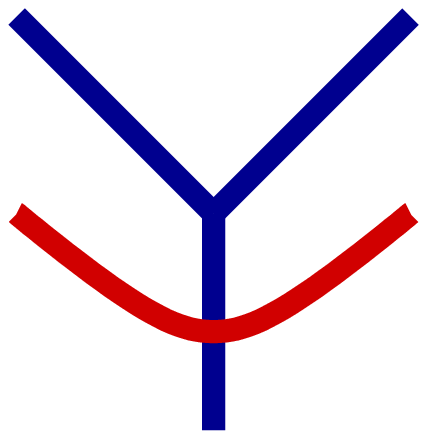}
\end{equation}

%%%%%%%%%%%%%%%%%%%%%
\medskip
\item Relations involving two adjacent colors:
\begin{equation}\label{eq:dot6v}
\figins{-16}{0.5}{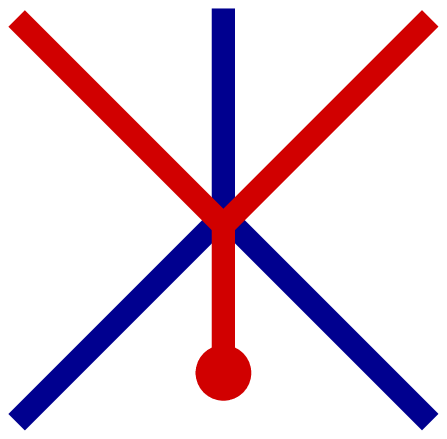}\
=\
\figins{-16}{0.5}{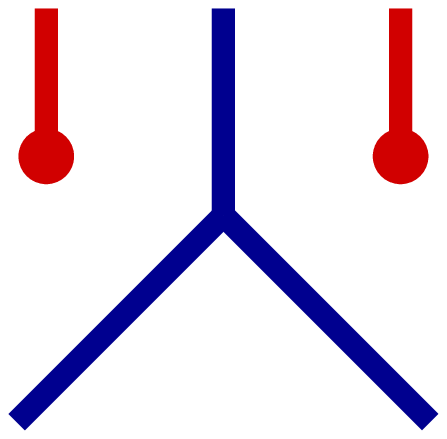}\
+\
\figins{-16}{0.5}{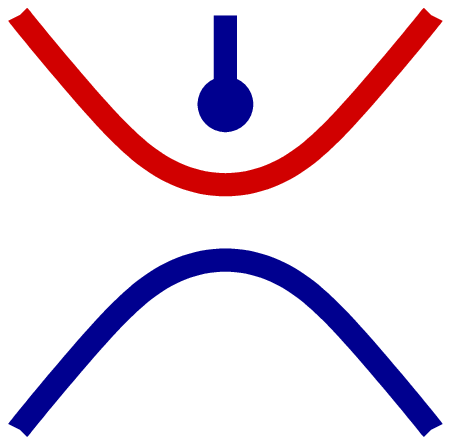}
\end{equation}

\begin{equation}\label{eq:reid3}
\figins{-30}{0.85}{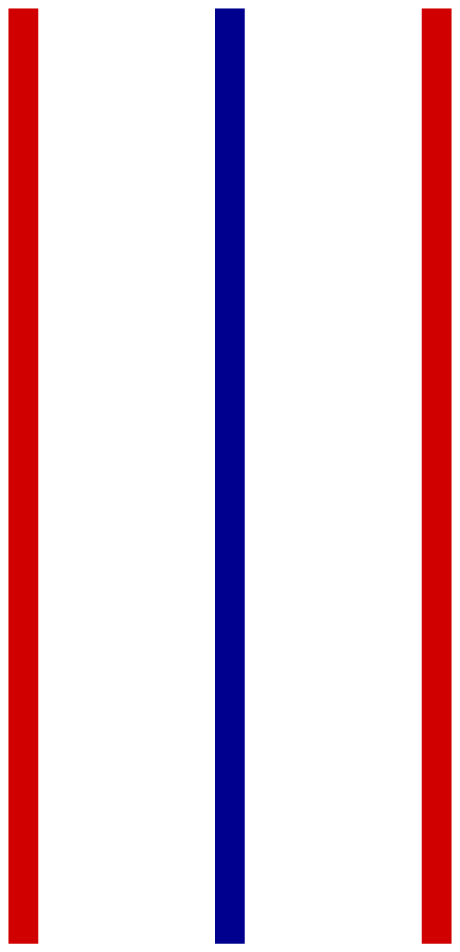}\
=\
\figins{-30}{0.85}{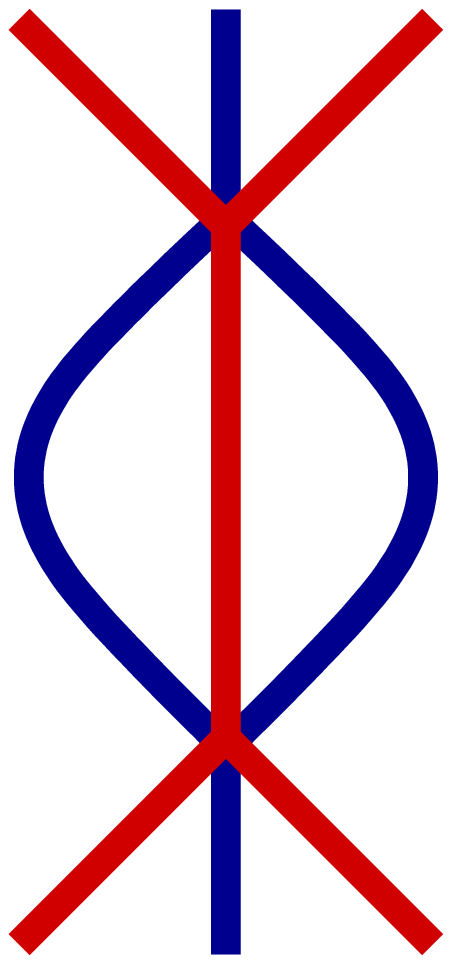}\
-\
\figins{-30}{0.85}{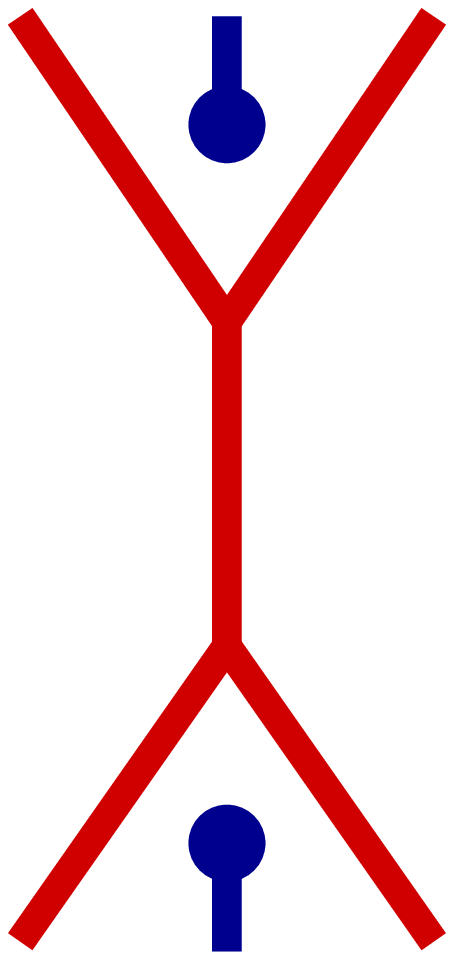}
\end{equation}

\begin{equation}\label{eq:dumbsq}
\figins{-30}{0.85}{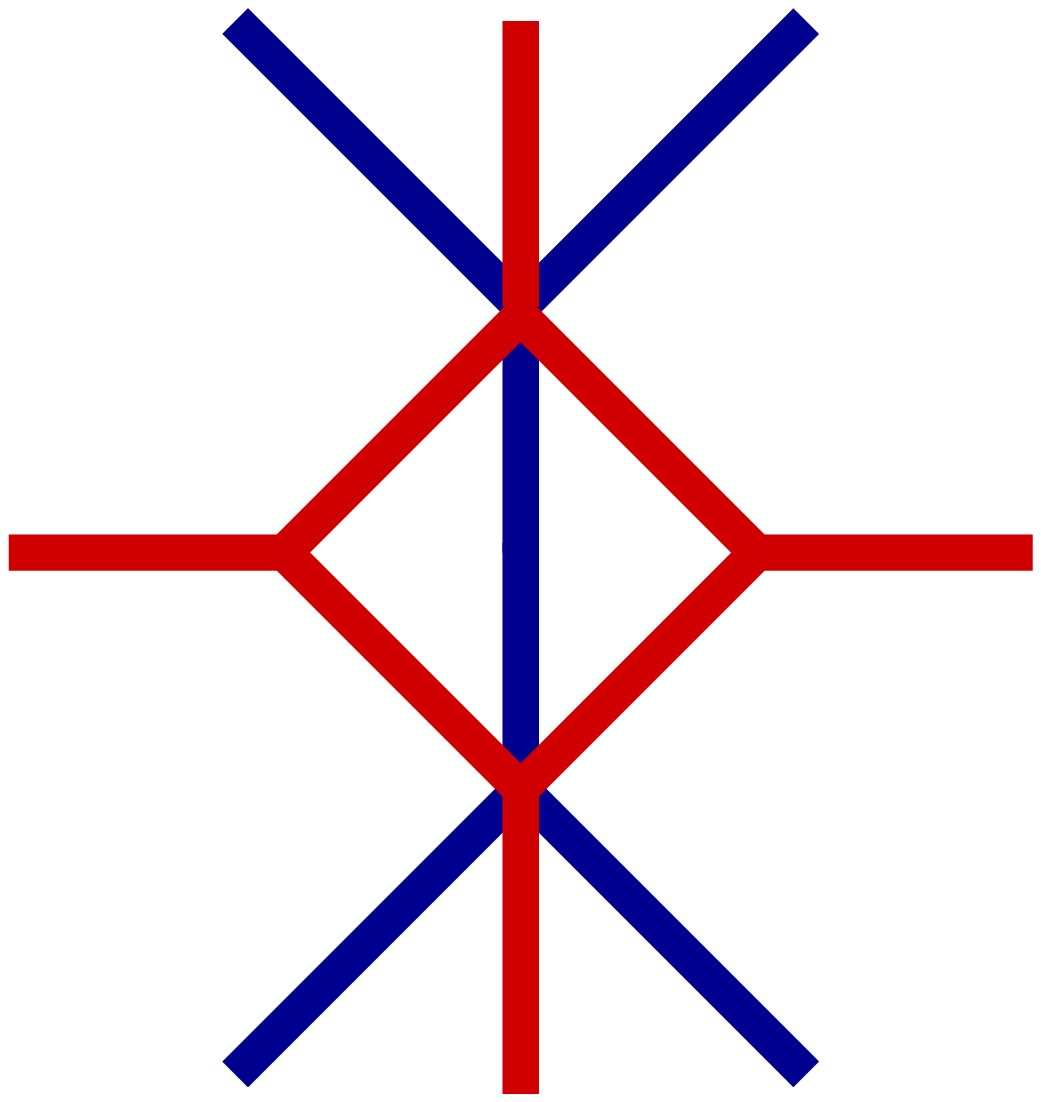}\
=\
\figins{-30}{0.85}{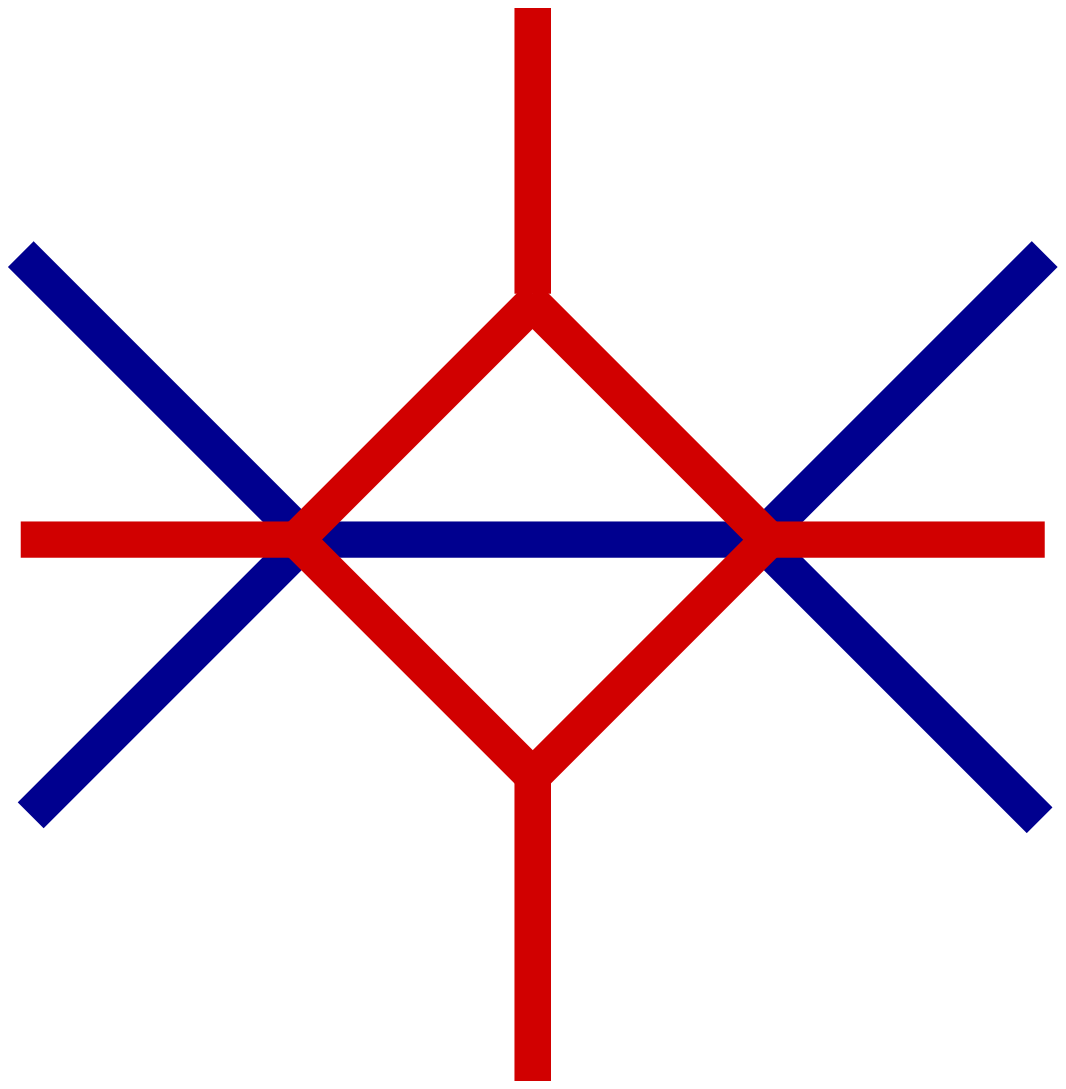}
\end{equation}

\begin{equation}\label{eq:slidenext}
\labellist
\tiny\hair 2pt
\pinlabel $j$ at -15  35
\pinlabel $i$ at  46 -10
\endlabellist
\figins{-17}{0.55}{sedot-edge-d}\
-\
\labellist
\tiny\hair 2pt
\pinlabel $j$ at  63  35
\pinlabel $i$ at   5 -10
\endlabellist
\figins{-17}{0.55}{edge-sedot-d}\quad
=\
\frac{1}{2}
\Biggl(\
\labellist
\tiny\hair 2pt
\pinlabel $i$   at  58  35
\pinlabel $i$   at   5 -10
\endlabellist
\figins{-17}{0.55}{edge-startenddot}\
-\
\labellist
\tiny\hair 2pt
\pinlabel $i$   at  -5   35
\pinlabel $i$   at  48 -10
\endlabellist
\figins{-17}{0.55}{startenddot-edge}\
\Biggr)
\end{equation}

%%%%%%%%%%%%%%%%%%%%%%%
\medskip
\item Relation involving three distant colors:
\begin{equation}\label{eq:slide4v}
\figins{-18}{0.6}{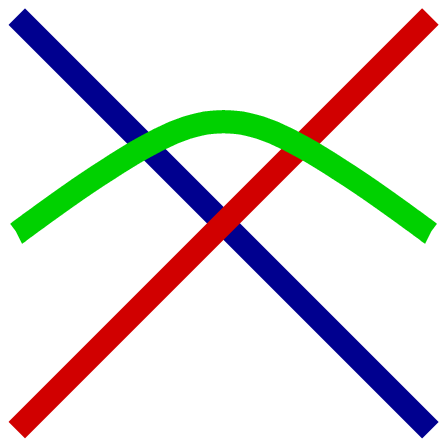}\
=\
\figins{-18}{0.6}{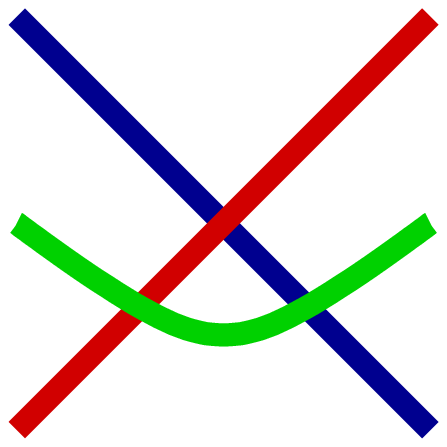}
\end{equation}

\medskip
\item Relation involving two adjacent colors and one distant from the other two:
\begin{equation}\label{eq:slide6v}
\figins{-18}{0.6}{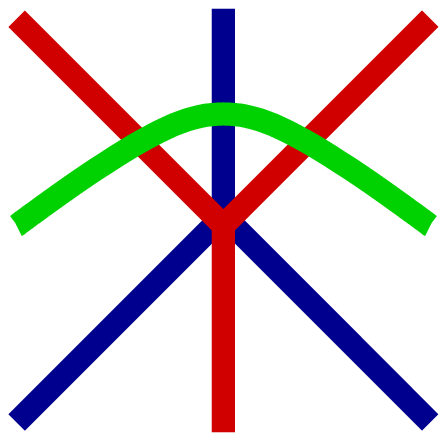}\
=\
\figins{-18}{0.6}{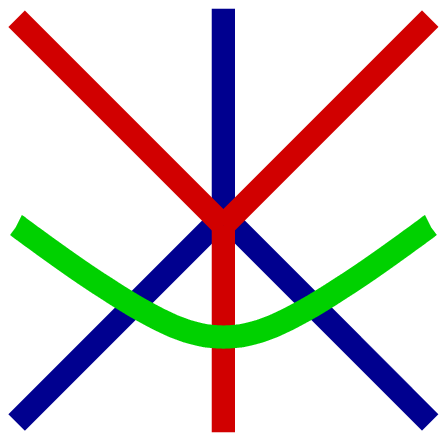}
\end{equation}

\medskip
\item Relation involving three adjacent colors:
\begin{equation}\label{eq:dumbdumbsquare}
\figins{-30}{0.85}{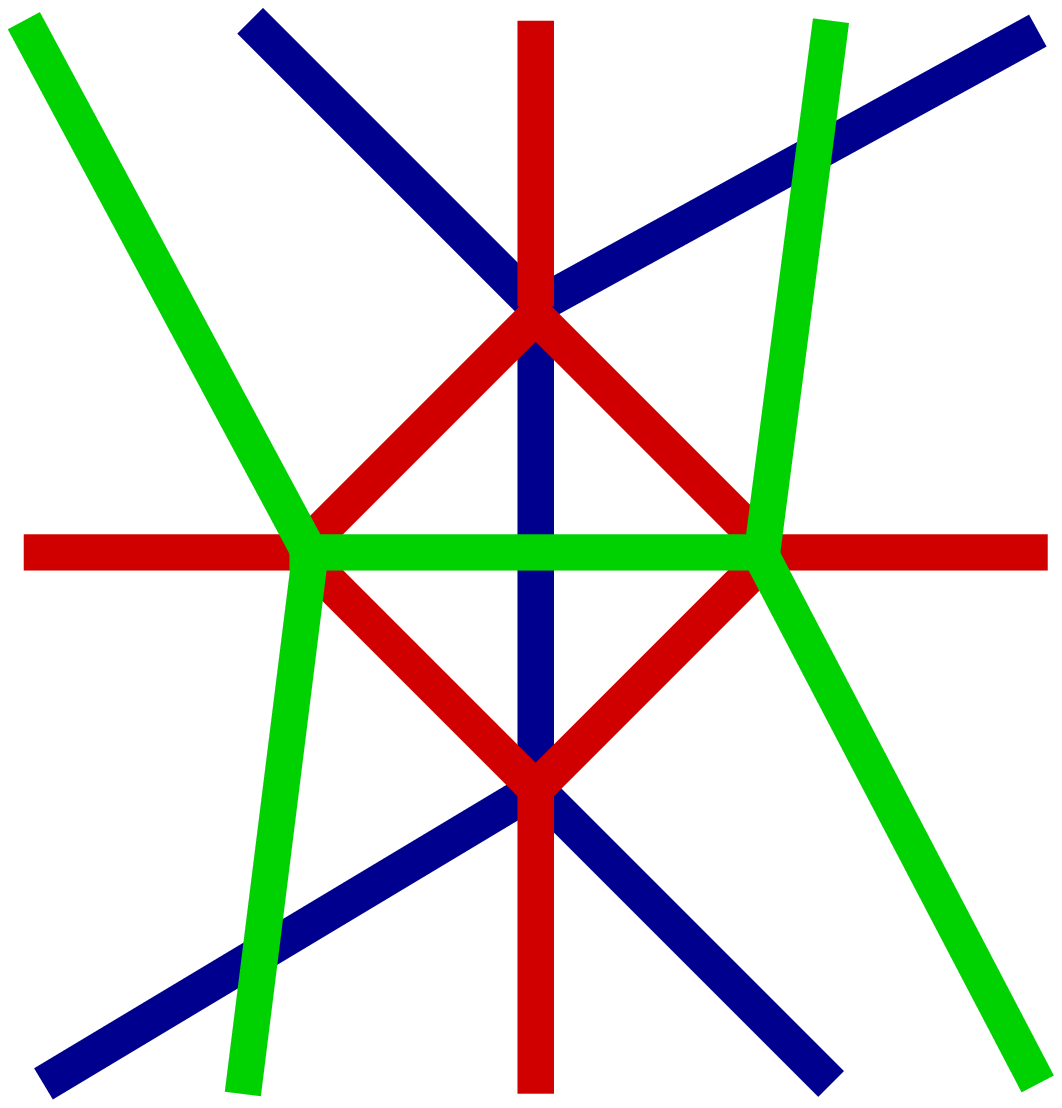}\
=\
\figins{-30}{0.85}{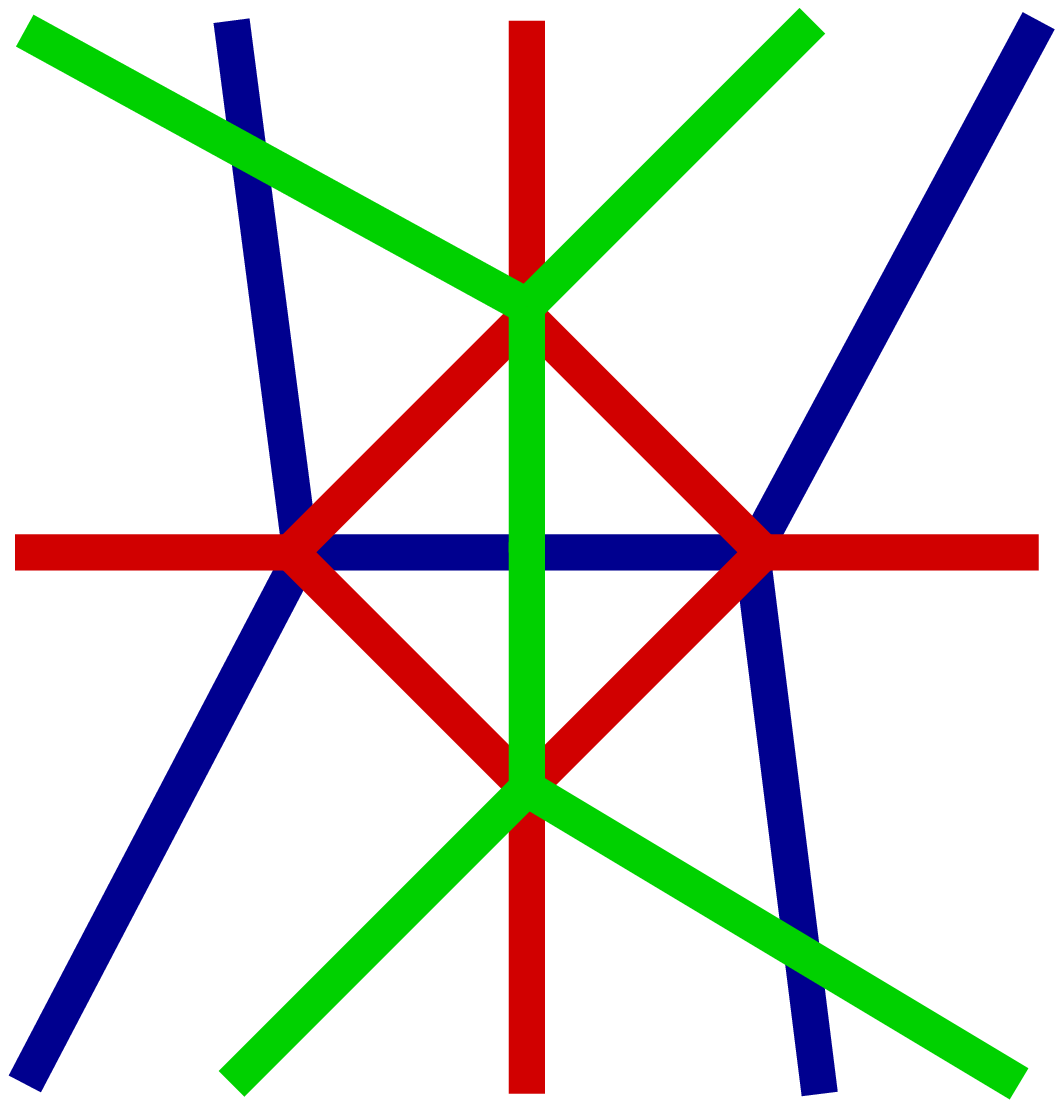}
\end{equation}

\medskip
\item Relations involving only oriented strands:
\begin{equation}\label{eq:orbub}
\figins{-10}{0.35}{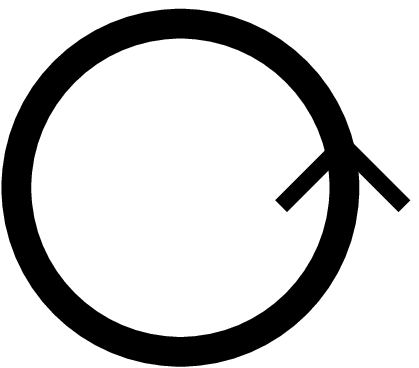}\
=\
1 \
=\
\figins{-10}{0.35}{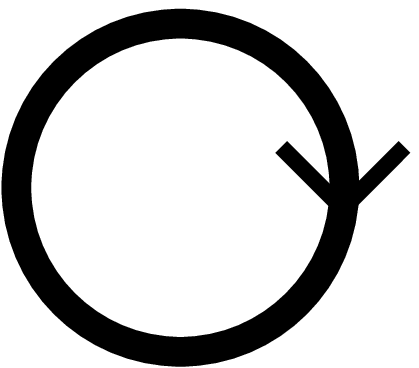}
\end{equation}

\begin{equation}\label{eq:capcupud}
\figins{-20}{0.9}{capcup-ud}\
=\
\figins{-20}{0.65}{rho-ud}
\end{equation}

\begin{equation}\label{eq:capcupdu}
\figins{-20}{0.9}{capcup-du}\
=\
\figins{-20}{0.65}{rho-du}
\end{equation}

\medskip
\item Relations involving oriented strands and distant colored strands:
\begin{equation}\label{eq:slide4mv}
\figins{-18}{0.6}{4mvert-slide-u}\
=\
\figins{-18}{0.6}{4mvert-slide-d}
\end{equation}

\medskip
\item Relations involving oriented strands and two adjacent colored strands:
\begin{equation}\label{eq:reid2ml}
\figins{-32}{0.9}{reid2m-l1}\
=\
\figins{-32}{0.9}{reid2m-l2}
\end{equation}

\begin{equation}\label{eq:reid2mr}
\ \figins{-32}{0.9}{reid2m-r1}\ 
=\ 
\figins{-32}{0.9}{reid2m-r2}
\end{equation}

\begin{equation}\label{eq:slidedotdist-md}
\figins{-16}{0.5}{4mvertdot-d}\
=\
\figins{-16}{0.5}{4mvertnodot-d}
\end{equation}

\begin{equation}\label{eq:slidedotdist-mu}
\figins{-16}{0.5}{4mvertdot-u}\
=\
\figins{-16}{0.5}{4mvertnodot-u}
\end{equation}

\begin{equation}\label{eq:mslide3v}
\ \figins{-17}{0.55}{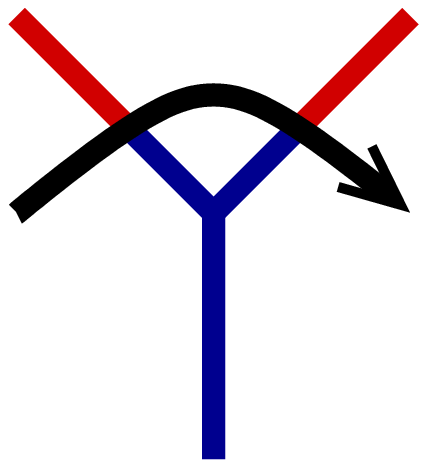}\
=\
\figins{-17}{0.55}{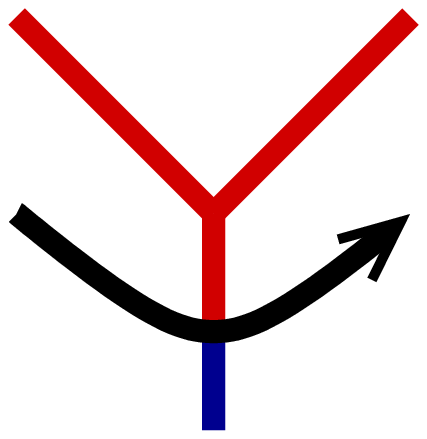}
\end{equation}

\medskip
\item Relations involving oriented strands and three adjacent colored strands:

\begin{equation}\label{eq:slide6mv}
\figins{-18}{0.6}{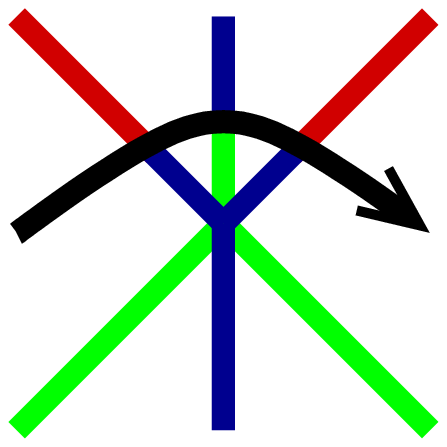}\
=\
\figins{-18}{0.6}{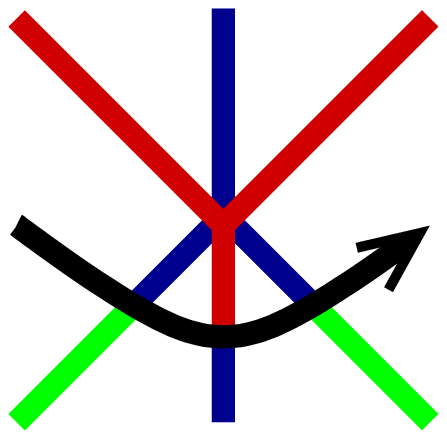}
\end{equation}

\begin{equation}\label{eq:slide6mv2}
\figins{-18}{0.6}{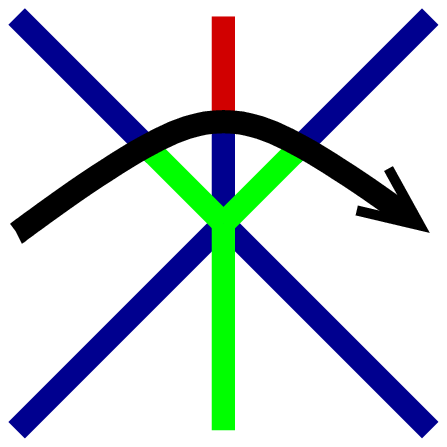}\
=\
\figins{-18}{0.6}{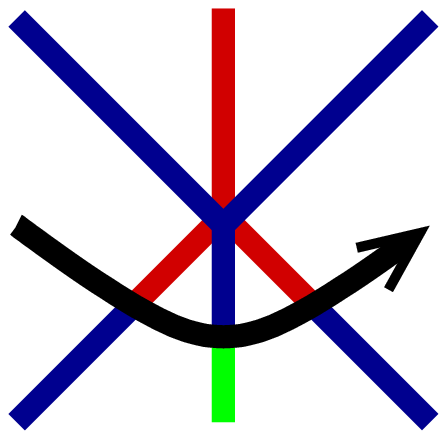}
\end{equation}

\medskip
\item Relations involving boxes:
\begin{align}
\label{eq:box1}
\figins{-6}{0.25}{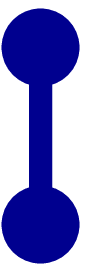}_{\,i}\
&= \
 \bbox{i+1} -\bbox{i}
\rlap{\hspace*{5ex} for $i\neq r$}
\\[1ex] \displaybreak[0]
\label{eq:box12}
\figins{-6}{0.25}{startenddot.eps}_{\,r}\
&= \
 \bbox{1} -\bbox{r} - \bbox{y}
\\[1ex] \displaybreak[0]
\label{eq:box13}
\biggl(
\bbox{i}\ 
 +\
\bbox{i+1} 
\biggr)\
\underset{i}{\figins{-20}{0.65}{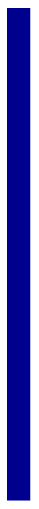}}
&=
\underset{i}{\figins{-20}{0.65}{vedge.eps}}
\biggl(
\bbox{i}\
+\
\bbox{i+1}
\biggr)
\\[1ex] \displaybreak[0]
\label{eq:box14}
\bbox{i}\ \bbox{i+1}\
\underset{i}{\figins{-20}{0.65}{vedge.eps}}
&=
\underset{i}{\figins{-20}{0.65}{vedge.eps}}\ \bbox{i}\ \bbox{i+1}
\rlap{\hspace*{5ex} for $i\neq r$}
\\[1ex] \displaybreak[0]
\label{eq:box15}
\biggl(\bbox{r} + \frac{1}{2} \bbox{y}\biggr) \biggl(\bbox{1} - \frac{1}{2} \bbox{y}\biggr)
\underset{r}{\figins{-20}{0.65}{vedge.eps}}
&=
\underset{r}{\figins{-20}{0.65}{vedge.eps}}\ \biggl(\bbox{r} + \frac{1}{2} \bbox{y}\biggr) \biggl(\bbox{1} - \frac{1}{2} \bbox{y}\biggr)
\\[1ex] \displaybreak[0]
\label{eq:box16}
\bbox{j}\
\underset{i}{\figins{-20}{0.65}{vedge.eps}} 
&=
\underset{i}{\figins{-20}{0.65}{vedge.eps}}\ \bbox{j}
\rlap{\hspace*{5ex} for $j\neq i, i+1$}
\\[1ex] \displaybreak[0]
\bbox{y}\
\underset{i}{\figins{-20}{0.65}{vedge.eps}} 
&=
\underset{i}{\figins{-20}{0.65}{vedge.eps}}\ \bbox{y}
\label{eq:box2}
\\[1ex] \displaybreak[0]
\bbox{y}\
\figins{-20}{0.65}{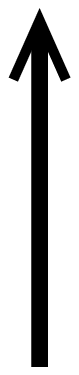} 
&=\figins{-20}{0.65}{rho.eps}\ \bbox{y}
\label{eq:box3}
\\[1ex] \displaybreak[0]
\label{eq:box31}
\bbox{y}\
\figins{-20}{0.65}{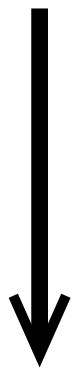} 
&=\figins{-20}{0.65}{rho-1.eps}\ \bbox{y}
\\[1ex] \displaybreak[0]
\label{eq:box32}
\bbox{i+1}\
\figins{-20}{0.65}{rho.eps} 
&=\figins{-20}{0.65}{rho.eps}\ \bbox{i}
\rlap{\hspace*{5ex} for $ i \neq r$}
\\[1ex] \displaybreak[0]
\label{eq:box33}
\biggl(
\bbox{1}\
-\
\bbox{y}
\biggr)\
\figins{-20}{0.65}{rho.eps} 
&=\figins{-20}{0.65}{rho.eps}\ \bbox{r}
\\[1ex] \displaybreak[0]
\label{eq:box34}
\bbox{i-1}\
\figins{-20}{0.65}{rho-1.eps} 
&=\figins{-20}{0.65}{rho-1.eps}\ \bbox{i}
\rlap{\hspace*{5ex} for $ i \neq 1$}
\\[1ex] \displaybreak[0]
\biggl(
\bbox{r}\
+\
\bbox{y}
\biggr)\
\figins{-20}{0.65}{rho-1.eps} 
&=\figins{-20}{0.65}{rho-1.eps}\ \bbox{1}
\label{eq:boxlast}
\end{align}
\end{itemize}

\begin{rem}\label{uselessrels}
Note that the Relations~\eqref{eq:slide4mv}--\eqref{eq:slide6mv2} also hold when the oriented strand has the opposite orientation. This follows from 
Relations~\eqref{eq:slide4mv}--\eqref{eq:slide6mv2} and Relations of isotopy.

Note also that the generators and relations listed above are redundant. This redundancy helps us to simplify some of the proofs later on. 
More specifically, let us list some of these unnecessary generators and relations:

\begin{itemize}
 \item[(i)] Relation \eqref{eq:capcupdu} follows from Relation \eqref{eq:capcupud} and isotopy invariance,
 \item[(ii)] Relation \eqref{eq:box34} follows from Relation \eqref{eq:box32} and isotopy invariance,
 \item[(iii)] Relation \eqref{eq:box33} follows from Relations \eqref{eq:box32} for $i=r-1$, \eqref{eq:box1} for $i=r-1$, 
\eqref{eq:box2}, \eqref{eq:slidedotdist-md} and \eqref{eq:slidedotdist-mu},
 \item[(iv)] Relation \eqref{eq:boxlast} follows from Relations \eqref{eq:box31}, \eqref{eq:box33} and isotopy invariance,
 \item[(v)] If we sum Relation \eqref{eq:box1} for all $i=1,\dots,r-1$ and Relation \eqref{eq:box12}, we obtain that
\begin{equation*}
 \bbox{y}
=
-\sum\limits_{k=1}^{r}
 \figins{-6}{0.25}{startenddot.eps}_{\,k}
\end{equation*}
\item[(vi)] If we sum Relation \eqref{eq:slidenext} for $j=i-1$ and for $j=i+1$, we obtain that
\begin{equation*}
\figins{-7}{0.27}{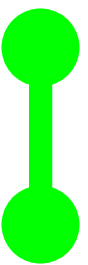}_{\,i-1}\
+\
\figins{-7}{0.27}{startenddot.eps}_{\,i}\
+\qquad
\labellist
\tiny\hair 2pt
\pinlabel $i+1$ at -30  45
\pinlabel $i$ at  46 -10
\endlabellist
\figins{-17}{0.55}{sedot-edge-d}\
=\
\labellist 
\tiny\hair 2pt
\pinlabel $i-1$   at  85  45
\pinlabel $i$   at   5 -10
\endlabellist
\figins{-17}{0.55}{edge-startenddotv}\
\qquad + \
\figins{-7}{0.27}{startenddot.eps}_{\,i}\
+\
\figins{-7}{0.27}{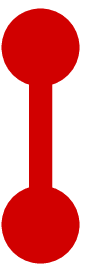}_{\,i+1}
\end{equation*}
 \item[(vii)] Relation \eqref{eq:box2} follows from Relation \eqref{eq:slidedotdist}, the two relations exhibited just above in (v) and (vi) and isotopy invariance,
 \item[(viii)] Relation \eqref{eq:box3} follows from Relations \eqref{eq:slidedotdist-md} and \eqref{eq:slidedotdist-mu} and the relation exhibited in (v),
 \item[(iix)] Relation \eqref{eq:box31} follows from Relation \eqref{eq:box3} and isotopy invariance.
\end{itemize}
\end{rem}

\subsubsection{Functor from $\debim_{\hat{A}_{r-1}}^*$ to $\ebim_{\hat{A}_{r-1}}^*$}

Let us construct a degree preserving functor
$$\F\colon\debim_{\hat{A}_{r-1}}^*\to \ebim_{\hat{A}_{r-1}}^*$$ 
which extends the one by Elias and Khovanov. On objects, it is defined as follows: it maps each integer $i\in \{1, \dots,r \}$ to $B_{i}\{-1\}$, 
and the symbols $+$ and $-$ to~$B_{\rho}$ and~$B_{\rho^{-1}}$ respectively. Sequences of these are mapped to tensor products. The empty sequence is sent to $R$. 

For the morphisms one only needs to specify~$\F$ on the generators. A sequence of vertical strands is mapped to the identity of the corresponding bimodule 
and $\mbox{box}_i$ (resp. $\mbox{box}_y$) is mapped to multiplication by $x_i$ (resp. $y$). Furthermore, we define (recall that 
$X_i=x_{i+1}-x_{i}$, for $i=1,\ldots, r-1$, and $X_r=x_1-x_r-y$) 
\begin{eqnarray*}
\F (\mbox{enddot}_i) & = & \br_i : a \ot b \mapsto ab \\
\F (\mbox{startdot}_i) & = & \rb_i : a \mapsto \frac{a}{2} \left( X_i \ot 1 + 1 \ot X_i \right) \\
\F (\mbox{merge}_i) & = & \mbox{pr}_i : 
\left\lbrace
\begin{array}{clc}
a \ot 1 \ot b  & \mapsto & 0  \\
a \ot X_i \ot b  & \mapsto & 2a \ot b \\
\end{array}
\right.  \\
\F (\mbox{split}_i) & = & \mbox{inj}_i : a \ot b \mapsto a \ot 1 \ot b \\
\F (4\mbox{vert}_{i,j}) & = & \mbox{f}_{i,j} :  a \ot 1 \ot b \mapsto a \ot 1 \ot b \\
\F (6\mbox{vert}_{i,i+1}) & = & \mbox{f}_{i,i+1} :  
\left\lbrace
\begin{array}{l}
a \ot 1 \ot 1 \ot b  \mapsto  a \ot 1 \ot 1 \ot b \\
a \ot (X_{i+1} \ot 1 + 1 \ot X_{i+1}) \ot b  \mapsto  0 \\
\end{array}
\right.  \\
\F (6\mbox{vert}_{i+1,i}) & = & \mbox{f}_{i+1,i} :  
\left\lbrace
\begin{array}{l}
a \ot 1 \ot 1 \ot b  \mapsto  a \ot 1 \ot 1 \ot b  \\
a \ot (X_{i} \ot 1 + 1 \ot X_{i})  \ot b  \mapsto  0 \\
\end{array}
\right.  \\
\F (+\mbox{cap}) & = & _{+,-}\mbox{r} : a \ot b \mapsto a \rho(b) \\
\F (-\mbox{cap}) & = & _{-,+}\mbox{r} : a \ot b \mapsto a \rho^{-1}(b) \\
\F (-\mbox{cup}) & = & \mbox{r}_{-,+} : a  \mapsto a \ot 1  \\
\F (+\mbox{cup}) & = & \mbox{r}_{+,-} : a  \mapsto a \ot 1  \\
\F (4\mbox{vert}_{+,i}) & = & \mbox{fl}_{+,i} : a \ot b \mapsto a\ot \rho(b) \\
\F (4\mbox{vert}_{i+1,+}) & = & \mbox{fl}_{i+1,+} : a \ot b \mapsto a\ot \rho^{-1}(b) \\
\F (4\mbox{vert}_{i,-}) & = & \mbox{fl}_{i,-} : a \ot b \mapsto a\ot \rho(b) \\
\F (4\mbox{vert}_{-,i+1}) & = & \mbox{fl}_{-,i+1} : a \ot b \mapsto a\ot \rho^{-1}(b) \\
\F (\mbox{box}_i) & = & \mbox{m}_i : a \mapsto ax_i \\
\F (\mbox{box}_y) & = & \mbox{m}_y : a \mapsto ay \\
\end{eqnarray*}

\begin{prop}
The functor $\F$ is well-defined, degree preserving and essentially surjective.
\end{prop}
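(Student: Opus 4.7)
The plan is to establish the three properties in increasing order of difficulty. Essential surjectivity is immediate from the definition of $\ebim_{\hat{A}_{r-1}}$: every object is, by construction, a direct sum of grading shifts of tensor products of $B_i$'s and $B_{\rho^{\pm 1}}$'s, and each of these generators lies in the image of $\F$ up to a shift. Degree preservation reduces to checking each generator against its assigned image. Because $i \mapsto B_i\{-1\}$, the shift of $-1$ ensures that $\mbox{enddot}_i$ and $\mbox{startdot}_i$ (both of degree $1$) correspond to the multiplication map and the ``coproduct'' map of degree $1$ between $R$ and $B_i\{-1\}$. In particular, for $i=r$ the element $X_r = x_1 - x_r - y$ is homogeneous of degree $2$ in $R=\Q[y][x_1,\ldots,x_r]$, so $\br_r$ and $\rb_r$ indeed have the claimed degree. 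The $4$- and $6$-valent vertices, the mixed vertices with one oriented strand, and the cap/cup maps involving $B_{\rho^{\pm 1}}$ are readily checked to be degree-zero bimodule maps, while the box generators have degree $2$, matching the degrees of $x_i$ and $y$.

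The bulk of the work lies in well-definedness: one must verify that every defining relation of $\debim_{\hat{A}_{r-1}}$ holds in $\ebim_{\hat{A}_{r-1}}$ after applying $\F$. My approach is to split the relations into two classes. The first class consists of relations involving only the non-oriented generators together with the boxes $\mbox{box}_i$ for $i \neq r$. These are direct transcriptions of relations already verified by Elias and Williamson~\cite{EW} for the non-extended affine type $A$ diagrammatic category, and they continue to hold verbatim in our setting because $y$ lies in the center of $R$ and commutes with every bimodule map in sight. For relations involving $\mbox{box}_r$ or $X_r$, the extra $-y$ summand in $X_r$ is exactly what produces the correction terms recorded in \eqref{eq:box12} and \eqref{eq:box15}, and one checks these using the explicit formulas $\sigma_r(x_1) = x_r + y$ and $\sigma_r(x_r) = x_1 - y$ from Section~\ref{action}.

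The second, genuinely new, class consists of relations involving oriented strands or the box $\mbox{box}_y$. I would proceed in the following order: the isotopy relations \eqref{eq:adjmu}--\eqref{eq:v4mrotd} follow from the biadjunction between $B_{\rho}$ and $B_{\rho^{-1}}$, which is the essence of Remark~\ref{homsp}; the purely oriented relations \eqref{eq:orbub}--\eqref{eq:capcupdu} are immediate from $\rho\rho^{-1}=\rho^{-1}\rho = \mbox{id}$; the mixed sliding relations \eqref{eq:slide4mv}--\eqref{eq:slide6mv2} are verified by tracking the action on the standard generator $1 \ot \cdots \ot 1$ and invoking Lemma~\ref{isobim}, which guarantees that $\rho$ intertwines $R^{\sigma_i}$ with $R^{\sigma_{i+1}}$; and the box relations \eqref{eq:box2}--\eqref{eq:boxlast} reduce to direct polynomial identities governing the $\rho$-action on $\Q[y][x_1,\ldots,x_r]$.

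The main obstacle is the bookkeeping for the mixed relations \eqref{eq:slide6mv}--\eqref{eq:slide6mv2}, where the adjacency data of the colored strands interacts non-trivially with the twisting by $\rho$. However, each such relation equates two bimodule maps whose common source is cyclic over the appropriate subring of a tensor power of $R$, so agreement on a single distinguished generator suffices, reducing every verification to a finite polynomial identity. The redundancies enumerated in Remark~\ref{uselessrels} cut the number of independent checks substantially, so in practice one only has to perform a dozen or so explicit calculations.
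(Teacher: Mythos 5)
Your proposal is correct and follows essentially the same route as the paper: essential surjectivity is read off from the definitions of the objects, degree preservation is a generator-by-generator check, and well-definedness splits into the non-oriented relations (handled as in Elias--Khovanov/Elias--Williamson, with the $-y$ in $X_r$ accounting for the modified $r$-colored relations) and the new relations involving oriented strands, which reduce to short polynomial computations. The paper's own proof is simply a terser version of this; your additional observations (cyclicity of the source bimodules over $1\otimes\cdots\otimes 1$, and the use of the redundancies in Remark~\ref{uselessrels}) are sound and merely make explicit how the ``straightforward verification'' is organized.
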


\begin{proof}
The fact that the functor~$\F$ is well-defined and degree preserving amounts to a straightforward 
verification that it preserves the relations~\eqref{eq:adj}--\eqref{eq:boxlast} and the degrees of the morphisms. For the relations 
involving only non-oriented strands, this is completely analogous to Elias and Khovanov's case. For the relations involving oriented strands, the 
calculations are new but easy. 

Furthermore, in view of the definitions of the objects of 
$\debim_{\hat{A}_{r-1}}^*$ and~$\ebim_{\hat{A}_{r-1}}^*$, the functor~$\F$ is clearly essentially surjective.
\end{proof}

\begin{prop}
The functor $\F$ is full.
\end{prop}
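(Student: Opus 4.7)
The plan is to bootstrap from Elias and Williamson's fullness theorem for the non-extended affine diagrammatic Soergel category $\dbim_{\hat{A}_{r-1}}^*$~\cite{EW}, using the biadjunction between the positively and negatively oriented strands to absorb the new $B_{\rho^{\pm 1}}$-factors.

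First I would put objects in normal form. Every object of $\ebim_{\hat{A}_{r-1}}^*$ is, up to shift and direct sum, a tensor product of factors $B_{\rho^{\pm 1}}$ and $B_i$, and by iterated use of Lemma~\ref{isobim} such a product is isomorphic to one of the shape $B_\rho^{\otimes k}\otimes_R B_{i_1}\otimes_R\cdots\otimes_R B_{i_m}$ with all $B_\rho$-factors gathered on the left. These isomorphisms lie in the image of $\F$, being built from the mixed $4$-valent vertices $4\mbox{vert}_{\pm,i}$ and $4\mbox{vert}_{i,\pm}$ together with the relations \eqref{eq:reid2ml}--\eqref{eq:reid2mr}. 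It therefore suffices to prove fullness on hom-spaces between normal-form objects.

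Next, given
$$f\colon B_\rho^{\otimes k}\otimes_R M\longrightarrow B_\rho^{\otimes l}\otimes_R N,$$
with $M$ and $N$ tensor products of $B_i$'s, I would appeal to Remark~\ref{homsp}. If $k\neq l$, then $f=0=\F(0)$. If $k=l$, the biadjunction between $B_\rho$ and $B_{\rho^{-1}}$ identifies this hom-space with $\Hom_{\bim_{\hat{A}_{r-1}}^*}(M,N)$; call the image of $f$ under this identification $\tilde f$. Crucially, this biadjunction is realized in $\debim_{\hat{A}_{r-1}}^*$ by the $\pm$-cups and $\pm$-caps subject to the isotopy relations~\eqref{eq:adjmu}--\eqref{eq:adjmd}. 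Elias and Williamson's fullness in affine type $A$ supplies a diagram $\tilde D\in\dbim_{\hat{A}_{r-1}}^*$ with $\F(\tilde D)=\tilde f$; pre- and post-composing $\tilde D$ with the cups and caps that invert the biadjunction then yields a diagram $D\in\debim_{\hat{A}_{r-1}}^*$ with $\F(D)=f$, completing the proof.

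The main technical point, and the only real obstacle, is verifying that the $\pm$-cups and $\pm$-caps as defined diagrammatically genuinely realize the bimodule biadjunction between $B_\rho$ and $B_{\rho^{-1}}$, i.e.\ that their $\F$-images satisfy the zigzag identities and that the induced isomorphism on hom-spaces agrees with the one of Remark~\ref{homsp}. This reduces to a direct calculation on generators using the definitions of ${}_{+,-}\mbox{r}$, ${}_{-,+}\mbox{r}$, $\mbox{r}_{+,-}$, $\mbox{r}_{-,+}$ and the relations~\eqref{eq:adjmu}--\eqref{eq:adjmd}. Everything else is formal: the extra polynomial variable $y$ is accommodated by the generator $\mbox{box}_y$ (whose image is multiplication by $y$), and all purely non-oriented pieces of a morphism are handled directly by~\cite{EW}.
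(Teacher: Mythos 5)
Your proof is correct and follows essentially the same strategy as the paper: use Remark~\ref{homsp} together with the invertibility of $B_{\rho}$ (realized diagrammatically by the oriented cups and caps and the mixed $4$-valent vertices) to reduce fullness to the non-extended case. The only difference is the input used for that case: the paper invokes Libedinsky's theorem that every morphism of $\bim_{\hat{A}_{r-1}}^*$ is generated by $\br_i$, $\rb_i$, $\mbox{pr}_i$, $\mbox{inj}_i$ and $\mbox{f}_{i,j}$ --- all visibly $\F$-images of generating diagrams --- whereas you invoke Elias--Williamson's fullness for the non-extended diagrammatic functor directly; both inputs are legitimate and lead to the same conclusion.
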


\begin{proof}
Libedinsky has proved in \cite{Li} that all the morphisms of the category~$\bim_{\hat{A}_{r-1}}^*$ are generated by the following ones:
\begin{itemize}
 \item $\br_i$, $\rb_i$, $\mbox{pr}_i$ and $\mbox{inj}_i$ for all $i=1, \dots,r$
\item $\mbox{f}_{i,j}$ for all $i,j=1, \dots,r$ with $i \neq j$.
\end{itemize}
In view of Remark~\ref{homsp}, this implies that all the morphisms of the category~$\ebim_{\hat{A}_{r-1}}^*$ are generated by the ones listed by Libedinsky and copied above, 
together with $\mbox{fl}_{+,i}$ and $\mbox{fl}_{i+1,+}$, giving  
$$B_{\rho} \ot_R B_i \cong B_{i+1} \ot_R B_{\rho} \quad \mbox{for} \  i=1, \dots, r,$$ 
and $_{+,-}\mbox{r}$, $_{-,+}\mbox{r}$, $\mbox{r}_{-,+} $ and $\mbox{r}_{+,-}$, giving 
$$B_{\rho} \ot_R B_{\rho^{-1}} \cong R \cong  B_{\rho^{-1}} \ot_R B_{\rho}.$$ 
Thus the functor~$\F$ is full, since all the morphisms generating~$\ebim_{\hat{A}_{r-1}}^*$ are in the image of~$\F$. 
\end{proof}

\begin{thm}
\label{thm:diameqbim}
The categories $\debim_{\hat{A}_{r-1}}^*$ and $\ebim_{\hat{A}_{r-1}}^*$ are equivalent and 
so are their Karoubi envelopes $\kdebim_{\hat{A}_{r-1}}$ and $\kebim_{\hat{A}_{r-1}}$.
\end{thm}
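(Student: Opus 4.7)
The plan is to prove faithfulness of $\F$, which, combined with the already established properties (degree preserving, essentially surjective, full), yields the first equivalence; the Karoubi envelope equivalence then follows because a fully faithful essentially surjective functor lifts to an equivalence of idempotent completions. Throughout, I would take the corresponding Elias--Williamson theorem for the non-extended affine type $A$ Soergel category, $\dbim_{\hat{A}_{r-1}}^* \simeq \bim_{\hat{A}_{r-1}}^*$, as a black-box input~\cite{EW}.

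First, I would establish a normal form on hom-spaces. Using the mixed $4$-valent vertices (whose images under $\F$ realize the isomorphisms $B_\rho \ot_R B_i \cong B_{i+1} \ot_R B_\rho$ of Lemma~\ref{isobim}) together with the isotopy relations \eqref{eq:adjmu} and \eqref{eq:adjmd}, every object of $\debim_{\hat{A}_{r-1}}^*$ is isomorphic to one of the form $(\underbrace{\pm,\dots,\pm}_{k},i_1,\dots,i_m)$, i.e. all oriented strands are slid to the far left. Since the reducing morphisms are sent by $\F$ to isomorphisms, faithfulness is equivalent to faithfulness on hom-spaces of the shape
$$\Hom_{\debim_{\hat{A}_{r-1}}^*}\!\bigl(B_{\rho}^{\ot k} \ot B_{i_1} \ot \cdots \ot B_{i_m},\; B_{\rho}^{\ot l} \ot B_{j_1} \ot \cdots \ot B_{j_n}\bigr).$$

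Second, I would mirror on the diagrammatic side the bimodule-level computation of Remark~\ref{homsp}. There the hom-space vanishes unless $k=l$, and is otherwise isomorphic to the corresponding hom-space in $\bim_{\hat{A}_{r-1}}^*$: biadjunction moves all $B_\rho^{\pm 1}$ factors to the target, and the invariant element $\sum_i x_i$ picks up a $(l-k)y$ correction when commuted across $B_{\rho}^{\ot l-k}$, forcing vanishing when $k\neq l$. Diagrammatically, the bubble relations \eqref{eq:orbub} together with \eqref{eq:adjmu}--\eqref{eq:adjmd} implement biadjunction of $+$ and $-$ strands; the box relations \eqref{eq:box3}--\eqref{eq:boxlast} together with \eqref{eq:box2} realize commutation of each $\mathrm{box}_i$ (and hence of $\sum_i \mathrm{box}_i$) across an oriented strand with a controlled $\mathrm{box}_y$ defect. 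Iterating \eqref{eq:box33} and summing produces, for any net imbalance $l-k$, a diagrammatic witness showing the hom-space is annihilated by a nonzero scalar multiple of $\mathrm{box}_y$ unless $k=l$; when $k=l$, all pairs of oppositely oriented strands cancel via \eqref{eq:orbub}, leaving a diagram in $\dbim_{\hat{A}_{r-1}}^*$. Invoking the Elias--Williamson equivalence then gives faithfulness on each remaining hom-space.

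The main obstacle I anticipate is the second step: carefully organizing the biadjunction cancellation of oriented strands against residual $\mathrm{box}_y$ insertions, so that the resulting identification of diagrammatic hom-spaces with the $k=l$ strata of bimodule hom-spaces is manifestly a bijection (compatible with $\F$), rather than merely an abstract isomorphism. Once this dictionary is in place the Karoubi equivalence $\kdebim_{\hat{A}_{r-1}}\simeq\kebim_{\hat{A}_{r-1}}$ is formal.
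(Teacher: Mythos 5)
Your strategy is the paper's strategy: reduce to faithfulness of $\F$ (fullness, essential surjectivity and degree preservation having been established already), normalize objects by sliding all oriented strands to the left via the mixed $4$-valent vertices, cancel oppositely oriented pairs, and reduce the remaining sign-free hom-spaces to Elias--Williamson's faithfulness theorem for non-extended affine type $A$ (Theorem 6.28 of~\cite{EW}). The passage to Karoubi envelopes is formal in both treatments.

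The one step in your write-up that does not close as stated is the vanishing of $\Hom_{\debim_{\hat{A}_{r-1}}^*}(X,Y)$ when the net sign counts $k_X$ and $k_Y$ differ. You argue that iterating the box relations produces a relation of the form $(l-k)\,\mbox{box}_y \cdot f = 0$; but $\mbox{box}_y$ has degree $2$ and is not invertible, so annihilation by it forces $f=0$ only if you already know the hom-space is $y$-torsion-free --- which is essentially the kind of control over the diagrammatic hom-spaces you are trying to establish. (The analogous computation in Remark~\ref{homsp} works because bimodule hom-spaces sit inside free $R$-modules.) The case is in fact vacuous for a softer reason: every generating diagram preserves the number of $+$ endpoints minus the number of $-$ endpoints (the oriented caps and cups each remove or create one $+$ and one $-$ simultaneously), so there are no diagrams at all between objects with $k_X\neq k_Y$ and the hom-space is zero by construction. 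With that repair, your argument coincides with the paper's: after cancelling $(\pm,\mp)$ pairs via \eqref{eq:orbub} and \eqref{eq:capcupud}--\eqref{eq:capcupdu}, one uses $\mathrm{End}(\mathrm{sign}(k_X)^{|k_X|})\cong\Q$ to identify $\Hom(X,Y)$ with $\Hom(X',Y')$ for sign-free $X'$, $Y'$, and then invokes~\cite{EW}. Note also that the identification $\Hom_{\debim_{\hat{A}_{r-1}}^*}(X',Y')\cong\Hom_{\dbim_{\hat{A}_{r-1}}^*}(X',Y')$ is asserted rather than proved in both your proposal and the paper; if you want a fully self-contained argument, this is the point that deserves the most care, since a priori the extended calculus could introduce new morphisms or relations between sign-free objects.
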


\begin{proof}
Only the faithfulness of $\F$ remains to be proved. 

For a given object $X$ in $\debim_{\hat{A}_{r-1}}^*$, let $k_X$ denote the total sum of 
plus and minus signs in $X$. Given two objects $X,Y$ in $\debim_{\hat{A}_{r-1}}^*$, 
the hom-space between $X$ and $Y$ is non-zero only if $k_X=k_Y$ (see Remark~\ref{homsp}). Any object 
$X$ in $\debim_{\hat{A}_{r-1}}^*$ is isomorphic to the object $(\mathrm{sign}(k_X)^{|k_X|},X')$, 
where $X'$ does not have any signs and is obtained from 
$X$ by applying the commutation isomorphisms $(\pm,i) \cong (i\pm1,\pm)$ and 
the isomorphisms $(\pm,\mp) \cong \emptyset$.  

Let $D$ be a diagram representing a morphism from $X$ to $Y$, such that 
$k_X=k_Y$. Since 
$$\mathrm{End}(\mathrm{sign}(k_X)^{|k_X|})\cong\Q,$$
it follows that 
$$\mathrm{Hom}_{\debim_{\hat{A}_{r-1}}^*}(X,Y)\cong \mathrm{Hom}_{\debim_{\hat{A}_{r-1}}^*}(X',Y').$$
Let us illustrate this by the example in Figure~\ref{fig:exdiag}.
\begin{figure}[htbp]
  \begin{center}
   \figins{-17}{3}{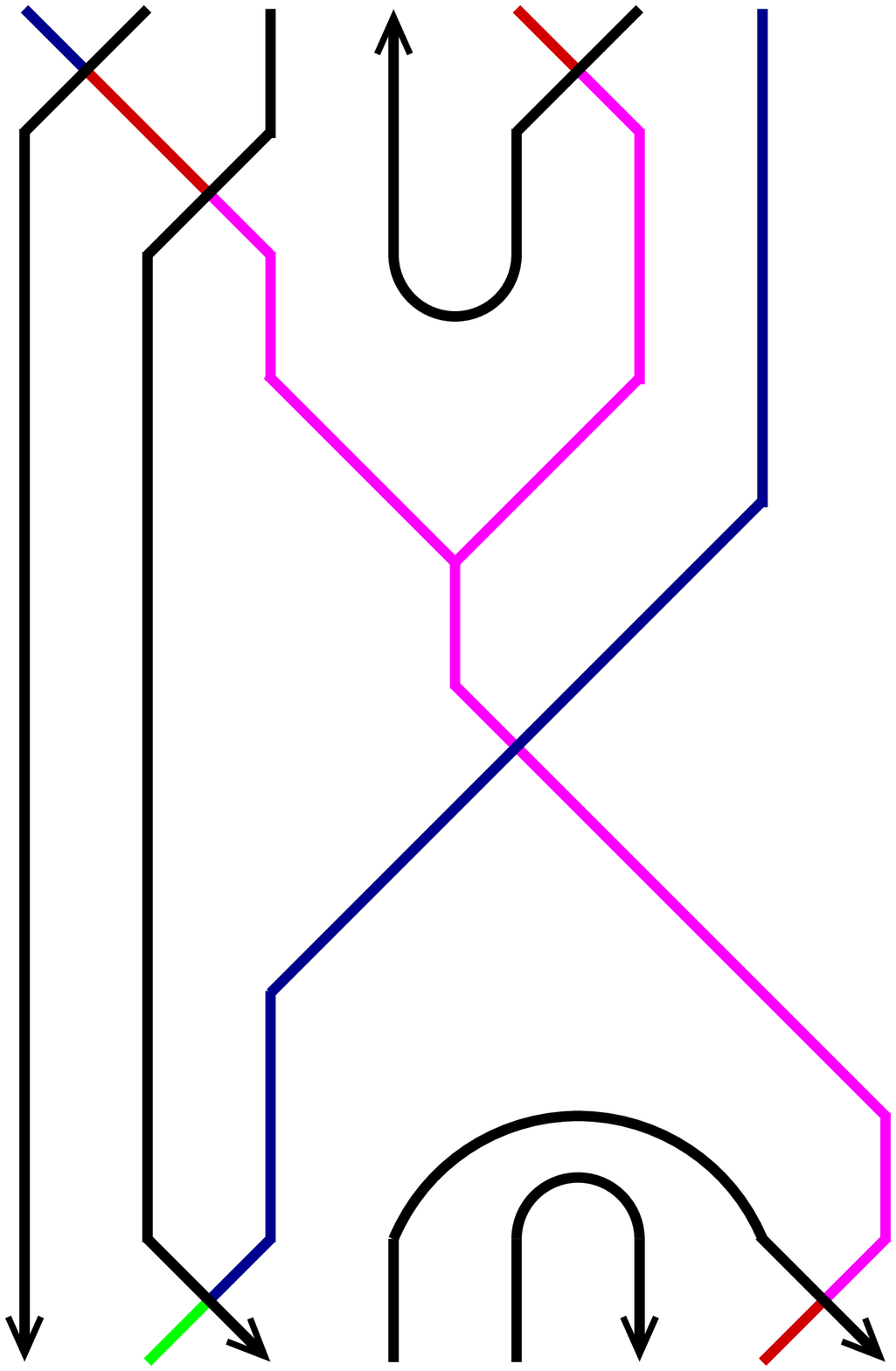} 
   \caption{Example of a decomposition of a diagram $D$}
   \label{fig:exdiag}
  \end{center}
\end{figure}

Since 
$$\mathrm{Hom}_{\debim_{\hat{A}_{r-1}}^*}(X',Y')\cong \mathrm{Hom}_{\dbim_{\hat{A}_{r-1}}^*}(X',Y'),$$ 
the faithfulness of $\F$ follows from the faithfulness of Elias and Williamson's 
analogous functor for non-extended affine type $A$, which they proved in Theorem 6.28 of~\cite{EW}. 
\end{proof}

\section{The affine Schur quotient}
\label{sec:AffSchur}
\subsection{Notations}\label{notSch}

Let~$n,r$ be integers, with~$n>r$ and $r\geq 3$, and let~$q$ be a formal parameter.

A generic $\mathfrak{gl}_n$--weight will be denoted $\lambda = (\lambda_1, \dots, \lambda_n)$, and we set $\bar{\lambda}_i = \lambda_i - \lambda_{i+1}$. Define
$$\Lambda(n,r)=\{\lambda\in \N^n\colon \,\, \sum_{i=1}^{n}\lambda_i=r\}$$ 
and
$$\Lambda^+(n,r)=\{\lambda\in\Lambda(n,r)\colon r\geq \lambda_1\geq\lambda_2\geq\cdots \geq\lambda_n\geq 0\}.$$ 
We will denote by $ (1^r)$ the weight $ \varepsilon_1 + \dots + \varepsilon_r$.

Let us formally denote by~$\bar{\alpha}_n$ the opposite of the highest root of~$\mathfrak{sl}_n$, \textit{i.e.} $\bar{\alpha}_n = - \bar{\theta} =  - \bar{\alpha}_1 - \dots  - \bar{\alpha}_{n-1} = \varepsilon_n - \varepsilon_1 $.

As always in this paper, we use the convention that the indices appearing in the relations are considered modulo~$n$.

\subsection{The (extended) affine algebras}\label{genalg}
In the following definitions we do not need to consider the derivation, which we used above. 
   
\begin{defn} The {\em extended quantum general linear algebra} 
$\hat{\mathbf U}_q(\hat{\mathfrak{gl}}_n)$ is 
the associative unital $\Q(q)$-algebra generated by $R^{\pm 1}$, $K_i^{\pm 1}$ and $E_{\pm i}$, for $i=1,\ldots, n$, subject to the relations
\begin{gather}
K_iK_j=K_jK_i\quad K_iK_i^{-1}=K_i^{-1}K_i=1
\\
E_iE_{-j} - E_{-j}E_i = \delta_{i,j}\dfrac{K_iK_{i+1}^{-1}-K_i^{-1}K_{i+1}}{q-q^{-1}}
\\
K_iE_{\pm j}=q^{\pm \left< \epsilon_i,\bar{\alpha}_j \right> }E_{\pm j}K_i
\\
E_{\pm i}^2E_{\pm (i\pm 1)}-(q+q^{-1})E_{\pm i}E_{\pm (i\pm 1)}E_{\pm i}+E_{\pm (i\pm 1)}E_{\pm i}^2=0
\\
E_{\pm i}E_{\pm j}-E_{\pm j}E_{\pm i}=0\qquad\text{for distant}\;i,j\\
RR^{-1}=R^{-1}R=1\\
RX_iR^{-1}=X_{i+1}\qquad\text{for}\;X_i\in\{E_{\pm i},K_i^{-1}\}.
\end{gather} 
\end{defn}

\begin{defn} 
\label{defn:qsln}
The {\em affine quantum general linear algebra} 
${\mathbf U}_q(\hat{\mathfrak{gl}}_n)\subseteq \hat{\mathbf U}_q(\hat{\mathfrak{gl}}_n)$ is 
the unital $\Q(q)$-subalgebra generated by $E_{\pm i}$ and $K_i^{\pm 1}$, for $i=1,\ldots,n$. 

The {\em affine quantum special linear algebra} ${\mathbf U}_q(\hat{\mathfrak{sl}}_n)\subseteq {\mathbf U}_q(\hat{\mathfrak{gl}}_n)$ is 
the unital $\Q(q)$-subalgebra generated by 
$E_{\pm i}$ and $K_iK^{-1}_{i+1}$, for $i=1,\ldots, n$.
\end{defn}

We will also need the bialgebra structure on 
$\hat{\mathbf U}_q(\hat{\mathfrak{gl}}_n)$.
\begin{defn} $\hat{\mathbf U}_q(\hat{\mathfrak{gl}}_n)$ is a bialgebra with 
counit 
$\epsilon\colon \hat{\mathbf U}_q(\hat{\mathfrak{gl}}_n)\to \Q(q)$
defined by 
$$\epsilon(E_{\pm i})=0,\qquad\epsilon(R^{\pm 1})=\epsilon(K_i^{\pm 1})=1$$
and coproduct 
$\Delta\colon \hat{\mathbf U}_q(\hat{\mathfrak{gl}}_n)\to 
\hat{\mathbf U}_q(\hat{\mathfrak{gl}}_n)\otimes \hat{\mathbf U}_q(\hat{\mathfrak{gl}}_n),$ 
defined by 
\begin{eqnarray}
\Delta(1)&=&1\otimes 1\\
\Delta(E_i)&=&E_i\otimes K_iK_{i+1}^{-1}+1\otimes E_i\\
\Delta(E_{-i})&=&K_i^{-1}K_{i+1}\otimes E_{-i}+E_{-i}\otimes 1\\
\Delta(K_i^{\pm 1})&=&K_i^{\pm 1}\otimes K_i^{\pm 1}\\
\Delta(R^{\pm 1})&=&R^{\pm 1}\otimes R^{\pm 1}.
\end{eqnarray}
\end{defn} 
As a matter of fact, $\hat{\mathbf U}_q(\hat{\mathfrak{gl}}_n)$ is 
even a Hopf algebra, but we do not need the antipode in this paper. Note 
that $\Delta$ and $\epsilon$ can be restricted to 
${\mathbf U}_q(\hat{\mathfrak{gl}}_n)$ and 
${\mathbf U}_q(\hat{\mathfrak{sl}}_n)$, which are bialgebras too. 

At level 0 and forgetting the derivation, we can work with the ${\mathbf U}_q(\mathfrak{sl}_n)$--weight 
lattice, when considering ${\mathbf U}_q(\hat{\mathfrak{sl}}_n)$--weight representations. Omitting the extra entry corresponding to the 
derivation makes this weight-lattice degenerate, because $\alpha_1+\alpha_2+\cdots+\alpha_n=0$, but that does not matter in this section. Similarly, we can work with the ${\mathbf U}_q(\mathfrak{gl}_n)$--weight lattice, when considering $\hat{\mathbf U}_q(\hat{\mathfrak{gl}}_n)$ and ${\mathbf U}_q(\hat{\mathfrak{gl}}_n)$--weight representations.
 
Suppose that $V$ is a 
${\mathbf U}_q(\hat{\mathfrak{gl}}_n)$--weight representation with 
weights $\lambda=(\lambda_1,\ldots,\lambda_n)\in\bZ^n$, i.e. 
$$V\cong \bigoplus_{\lambda}V_{\lambda}$$ 
and $K_i$ acts as multiplication by 
$q^{\lambda_i}$ on $V_{\lambda}$. Then $V$ is also a 
${\mathbf U}_q(\hat{\mathfrak{sl}}_n)$--weight representation with weights 
$\overline{\lambda}=(\overline{\lambda}_1,\ldots,\overline{\lambda}_{n-1})\in
\bZ^{n-1}$ such that 
$\overline{\lambda}_j=\lambda_j-\lambda_{j+1}$ for $j=1,\ldots,n-1$. 

Conversely, there is not a unique choice of ${\mathbf U}_q(\hat{\mathfrak{gl}}_n)$--action
on a given ${\mathbf U}_q(\hat{\mathfrak{sl}}_n)$--weight representation with weights $\mu=(\mu_1,\ldots,\mu_{n-1})$. 
We first have to fix the action of $K_1\cdots K_n$. In terms of weights, this 
corresponds to the observation that, for any given $r\in\bZ$ the equations 
\begin{align}
\label{eq:sl-gl-wts1}
\lambda_i-\lambda_{i+1}&=\mu_i\\
\label{eq:sl-gl-wts2}
\qquad \sum_{i=1}^{n}\lambda_i&=r
\end{align}  
determine $\lambda=(\lambda_1,\ldots,\lambda_n)$ uniquely, 
if there exists a solution to~\eqref{eq:sl-gl-wts1} and~\eqref{eq:sl-gl-wts2} 
at all. We therefore define the map 
$\phi_{n,r}\colon \bZ^{n-1}\to \bZ^{n}\cup \{*\}$ 
by 
$$
\phi_{n,r}(\mu)=\lambda 
$$
if~\eqref{eq:sl-gl-wts1} and \eqref{eq:sl-gl-wts2} have a solution, and 
put $\phi_{n,r}(\mu)=*$ otherwise.   

As far as weight representations are concerned, we can restrict our attention to the 
Beilinson-Lusztig-MacPherson idempotented version of these 
quantum groups, denoted $\Uglaffext$, $\Uglaff$ and $\Uaff$ respectively. 
For each $\lambda\in\bZ^n$ adjoin an idempotent $1_{\lambda}$ to 
$\hat{\mathbf U}_q(\hat{\mathfrak{gl}}_n)$ and add 
the relations
\begin{align*}
1_{\lambda}1_{\mu} &= \delta_{\lambda,\mu}1_{\lambda}   
\\
E_{\pm i}1_{\lambda} &= 1_{\lambda\pm\bar{\alpha}_i}E_{\pm i}
\\
K_i1_{\lambda} &= q^{\lambda_i}1_{\lambda}
\\
R1_{(\lambda_1,\ldots,\lambda_n)}&= 1_{(\lambda_n,\lambda_1,\ldots,\lambda_{n-1})}R.
\end{align*}
\begin{defn} 
\label{defn:Uglndot}
The {\em idempotented extended affine quantum general linear algebra} is defined by 
$$\Uglaffext=\bigoplus_{\lambda,\mu\in\bZ^n}1_{\lambda}\hat{\mathbf U}_q(\hat{\mathfrak{gl}}_n)1_{\mu}.$$
\end{defn}

Of course one defines $\Uglaff\subset\Uglaffext$ as the idempotented subalgebra 
generated by $1_{\lambda}$ and $E_{\pm i}1_{\lambda}$, 
for $i=1,\ldots, n$ and $\lambda\in\mathbb{Z}^n$.  

Similarly for $\mathbf U_q(\hat{\mathfrak{sl}}_n)$, adjoin an idempotent $1_{\lambda}$ 
for each $\lambda\in\bZ^{n-1}$ and add the relations
\begin{align*}
1_{\lambda}1_{\mu} &= \delta_{\lambda,\mu}1_{\lambda}   
\\
E_{\pm i}1_{\lambda} &= 1_{\lambda\pm\overline{\alpha}_i}E_{\pm i}
\\
K_iK^{-1}_{i+1}1_{\lambda} &= q^{\lambda_i}1_{\lambda}.
\end{align*}
\begin{defn} The {\em idempotented quantum special linear algebra} is defined by 
$$\Uaff=\bigoplus_{\lambda,\mu\in\bZ^{n-1}}1_{\lambda}{\mathbf U}_q(\hat{\mathfrak{sl}}_n)1_{\mu}.$$
\end{defn}
Any weight-representation of $\hat{\mathbf U}_q(\hat{\mathfrak{gl}}_n)$, ${\mathbf U}_q(\hat{\mathfrak{gl}}_n)$ or 
${\mathbf U}_q(\hat{\mathfrak{sl}}_n)$ is also a representation of $\Uglaffext$, $\Uglaff$ or $\Uaff$, respectively. This is not true for non-weight representations, of which there are many. There are also other differences of course, e.g. 
$\Uglaffext$, $\Uglaff$ and $\Uaff$ are not unital, because they have 
infinitely many idempotents. For that same reason, they are not bialgebras, 
although their action on tensor products of weight representations is 
well-defined. 
 
\subsection{The affine $q$-Schur algebra}

Let us first recall Green's~\cite{Gr,DG} tensor space and the action of 
$\hat{\mathbf U}_q(\hat{\mathfrak{gl}}_n)$ on it. We will also recall some 
basic results about this action and add some of our own.  
Wherever we omit a proof in this section, the corresponding result was 
taken from~\cite{DG}. When we give a proof, it is because the corresponding result 
cannot be found in the literature and we had to prove it ourselves, e.g. 
the inner product on tensor space is probably known to experts, 
but there seems to be no written reference.  

Let $V$ be the $\Q(q)$-vector space freely generated by 
$\{e_t\mid t\in\mathbb{Z}\}$. 
\begin{defn}
The following defines an action of $\hat{\mathbf U}_q(\hat{\mathfrak{gl}}_n)$ on $V$
\begin{eqnarray}
E_ie_{t+1}=e_t&\text{if}\; i\equiv t\mod n\\
E_ie_{t+1}=0&\text{if}\; i\not\equiv t\mod n\\
E_{-i}e_t=e_{t+1}&\text{if}\; i\equiv t\mod n\\
E_{-i}e_t=0&\text{if}\; i\not\equiv t\mod n\\
K_i^{\pm 1}e_t=q^{\pm 1}e_t&\text{if}\; i\equiv t\mod n\\
K_i^{\pm 1}e_t=e_t&\text{if}\; i\not\equiv t\mod n\\
R^{\pm 1}e_t=e_{t\pm 1}&\text{for all}\; t\in\mathbb{Z}.
\end{eqnarray}
\end{defn} 

Note that $V$ is clearly a weight representation of $\hat{{\mathbf U}}_q(\hat{\mathfrak{gl}}_n)$, with 
$e_t$ having weight $\epsilon_i$, for $i\equiv t \mod n$. Therefore $V$ is also 
a representation of $\Uglaffext$. 

From now on, let $r\in\mathbb{N}_{>0}$ be arbitrary but fixed. 
As usual, one extends the above action to $V^{\otimes r}$, 
using the coproduct in $\hat{\mathbf U}_q(\hat{\mathfrak{gl}}_n)$. Again, this is a weight representation 
and therefore a representation of $\Uglaffext$, which we call Green's {\em tensor space}. 

We also define a $\Q(q)$-bilinear form on $V$ by $\langle e_s,e_t\rangle=\delta_{st}$, which extends to $V^{\otimes r}$ factorwise, i.e.  
$$\langle v_1\otimes \cdots\otimes v_r,w_1\otimes\cdots\otimes w_r\rangle:=\langle v_1,w_1\rangle\cdots\langle v_r,w_r\rangle.$$ 
\begin{lem}
\label{lem:nondeg}
For any $v\in V^{\otimes r}$, we have 
$$\langle v,v\rangle\ne 0.$$
\end{lem}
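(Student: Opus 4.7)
The plan is to exploit the fact that the pure-tensor basis is orthonormal for the form and then use a formal reality argument over $\Q(q)$.

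First I would observe that, by construction, the set $\{e_{t_1}\otimes\cdots\otimes e_{t_r}\mid (t_1,\ldots,t_r)\in\Z^r\}$ is a $\Q(q)$-basis of $V^{\otimes r}$, and the factorwise definition of the bilinear form together with $\langle e_s,e_t\rangle=\delta_{st}$ makes this basis orthonormal. Hence writing any $v\in V^{\otimes r}$ as a finite sum
\begin{equation*}
v=\sum_{\mathbf{t}\in\Z^r} c_{\mathbf{t}}\, e_{t_1}\otimes\cdots\otimes e_{t_r},\qquad c_{\mathbf{t}}\in\Q(q),
\end{equation*}
one immediately computes
\begin{equation*}
\langle v,v\rangle=\sum_{\mathbf{t}\in\Z^r}c_{\mathbf{t}}^{\,2}.
\end{equation*}

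Next I would reduce the non-vanishing claim to the statement that $\Q(q)$ is a formally real field, i.e.\ that a finite sum of squares in $\Q(q)$ vanishes only if each summand vanishes. To see this, clearing denominators one may assume $c_{\mathbf{t}}\in\Q[q]$; if $\sum_{\mathbf{t}}c_{\mathbf{t}}(q)^2=0$ as a polynomial identity in $q$, then evaluating at any real $q=t\in\R$ gives $\sum_{\mathbf{t}}c_{\mathbf{t}}(t)^{2}=0$ in $\R$, which forces $c_{\mathbf{t}}(t)=0$ for every real $t$ and every $\mathbf{t}$; hence each $c_{\mathbf{t}}$ is identically zero.

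Combining the two steps: if $v\neq 0$, at least one coefficient $c_{\mathbf{t}}$ is non-zero, so $\sum c_{\mathbf{t}}^{\,2}\neq 0$ and therefore $\langle v,v\rangle\neq 0$. There is no real obstacle here; the only point that is not completely formal is the formal reality of $\Q(q)$, which is a standard fact and is what makes the orthonormality of the pure-tensor basis upgrade to non-degeneracy on the nose (rather than up to a radical).
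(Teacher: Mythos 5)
Your proof is correct and takes essentially the same route as the paper: both rest on the orthonormality of the pure-tensor basis, which reduces the claim to showing that a nontrivial finite sum of squares in $\Q(q)$ is nonzero, and both establish that by specializing $q$ to real values and using positivity of sums of squares in $\R$ (the paper picks a single rational $q_0$ at which all coefficients are nonzero, whereas you argue by contradiction over all real specializations; this is only a cosmetic difference).
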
   
\begin{proof}
We can write $v$ uniquely as $\sum_{\underline{t}\in T}v_{\underline{t}}e_{\underline{t}}$, such that   
$T$ is a finite subset of $\mathbb{Z}^r$ and for any 
$\underline{t}=(t_1,\ldots, t_r)\in T$ we have 
$$v_{\underline{t}}\in\Q(q)\quad\text{and}\quad e_{\underline{t}}=e_{t_1}\otimes\cdots\otimes e_{t_r}.$$
Thus we get 
$$\langle v,v\rangle=\sum_{\underline{t}\in T}v_{\underline{t}}^2.$$
For each $\underline{t}\in T$, write $v_{\underline{t}}=f_{\underline{t}}(q)/g_{\underline{t}}(q)$, where $f_{\underline{t}}(q),g_{\underline{t}}(q)
\in\Q[q]$ have g.c.d. equal to 1. Choose $q_0\in\Q$ such 
that $f_{\underline{t}}(q_0)\ne 0, g_{\underline{t}}(q_0)\ne 0$ for all $\underline{t}\in T$ (such a number exists, because $T$ is finite 
and each polynomial has only finitely many roots). Then $v_{\underline{t}}(q_0)=f_{\underline{t}}(q_0)/g_{\underline{t}}(q_0)\in\Q^*$ and 
we have 
$$\sum_{\underline{t}\in T} v_{\underline{t}}(q_0)^2 > 0.$$
This implies that 
$$\sum_{\underline{t}\in T}v_{\underline{t}}^2\ne 0.$$
\end{proof}

There is a right action of $\hat{\He}_{\hat{A}_{r-1}}$ on $V^{\otimes r}$ 
which commutes with the left action of 
$\hat{\mathbf U}_q(\hat{\mathfrak{gl}}_n)$. 
Its precise definition, which can be found 
in~\cite{Gr, DG}, is not relevant here.

\begin{defn} The { \em affine $q$-Schur algebra} $\hat{\SD}(n,r)$ is by definition 
the centralizing algebra $$\mbox{End}_{\hat{\He}_{\hat{A}_{r-1}}}(V^{\otimes r}).$$ 
\end{defn}
By affine Schur-Weyl duality, the image of $\psi_{n,r}\colon \hat{\mathbf U}_q(\hat{\mathfrak{gl}}_n)\to \mbox{End}(V^{\otimes r})$ is 
always isomorphic to $\hat{\SD}(n,r)$. If $n>r$, we can even restrict to ${\mathbf U}_q(\hat{\mathfrak{sl}}_n)\subset 
\hat{\mathbf U}_q(\hat{\mathfrak{gl}}_n)$, i.e.  
$$\psi_{n,r}({\mathbf U}_q(\hat{\mathfrak{sl}}_n))\cong \hat{\SD}(n,r).$$
For $n=r$, this is no longer true.  
 
\begin{defn}\label{defn:rho} Let $\rho\colon \hat{\mathbf U}_q(\hat{\mathfrak{gl}}_n)\to \hat{\mathbf U}_q(\hat{\mathfrak{gl}}_n)$ be the $\Q(q)$-linear 
algebra anti-involution defined by 
$$
\rho(E_i)=qK_iK_{i+1}^{-1}E_{-i}\quad \rho(E_{-i})=qK_i^{-1}K_{i+1}E_i\quad \rho(K_i)=K_i,\quad \rho(R)=R^{-1},
$$
for $1\leq i\leq n$. 
\end{defn}

The proof of the following lemma is a straightforward check, which we leave 
to the reader. 
\begin{lem}\label{lem:rhoDeltacommute} We have 
$$\Delta\rho=(\rho\otimes\rho)\Delta.$$
\end{lem}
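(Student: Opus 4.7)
The plan is to verify the identity $\Delta\rho=(\rho\otimes\rho)\Delta$ on each of the algebra generators $R^{\pm 1}$, $K_i^{\pm 1}$, $E_{\pm i}$ and then extend by multiplicativity. The extension step works because $\Delta$ is an algebra homomorphism while both $\rho$ and $\rho\otimes\rho$ are algebra anti-homomorphisms (the latter with respect to the componentwise product on $\hat{\mathbf U}_q(\hat{\mathfrak{gl}}_n)\otimes\hat{\mathbf U}_q(\hat{\mathfrak{gl}}_n)$). Thus, for any $x,y$,
\begin{align*}
\Delta\rho(xy)&=\Delta(\rho(y))\Delta(\rho(x)),\\
(\rho\otimes\rho)\Delta(xy)&=(\rho\otimes\rho)\Delta(y)\,(\rho\otimes\rho)\Delta(x),
\end{align*}
so agreement on $x$ and on $y$ forces agreement on $xy$. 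Consequently it suffices to check generators.

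The cases $R^{\pm 1}$ and $K_i^{\pm 1}$ are immediate: both sides equal $R^{\pm 1}\otimes R^{\pm 1}$ (using $\Delta(R^{-1})=R^{-1}\otimes R^{-1}$, obtained from $\Delta(R)\Delta(R^{-1})=1\otimes 1$) and $K_i\otimes K_i$ respectively, since $\rho$ fixes $K_i$ and inverts $R$. The substantive check is on $E_{\pm i}$. For $E_i$, using the definition of $\rho$ and the commutativity of the $K_j$'s, I would compute
\begin{equation*}
\Delta\rho(E_i)=q\,\Delta(K_iK_{i+1}^{-1})\Delta(E_{-i})=q(K_iK_{i+1}^{-1}\otimes K_iK_{i+1}^{-1})(K_i^{-1}K_{i+1}\otimes E_{-i}+E_{-i}\otimes 1)
\end{equation*}
and simplify using $K_iK_{i+1}^{-1}K_i^{-1}K_{i+1}=1$, yielding
\begin{equation*}
q\bigl(1\otimes K_iK_{i+1}^{-1}E_{-i}+K_iK_{i+1}^{-1}E_{-i}\otimes K_iK_{i+1}^{-1}\bigr).
\end{equation*}
On the other side, using $\rho(K_iK_{i+1}^{-1})=K_iK_{i+1}^{-1}$ (since $\rho$ is an anti-involution fixing the Cartan) one obtains
\begin{equation*}
(\rho\otimes\rho)\Delta(E_i)=\rho(E_i)\otimes\rho(K_iK_{i+1}^{-1})+1\otimes\rho(E_i)=qK_iK_{i+1}^{-1}E_{-i}\otimes K_iK_{i+1}^{-1}+q\otimes K_iK_{i+1}^{-1}E_{-i},
\end{equation*}
which matches. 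The case of $E_{-i}$ is the mirror calculation, using $\rho(E_{-i})=qK_i^{-1}K_{i+1}E_i$ together with $K_i^{-1}K_{i+1}K_iK_{i+1}^{-1}=1$.

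No real obstacle arises; the whole verification is mechanical once the reduction to generators is in place. The only mild subtlety is keeping track of the order-reversal on $\hat{\mathbf U}_q(\hat{\mathfrak{gl}}_n)\otimes\hat{\mathbf U}_q(\hat{\mathfrak{gl}}_n)$ and correctly using $\rho(K_iK_{i+1}^{-1})=K_iK_{i+1}^{-1}$ (which holds because the $K_j$'s commute and $\rho$ fixes each of them), and then collapsing the products $K_iK_{i+1}^{-1}K_i^{-1}K_{i+1}=1$ when comparing the two sides of each $E_{\pm i}$ case.
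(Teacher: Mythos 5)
Your proof is correct, and it is exactly the verification the paper has in mind: the paper omits the argument entirely ("a straightforward check, which we leave to the reader"), and your reduction to generators via the anti-homomorphism property of both $\Delta\rho$ and $(\rho\otimes\rho)\Delta$, followed by the explicit checks on $R^{\pm1}$, $K_i^{\pm1}$ and $E_{\pm i}$, is the intended routine computation. The only cosmetic quibble is that for the generator $R^{\pm1}$ both sides equal $R^{\mp1}\otimes R^{\mp1}$ rather than $R^{\pm1}\otimes R^{\pm1}$ as written, which does not affect the argument.
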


\begin{lem}\label{lem:rho} For any $X\in \hat{\mathbf U}_q(\hat{\mathfrak{gl}}_n)$ and any $v,w\in V^{\otimes r}$, we have 
$$\langle Xv,w\rangle=\langle v,\rho(X)w\rangle.$$ 
\end{lem}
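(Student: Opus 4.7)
The plan is to reduce the statement to a computation on generators of $\hat{\mathbf U}_q(\hat{\mathfrak{gl}}_n)$ acting on a single tensor factor $V$, and then to verify it by inspection.

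First I observe that the identity is preserved under products: if $\langle Xv,w\rangle=\langle v,\rho(X)w\rangle$ and $\langle Yv,w\rangle=\langle v,\rho(Y)w\rangle$ hold for all $v,w\in V^{\otimes r}$, then
\begin{equation*}
\langle (XY)v,w\rangle=\langle Y v,\rho(X)w\rangle=\langle v,\rho(Y)\rho(X)w\rangle=\langle v,\rho(XY)w\rangle,
\end{equation*}
since $\rho$ is an anti-homomorphism. It also extends $\Q(q)$-linearly in $X$. Hence it is enough to check the identity for the algebra generators $E_{\pm i}$, $K_i^{\pm 1}$, and $R^{\pm 1}$.

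Next I reduce from $V^{\otimes r}$ to $V$ by induction on $r$, using the coproduct. Since the action on $V^{\otimes r}$ factors through iterated applications of $\Delta$ and the bilinear form is defined factorwise, Sweedler notation gives
\begin{equation*}
\langle X(v_1\otimes v_2),w_1\otimes w_2\rangle=\sum\langle X_{(1)}v_1,w_1\rangle\langle X_{(2)}v_2,w_2\rangle.
\end{equation*}
Assuming the identity holds on each single factor, the right-hand side equals
\begin{equation*}
\sum\langle v_1,\rho(X_{(1)})w_1\rangle\langle v_2,\rho(X_{(2)})w_2\rangle=\langle v_1\otimes v_2,(\rho\otimes\rho)\Delta(X)\,(w_1\otimes w_2)\rangle.
\end{equation*}
By Lemma~\ref{lem:rhoDeltacommute}, $(\rho\otimes\rho)\Delta=\Delta\rho$, so this equals $\langle v_1\otimes v_2,\rho(X)(w_1\otimes w_2)\rangle$. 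Iterating this argument reduces the problem to checking the identity for each generator of $\hat{\mathbf U}_q(\hat{\mathfrak{gl}}_n)$ acting on $V$.

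The remaining step is a direct verification on the basis $\{e_t\}_{t\in\Z}$ of $V$. For instance, for $E_i$ one computes that $\langle E_i e_s,e_t\rangle=1$ precisely when $s=t+1$ and $i\equiv t\bmod n$, and otherwise vanishes. On the other side, $\rho(E_i)=qK_iK_{i+1}^{-1}E_{-i}$ sends $e_t$ to $q\cdot q^{-1}e_{t+1}=e_{t+1}$ in the same case and to $0$ otherwise, since $K_{i+1}^{-1}$ contributes $q^{-1}$ when $i\equiv t\bmod n$. The cases of $E_{-i}$, $K_i^{\pm 1}$ and $R^{\pm 1}$ are analogous and even more immediate (for $R$: $\langle Re_s,e_t\rangle=\delta_{s+1,t}=\langle e_s,R^{-1}e_t\rangle$). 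I do not expect any genuine obstacle in this proof; the only subtlety is the correct bookkeeping of the $K_iK_{i+1}^{-1}$ prefactor in $\rho(E_{\pm i})$, which is exactly calibrated so that the $q$-powers produced by the action of these $K$'s cancel the factor $q$ in front.
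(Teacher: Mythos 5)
Your proof is correct and follows exactly the route the paper takes: reduce to $r=1$ via Lemma~\ref{lem:rhoDeltacommute} and the factorwise definition of the form, then verify on the basis $\{e_t\}$ for each generator (the paper compresses both the reduction to generators and the basis check into "straightforward and left to the reader"). Your explicit bookkeeping of the $qK_iK_{i+1}^{-1}$ prefactor is the content the paper omits, and it checks out.
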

\begin{proof} By Lemma~\ref{lem:rhoDeltacommute}, it suffices to check the above for $r=1$ and $v=e_i$ and 
$w=e_j$, for any $i,j\in\mathbb{Z}$. This is straightforward and left to the reader. 
\end{proof}

Note that $\rho$ can also be defined on $\Uglaff$, such that 
$\rho(1_{\lambda})=1_{\lambda}$ for any $\lambda\in\mathbb{Z}^r$, and that 
it descends to $\hat{\SD}(n,r)$.

\subsection{A presentation of $\hat{\SD}(n,r)$ for $n>r$}\label{SchAlg}
In this subsection, let $n>r$. Recall that in this case 
$$\psi_{n,r}\colon \Uglaff\to \mbox{End}(V^{\otimes r})\to \hat{\SD}(n,r)$$ 
is surjective. This gives rise to the following presentation of $\hat{\SD}(n,r)$. The proof can be found 
in~\cite{DG}.

\begin{thm}\cite{DG}\label{thm:presentationnd}
$\hat{\SD}(n,r)$ is isomorphic to the associative unital $\Q(q)$-algebra generated by~$1_{\lambda}$, for~$\lambda\in\Lambda(n,r)$, and~$E_{\pm i}$, for~$i=1,\ldots,n$, 
subject to the relations
\begin{gather}
1_{\lambda}1_{\mu} = \delta_{\lambda,\mu}1_{\lambda} 
\\
\sum_{\lambda\in\Lambda(n,r)}1_{\lambda} = 1
\\
E_{\pm i}1_{\lambda} = 1_{\lambda\pm\bar{\alpha}_i}E_{\pm i}
\\
E_iE_{-j}-E_{-j}E_i = \delta_{ij}\sum\limits_{\lambda\in\Lambda(n,r)} [\lambda_i - \lambda_{i+1}]1_{\lambda}
 \\
E_{\pm i}^2E_{\pm (i\pm 1)}-(q+q^{-1})E_{\pm i}E_{\pm (i\pm 1)}E_{\pm i}+E_{\pm (i\pm 1)}E_{\pm i}^2=0 
\\
E_{\pm i}E_{\pm j}-E_{\pm j}E_{\pm i} =0 \qquad\text{for distant}\;i,j.
\end{gather}
We use the convention that~$1_{\mu}X1_{\lambda}=0$ whenever~$\mu$ or~$\lambda$ is not contained in~$\Lambda(n,r)$. 
Recall that~$[a]$ is the $q$-integer $(q^a-q^{-a})/(q-q^{-1})$.
\end{thm}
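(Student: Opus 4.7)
The plan is to construct an explicit algebra homomorphism $\phi$ from the presented algebra $A$ to $\hat{\SD}(n,r)$ and show that $\phi$ is an isomorphism. I define $\phi$ on generators by sending $1_{\lambda}$ to the projector of $V^{\otimes r}$ onto the $\lambda$-weight space and $E_{\pm i}$ to $\psi_{n,r}(E_{\pm i})$. Since each basis vector $e_{t_1}\otimes \cdots \otimes e_{t_r}$ of $V^{\otimes r}$ carries $\mathfrak{gl}_n$-weight $\epsilon_{i_1}+\cdots+\epsilon_{i_r}$ with $i_j\equiv t_j\pmod n$, every weight appearing in $V^{\otimes r}$ lies in $\Lambda(n,r)$. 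Consequently the orthogonality $1_{\lambda}1_{\mu}=\delta_{\lambda,\mu}1_{\lambda}$ and the completeness $\sum_{\lambda\in\Lambda(n,r)}1_{\lambda}=\mathrm{id}_{V^{\otimes r}}$ follow at once from the weight-space decomposition.

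Next I verify the remaining relations hold in $\hat{\SD}(n,r)$. The identity $E_{\pm i}1_{\lambda}=1_{\lambda\pm \bar{\alpha}_i}E_{\pm i}$ is built into the weight grading (with the convention that $1_\mu X 1_\lambda = 0$ whenever $\mu$ or $\lambda$ falls outside $\Lambda(n,r)$). For the Chevalley commutator, I use that on the $\lambda$-weight space $\psi_{n,r}(K_iK_{i+1}^{-1})$ acts as multiplication by $q^{\lambda_i-\lambda_{i+1}}$, so that
\begin{equation*}
[E_i,E_{-j}]\,1_\lambda=\delta_{ij}\,\frac{q^{\lambda_i-\lambda_{i+1}}-q^{-(\lambda_i-\lambda_{i+1})}}{q-q^{-1}}\,1_\lambda=\delta_{ij}\,[\lambda_i-\lambda_{i+1}]\,1_\lambda,
\end{equation*}
which is exactly the stated quantum Serre commutation. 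The Serre relations and distant-index commutativity are inherited from ${\mathbf U}_q(\hat{\mathfrak{sl}}_n)$, of which $\hat{\SD}(n,r)$ is a quotient via $\psi_{n,r}$ after restriction. This proves that $\phi$ is a well-defined algebra homomorphism.

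Surjectivity is then an immediate consequence of affine Schur-Weyl duality: when $n>r$, as noted earlier in the excerpt, we already have $\psi_{n,r}({\mathbf U}_q(\hat{\mathfrak{sl}}_n))\cong\hat{\SD}(n,r)$; since the image of $\phi$ contains this subalgebra as well as all weight-space idempotents (which lie in any case in the image of $\Uaff$), it is all of $\hat{\SD}(n,r)$.

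The main obstacle is injectivity. The strategy is to use the defining relations of $A$ to reduce an arbitrary element to a normal form indexed by the same combinatorial set that parametrises a known basis of $\hat{\SD}(n,r)$, namely the matrix basis of Doty--Green coming from $n>r$ double cosets in the extended affine symmetric group. First one produces a PBW-type spanning set for $A$ by sorting the $E_{\pm i}$'s into divided powers and using the commutator and Serre relations; second one checks that under $\phi$ this spanning set maps bijectively onto the Doty--Green matrix basis. Linear independence of the spanning set then follows from linear independence of its image, completing the proof. This last step is where the heavy combinatorial work of \cite{DG} is concentrated, since one must compare two rather different descriptions of the same algebra; the idempotent reduction afforded by $1_\lambda$ and the condition $n>r$ (which prevents wraparound collisions in $\Lambda(n,r)$) are what make the matching possible without an auxiliary generator playing the role of $R$.
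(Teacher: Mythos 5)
The paper does not actually prove this theorem: it states it with the citation \cite{DG} and remarks explicitly that ``the proof can be found in~\cite{DG}'', so there is no internal argument to compare yours against. Your outline is consistent with that treatment and goes somewhat further. The verification that the relations hold in $\hat{\SD}(n,r)$ is correct: the weight of $e_{t_1}\otimes\cdots\otimes e_{t_r}$ is indeed $\epsilon_{i_1}+\cdots+\epsilon_{i_r}\in\Lambda(n,r)$, the weight projectors commute with the $\hat{\He}_{\hat{A}_{r-1}}$-action (since that action commutes with the $K_i$), and the computation of $[E_i,E_{-j}]1_{\lambda}$ via $K_iK_{i+1}^{-1}\mapsto q^{\lambda_i-\lambda_{i+1}}$ is right. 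Surjectivity from $\psi_{n,r}({\mathbf U}_q(\hat{\mathfrak{sl}}_n))\cong\hat{\SD}(n,r)$ for $n>r$ is also fine. However, be aware that injectivity is the entire substance of the theorem, and your proposal does not establish it: you describe a strategy (a PBW-type normal form in the presented algebra mapped bijectively onto the Doty--Green double-coset basis) but then defer its execution to the ``heavy combinatorial work of \cite{DG}''. That is acceptable here only because the paper itself treats the theorem as an imported result; as a standalone proof it would be incomplete, since constructing the spanning set from the relations (in particular handling the affine generator $E_{\pm n}$ and showing no further relations are needed) is precisely where the difficulty lies.
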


We will use signed sequences $\ii=(\mu_1 i_1,\ldots,\mu_m i_m)$, with~$m\in\N$, $\mu_j\in\{\pm 1\}$ and $i_j\in \{1,\ldots,n\}$. 
The set of signed sequences we denote~$\sseq$. For a signed sequence~$\ii=(\mu_1 i_1,\ldots,\mu_m i_m)$ we define 
$$\ii_{\Lambda}:=\mu_1\bar{\alpha}_{i_1} + \dots + \mu_m\bar{\alpha}_{i_m}.$$
We write $E_{\ii}$ for the product~$E_{\mu_1 i_1}\dots E_{\mu_{m} i_{m}}$. For any $\lambda \in \Z^n$ and~$\ii \in \sseq$, we have
$$E_{\ii}1_{\lambda}=1_{\lambda + \ii_{\Lambda}}E_{\ii}.$$

The surjection $\psi_{n,r}\colon \Uaff\to\hat{\SD}(n,r)$ can also be given 
explicitly in terms of the generators in Theorem~\ref{thm:presentationnd}. 
For any $\lambda\in\mathbb{Z}^{n-1}$, we have 
\begin{equation}
\label{eq:psi}
\psi_{n,r}(E_{\pm i}1_{\lambda})=E_{\pm i}1_{\phi_{n,r}(\lambda)},
\end{equation}
where $\phi_{n,r}\colon \mathbb{Z}^{n-1}\to \Lambda(n,r)\cup \{*\}$ is the map defined in~\ref{genalg}. 
By convention, we put $1_{*}=0$. 

Recall the definition of $\rho$ in~\ref{defn:rho}. Since $\rho$ is an algebra anti-homomorphism, 
we get 
$$\rho\colon 1_{\lambda}\hat{\SD}(n,r)1_{\mu}\to 1_{\mu}\hat{\SD}(n,r)1_{\lambda},$$
for any $\lambda,\mu\in\Lambda(n,r)$. 

\begin{lem}
\label{lem:non-deg}
For any $\lambda,\mu\in\Lambda(n,r)$ and any non-zero element 
$1_{\lambda}X1_{\mu}\in \hat{\SD}(n,r)$, we have  
$$1_{\lambda}X\rho(X)1_{\lambda}\ne 0\quad\text{and}\quad 1_{\mu}\rho(X)X1_{\mu}\ne 0.$$
\end{lem}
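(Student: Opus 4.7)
The plan is to exploit Lemma~\ref{lem:rho} together with the non-degeneracy established in Lemma~\ref{lem:nondeg}, using the faithful representation of $\hat{\SD}(n,r)$ on $V^{\otimes r}$.

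First I would view $\hat{\SD}(n,r)$ via its embedding $\hat{\SD}(n,r)\hookrightarrow \mathrm{End}(V^{\otimes r})$, under which the idempotent $1_\lambda$ becomes projection onto the $\mathfrak{gl}_n$--weight space $V^{\otimes r}_\lambda$. Consequently, $1_\lambda X 1_\mu\neq 0$ in $\hat{\SD}(n,r)$ if and only if there exists a vector $v\in V^{\otimes r}_\mu$ with $Xv\neq 0$ (and automatically $Xv\in V^{\otimes r}_\lambda$).

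Pick such a $v$. By Lemma~\ref{lem:rho}, we have
\begin{equation*}
\langle Xv,Xv\rangle=\langle v,\rho(X)Xv\rangle.
\end{equation*}
By Lemma~\ref{lem:nondeg} applied to the non-zero vector $Xv$, the left-hand side is non-zero, hence $\rho(X)Xv\neq 0$. Since $\rho(1_\lambda)=1_\lambda$ for every $\lambda$ (as recalled after Lemma~\ref{lem:rho}) and $\rho$ is an algebra anti-homomorphism, $\rho(X)$ maps $V^{\otimes r}_\lambda$ back into $V^{\otimes r}_\mu$, so $\rho(X)Xv\in V^{\otimes r}_\mu$. Thus $1_\mu \rho(X) X 1_\mu$ acts non-trivially on $V^{\otimes r}$, and therefore is non-zero in $\hat{\SD}(n,r)$.

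For the companion statement, I would use that $\rho$ is an anti-involution descending to $\hat{\SD}(n,r)$, so $\rho(1_\lambda X 1_\mu)=1_\mu\rho(X)1_\lambda$ is non-zero whenever $1_\lambda X 1_\mu$ is. Pick $w\in V^{\otimes r}_\lambda$ with $\rho(X)w\neq 0$, and apply Lemma~\ref{lem:rho} once more together with $\rho^2=\mathrm{id}$ to get
\begin{equation*}
\langle \rho(X)w,\rho(X)w\rangle=\langle w, X\rho(X)w\rangle,
\end{equation*}
whose left-hand side is non-zero by Lemma~\ref{lem:nondeg}. Hence $X\rho(X)w\neq 0$, and since $w,\ X\rho(X)w\in V^{\otimes r}_\lambda$, we conclude $1_\lambda X\rho(X)1_\lambda\neq 0$.

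There is no real obstacle: the whole argument is a short symmetric application of the two preceding lemmas, the only point to check carefully being that $\rho$ interacts with the idempotents in the expected way (namely $\rho(1_\lambda X 1_\mu)=1_\mu\rho(X)1_\lambda$), so that $\rho(X)X$ and $X\rho(X)$ really lie in the correct weight components $1_\mu\hat{\SD}(n,r)1_\mu$ and $1_\lambda\hat{\SD}(n,r)1_\lambda$ respectively.
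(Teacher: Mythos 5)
Your proof is correct and follows essentially the same route as the paper: both arguments pick a vector in tensor space not killed by $X1_{\mu}$, apply Lemma~\ref{lem:rho} to rewrite $\langle Xv,Xv\rangle$ as $\langle v,\rho(X)Xv\rangle$, invoke Lemma~\ref{lem:nondeg} to conclude non-vanishing, and then deduce the companion statement from $X\rho(X)=\rho(Y)Y$ with $Y=\rho(X)$. The only cosmetic difference is that you re-run the argument explicitly for $\rho(X)$ where the paper simply cites the substitution.
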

\begin{proof}
By the surjectivity of $\psi_{n,r}$, we know that there exist 
$e_{t_1},\ldots, e_{t_r}\in V$ such that 
$$X1_{\mu}(e_{t_1}\otimes\cdots\otimes e_{t_r})\ne 0.$$
By Lemmas~\ref{lem:rho} and~\ref{lem:nondeg}, we get 
\begin{eqnarray*}
\langle e_{t_1}\otimes\cdots\otimes e_{t_r},1_{\mu}\rho(X)X1_{\mu}(e_{t_1}\otimes\cdots\otimes e_{t_r})\rangle&=&\\
\langle X1_{\mu}(e_{t_1}\otimes\cdots\otimes e_{t_r}),X1_{\mu}(e_{t_1}\otimes\cdots\otimes e_{t_r})\rangle&\ne& 0.
\end{eqnarray*}
Therefore, we see that 
$$\psi_{n,r}(1_{\mu}\rho(X)X1_{\mu})\ne 0.$$

The other case follows automatically, because $X\rho(X)=\rho^2(X)\rho(X)=\rho(Y)Y$ for 
$Y=\rho(X)$.
\end{proof}

We can also give an explicit formula for the well-known embedding (see~\cite{DG}) 
of $\hat{\He}_{\hat{A}_{r-1}}$ into $\hat{\SD}(n,r)$. Let $1_r=1_{(1^r)}$. 
We define the following map 
$$\sigma_{n,r}\colon\hat{\He}_{\hat{A}_{r-1}}\to 1_{r}\hat{\SD}(n,r)1_{r}$$
by 
$$\sigma_{n,r}(b_i)=1_rE_{-i}E_i1_r=1_rE_iE_{-i}1_r,$$
for $i=1,\ldots,r-1$,
$$\sigma_{n,r}(b_r)=1_rE_{-n}\dots E_{-r} E_{r}\dots E_{n} 1_r,$$
\begin{eqnarray*}
 \sigma_{n,r}(T_{\rho})& = & 1_rE_{-n} \dots E_{-r-1} E_{-1}\dots E_{-r}1_r \\
     & (= & 1_rE_{-n}E_{-1}\dots E_{-r+1} E_{-n+1}\dots E_{-r}1_r)
\end{eqnarray*}
and
\begin{eqnarray*}
 \sigma_{n,r}(T_{\rho^{-1}}) & = & 1_rE_{r} \dots E_{1} E_{r+1}\dots E_{n}1_r \\
  & ( = & 1_rE_{r}\dots E_{n-1} E_{r-1} \dots E_{1}E_{n}1_r).
\end{eqnarray*}
It is easy to check that $\sigma_{n,r}$ is well-defined. It turns out that $\sigma_{n,r}$ is actually an isomorphism, which induces the 
affine $q$-Schur functor 
$$\hat{\SD}(n,r)-\mbox{mod}\to \hat{\He}_{\hat{A}_{r-1}}-\mbox{mod}$$ 
between the categories of finite-dimensional modules of the extended 
affine Hecke algebra and of the affine $q$--Schur algebra. 
This functor is an equivalence (see Theorem 4.1.3 in~\cite{DDF}, for example).

The following result will be needed in Section~\ref{GrAlg}.

\begin{prop}\label{alg} Let $n>r$. Suppose that $A$ is a $\Q(q)$-algebra and 
$$f\colon \hat{\SD}(n,r)\to A$$ 
is a surjective $\Q(q)$-algebra homomorphism which is an embedding 
when restricted to $1_r \hat{\SD}(n,r)1_r\cong \hat{\He}_{\hat{A}_{r-1}}$. 
Then $f$ is a $\Q(q)$-algebra isomorphism
$$A\cong \hat{\SD}(n,r).$$  
\end{prop}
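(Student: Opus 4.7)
The plan is to establish injectivity of $f$, since surjectivity is given. The strategy relies on two ingredients: the hypothesis that $f$ is injective on the Hecke corner $1_r \hat{\SD}(n,r) 1_r$, and the fact that, for $n > r$, this corner generates the whole of $\hat{\SD}(n,r)$ as a two-sided ideal. More precisely, I will use the identity
$$\hat{\SD}(n,r)\,1_r\,\hat{\SD}(n,r) \;=\; \hat{\SD}(n,r),$$
which says that $1_r$ is a full idempotent; equivalently, $\hat{\SD}(n,r)1_r$ is a progenerator, equivalently the affine $q$-Schur functor between $\hat{\SD}(n,r)$-modules and $\hat{\He}_{\hat{A}_{r-1}}$-modules is a Morita equivalence, which is precisely the content of Theorem~4.1.3 of~\cite{DDF} cited earlier in this section. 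Since $\hat{\SD}(n,r)$ is unital with unit $1 = \sum_{\lambda \in \Lambda(n,r)} 1_\lambda$, the fullness of $1_r$ yields a decomposition $1 = \sum_i c_i\, 1_r\, d_i$ for some $c_i, d_i \in \hat{\SD}(n,r)$.

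Once this is in place, the argument is essentially formal. Take any $x \in \ker f$. Because $\ker f$ is a two-sided ideal and $f$ is a ring homomorphism, for every pair $a, b \in \hat{\SD}(n,r)$ the element $1_r a x b 1_r$ lies in $\ker f \cap 1_r \hat{\SD}(n,r) 1_r$. By the injectivity hypothesis on the Hecke corner, this forces $1_r a x b 1_r = 0$ for all $a, b$. Specializing to $a = d_i$, $b = c_j$ and expanding $x = 1 \cdot x \cdot 1$ via the decomposition of the unit above then gives
$$x \;=\; \sum_{i,j} c_i\,(1_r d_i\, x\, c_j\, 1_r)\, d_j \;=\; 0,$$
whence $\ker f = 0$ and $f$ is an isomorphism.

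The main, and really the only non-formal, obstacle is the fullness identity for $1_r$. This is exactly where the hypothesis $n > r$ intervenes: for $n < r$ even the decategorified setup is problematic (as flagged in the introduction), and for $n = r$ one must work with a strictly larger algebra than $\hat{\SD}(n,n)$, which is deferred to the sequel~\cite{MTh2}. For $n > r$, however, the cited Morita equivalence makes the fullness immediate, and the proposition reduces to the formal idempotent-decomposition argument above. Note in particular that Lemma~\ref{lem:non-deg} and the anti-involution $\rho$ play no role in this approach, although they would be central to a more direct non-degeneracy argument that avoided invoking~\cite{DDF}.
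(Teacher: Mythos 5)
Your argument is correct in outline and takes a genuinely different route from the paper's. The paper never invokes fullness of $1_r$: it first uses the bilinear form on Green's tensor space together with the anti-involution $\rho$ (Lemma~\ref{lem:non-deg}) to show that $f$ is injective on the strips $1_r\hat{\SD}(n,r)1_{\lambda}$ and $1_{\lambda}\hat{\SD}(n,r)1_r$, and then, for a nonzero $1_{\lambda}X1_{\mu}$, uses the isomorphism $1_{\lambda}\hat{\SD}(n,r)1_{\mu}\cong\Hom_{\hat{\He}_{\hat{A}_{r-1}}}(1_{\mu}\hat{\SD}(n,r)1_{r},1_{\lambda}\hat{\SD}(n,r)1_{r})$ to produce $Y$ with $1_{\lambda}XY1_r\ne 0$, whence $f(1_{\lambda}X1_{\mu})\ne 0$. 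Your version replaces all of this by the single identity $\hat{\SD}(n,r)1_r\hat{\SD}(n,r)=\hat{\SD}(n,r)$ plus a formal idempotent computation; this is cleaner and, as you note, makes Lemma~\ref{lem:non-deg} and $\rho$ unnecessary. The formal part of your argument is fine: $\ker f\cap 1_r\hat{\SD}(n,r)1_r=0$ by hypothesis, and the decomposition of the unit then kills any $x\in\ker f$.

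The one genuine weak point is your justification of the fullness identity. What Theorem~4.1.3 of~\cite{DDF}, as it is used in this paper, provides is an equivalence between the categories of \emph{finite-dimensional} modules. For a finite-dimensional algebra $A$ with idempotent $e$, ``the Schur functor is an equivalence'' is indeed equivalent to $AeA=A$; but $\hat{\SD}(n,r)$ is infinite-dimensional, and for such algebras the implication you need goes the wrong way: $AeA=A$ implies that $M\mapsto eM$ is an equivalence on all modules, whereas an equivalence on finite-dimensional modules does not force $AeA=A$ unless one also knows that $A/AeA$, if nonzero, admits a nonzero finite-dimensional module. So your chain of ``equivalently''s does not make fullness immediate. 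Fortunately the identity is true and has a short direct proof: in the decomposition $V^{\otimes r}\cong\bigoplus_{\mu\in\Lambda(n,r)}x_{\mu}\hat{\He}_{\hat{A}_{r-1}}$ of Green's tensor space, with $x_{\mu}=\sum_{w\in S_{\mu}}T_w$ and $1_{\mu}$ the projection onto the $\mu$-summand, each $x_{\mu}$ satisfies $x_{\mu}^2=P_{\mu}\,x_{\mu}$ for a nonzero polynomial $P_{\mu}$ in $q$, so $x_{\mu}/P_{\mu}$ is an idempotent of $\hat{\He}_{\hat{A}_{r-1}}$ over $\Q(q)$ and $x_{\mu}\hat{\He}_{\hat{A}_{r-1}}$ is a direct summand of the free module $x_{(1^r)}\hat{\He}_{\hat{A}_{r-1}}=\hat{\He}_{\hat{A}_{r-1}}$. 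Hence every homomorphism $x_{\mu}\hat{\He}_{\hat{A}_{r-1}}\to x_{\lambda}\hat{\He}_{\hat{A}_{r-1}}$ factors through $\hat{\He}_{\hat{A}_{r-1}}$, i.e.\ $1_{\lambda}\hat{\SD}(n,r)1_{\mu}=\bigl(1_{\lambda}\hat{\SD}(n,r)1_r\bigr)\bigl(1_r\hat{\SD}(n,r)1_{\mu}\bigr)$. With this substituted for the appeal to the Morita equivalence, your proof is complete.
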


\begin{proof}
We first prove that $f$ is an embedding when restricted to 
$1_r \hat{\SD}(n,r)1_{\lambda}$ and $1_{\lambda}\hat{\SD}(n,r)1_r$, 
for any $\lambda\in\Lambda(n,r)$. Suppose that this is not true in the first 
case, then there exists a non-zero element 
$1_rX1_{\lambda}\in 1_r\hat{\SD}(n,r)1_{\lambda}$ in the kernel of $f$. By 
Lemma~\ref{lem:non-deg}, we have 
$1_rX\rho(X)1_r\ne 0$. However, we have 
$$f(1_rX\rho(X)1_r)=f(1_rX1_{\lambda})f(1_{\lambda}\rho(X)1_r)=0,$$
which leads to a contradiction, because by hypothesis 
$f$ is an embedding when restricted to $1_r\hat{\SD}(n,r)1_r$.
The second case can be proved similarly. 

Now, for any $\lambda,\mu\in\Lambda(n,r)$, let 
$1_{\lambda}X1_{\mu}\in\hat{\SD}(n,r)$ be an arbitrary non-zero element. 
By Schur-Weyl duality, we have 
$$1_{\lambda}\hat{\SD}(n,r)1_{\mu}\cong 
\text{Hom}_{\hat{\He}_{\hat{A}_{r-1}}}(1_{\mu}\hat{\SD}(n,r)1_{r},1_{\lambda}\hat{\SD}(n,r)1_{r}),$$
where the isomorphism is induced by left composition. Therefore, there exists 
an element $1_{\mu}Y1_r$ such that 
$$1_{\lambda}XY1_r\ne 0.$$
By the above, we have 
$$f(1_{\lambda}X1_{\mu})f(1_{\mu}Y1_{1_r})=f(1_{\lambda}XY1_r)\ne 0,$$
so $$f(1_{\lambda}X1_{\mu})\ne 0.$$ 
This shows that $f$ is an embedding when restricted to 
$1_{\lambda}\hat{\SD}(n,r)1_{\mu}$. Since $\lambda$ and $\mu$ were arbitrary, 
this shows that $f$ is an isomorphism. 
\end{proof}

Let us end this section giving an embedding between affine 
$q$--Schur algebras which we will use in Section \ref{2rep}.
\begin{prop} The $\Q(q)$-linear algebra homomorphism 
$$\iota_{n}\colon \hat{\SD}(n,r)\to \hat{\SD}(n+1,r)$$ 
defined by
\begin{eqnarray}
1_{\lambda}&\mapsto& 1_{(\lambda,0)}\\
E_{\pm i}1_{\lambda}&\mapsto& E_{\pm i}1_{(\lambda,0)}\\
E_n1_{\lambda}&\mapsto &E_nE_{n+1}1_{(\lambda,0)}\\
E_{-n}1_{\lambda}&\mapsto &E_{-(n+1)}E_{-n}1_{(\lambda,0)},
\end{eqnarray}
for any $1\leq i\leq n-1$ and $\lambda\in\Lambda(n,r)$, 
is an embedding and gives an isomorphism of algebras
$$\hat{\SD}(n,r)\cong \bigoplus_{\lambda,\mu\in\Lambda(n,r)}1_{(\lambda,0)}\hat{\SD}(n+1,r)1_{(\mu,0)}.$$
\end{prop}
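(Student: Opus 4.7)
The plan is to establish the statement in two main stages: verify that $\iota_n$ is a well-defined algebra map into the indicated corner, and then apply Proposition~\ref{alg} to upgrade this to an isomorphism. Throughout, I will use the presentation of $\hat{\SD}(n,r)$ from Theorem~\ref{thm:presentationnd}, which requires $n > r$, so the proposition is applicable.

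First I would check well-definedness by verifying that the defining relations of $\hat{\SD}(n,r)$ are preserved by the images of the generators. The idempotent orthogonality, partition-of-unity, and $E_{\pm i}$-weight-shifting relations are immediate from $\iota_n(1_\lambda)=1_{(\lambda,0)}$ and the observation that $\bar{\alpha}_i$ in $\Lambda(n,r)$ corresponds to the obvious $\bar{\alpha}_i$ in $\Lambda(n+1,r)$ (for $i<n$), while $\bar{\alpha}_n = \varepsilon_n - \varepsilon_1$ in $\Lambda(n,r)$ corresponds to $\bar{\alpha}_n + \bar{\alpha}_{n+1} = (\varepsilon_n - \varepsilon_{n+1}) + (\varepsilon_{n+1} - \varepsilon_1)$ in $\Lambda(n+1,r)$. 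Relations among generators with indices $<n$ are trivially preserved. The nontrivial checks concern (a) the commutator $[E_n, E_{-n}]$, where one computes
\begin{equation*}
[E_n E_{n+1}, E_{-(n+1)} E_{-n}]\,1_{(\lambda,0)} = [\lambda_n-\lambda_1]\,1_{(\lambda,0)}
\end{equation*}
by expanding using that $E_{\pm n}$ commutes with $E_{\mp(n+1)}$ and applying the $EF$-relations at $n$ and $n+1$ (with $\lambda_{n+1}=0$); and (b) the Serre relations linking $E_n$ with $E_{n\pm 1}$, which after substitution follow by combining the Serre relations at adjacent pairs $(n-1,n)$ and $(n,n+1)$ in $\hat{\SD}(n+1,r)$. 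This is a finite, essentially routine computation.

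Next, it is immediate from the definition that every generating image lies in $\bigoplus_{\lambda,\mu} 1_{(\lambda,0)}\hat{\SD}(n+1,r)1_{(\mu,0)}$, since $E_{\pm i}$ for $i<n$ does not touch the last weight coordinate, while $E_nE_{n+1}$ and $E_{-(n+1)}E_{-n}$ shift it by $+1$ then $-1$ (or vice versa), returning to $0$. For surjectivity onto the corner I would argue that any element of the corner is a $\Q(q)$-linear combination of products $1_{(\lambda,0)}E_{\bs}1_{(\mu,0)}$ whose signed sequence $\bs$ starts and ends at last coordinate $0$; using the Serre and $EF$ relations in $\hat{\SD}(n+1,r)$ one can rearrange $\bs$ so that each $E_{\pm(n+1)}$ is adjacent to a matching $E_{\pm n}$, placing the product in the image of $\iota_n$.

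Finally, I would invoke Proposition~\ref{alg} applied to $\iota_n\colon \hat{\SD}(n,r)\to \bigoplus 1_{(\lambda,0)}\hat{\SD}(n+1,r)1_{(\mu,0)}$. Surjectivity is the previous paragraph. For injectivity on the Hecke corner $1_{(1^r)}\hat{\SD}(n,r)1_{(1^r)}$, a direct substitution in the explicit formulas of Section~\ref{SchAlg} yields
\begin{equation*}
\iota_n\circ \sigma_{n,r} = \sigma_{n+1,r}
\end{equation*}
on each of the generators $b_1,\dots,b_r,T_\rho^{\pm 1}$: every $E_{\pm n}$ appearing in a $\sigma_{n,r}$-formula is replaced by a pair $E_{\pm(n+1)}E_{\pm n}$ or $E_{\pm n}E_{\pm(n+1)}$ that precisely matches the corresponding $\sigma_{n+1,r}$-formula. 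Since both $\sigma_{n,r}$ and $\sigma_{n+1,r}$ are isomorphisms onto their respective Hecke corners, $\iota_n$ restricts to an isomorphism between them, and Proposition~\ref{alg} then yields the asserted algebra isomorphism. The main obstacle will be the combinatorial rewriting in the surjectivity step, where care must be taken to track how Serre relations in $\hat{\SD}(n+1,r)$ pair up unmatched $E_{\pm(n+1)}$-factors with $E_{\pm n}$-factors; everything else reduces to bookkeeping with the known presentation.
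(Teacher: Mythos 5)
The paper states this proposition without any proof, so there is no argument of the authors' to compare yours against; judged on its own, your overall strategy is the right one given the tools the paper provides. Reducing to Proposition~\ref{alg} is exactly what that proposition was set up for, and the injectivity input is correctly identified: a direct substitution gives $\iota_n\circ\sigma_{n,r}=\sigma_{n+1,r}$ on $b_1,\dots,b_r,T_\rho^{\pm1}$, so $\iota_n$ restricted to $1_r\hat{\SD}(n,r)1_r$ is $\sigma_{n+1,r}\circ\sigma_{n,r}^{-1}$, an isomorphism onto $1_r\hat{\SD}(n+1,r)1_r$. Your commutator claim also checks out: writing $H_k=\sum_\mu[\mu_k-\mu_{k+1}]1_\mu$, one gets $[E_nE_{n+1},E_{-(n+1)}E_{-n}]1_{(\lambda,0)}=\bigl([\lambda_n-1][\lambda_1]+[1-\lambda_1][\lambda_n]\bigr)1_{(\lambda,0)}=[\lambda_n-\lambda_1]1_{(\lambda,0)}$, where the collapse to a scalar uses $E_n1_{(\lambda,0)}=0$ and $E_{-(n+1)}1_{(\lambda,0)}=0$ (the shifted weights have last entry $-1$).

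Two steps are materially underdeveloped. First, the Serre relations of the form $(E_nE_{n+1})^2E_{n\pm1}-(q+q^{-1})(E_nE_{n+1})E_{n\pm1}(E_nE_{n+1})+E_{n\pm1}(E_nE_{n+1})^2=0$ do \emph{not} follow from "combining the Serre relations at adjacent pairs" alone: in $\Uglaffext$ the naive product $E_nE_{n+1}$ is not a root vector and these identities are false. They hold only in the corner, where $E_n1_{(\lambda,0)}=0$ forces $E_nE_{n+1}E_nE_{n+1}1_{(\lambda,0)}=\tfrac{1}{q+q^{-1}}E_n^2E_{n+1}^21_{(\lambda,0)}$ (and similarly for the mixed term), after which the two adjacent Serre relations do combine to give zero; this vanishing must be invoked explicitly. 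Second, surjectivity onto $\bigoplus 1_{(\lambda,0)}\hat{\SD}(n+1,r)1_{(\mu,0)}$ is the real content and your one-sentence rearrangement is not yet a proof: sliding an $E_{\pm(n+1)}$ toward a partner $E_{\pm n}$ in general forces it past factors $E_{\mp(n+1)}$, producing $EF$-correction terms (Cartan factors) that must be absorbed by an induction on the number of $(n+1)$-coloured letters. The induction does close up, since each correction term is a corner monomial with strictly fewer such letters, but it needs to be written. A cheaper route to both problematic steps is the module-theoretic one implicit in Section~3.3: the order-preserving bijection $qn+s\mapsto q(n+1)+s$ ($1\le s\le n$) identifies $V^{\otimes r}$ for $\hat{\SD}(n,r)$ with the summand $e\cdot V^{\otimes r}$ of the $(n+1)$-tensor space, $e=\sum_\lambda 1_{(\lambda,0)}$, compatibly with the $\hat{\He}_{\hat{A}_{r-1}}$-action, whence $\hat{\SD}(n,r)\cong e\,\hat{\SD}(n+1,r)\,e$ at once, leaving only the matching of generators to verify.
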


\subsection{The 2-categories $\glcat_{[y]}$ and $\Scat(n,r)_{[y]}$}\label{sec:scat}                                      
 
In this section we define three $2$-categories, 
$\Ucataffy$, $\Uglcataffy$ and~$\Scat(n,r)_{[y]}$, using a graphical calculus 
analogous to Khovanov and Lauda's in~\cite{KL3}. 

\subsubsection{The 2-category $\glcat_{[y]}$}\label{sec:glcat} 
The $2$-category $\Ucataffy$ is defined just as $\Ucataff$ in~\cite{KL3}, 
but the $2$-HOM-spaces are tensored with $\mathbb{Q}[y]$, where $y$ is a formal variable of degree 
two, and some of the KL-relations are $y$-deformed, as shown below. 
In order to define $\Uglcataffy$, change the weights 
in the definition of $\Ucataffy$ into (degenerate) level-zero 
$\hat{\mathfrak{gl}}_n$--weights (i.e. $\mathfrak{gl}_n$--weights).
The $2$-category $\Scat(n,r)_{[y]}$ is then defined 
as a quotient of $\Uglcataffy$. This is 
precisely the affine analogue of what was done in~\cite{MSVschur}. 

\begin{rem} We use the sign conventions from~\cite{MSVschur} in the 
relations on $2$-morphisms, which differ 
from Khovanov and Lauda's sign conventions. For more details on this 
change of convention, see below. 
\end{rem}

\begin{rem}
We do not prove that the 2-category~$\Uglcataffy$ 
provides a categorification of~${\mathbf U}_q(\hat{\mathfrak{gl}}_n)$, 
although we conjecture 
that it does. We will prove that $\Scat(n,r)_{[y]}$ categorifies 
$\hat{\SD}(n,r)$. 
\end{rem}

In order to avoid giving several long definitions which are very similar, we 
only define $\Uglcataffy$ in detail. The $2$-category 
$\Ucataffy$ is defined exactly as $\Uglcataffy$, but using degenerate 
level-zero $\hat{\mathfrak{sl}}_n$-weights. The $2$-category 
$\Scat(n,r)_{[y]}$ is defined as a quotient of $\Uglcataffy$. We will show 
that $\Scat(n,r)_{[y]}$ can also be obtained as a quotient of $\Ucataffy$. 

To be more precise, we first define the $\Q[y]$-linear graded $2$-category 
with translation $\Uglcataffy^*$, whose $2$-morphisms are 
$\Q[y]$-linear combinations of homogeneous 
$2$-morphisms of various degrees. 
The $\Q$-linear $2$-category $\Uglcataffy$ is then obtained by restricting to 
the degree-zero $2$-morphisms. 

\begin{defn} \label{def_glcat} The additive $\Q[y]$-linear graded 2-category 
with translation $\Uglcataffy^*$ consists of

\n $\bullet$ objects:  $\lambda \in \Z^n$.

\n The hom-category $\Uglcataffy^*(\lambda,\lambda')$ between two objects~$\lambda$, $\lambda'$ is an additive $\Q[y]$-linear graded category with 
translation defined by:

\n $\bullet$ objects\footnote{We refer to objects of the category
$\Uglcataffy^*(\lambda,\lambda')$ as 1-morphisms of $\Uglcataffy^*$.  
Likewise, the morphisms of
$\Uglcataffy^*(\lambda,\lambda')$ are called 2-morphisms in $\Uglcataffy^*$. } 
of $\Uglcataffy^*(\lambda,\lambda')$. A 1-morphism in~$\Uglcataffy^*$ 
from~$\lambda$ to~$\lambda'$
is a formal finite direct sum of 1-morphisms
$$ \cal{E}_{\ii} \onel\{t\} = \onelp \cal{E}_{\ii} \onel\{t\}
:= \cal{E}_{\mu_1 i_1}\dotsm\cal{E}_{\mu_m i_m} \onel\{t\}
$$
for any~$t\in \Z$ and signed sequence~$\ii\in\sseq$ such that~$\lambda'=\lambda+\ii_{\Lambda}$ and~$\lambda$, $\lambda'\in\bZ^n$. 

\n The morphisms of~$\Uglcataffy^*(\lambda,\lambda')$ are presented by generators and relations. Multiplication by $y$ is indicated graphically by 
$\bbox{y}$ in the diagrams below. 

\n $\bullet$ generators. For 1-morphisms~$\cal{E}_{\ii} \onel\{t\}$
and~$\cal{E}_{\jj} \onel\{t'\}$ in~$\Uglcataffy^*$, the 
hom-sets~$\Uglcataffy^*(\cal{E}_{\ii} \onel\{t\},\cal{E}_{\jj} \onel\{t'\})$ of~$\Uglcataffy^*(\lambda,\lambda')$ are graded 
$\Q[y]$-vector spaces given by linear combinations of 
diagrams of homogeneous degrees, modulo 
certain relations, built from composites of:
\begin{enumerate}[i)]
   \item  Degree-zero identity 2-morphisms $1_x$ for each 1-morphism $x$ in
$\Uglcataffy^*$; in particular for any $i \in \{1, \dots,n\}$, $t\in \Z$ and $\lambda \in \Z^n$, the identity 2-morphisms $1_{\cal{E}_{i} \onel}\{t\}$ and
$1_{\cal{E}_{-i} \onel}\{t\}$ are represented graphically by
\[
\begin{array}{ccc}
  1_{\cal{E}_{i} \onel\{t\}} &\quad  & 1_{\cal{E}_{-i} \onel\{t\}} \\ \\
   \xy
 (0,0)*{\dblue\xybox{(0,8);(0,-8); **\dir{-} ?(.5)*\dir{>}+(2.3,0)*{\scriptstyle{}};}};
 (-1,-11)*{ i};(-1,11)*{ i};
 (6,2)*{ \lambda};
 (-8,2)*{ \lambda +i_{\Lambda}};
 (-10,0)*{};(10,0)*{};
 \endxy
 & &
 \;\;   
   \xy
 (0,0)*{\dblue\xybox{(0,8);(0,-8); **\dir{-} ?(.5)*\dir{<}+(2.3,0)*{\scriptstyle{}};}};
 (-1,-11)*{ i};(-1,11)*{ i};
 (6,2)*{ \lambda};
 (-8.5,2)*{ \lambda -i_{\Lambda}};
 (-12,0)*{};(12,0)*{};
 \endxy
\\ \\
   \;\;\text{ {\rm deg} 0}\;\;
 & &\;\;\text{ {\rm deg} 0}\;\;
\end{array}
\]

More generally, for a signed sequence~$\ii=(\mu_1 i_1, \mu _2i_2, \dots \mu_m i_m)$, the identity~$1_{\cal{E}_{\ii} \onel\{t\}}$ 2-morphism is
represented as
\begin{equation*}
\begin{array}{ccc}
  \xy
 (-12,0)*{\dblue\xybox{(-12,8);(-12,-8); **\dir{-};}};
 (-4,0)*{\dred\xybox{(-4,8);(-4,-8); **\dir{-};}};
 (4,0)*{\cdots};
 (12,0)*{\dgreen\xybox{(12,8);(12,-8); **\dir{-};}};
 (-12,11)*{i_1}; (-4,11)*{ i_2};(12,11)*{ i_m };
  (-12,-11)*{ i_1}; (-4,-11)*{ i_2};(12,-11)*{ i_m};
 (18,2)*{ \lambda}; (-20,2)*{ \lambda+\ii_{\Lambda}};
 \endxy
\end{array}
\end{equation*}
where the strand labeled~$i_{k}$ is oriented up if~$\mu_{k}=+$ and oriented down if $\mu_{k}=-$. We will often place labels on the side of a strand and omit the labels at the top and bottom. The signed sequence can be recovered from the labels and the orientations on the strands. We might also forget the object on the left of the diagram which can be recovered from the object on the right and the signed sequence corresponding to the diagram.

\item For any $\lambda \in \bZ^n$ the 2-morphisms 
\[
\begin{tabular}{|l|c|c|c|c|}
\hline
 {\bf Notation:} \xy (0,-5)*{};(0,7)*{}; \endxy&
 $\dblue\Uup_{\black i,\lambda}$  &  $\dblue\Udown_{\black i,\lambda}$  
 &$\Ucrossij_{i,j,\lambda}$
 &$\Ucrossdij_{i,j,\lambda}$  \\
 \hline
 {\bf 2-morphism:} &   \xy
 (0,0)*{\dblue\xybox{(0,7);(0,-7); **\dir{-} ?(.75)*\dir{>}+(2.3,0)*{\scriptstyle{}}
 ?(.1)*\dir{ }+(2,0)*{\black \scs i};
 (0,-2)*{\txt\large{$\bullet$}};}};
 (4,4)*{ \lambda};
 (-8,4)*{ \lambda +i_{\Lambda}};
 (-10,0)*{};(10,0)*{};
 \endxy
 &
     \xy
 (0,0)*{\dblue\xybox{(0,7);(0,-7); **\dir{-} ?(.75)*\dir{<}+(2.3,0)*{\scriptstyle{}}
 ?(.1)*\dir{ }+(2,0)*{\black\scs i};
 (0,-2)*{\txt\large{$\bullet$}};}};
 (-6,4)*{ \lambda};
 (8,3.9)*{ \lambda +i_{\Lambda}};
 (-10,0)*{};(10,9)*{};
 \endxy
 &
   \xy
  (0,0)*{\xybox{
    (0,0)*{\dblue\xybox{(-4,-4)*{};(4,4)*{} **\crv{(-4,-1) & (4,1)}?(1)*\dir{>} ;}};
    (0,0)*{\dgreen\xybox{(4,-4)*{};(-4,4)*{} **\crv{(4,-1) & (-4,1)}?(1)*\dir{>};}};
    (-5,-3)*{\scs i};
     (5.1,-3)*{\scs j};
     (8,1)*{ \lambda};
     (-12,0)*{};(12,0)*{};
     }};
  \endxy
 &
   \xy
  (0,0)*{\xybox{
    (0,0)*{\dgreen\xybox{(-4,4)*{};(4,-4)*{} **\crv{(-4,1) & (4,-1)}?(1)*\dir{>} ;}};
    (0,0)*{\dblue\xybox{(4,4)*{};(-4,-4)*{} **\crv{(4,1) & (-4,-1)}?(1)*\dir{>};}};
    (-6,-3)*{\scs i};
     (6,-3)*{\scs j};
     (8,1)*{ \lambda};
     (-12,0)*{};(12,0)*{};
     }};
  \endxy
\\ & & & &\\
\hline
 {\bf Degree:} & \;\;\text{  $i\cdot i$ }\;\;
 &\;\;\text{  $i\cdot i$}\;\;& \;\;\text{  $-i \cdot j$}\;\;
 & \;\;\text{  $-i \cdot j$}\;\; \\
 \hline
\end{tabular}
\]

\[
\begin{tabular}{|l|c|c|c|c|}
\hline
  {\bf Notation:} \xy (0,-5)*{};(0,7)*{}; \endxy
 & \text{$\Ucupri_{i,\lambda}$} 
 & \text{$\Ucupli_{i,\lambda}$} 
 & \text{$\Ucapli_{i,\lambda}$} 
 & \text{$\Ucapri_{i,\lambda}$} \\
 \hline
  {\bf 2-morphism:} & \xy
    (0,-3)*{\dblue\bbpef{\black i}};
    (8,-3)*{ \lambda};
    (-12,0)*{};(12,0)*{};
    \endxy
  & \xy
    (0,-3)*{\dblue\bbpfe{\black i}};
    (8,-3)*{ \lambda};
    (-12,0)*{};(12,0)*{};
    \endxy
  & \xy
    (0,0)*{\dblue\bbcef{\black i}};
    (8,3)*{ \lambda};
    (-12,0)*{};(12,0)*{};
    \endxy 
  & \xy
    (0,0)*{\dblue\bbcfe{\black i}};
    (8,3)*{ \lambda};
    (-12,0)*{};(12,0)*{};(8,8)*{};
    \endxy\\& & &  &\\ \hline
 {\bf Degree:} & \;\;\text{  $1+\llambda_i$}\;\;
 & \;\;\text{ $1-\llambda_i$}\;\;
 & \;\;\text{ $1+\llambda_i$}\;\;
 & \;\;\text{  $1-\llambda_i$}\;\;
 \\
 \hline
\end{tabular}
\]
\end{enumerate}
where the degrees are given by the symmetric $\Z$-valued bilinear form on $\C \{1,\ldots,n\}$  
\begin{equation*}
i\cdot j = 
\begin{cases}
 2 & \mbox{if} \ i=j \\
-1 & \mbox{if} \ i\equiv j\pm 1\mod n \\
 0 & \mbox{if} \ i\not\equiv j\pm 1\mod n .
\end{cases}
\end{equation*}

\n $\bullet$ relations:

\n $\star$ Biadjointness and cyclicity:
\begin{enumerate}[i)]
\item\label{it:sl2i}  $\mathbf{1}_{\lambda+i_{\Lambda}}\cal{E}_{i}\onel$ and
$\onel\cal{E}_{-i}\mathbf{1}_{\lambda+i_{\Lambda}}$ are biadjoint, up to grading shifts:
\begin{equation} \label{eq_biadjoint1}
\text{$
  \xy   0;/r.18pc/:
    (0,0)*{\dblue\xybox{
    (-8,0)*{}="1";
    (0,0)*{}="2";
    (8,0)*{}="3";
    (-8,-10);"1" **\dir{-};
    "1";"2" **\crv{(-8,8) & (0,8)} ?(0)*\dir{>} ?(1)*\dir{>};
    "2";"3" **\crv{(0,-8) & (8,-8)}?(1)*\dir{>};
    "3"; (8,10) **\dir{-};}};
    (12,-9)*{\lambda};
    (-6,9)*{\lambda+i_{\Lambda}};
    (-10,-8)*{\scs i};
    \endxy
    \quad =
    \quad
\xy   0;/r.18pc/:
    (0,0)*{\dblue\xybox{
    (-8,0)*{}="1";
    (0,0)*{}="2";
    (8,0)*{}="3";
    (0,-10);(0,10)**\dir{-} ?(.5)*\dir{>};}};
    (5,8)*{\lambda};
    (-9,8)*{\lambda+i_{\Lambda}};
    (-2,-8)*{\scs i};
    \endxy
    \quad =
    \quad
\xy   0;/r.18pc/:
    (0,0)*{\dblue\xybox{
    (8,0)*{}="1";
    (0,0)*{}="2";
    (-8,0)*{}="3";
    (8,-10);"1" **\dir{-};
    "1";"2" **\crv{(8,8) & (0,8)} ?(0)*\dir{>} ?(1)*\dir{>};
    "2";"3" **\crv{(0,-8) & (-8,-8)}?(1)*\dir{>};
    "3"; (-8,10) **\dir{-};}};
    (12,9)*{\lambda};
    (-5,-9)*{\lambda+i_{\Lambda}};
    (10,-8)*{\scs i};
    \endxy
$}
\end{equation}

\begin{equation}\label{eq_biadjoint2}
\text{$
 \xy   0;/r.18pc/:
    (0,0)*{\dblue\xybox{
    (-8,0)*{}="1";
    (0,0)*{}="2";
    (8,0)*{}="3";
    (-8,-10);"1" **\dir{-};
    "1";"2" **\crv{(-8,8) & (0,8)} ?(0)*\dir{<} ?(1)*\dir{<};
    "2";"3" **\crv{(0,-8) & (8,-8)}?(1)*\dir{<};
    "3"; (8,10) **\dir{-};}};
    (12,-9)*{\lambda+i_{\Lambda}};
    (-6,9)*{ \lambda};
    (-10,-8)*{\scs i};
    \endxy
    \quad =
    \quad
\xy   0;/r.18pc/:
    (0,0)*{\dblue\xybox{
    (-8,0)*{}="1";
    (0,0)*{}="2";
    (8,0)*{}="3";
    (0,-10);(0,10)**\dir{-} ?(.5)*\dir{<};}};
    (9,8)*{\lambda+i_{\Lambda}};
    (-5,8)*{ \lambda};
    (-2,-8)*{\scs i};
    \endxy
    \quad =
    \quad
 \xy  0;/r.18pc/:
    (0,0)*{\dblue\xybox{
    (8,0)*{}="1";
    (0,0)*{}="2";
    (-8,0)*{}="3";
    (8,-10);"1" **\dir{-};
    "1";"2" **\crv{(8,8) & (0,8)} ?(0)*\dir{<} ?(1)*\dir{<};
    "2";"3" **\crv{(0,-8) & (-8,-8)}?(1)*\dir{<};
    "3"; (-8,10) **\dir{-};}};
    (12,9)*{\lambda+i_{\Lambda}};
    (-6,-9)*{ \lambda};
    (10,-8)*{\scs i};
    \endxy
$}
\end{equation}

\item As well for dotted lines:
\begin{equation} \label{eq_cyclic_dot}
\text{$
    \xy
    (0,0)*{\dblue\xybox{
    (-8,5)*{}="1";
    (0,5)*{}="2";
    (0,-5)*{}="2'";
    (8,-5)*{}="3";
    (-8,-10);"1" **\dir{-};
    "2";"2'" **\dir{-} ?(.5)*\dir{<};
    "1";"2" **\crv{(-8,12) & (0,12)} ?(0)*\dir{<};
    "2'";"3" **\crv{(0,-12) & (8,-12)}?(1)*\dir{<};
    "3"; (8,10) **\dir{-};
    (0,4)*{\txt\large{$\bullet$}};}};
    (15,-9)*{ \lambda+i_{\Lambda}};
    (-12,9)*{\lambda};
    (10,8)*{\scs };
    (-10,-8)*{\scs i};
    \endxy
    \quad = \quad
      \xy
 (0,0)*{\dblue\xybox{
 (0,10);(0,-10); **\dir{-} ?(.75)*\dir{<}+(2.3,0)*{\scriptstyle{}}
 ?(.1)*\dir{ }+(2,0)*{\scs };
 (0,0)*{\txt\large{$\bullet$}};}};
 (-6,5)*{ \lambda};
 (8,5)*{ \lambda +i_{\Lambda}};
 (-10,0)*{};(10,0)*{};(-2,-8)*{\scs i};
 \endxy
    \quad = \quad
    \xy
    (0,0)*{\dblue\xybox{
    (8,5)*{}="1";
    (0,5)*{}="2";
    (0,-5)*{}="2'";
    (-8,-5)*{}="3";
    (8,-10);"1" **\dir{-};
    "2";"2'" **\dir{-} ?(.5)*\dir{<};
    "1";"2" **\crv{(8,12) & (0,12)} ?(0)*\dir{<};
    "2'";"3" **\crv{(0,-12) & (-8,-12)}?(1)*\dir{<};
    "3"; (-8,10) **\dir{-};
    (0,4)*{\txt\large{$\bullet$}};}};
    (15,9)*{\lambda+i_{\Lambda}};
    (-12,-9)*{\lambda};
    (-10,8)*{\scs };
    (10,-8)*{\scs i};
    \endxy
$}
\end{equation}
\item All 2-morphisms are cyclic with respect to the above biadjoint
   structure. This is ensured by the relations \eqref{eq_biadjoint1}--\eqref{eq_cyclic_dot}, and the
   relations
\begin{equation} \label{eq_cyclic_cross-gen}
\text{$
\xy 0;/r.19pc/:
(0,0)*{\dred\xybox{
     (-4,-4)*{};(4,4)*{} **\crv{(-4,-1) & (4,1)}?(1)*\dir{>};
     (-4,-4)*{};(18,-4)*{} **\crv{(-4,-16) & (18,-16)} ?(1)*\dir{<}?(0)*\dir{<};
     (18,-4);(18,12) **\dir{-};
     (4,4)*{};(-18,4)*{} **\crv{(4,16) & (-18,16)} ?(1)*\dir{>};  
     (-18,4);(-18,-12) **\dir{-}; 
}};
(0,0)*{\dblue\xybox{
     (4,-4)*{};(-4,4)*{} **\crv{(4,-1) & (-4,1)};
     (12,-4);(12,12) **\dir{-};
     (-12,4);(-12,-12) **\dir{-};
     (4,-4)*{};(12,-4)*{} **\crv{(4,-10) & (12,-10)}?(1)*\dir{<}?(0)*\dir{<};
      (-4,4)*{};(-12,4)*{} **\crv{(-4,10) & (-12,10)}?(1)*\dir{>}?(0)*\dir{>};
}};
     (8,1)*{ \lambda};
     (-10,0)*{};(10,0)*{};
      (20,11)*{\scs j};(10,11)*{\scs i};
      (-20,-11)*{\scs j};(-10,-11)*{\scs i};
   %  }};
  \endxy
\quad =  \quad \xy
(0,0)*{\dred\xybox{
     (-4,-4)*{};(4,4)*{} **\crv{(-4,-1) & (4,1)}?(0)*\dir{<};}};
(0,0)*{\dblue\xybox{
     (4,-4)*{};(-4,4)*{} **\crv{(4,-1) & (-4,1)}?(0)*\dir{<};}};
     (-5,3)*{\scs i};
     (5.1,3)*{\scs j};
     (-8,0)*{ \lambda};
     (-12,0)*{};(12,0)*{};
  \endxy \quad =  \quad
 \xy 0;/r.19pc/:
(0,0)*{\dblue\xybox{
      (4,-4)*{};(-4,4)*{} **\crv{(4,-1) & (-4,1)}?(1)*\dir{>};
      (-4,4)*{};(18,4)*{} **\crv{(-4,16) & (18,16)} ?(1)*\dir{>};
      (4,-4)*{};(-18,-4)*{} **\crv{(4,-16) & (-18,-16)} ?(1)*\dir{<}?(0)*\dir{<};
      (18,4);(18,-12) **\dir{-};
      (-18,-4);(-18,12) **\dir{-};
}};
(0,0)*{\dred\xybox{
      (-4,-4)*{};(4,4)*{} **\crv{(-4,-1) & (4,1)}; 
      (12,4);(12,-12) **\dir{-};
      (-10,0)*{};(10,0)*{};
      (-4,-4)*{};(-12,-4)*{} **\crv{(-4,-10) & (-12,-10)}?(1)*\dir{<}?(0)*\dir{<};
      (4,4)*{};(12,4)*{} **\crv{(4,10) & (12,10)}?(1)*\dir{>}?(0)*\dir{>};
      (-12,-4);(-12,12) **\dir{-};
}};
  (8,1)*{ \lambda};
 (-20,11)*{\scs i};(-10,11)*{\scs j};
  (20,-11)*{\scs i};(10,-11)*{\scs j};
  \endxy
$}
\end{equation}
The cyclic condition on 2-morphisms expressed by \eqref{eq_biadjoint1}--\eqref{eq_cyclic_cross-gen} ensures that diagrams related by isotopy represent
the same 2-morphism in $\Uglcataffy$.

It will be convenient to introduce degree zero 2-morphisms:
\begin{equation} \label{eq_crossl-gen}
\text{$
  \xy
(0,0)*{\dred\xybox{
    (-4,-4)*{};(4,4)*{} **\crv{(-4,-1) & (4,1)}?(1)*\dir{>};}};
(0,0)*{\dblue\xybox{
    (4,-4)*{};(-4,4)*{} **\crv{(4,-1) & (-4,1)}?(0)*\dir{<};}};
    (-5,-3)*{\scs j};
     (-5,3)*{\scs i};
     (8,2)*{ \lambda};
     (-12,0)*{};(12,0)*{};
  \endxy
:=
 \xy 0;/r.19pc/:
(0,0)*{\dred\xybox{
    (4,-4)*{};(-4,4)*{} **\crv{(4,-1) & (-4,1)}?(1)*\dir{>};
    (-4,4);(-4,12) **\dir{-}; 
    (4,-4);(4,-12) **\dir{-};
}};
(0,0)*{\dblue\xybox{
    (-4,-4)*{};(4,4)*{} **\crv{(-4,-1) & (4,1)};
    (-12,-4);(-12,12) **\dir{-};
    (12,4);(12,-12) **\dir{-};
    (-10,0)*{};(10,0)*{};
    (-4,-4)*{};(-12,-4)*{} **\crv{(-4,-10) & (-12,-10)}?(1)*\dir{<}?(0)*\dir{<};
    (4,4)*{};(12,4)*{} **\crv{(4,10) & (12,10)}?(1)*\dir{>}?(0)*\dir{>};
}};
    (16,1)*{\lambda};
    (-14,11)*{\scs i};(-2,11)*{\scs j};
    (14,-11)*{\scs i};(2,-11)*{\scs j};
 \endxy
  \quad = \quad
  \xy 0;/r.19pc/:
(0,0)*{\dblue\xybox{
    (-4,-4)*{};(4,4)*{} **\crv{(-4,-1) & (4,1)}?(1)*\dir{<};
    (4,4);(4,12) **\dir{-};(-4,-4);(-4,-12) **\dir{-};
}};
(0,0)*{\dred\xybox{
    (4,-4)*{};(-4,4)*{} **\crv{(4,-1) & (-4,1)};
     (12,-4);(12,12) **\dir{-};
     (-12,4);(-12,-12) **\dir{-};
     (10,0)*{};(-10,0)*{};
     (4,-4)*{};(12,-4)*{} **\crv{(4,-10) & (12,-10)}?(1)*\dir{>}?(0)*\dir{>};
      (-4,4)*{};(-12,4)*{} **\crv{(-4,10) & (-12,10)}?(1)*\dir{<}?(0)*\dir{<};
}};
     (16,1)*{\lambda};
     (14,11)*{\scs j};(2,11)*{\scs i};
     (-14,-11)*{\scs j};(-2,-11)*{\scs i};
  \endxy
$}
\end{equation}
\begin{equation} \label{eq_crossr-gen}
\text{$
  \xy
(0,0)*{\dred\xybox{
    (-4,-4)*{};(4,4)*{} **\crv{(-4,-1) & (4,1)}?(0)*\dir{<};}};
(0,0)*{\dblue\xybox{
    (4,-4)*{};(-4,4)*{} **\crv{(4,-1) & (-4,1)}?(1)*\dir{>};}};
    (5.1,-3)*{\scs i};
     (5.1,3)*{\scs j};
     (-8,2)*{ \lambda};
     (-12,0)*{};(12,0)*{};
  \endxy
:=
 \xy 0;/r.19pc/:
(0,0)*{\dblue\xybox{
    (-4,-4)*{};(4,4)*{} **\crv{(-4,-1) & (4,1)}?(1)*\dir{>};
    (4,4);(4,12) **\dir{-};
    (-4,-4);(-4,-12) **\dir{-};
}};
(0,0)*{\dred\xybox{
     (4,-4)*{};(-4,4)*{} **\crv{(4,-1) & (-4,1)};
     (12,-4);(12,12) **\dir{-};
     (-12,4);(-12,-12) **\dir{-};
     (10,0)*{};(-10,0)*{};
     (4,-4)*{};(12,-4)*{} **\crv{(4,-10) & (12,-10)}?(1)*\dir{<}?(0)*\dir{<};
     (-4,4)*{};(-12,4)*{} **\crv{(-4,10) & (-12,10)}?(1)*\dir{>}?(0)*\dir{>};
}};
     (-16,1)*{\lambda};
     (14,11)*{\scs j};(2,11)*{\scs i};
     (-14,-11)*{\scs j};(-2,-11)*{\scs i};
\endxy
  \quad = \quad
  \xy 0;/r.19pc/:
(0,0)*{\dred\xybox{
     (4,-4)*{};(-4,4)*{} **\crv{(4,-1) & (-4,1)}?(1)*\dir{<};
     (-4,4);(-4,12) **\dir{-};     (4,-4);(4,-12) **\dir{-};
}};
(0,0)*{\dblue\xybox{
     (-4,-4)*{};(4,4)*{} **\crv{(-4,-1) & (4,1)};
     (-12,-4);(-12,12) **\dir{-};
     (12,4);(12,-12) **\dir{-};
     (-10,0)*{};(10,0)*{};
     (-4,-4)*{};(-12,-4)*{} **\crv{(-4,-10) & (-12,-10)}?(1)*\dir{>}?(0)*\dir{>};
     (4,4)*{};(12,4)*{} **\crv{(4,10) & (12,10)}?(1)*\dir{<}?(0)*\dir{<};
}};
     (-16,1)*{\lambda};
     (-14,11)*{\scs i};(-2,11)*{\scs j};
     (14,-11)*{\scs i};(2,-11)*{\scs j};
  \endxy
$}
\end{equation}
where the second equality in \eqref{eq_crossl-gen} and \eqref{eq_crossr-gen}
follow from \eqref{eq_cyclic_cross-gen}. 
\end{enumerate}

\n $\star$ Bubble relations:
\begin{enumerate}[i)]
\item All dotted bubbles of negative degree are zero. That is,
\begin{equation} \label{eq_positivity_bubbles}
 \xy
 (-12,0)*{\dblue\cbub{\black m}{\black i}};
 (-8,8)*{\lambda};
 \endxy
  = 0
 \qquad
  \text{if $m<\llambda_i-1$} \qquad
 \xy
 (-12,0)*{\dblue\ccbub{\black m}{\black i}};
 (-8,8)*{\lambda};
 \endxy = 0\quad
  \text{if $m< -\llambda_i-1$}
\end{equation}
for all $m \in \Z_+$, where a dot carrying a label $m$ denotes the
$m$-fold iterated vertical composite of $\Uup_{i,\lambda}$ or
$\Udown_{i,\lambda}$ depending on the orientation.  

\item A dotted bubble of degree zero equals $\pm 1$:
\begin{equation}\label{eq:bubb_deg0}
\xy 0;/r.18pc/:
 (0,-1)*{\dblue\cbub{\black\llambda_i-1}{\black i}};
  (4,8)*{\lambda};
 \endxy
  = (-1)^{\laii} \quad \text{for $\llambda_i \geq 1$,}
  \qquad \quad
  \xy 0;/r.18pc/:
 (0,-1)*{\dblue\ccbub{\black -\llambda_i-1}{\black i}};
  (4,8)*{\lambda};
 \endxy
  = (-1)^{\laii-1} \quad \text{for $\llambda_i \leq -1$.}
\end{equation}

\item For the following relations we employ the convention that all summations are increasing, so that a summation of the form $\sum_{f=0}^{m}$ is zero 
if $m < 0$.
\begin{eqnarray}
\label{eq:redtobubbles}
  \text{$\xy 0;/r.18pc/:
  (10,8)*{\lambda};
  %(0,0)*{\twoIu{i}};
  (0,-3)*{\dblue\xybox{
  (-3,-8)*{};(3,8)*{} **\crv{(-3,-1) & (3,1)}?(1)*\dir{>};?(0)*\dir{>};
    (3,-8)*{};(-3,8)*{} **\crv{(3,-1) & (-3,1)}?(1)*\dir{>};
  (-3,-12)*{\bbsid};
  (-3,8)*{\bbsid};
  (3,8)*{}="t1";
  (9,8)*{}="t2";
  (3,-8)*{}="t1'";
  (9,-8)*{}="t2'";
   "t1";"t2" **\crv{(3,14) & (9, 14)};
   "t1'";"t2'" **\crv{(3,-14) & (9, -14)};
   "t2'";"t2" **\dir{-} ?(.5)*\dir{<};}};
   (9,0)*{}; (-7.5,-12)*{\scs i};
 \endxy$} \; = \; -\sum_{f=0}^{-\llambda_i}
   \xy
  (19,4)*{\lambda};
  (0,0)*{\dblue\bbe{}};(-2,-8)*{\scs i};
  (12,-2)*{\dblue\cbub{\black\llambda_i-1+f}{\black i}};
  (0,6)*{\dblue\bullet}+(6,1)*{\scs -\llambda_i-f};
 \endxy
\qquad \quad
  \text{$ \xy 0;/r.18pc/:
  (-12,8)*{\lambda};
   (0,-2)*{\dblue\xybox{
   (-3,-8)*{};(3,8)*{} **\crv{(-3,-1) & (3,1)}?(1)*\dir{>};?(0)*\dir{>};
    (3,-8)*{};(-3,8)*{} **\crv{(3,-1) & (-3,1)}?(1)*\dir{>};
  (3,-12)*{\bbsid};
  (3,8)*{\bbsid}; %(6,-8)*{\scs i};
  (-9,8)*{}="t1";
  (-3,8)*{}="t2";
  (-9,-8)*{}="t1'";
  (-3,-8)*{}="t2'";
   "t1";"t2" **\crv{(-9,14) & (-3, 14)};
   "t1'";"t2'" **\crv{(-9,-14) & (-3, -14)};
  "t1'";"t1" **\dir{-} ?(.5)*\dir{<};}};(7.5,-11)*{\scs i};
 \endxy$} \; = \;
 \sum_{g=0}^{\llambda_i}
   \xy
  (-12,8)*{\lambda};
  (0,0)*{\dblue\bbe{}};(2,-8)*{\scs i};
  (-12,-2)*{\dblue\ccbub{\black -\llambda_i-1+g}{\black i}};
  (0,6)*{\dblue\bullet}+(8,-1)*{\scs \llambda_i-g};
 \endxy
\end{eqnarray}
%
% square relations
%
\begin{eqnarray}
\label{eq:EF}
 \vcenter{\xy 0;/r.18pc/:
  (0,0)*{\dblue\xybox{
  (-8,0)*{};
  (8,0)*{};
  (-4,10)*{}="t1";
  (4,10)*{}="t2";
  (-4,-10)*{}="b1";
  (4,-10)*{}="b2";
  "t1";"b1" **\dir{-} ?(.5)*\dir{<};
  "t2";"b2" **\dir{-} ?(.5)*\dir{>};}};
  (-6,-8)*{\scs i};
  (6,-8)*{\scs i};
  (10,2)*{\lambda};
  (-10,2)*{\lambda};
  \endxy}
&\quad = \quad&
 \vcenter{   \xy 0;/r.18pc/:
    (0,0)*{\dblue\xybox{
    (-4,-4)*{};(4,4)*{} **\crv{(-4,-1) & (4,1)}?(1)*\dir{>};
    (4,-4)*{};(-4,4)*{} **\crv{(4,-1) & (-4,1)}?(1)*\dir{<};?(0)*\dir{<};
    (-4,4)*{};(4,12)*{} **\crv{(-4,7) & (4,9)};
    (4,4)*{};(-4,12)*{} **\crv{(4,7) & (-4,9)}?(1)*\dir{>};}};
    (8,8)*{\lambda};(-6,-7)*{\scs i};(6.8,-7)*{\scs i};
 \endxy}
  \quad - \quad
   \sum_{f=0}^{\llambda_i-1} \sum_{g=0}^{f}
    \vcenter{\xy 0;/r.18pc/:
    (-10,10)*{\lambda};
  (0,0)*{\dblue\xybox{
  (-8,0)*{}; (8,0)*{};
  (-4,-15)*{}="b1";
  (4,-15)*{}="b2";
  "b2";"b1" **\crv{(5,-8) & (-5,-8)}; ?(.05)*\dir{<} ?(.93)*\dir{<}
  ?(.8)*\dir{}+(0,-.1)*{\bullet}+(-5,2)*{\black\scs f-g};
  (-4,15)*{}="t1";
  (4,15)*{}="t2";
  "t2";"t1" **\crv{(5,8) & (-5,8)}; ?(.15)*\dir{>} ?(.95)*\dir{>}
  ?(.4)*\dir{}+(0,-.2)*{\bullet}+(3,-2)*{\black\scs \mspace{38mu}\;\;\; \llambda_i-1-f};
  (0,0)*{\ccbub{\black\scs \quad\;\;\;-\llambda_i-1+g}{i}};}};
  \endxy}
\label{eq_ident_decomp0}
 %\nn
% \\  \; \nn \\
\\[2ex]
 \vcenter{\xy 0;/r.18pc/:
  (0,0)*{\dblue\xybox{
  (-8,0)*{};(8,0)*{};
  (-4,10)*{}="t1";
  (4,10)*{}="t2";
  (-4,-10)*{}="b1";
  (4,-10)*{}="b2";
  "t1";"b1" **\dir{-} ?(.5)*\dir{>};
  "t2";"b2" **\dir{-} ?(.5)*\dir{<};}};
  (-6,-8)*{\scs i};(6,-8)*{\scs i};
  (10,2)*{\lambda};
  (-10,2)*{\lambda};
  \endxy}
&\quad = \quad&
   \vcenter{\xy 0;/r.18pc/:
    (0,0)*{\dblue\xybox{
    (-4,-4)*{};(4,4)*{} **\crv{(-4,-1) & (4,1)}?(1)*\dir{<};?(0)*\dir{<};
    (4,-4)*{};(-4,4)*{} **\crv{(4,-1) & (-4,1)}?(1)*\dir{>};
    (-4,4)*{};(4,12)*{} **\crv{(-4,7) & (4,9)}?(1)*\dir{>};
    (4,4)*{};(-4,12)*{} **\crv{(4,7) & (-4,9)};}};
    (8,8)*{\lambda};(-6.8,-7)*{\scs i};(6,-7)*{\scs i};
 \endxy}
  \quad - \quad
\sum_{f=0}^{-\llambda_i-1} \sum_{g=0}^{f}
    \vcenter{\xy 0;/r.18pc/:
  (0,0)*{\dblue\xybox{ 
  (-8,0)*{}; (8,0)*{};
  (-4,-15)*{}="b1";
  (4,-15)*{}="b2";
  "b2";"b1" **\crv{(5,-8) & (-5,-8)}; ?(.1)*\dir{>} ?(.95)*\dir{>}
  ?(.8)*\dir{}+(0,-.1)*{\bullet}+(-5,2)*{\black\scs f-g};
  (-4,15)*{}="t1";
  (4,15)*{}="t2";
  "t2";"t1" **\crv{(5,8) & (-5,8)}; ?(.15)*\dir{<} ?(.97)*\dir{<}
  ?(.4)*\dir{}+(0,-.2)*{\bullet}+(3,-2)*{\black\scs \mspace{32mu}\;\;-\llambda_i-1-f};
  (0,0)*{\cbub{\black\scs \quad\; \llambda_i-1+g}{i}};}};
  (-10,10)*{\lambda};
  \endxy} \label{eq_ident_decomp}
\end{eqnarray}

\item Fake bubbles:

Notice that for some values of $\lambda$ the dotted bubbles appearing above have negative labels. A composite of $\dblue\Uup_{\black\!\! i,\lambda}$
or $\dblue\Udown_{\black i,\lambda}$ with itself a negative number of times does not make sense. These dotted bubbles with negative labels, called fake bubbles, are
formal symbols inductively defined by the equation
%\begin{center}
\begin{eqnarray}\label{eq_infinite_Grass}
\left(\ \xy 0;/r.15pc/:
 (0,0)*{\dblue\xybox{
 (0,0)*{\ccbub{\black\mspace{-32mu}-\llambda_i-1}{\black i}};}};
  (4,8)*{\lambda};
 \endxy
 +
 \xy 0;/r.15pc/:
 (0,0)*{\dblue\xybox{
 (0,0)*{\ccbub{\black\mspace{-12mu}-\llambda_i-1+1}{\black i}};}};
  (4,8)*{\lambda};
 \endxy t
 + \cdots +
 \xy 0;/r.15pc/:
 (0,0)*{\dblue\xybox{
 (0,0)*{\ccbub{\black\mspace{-12mu}-\llambda_i-1+r}{\black i}};}};
  (4,8)*{\lambda};
 \endxy t^{r}
 + \cdots
\right)& \times & \nn
\\
\left( \xy 0;/r.15pc/:
(0,0)*{\dblue\xybox{
 (0,0)*{\cbub{\black\mspace{-22mu}\llambda_i-1}{\black i}};}};
  (4,8)*{\lambda};
 \endxy
 +
 \xy 0;/r.15pc/:
 (0,0)*{\dblue\xybox{
 (0,0)*{\cbub{\black\mspace{-8mu}\llambda_i-1+1}{\black i}};}};
  (4,8)*{\lambda};
 \endxy t
 + \cdots +
 \xy 0;/r.15pc/:
 (0,0)*{\dblue\xybox{
 (0,0)*{\cbub{\black\mspace{-8mu}\llambda_i-1+r}{\black i}};}};
 (4,8)*{\lambda};
 \endxy t^{r}
 + \cdots
\right) & = & -1  %\nn \\ 
\end{eqnarray}
%\end{center}
and the additional condition
\[
\xy 0;/r.18pc/:
 (0,0)*{\dblue\cbub{\black -1}{\black i}};
  (4,8)*{\lambda};
 \endxy
 \quad = (-1)^{\laii},
 \qquad
  \xy 0;/r.18pc/:
 (0,0)*{\dblue\ccbub{\black -1}{\black i}};
  (4,8)*{\lambda};
 \endxy
  \quad = (-1)^{\laii-1} \qquad \text{if $\llambda_i =0$.}
\]
Although the labels are negative for fake bubbles, one can check that the overall
degree of each fake bubble is still positive, so that these fake bubbles do not
violate the positivity of dotted bubble axiom. The above equation, called the
infinite Grassmannian relation, remains valid even in high degree when most of
the bubbles involved are not fake bubbles.

\end{enumerate}

\n $\star$ NilHecke relations:
 \begin{equation} \label{eq_nil_rels}
  \vcenter{\xy 0;/r.18pc/:
    (0,0)*{\dblue\xybox{
    (-4,-4)*{};(4,4)*{} **\crv{(-4,-1) & (4,1)}?(1)*\dir{>};
    (4,-4)*{};(-4,4)*{} **\crv{(4,-1) & (-4,1)}?(1)*\dir{>};
    (-4,4)*{};(4,12)*{} **\crv{(-4,7) & (4,9)}?(1)*\dir{>};
    (4,4)*{};(-4,12)*{} **\crv{(4,7) & (-4,9)}?(1)*\dir{>};}};
  (8,8)*{\lambda};(-5,-7)*{\scs i};(5.1,-7)*{\scs i};
 \endxy}
 =0, \qquad \quad
 \vcenter{
 \xy 0;/r.18pc/:
 (0,0)*{\dblue\xybox{
    (-4,-4)*{};(4,4)*{} **\crv{(-4,-1) & (4,1)}?(1)*\dir{>};
    (4,-4)*{};(-4,4)*{} **\crv{(4,-1) & (-4,1)}?(1)*\dir{>};
    (4,4)*{};(12,12)*{} **\crv{(4,7) & (12,9)}?(1)*\dir{>};
    (12,4)*{};(4,12)*{} **\crv{(12,7) & (4,9)}?(1)*\dir{>};
    (-4,12)*{};(4,20)*{} **\crv{(-4,15) & (4,17)}?(1)*\dir{>};
    (4,12)*{};(-4,20)*{} **\crv{(4,15) & (-4,17)}?(1)*\dir{>};
    (-4,4)*{}; (-4,12) **\dir{-};
    (12,-4)*{}; (12,4) **\dir{-};
    (12,12)*{}; (12,20) **\dir{-};}}; 
     (-9.5,-11)*{\scs i};(1.5,-11)*{\scs i};(9.5,-11)*{\scs i};
  (12,0)*{\lambda};
\endxy}
 \;\; =\;\;
 \vcenter{
 \xy 0;/r.18pc/:
    (0,0)*{\dblue\xybox{
    (4,-4)*{};(-4,4)*{} **\crv{(4,-1) & (-4,1)}?(1)*\dir{>};
    (-4,-4)*{};(4,4)*{} **\crv{(-4,-1) & (4,1)}?(1)*\dir{>};
    (-4,4)*{};(-12,12)*{} **\crv{(-4,7) & (-12,9)}?(1)*\dir{>};
    (-12,4)*{};(-4,12)*{} **\crv{(-12,7) & (-4,9)}?(1)*\dir{>};
    (4,12)*{};(-4,20)*{} **\crv{(4,15) & (-4,17)}?(1)*\dir{>};
    (-4,12)*{};(4,20)*{} **\crv{(-4,15) & (4,17)}?(1)*\dir{>};
    (4,4)*{}; (4,12) **\dir{-};
    (-12,-4)*{}; (-12,4) **\dir{-};
    (-12,12)*{}; (-12,20) **\dir{-};}};
  (-1.5,-11)*{\scs i};(9.5,-11)*{\scs i};(-9.5,-11)*{\scs i};
  (12,0)*{\lambda};
\endxy} 
  \end{equation}
\begin{equation} \label{eq_nil_dotslide}
  \xy
  (0,1)*{\dblue\xybox{
  (4,4);(4,-4) **\dir{-}?(0)*\dir{<}+(2.3,0)*{};
  (-4,4);(-4,-4) **\dir{-}?(0)*\dir{<}+(2.3,0)*{};}};
  (6,2)*{\lambda};(-6.2,-2)*{\scs i}; (4,-2)*{\scs i};
 \endxy
\
  =
\xy
  (0,0)*{\dblue\xybox{
    (-4,-4)*{};(4,4)*{} **\crv{(-4,-1) & (4,1)}?(1)*\dir{>}?(.25)*{\bullet};
    (4,-4)*{};(-4,4)*{} **\crv{(4,-1) & (-4,1)}?(1)*\dir{>};
    (-5,-3)*{\black\scs i};
     (5.1,-3)*{\black\scs i};
     (8,1)*{\black\lambda};
     (-10,0)*{};(10,0)*{};
     }};
  \endxy
  -
 \xy
  (0,0)*{\dblue\xybox{
    (-4,-4)*{};(4,4)*{} **\crv{(-4,-1) & (4,1)}?(1)*\dir{>}?(.75)*{\bullet};
    (4,-4)*{};(-4,4)*{} **\crv{(4,-1) & (-4,1)}?(1)*\dir{>};
    (-5,-3)*{\black\scs i};
     (5.1,-3)*{\black\scs i};
     (8,1)*{\black\lambda};
     (-10,0)*{};(10,0)*{};
     }};
  \endxy
\
  =
\xy
  (0,0)*{\dblue\xybox{
    (-4,-4)*{};(4,4)*{} **\crv{(-4,-1) & (4,1)}?(1)*\dir{>};
    (4,-4)*{};(-4,4)*{} **\crv{(4,-1) & (-4,1)}?(1)*\dir{>}?(.75)*{\bullet};
    (-5,-3)*{\black\scs i};
     (5.1,-3)*{\black\scs i};
     (8,1)*{\black\lambda};
     (-10,0)*{};(10,0)*{};
     }};
  \endxy
  -
  \xy
  (0,0)*{\dblue\xybox{
    (-4,-4)*{};(4,4)*{} **\crv{(-4,-1) & (4,1)}?(1)*\dir{>} ;
    (4,-4)*{};(-4,4)*{} **\crv{(4,-1) & (-4,1)}?(1)*\dir{>}?(.25)*{\bullet};
    (-5,-3)*{\black\scs i};
     (5.1,-3)*{\black\scs i};
     (8,1)*{\black\lambda};
     (-10,0)*{};(10,0)*{};
     }};
  \endxy %\nn %\\
\end{equation}

\n $\star$ For $i \neq j$
\begin{equation} \label{eq_downup_ij-gen}
\text{$
 \vcenter{   \xy 0;/r.18pc/:
(0,0)*{\dblue\xybox{
    (-4,-4)*{};(4,4)*{} **\crv{(-4,-1) & (4,1)}?(1)*\dir{>};
    (4,4)*{};(-4,12)*{} **\crv{(4,7) & (-4,9)}?(1)*\dir{>};
}};
(0,0)*{\dred\xybox{
    (4,-4)*{};(-4,4)*{} **\crv{(4,-1) & (-4,1)}?(1)*\dir{<};?(0)*\dir{<};
    (-4,4)*{};(4,12)*{} **\crv{(-4,7) & (4,9)};
}};
   (7,4)*{\lambda};(-6,-7)*{\scs i};(6.5,-7)*{\scs j};
 \endxy}
 \;\;= \;\;
\xy 0;/r.18pc/:
(3.5,0)*{\dred\xybox{
    (3,9);(3,-9) **\dir{-}?(.55)*\dir{>}+(2.3,0)*{};}};
(-3.5,0)*{\dblue\xybox{
    (-3,9);(-3,-9) **\dir{-}?(.5)*\dir{<}+(2.3,0)*{};}};
    (7,2)*{\lambda};(-6,-6)*{\scs i};(3.8,-6)*{\scs j};
 \endxy
 \qquad
    \vcenter{\xy 0;/r.18pc/:
(0,0)*{\dblue\xybox{
    (-4,-4)*{};(4,4)*{} **\crv{(-4,-1) & (4,1)}?(1)*\dir{<};?(0)*\dir{<};
    (4,4)*{};(-4,12)*{} **\crv{(4,7) & (-4,9)};
}}; 
(0,0)*{\dred\xybox{
    (4,-4)*{};(-4,4)*{} **\crv{(4,-1) & (-4,1)}?(1)*\dir{>};
    (-4,4)*{};(4,12)*{} **\crv{(-4,7) & (4,9)}?(1)*\dir{>};
}};
    (7,4)*{\lambda};(-6.5,-7)*{\scs i};(6,-7)*{\scs j};
 \endxy}
 \;\;=\;\;
\xy 0;/r.18pc/:
(3.5,0)*{\dred\xybox{
    (3,9);(3,-9) **\dir{-}?(.5)*\dir{<}+(2.3,0)*{};
}};
(-3.5,0)*{\dblue\xybox{
    (-3,9);(-3,-9) **\dir{-}?(.55)*\dir{>}+(2.3,0)*{};
}};
    (7,2)*{\lambda};(-6.5,-6)*{\scs i};(3.6,-6)*{\scs j};
 \endxy
$}
\end{equation}

\n $\star$ The analogue of the $R(\nu)$-relations:
\begin{enumerate}[i)]
\item For $i \neq j$
\begin{equation}
\label{eq_r2_ij-gen}
  \vcenter{\xy 0;/r.18pc/:
(0,0)*{\dblue\xybox{
    (-4,-4)*{};(4,4)*{} **\crv{(-4,-1) & (4,1)}?(1)*\dir{>};
    (4,4)*{};(-4,12)*{} **\crv{(4,7) & (-4,9)}?(1)*\dir{>};
}};
(0,0)*{\dred\xybox{
    (4,-4)*{};(-4,4)*{} **\crv{(4,-1) & (-4,1)}?(1)*\dir{>};
    (-4,4)*{};(4,12)*{} **\crv{(-4,7) & (4,9)}?(1)*\dir{>};
}};
    (8,8)*{\lambda};(-5,-6)*{\scs i};
    (5.3,-6)*{\scs j};
 \endxy} =
\end{equation}
\begin{equation*}
\begin{cases}
\vcenter{\xy 0;/r.18pc/:
(4,0)*{\dred\xybox{
  (3,9);(3,-9) **\dir{-}?(.5)*\dir{<}+(2.3,0)*{};
}};
(-2.5,0)*{\dblue\xybox{
  (-3,9);(-3,-9) **\dir{-}?(.5)*\dir{<}+(2.3,0)*{};
}};
  (8,2)*{\lambda};(-5,-6)*{\scs i};(5.1,-6)*{\scs j};
 \endxy} &  \text{if $i \cdot j=0$}\\ \\
  \epsilon(i,j)\left(
  \vcenter{\xy 0;/r.18pc/:
(4.5,0)*{\dred\xybox{
  (3,9);(3,-9) **\dir{-}?(.5)*\dir{<}+(2.3,0)*{};
}};
(-2.5,0)*{\dblue\xybox{
  (-3,9);(-3,-9) **\dir{-}?(.5)*\dir{<}+(2.3,0)*{};
  (-3,4)*{\bullet};
}};
  (8,2)*{\black\lambda}; 
  (-5,-6)*{\bscs i};     
(5.1,-6)*{\bscs j};
 \endxy} 
\quad - \quad
 \vcenter{\xy 0;/r.18pc/:
(4,0)*{\dred\xybox{
  (3,9);(3,-9) **\dir{-}?(.5)*\dir{<}+(2.3,0)*{}; (3,4)*{\bullet};
}};
(-2,0)*{\dblue\xybox{
  (-3,9);(-3,-9) **\dir{-}?(.5)*\dir{<}+(2.3,0)*{};
}};
  (9,2)*{\black\lambda};
  (-5,-6)*{\bscs i};     (5.1,-6)*{\bscs j};
 \endxy}\right)
   & \begin{array}{l}
\text{if $i \cdot j =-1$}\\ 
\text{and $\{i,j\}\ne \{1,n\}$}
\end{array}\\ \\

\epsilon(i,j)\left(
  \vcenter{\xy 0;/r.18pc/:
(4.5,0)*{\dred\xybox{
  (3,9);(3,-9) **\dir{-}?(.5)*\dir{<}+(2.3,0)*{};
}};
(-2.5,0)*{\dblue\xybox{
  (-3,9);(-3,-9) **\dir{-}?(.5)*\dir{<}+(2.3,0)*{};
  (-3,4)*{\bullet};
}};
  (8,2)*{\black\lambda}; 
  (-5,-6)*{\bscs i};     (5.1,-6)*{\bscs j};
 \endxy} \quad
 - \quad
 \vcenter{\xy 0;/r.18pc/:
(4,0)*{\dred\xybox{
  (3,9);(3,-9) **\dir{-}?(.5)*\dir{<}+(2.3,0)*{}; (3,4)*{\bullet};
}};
(-2,0)*{\dblue\xybox{
  (-3,9);(-3,-9) **\dir{-}?(.5)*\dir{<}+(2.3,0)*{};
}};
  (9,2)*{\black\lambda};
  (-5,-6)*{\bscs i};     (5.1,-6)*{\bscs j};
 \endxy}\right)\quad - \quad
 \bbox{y} \vcenter{\xy 0;/r.18pc/:
(4,0)*{\dred\xybox{
  (3,9);(3,-9) **\dir{-}?(.5)*\dir{<}+(2.3,0)*{}; 
}};
(-2,0)*{\dblue\xybox{
  (-3,9);(-3,-9) **\dir{-}?(.5)*\dir{<}+(2.3,0)*{};
}};
  (9,2)*{\black\lambda};
  (-5,-6)*{\bscs i};     (5.1,-6)*{\bscs j};
 \endxy} & \text{if $\{ i,j\} = \{1,n\}$}
\end{cases}
\end{equation*}
\vskip0.5cm
\n For $i\cdot j=-1$, we define  
$$
\epsilon(i,j):=
\begin{cases}
1& \text{if}\; i\equiv j+1\mod n\\
-1& \text{if}\; i\equiv j-1\mod n
\end{cases}.
$$
Note that this sign takes into account the standard 
orientation of the Dynkin diagram. 

\begin{eqnarray} \label{eq_dot_slide_ij-gen}
\xy
(0,0)*{\dblue\xybox{
    (-4,-4)*{};(4,4)*{} **\crv{(-4,-1) & (4,1)}?(1)*\dir{>}?(.75)*{\bullet};
}};
(0,0)*{\dred\xybox{
    (4,-4)*{};(-4,4)*{} **\crv{(4,-1) & (-4,1)}?(1)*\dir{>};
}};
    (-5,-3)*{\scs i};
    (5.1,-3)*{\scs j};
    (8,1)*{ \lambda};
    (-10,0)*{};(10,0)*{};
\endxy
 \;\; =
\xy
(0,0)*{\dblue\xybox{
    (-4,-4)*{};(4,4)*{} **\crv{(-4,-1) & (4,1)}?(1)*\dir{>}?(.25)*{\bullet};
}};
(0,0)*{\dred\xybox{
    (4,-4)*{};(-4,4)*{} **\crv{(4,-1) & (-4,1)}?(1)*\dir{>};
}}; 
     (-5,-3)*{\scs i};
     (5.1,-3)*{\scs j};
     (8,1)*{ \lambda};
     (-10,0)*{};(10,0)*{};
\endxy
\qquad  \xy
(0,0)*{\dblue\xybox{
    (-4,-4)*{};(4,4)*{} **\crv{(-4,-1) & (4,1)}?(1)*\dir{>};
}};
(0,0)*{\dred\xybox{
    (4,-4)*{};(-4,4)*{} **\crv{(4,-1) & (-4,1)}?(1)*\dir{>}?(.75)*{\bullet};
}};
    (-5,-3)*{\scs i};
    (5.1,-3)*{\scs j};
    (8,1)*{ \lambda};
    (-10,0)*{};(10,0)*{};
  \endxy
\;\;  =
  \xy
(0,0)*{\dblue\xybox{
    (-4,-4)*{};(4,4)*{} **\crv{(-4,-1) & (4,1)}?(1)*\dir{>} ;
}};
(0,0)*{\dred\xybox{
    (4,-4)*{};(-4,4)*{} **\crv{(4,-1) & (-4,1)}?(1)*\dir{>}?(.25)*{\bullet};
}};
    (-5,-3)*{\scs i};
     (5.1,-3)*{\scs j};
     (8,1)*{ \lambda};
     (-10,0)*{};(12,0)*{};
   \endxy
\end{eqnarray}

\item Unless $i = k$ and $i \cdot j=-1$
\begin{equation}
\text{$
 \vcenter{
 \xy 0;/r.18pc/:
(0,0)*{\dblue\xybox{
    (-4,-4)*{};(4,4)*{} **\crv{(-4,-1) & (4,1)}?(1)*\dir{>};
    (4,4)*{};(12,12)*{} **\crv{(4,7) & (12,9)}?(1)*\dir{>};
    (12,12)*{}; (12,20) **\dir{-};
}};
(-4,0)*{\dred\xybox{
    (4,-4)*{};(-4,4)*{} **\crv{(4,-1) & (-4,1)}?(1)*\dir{>};
    (-4,12)*{};(4,20)*{} **\crv{(-4,15) & (4,17)}?(1)*\dir{>};
    (-4,4)*{}; (-4,12) **\dir{-};
}};
(0,0)*{\dgreen\xybox{
    (12,4)*{};(4,12)*{} **\crv{(12,7) & (4,9)}?(1)*\dir{>};
    (4,12)*{};(-4,20)*{} **\crv{(4,15) & (-4,17)}?(1)*\dir{>};
    (12,-4)*{}; (12,4) **\dir{-};
}};
  (12,0)*{\lambda};
  (-10,-11)*{\scs i};
  (  2,-11)*{\scs j};
  (10.5,-11)*{\scs k};
\endxy}
 \;\; =\;\;
 \vcenter{
 \xy 0;/r.18pc/:
(0,0)*{\dgreen\xybox{
    (4,-4)*{};(-4,4)*{} **\crv{(4,-1) & (-4,1)}?(1)*\dir{>};
    (-4,4)*{};(-12,12)*{} **\crv{(-4,7) & (-12,9)}?(1)*\dir{>};
    (-12,12)*{}; (-12,20) **\dir{-};
}};
(4,0)*{\dred\xybox{
    (-4,-4)*{};(4,4)*{} **\crv{(-4,-1) & (4,1)}?(1)*\dir{>};
    (4,12)*{};(-4,20)*{} **\crv{(4,15) & (-4,17)}?(1)*\dir{>};
    (4,4)*{}; (4,12) **\dir{-};
}};
(0,0)*{\dblue\xybox{
    (-12,4)*{};(-4,12)*{} **\crv{(-12,7) & (-4,9)}?(1)*\dir{>};
    (-4,12)*{};(4,20)*{} **\crv{(-4,15) & (4,17)}?(1)*\dir{>};
    (-12,-4)*{}; (-12,4) **\dir{-};
}};
  (12,0)*{\lambda};
  (10,-11)*{\scs k};
  (-1.5,-11)*{\scs j};
  (-9.5,-11)*{\scs i};
\endxy} \label{eq_r3_easy-gen}
$}
\end{equation}

For $i \cdot j =-1$
\begin{equation}
\text{$
 \vcenter{
 \xy 0;/r.18pc/:
(0,0)*{\dblue\xybox{
    (-4,-4)*{};(4,4)*{} **\crv{(-4,-1) & (4,1)}?(1)*\dir{>};
    (4,4)*{};(12,12)*{} **\crv{(4,7) & (12,9)}?(1)*\dir{>};
    (12,4)*{};(4,12)*{} **\crv{(12,7) & (4,9)}?(1)*\dir{>};
    (4,12)*{};(-4,20)*{} **\crv{(4,15) & (-4,17)}?(1)*\dir{>};
    (12,-4)*{}; (12,4) **\dir{-};
    (12,12)*{}; (12,20) **\dir{-};
}};
(-4,0)*{\dred\xybox{
    (4,-4)*{};(-4,4)*{} **\crv{(4,-1) & (-4,1)}?(1)*\dir{>};
    (-4,12)*{};(4,20)*{} **\crv{(-4,15) & (4,17)}?(1)*\dir{>};
    (-4,4)*{}; (-4,12) **\dir{-};
}};
  (12,0)*{\lambda};
  (-10,-11)*{\scs i};
  (1.5,-11)*{\scs j};
  (9.5,-11)*{\scs i};
\endxy}
\quad - \quad
 \vcenter{
 \xy 0;/r.18pc/:
(0,0)*{\dblue\xybox{
    (4,-4)*{};(-4,4)*{} **\crv{(4,-1) & (-4,1)}?(1)*\dir{>};
    (-4,4)*{};(-12,12)*{} **\crv{(-4,7) & (-12,9)}?(1)*\dir{>};
    (-12,4)*{};(-4,12)*{} **\crv{(-12,7) & (-4,9)}?(1)*\dir{>};
    (-4,12)*{};(4,20)*{} **\crv{(-4,15) & (4,17)}?(1)*\dir{>};
    (-12,-4)*{}; (-12,4) **\dir{-};
    (-12,12)*{}; (-12,20) **\dir{-};
}};
(4,0)*{\dred\xybox{
    (-4,-4)*{};(4,4)*{} **\crv{(-4,-1) & (4,1)}?(1)*\dir{>};
    (4,12)*{};(-4,20)*{} **\crv{(4,15) & (-4,17)}?(1)*\dir{>};
    (4,4)*{}; (4,12) **\dir{-};
}};
  (12,0)*{\lambda};
  (10,-11)*{\scs i};
  (-1.5,-11)*{\scs j};
  (-9.5,-11)*{\scs i};
\endxy}
 \;\; =\;\;\;\; \epsilon(i,j)
\xy 0;/r.18pc/:
(10,0)*{\dblue\xybox{
  (4,12);(4,-12) **\dir{-}?(.5)*\dir{<};
  (22,12);(22,-12) **\dir{-}?(.5)*\dir{<}?(.25)*\dir{}+(0,0)*{}+(10,0)*{\scs};
}};
(3.5,0)*{\dred\xybox{
  (-4,12);(-4,-12) **\dir{-}?(.5)*\dir{<}?(.25)*\dir{}+(0,0)*{}+(-3,0)*{\scs };
}};
  (20,0)*{\lambda}; 
  (-5.6,-11)*{\scs i};
  (3.1,-11)*{\scs j};
  (15.2,-11)*{\scs i};
 \endxy
 \label{eq_r3_hard-gen}
$}.
\end{equation}
\end{enumerate}
%%%%%%%%%%%%%%%%%%%%%%%%%%%%%%%%%%%%%%%%%%%%%%%%%%%%%%%

\n $\bullet$ Composition of $1$-morphisms is defined by multiplication 
and horizontal and vertical composition of $2$-morphisms are defined by 
juxtaposition and glueing, as usual.  
\end{defn}

\begin{defn}
The $2$-category $\Uglcataffy$ is the $\Q$-linear 
$2$-subcategory of $\Uglcataffy^*$ given by the degree-zero $2$-morphisms. 
\end{defn}

\begin{rem}
Note that the $2$-hom-spaces in $\Uglcataffy$ are no longer $\Q[y]$-linear, 
because $\deg(y)=2$. They are finite-dimensional as $\Q$-vector spaces, 
because the original KL $2$-HOM-spaces are finite-dimensional in each 
degree and their grading is bounded below.  
\end{rem}

As already remarked, $\Ucataffy$ is defined similarly. Only some signs in 
the relations involving right cups and caps have to be changed, 
so that all relations 
really depend on $\hat{\mathfrak{sl}}_n$-weights and not on 
$\hat{\mathfrak{gl}}_n$-weights. We use the convention which is 
the affine analogue of the signed-version 
in~\cite{KL3}. For more information on this change of signs, 
see~\eqref{eq:signs}.

\begin{defn}
Let $\Ucataff$ and $\Uglcataff$ denote the $\Q$-linear $2$-categories 
obtained by modding out $\Ucataffy$ and $\Uglcataffy$ by the ideal 
generated by $y$. 
\end{defn}
\n Note that $\Ucataff$ is isomorphic to the original 
KL $2$-category and $\Uglcataff$ isomorphic to its 
$\hat{\mathfrak{gl}}_n$-analogue. 

We do not know if $\Ucataffy$ is isomorphic to $\Ucataff\otimes_{\Q}\Q[y]$, 
but it does not seem so.

\subsubsection{Further relations in $\Uglcataffy$}

The other $\Uglcataffy$-relations expressed below follow from the relations in Definition~\ref{def_glcat} and are going to be used in the sequel.

\n $\star$ Bubble slides: \\ 
\n If $\{i,j\}\ne\{1,n\}$, we have  
\begin{equation}
\label{eq:bub_slides}
\text{$ 
   \xy
  (14,8)*{\lambda};
  (0,0)*{\dgreen\bbe{}};
  (0,-12)*{\scs j};
  (12,-2)*{\dblue\ccbub{\black -\llambda_i-1+m}{\black i}};
  (0,6)*{ }+(7,-1)*{\scs  };
 \endxy
$}
\quad = \quad
 \begin{cases}
 \ \xsum{f=0}{m}(f-m-1)
   \xy
  (0,8)*{\lambda+j_{\Lambda}};
  (12,0)*{\dgreen\bbe{}};
  (12,-12)*{\scs j};
  (0,-2)*{\dblue\ccbub{\black - \overline{(\lambda + j_\Lambda)}_i -1 + f}{\black i}};
  (12,6)*{\dgreen\bullet}+(5,-1)*{\scs m-f};
 \endxy
    &  \text{if $i=j$} 
\\ \\
\ \qquad \qquad  \xy
  (0,8)*{\lambda+j_{\Lambda}};
  (12,0)*{\dgreen\bbe{}};
  (12,-12)*{\scs j};
  (0,-2)*{\dblue\ccbub{\black -\overline{(\lambda + j_\Lambda)_i} -1+m }{\black i}};
 \endxy  &  \text{if $i \cdot j=0$}
 \end{cases}
\end{equation}
%%%%%%%%%%%%%%%%%%
\begin{align}
\label{eq:2ndbubbslide}
\text{$ 
   \xy
  (14,8)*{\lambda};
  (0,0)*{\dred\bbe{}};
  (0,-12)*{\scs i+1};
  (12,-2)*{\dblue\ccbub{\black -\llambda_i-1+m}{\black i}};
  (0,6)*{ }+(7,-1)*{\scs  };
 \endxy
$}
&= \quad
   \xy
  (-4,8)*{\lambda+(i+1)_{\Lambda}};
  (12,0)*{\dred\bbe{}};
  (12,-12)*{\scs i+1};
  (-6,-2)*{\dblue\ccbub{\black -(\overline{\lambda +(i+1)_{\Lambda}})_i-2+m}{\black i}};
  (12,6)*{\dred\bullet}+(5,-1)*{\scs };
 \endxy
 \quad - \quad
  \xy
  (-4,8)*{\lambda+(i+1)_{\Lambda}};
  (12,0)*{\red\bbe{}};
  (11,-12)*{\scs i+1};
  (-6,-2)*{\dblue\ccbub{\black -(\overline{\lambda+(i+1)_{\Lambda}})_i -1+m}{\black i}};
 \endxy
\\ \displaybreak[0]
\label{eq:extrabubble3}
\text{$ 
   \xy
  (6,8)*{\lambda};
  (12,0)*{\dred\bbe{}};
  (12,-12)*{\scs i+1};
  (0,-2)*{\dblue\ccbub{\black -\llambda_i-1+m}{\black i}};
  (-12,6)*{ }+(7,-1)*{\scs  };
 \endxy
$}
&= 
-\sum\limits_{f+g=m}
   \xy
  (18,8)*{\lambda-(i+1)_{\Lambda}};
  (0,0)*{\dred\bbe{}};
  (0,-12)*{\scs i+1};
  (16,-2)*{\dblue\ccbub{\black -(\overline{\lambda -(i+1)_{\Lambda}})_i-1+g}{\black i}};
  (0,6)*{\dred\bullet}+(-3,-1)*{\scs f};
 \endxy
\\\nn\\ 
\label{eq:extrabubble4}
\displaybreak[0]
\text{$ 
   \xy
  (14,8)*{\lambda};
  (0,0)*{\dred\bbe{}};
  (0,-12)*{\scs i+1};
  (12,-2)*{\dblue\cbub{\black \llambda_i-1+m}{\black i}};
  (0,6)*{ }+(7,-1)*{\scs  };
 \endxy
$}
&= -\sum\limits_{f+g=m}\ \
   \xy
  (-4,8)*{\lambda+(i+1)_{\Lambda}};
  (12,0)*{\dred\bbe{}};
  (12,-12)*{\scs i+1};
  (-6,-2)*{\dblue\cbub{\black (\overline{\lambda +(i+1)_{\Lambda}})_i-1+g}{\black i}};
  (12,6)*{\dred\bullet}+(2,-1)*{\scs f};
 \endxy
\\\nn \\ \displaybreak[0]
\label{eq:extrabubblelast}
\text{$ 
   \xy
  (6,8)*{\lambda};
  (12,0)*{\dred\bbe{}};
  (12,-12)*{\scs i+1};
  (0,-2)*{\dblue\cbub{\black \llambda_i-1+m}{\black i}};
  (-12,6)*{ }+(7,-1)*{\scs  };
 \endxy
$}
&= 
\quad
   \xy
  (18,8)*{\lambda-(i+1)_{\Lambda}};
  (0,0)*{\dred\bbe{}};
  (0,-12)*{\scs i+1};
  (16,-2)*{\dblue\cbub{\black (\overline{\lambda -(i+1)_{\Lambda}})_i-2+m}{\black i}};
  (0,6)*{\dred\bullet}+(-3,-1)*{\scs };
 \endxy
\quad-\quad
   \xy
  (18,8)*{\lambda-(i+1)_{\Lambda}};
  (0,0)*{\dred\bbe{}};
  (0,-12)*{\scs i+1};
  (16,-2)*{\dblue\cbub{\black (\overline{\lambda -(i+1)_{\Lambda}})_i-1+m}{\black i}};
 \endxy
\end{align}
If we switch labels $i$ and $i+1$, then the r.h.s. of the above equations gets 
a minus sign. Bubble slides with the vertical strand oriented downwards 
can easily be obtained from the ones above by rotating the diagrams $180$ 
degrees. 
\medskip

If $\{i,j\}=\{1,n\}$, we get 
\begin{align}
\text{$ 
   \xy
  (14,8)*{\lambda};
  (0,0)*{\dred\bbe{}};
  (0,-12)*{\scs 1};
  (12,-2)*{\dblue\ccbub{\black -\llambda_n-1+m}{\black n}};
  (0,6)*{ }+(7,-1)*{\scs  };
 \endxy
$}
&= \quad
   \xy
  (-4,8)*{\lambda+(1)_{\Lambda}};
  (12,0)*{\dred\bbe{}};
  (12,-12)*{\scs 1};
  (-6,-2)*{\dblue\ccbub{\black -(\overline{\lambda +(1)_{\Lambda}})_n-2+m}{\black n}};
  (12,6)*{\dred\bullet}+(5,-1)*{\scs };
 \endxy
 \quad - \quad
  \xy
  (-4,8)*{\lambda+(1)_{\Lambda}};
  (12,0)*{\red\bbe{}};
  (11,-12)*{\scs 1};
  (-6,-2)*{\dblue\ccbub{\black -(\overline{\lambda+(1)_{\Lambda}})_n -1+m}{\black n}};
 \endxy \nn
\\
& - \quad \bbox{y}
\xy
  (-4,8)*{\lambda+(1)_{\Lambda}};
  (12,0)*{\dred\bbe{}};
  (12,-12)*{\scs 1};
  (-6,-2)*{\dblue\ccbub{\black -(\overline{\lambda +(1)_{\Lambda}})_n-2+m}{\black n}};
   \endxy
\label{eq:2ndbubbslide1n}
\end{align}

\begin{align}
\text{$ 
   \xy
  (14,8)*{\lambda};
  (0,0)*{\dred\bbe{}};
  (0,-12)*{\scs n};
  (12,-2)*{\dblue\ccbub{\black -\llambda_1-1+m}{\black 1}};
  (0,6)*{ }+(7,-1)*{\scs  };
 \endxy
$}
&= \quad
-   \xy
  (-4,8)*{\lambda+(n)_{\Lambda}};
  (12,0)*{\dred\bbe{}};
  (12,-12)*{\scs n};
  (-6,-2)*{\dblue\ccbub{\black -(\overline{\lambda +(n)_{\Lambda}})_1-2+m}{\black 1}};
  (12,6)*{\dred\bullet}+(5,-1)*{\scs };
 \endxy
 \quad + \quad
  \xy
  (-4,8)*{\lambda+(n)_{\Lambda}};
  (12,0)*{\red\bbe{}};
  (11,-12)*{\scs n};
  (-6,-2)*{\dblue\ccbub{\black -(\overline{\lambda+(n)_{\Lambda}})_1 -1+m}{\black 1}};
 \endxy \nn
\\ 
&- \quad \bbox{y}
\xy
  (-4,8)*{\lambda+(n)_{\Lambda}};
  (12,0)*{\dred\bbe{}};
  (12,-12)*{\scs n};
  (-6,-2)*{\dblue\ccbub{\black -(\overline{\lambda +(n)_{\Lambda}})_1-2+m}{\black 1}};
   \endxy
\label{eq:2ndbubbslide21n}
\end{align}

\begin{align}
\text{$ 
   \xy
  (6,8)*{\lambda};
  (12,0)*{\dred\bbe{}};
  (12,-12)*{\scs 1};
  (0,-2)*{\dblue\ccbub{\black -\llambda_n-1+m}{\black n}};
  (-12,6)*{ }+(7,-1)*{\scs  };
 \endxy
$}
&= 
-\sum\limits_{f+g=m}\xsum{p=0}{f} \binom{f}{p} (-\bbox{y})^{f-p}
   \xy
  (18,8)*{\lambda-(1)_{\Lambda}};
  (0,0)*{\dred\bbe{}};
  (0,-12)*{\scs 1};
  (16,-2)*{\dblue\ccbub{\black -(\overline{\lambda -(1)_{\Lambda}})_n-1+g}{\black n}};
  (0,6)*{\dred\bullet}+(-3,-1)*{\scs p};
 \endxy 
\label{eq:extrabubble41n}
\end{align}
\vskip0.2cm
\begin{align}
\text{$ 
   \xy
  (6,8)*{\lambda};
  (12,0)*{\dred\bbe{}};
  (12,-12)*{\scs n};
  (0,-2)*{\dblue\ccbub{\black -\llambda_1-1+m}{\black 1}};
  (-12,6)*{ }+(7,-1)*{\scs  };
 \endxy
$}
&= 
\sum\limits_{f+g=m}\xsum{p=0}{f} \binom{f}{p} \bbox{y}^{f-p}
   \xy
  (18,8)*{\lambda-(n)_{\Lambda}};
  (0,0)*{\dred\bbe{}};
  (0,-12)*{\scs n};
  (16,-2)*{\dblue\ccbub{\black -(\overline{\lambda -(n)_{\Lambda}})_1-1+g}{\black 1}};
  (0,6)*{\dred\bullet}+(-3,-1)*{\scs p};
 \endxy 
\label{eq:extrabubble421n}
\end{align}
\vskip0.2cm
\begin{align}
\text{$ 
   \xy
  (14,8)*{\lambda};
  (0,0)*{\dred\bbe{}};
  (0,-12)*{\scs 1};
  (12,-2)*{\dblue\cbub{\black \llambda_n-1+m}{\black n}};
  (0,6)*{ }+(7,-1)*{\scs  };
 \endxy
$}
&= -\sum\limits_{f+g=m}\ \xsum{p=0}{f} \binom{f}{p} (-\bbox{y})^{f-p}
   \xy
  (-4,8)*{\lambda+(1)_{\Lambda}};
  (12,0)*{\dred\bbe{}};
  (12,-12)*{\scs 1};
  (-6,-2)*{\dblue\cbub{\black (\overline{\lambda +(1)_{\Lambda}})_n-1+g}{\black n}};
  (12,6)*{\dred\bullet}+(2,-1)*{\scs p};
 \endxy
\label{eq:extrabubble431n}
\end{align}

\begin{align}
\text{$ 
   \xy
  (14,8)*{\lambda};
  (0,0)*{\dred\bbe{}};
  (0,-12)*{\scs n};
  (12,-2)*{\dblue\cbub{\black \llambda_1-1+m}{\black 1}};
  (0,6)*{ }+(7,-1)*{\scs  };
 \endxy
$}
&= \sum\limits_{f+g=m}\ \xsum{p=0}{f} \binom{f}{p} \bbox{y}^{f-p}
   \xy
  (-4,8)*{\lambda+(n)_{\Lambda}};
  (12,0)*{\dred\bbe{}};
  (12,-12)*{\scs n};
  (-6,-2)*{\dblue\cbub{\black (\overline{\lambda +(n)_{\Lambda}})_1-1+g}{\black 1}};
  (12,6)*{\dred\bullet}+(2,-1)*{\scs p};
 \endxy
\label{eq:extrabubble441n}
\end{align}

\begin{align}
\text{$ 
   \xy
  (6,8)*{\lambda};
  (12,0)*{\dred\bbe{}};
  (12,-12)*{\scs 1};
  (0,-2)*{\dblue\cbub{\black \llambda_n-1+m}{\black n}};
  (-12,6)*{ }+(7,-1)*{\scs  };
 \endxy
$}
&= 
\quad
   \xy
  (18,8)*{\lambda-(1)_{\Lambda}};
  (0,0)*{\dred\bbe{}};
  (0,-12)*{\scs 1};
  (16,-2)*{\dblue\cbub{\black (\overline{\lambda -(1)_{\Lambda}})_n-2+m}{\black n}};
  (0,6)*{\dred\bullet}+(-3,-1)*{\scs };
 \endxy
\quad-\quad
   \xy
  (18,8)*{\lambda-(1)_{\Lambda}};
  (0,0)*{\dred\bbe{}};
  (0,-12)*{\scs 1};
  (16,-2)*{\dblue\cbub{\black (\overline{\lambda -(1)_{\Lambda}})_n-1+m}{\black n}};
 \endxy \nn
\\ 
& -\quad \bbox{y}
   \xy
  (18,8)*{\lambda-(1)_{\Lambda}};
  (0,0)*{\dred\bbe{}};
  (0,-12)*{\scs 1};
  (16,-2)*{\dblue\cbub{\black (\overline{\lambda -(1)_{\Lambda}})_n-2+m}{\black n}};
 \endxy
\label{eq:extrabubble451n}
\end{align}

\begin{align}
\text{$ 
   \xy
  (6,8)*{\lambda};
  (12,0)*{\dred\bbe{}};
  (12,-12)*{\scs n};
  (0,-2)*{\dblue\cbub{\black \llambda_1-1+m}{\black 1}};
  (-12,6)*{ }+(7,-1)*{\scs  };
 \endxy
$}
&= 
\quad -
   \xy
  (18,8)*{\lambda-(n)_{\Lambda}};
  (0,0)*{\dred\bbe{}};
  (0,-12)*{\scs n};
  (16,-2)*{\dblue\cbub{\black (\overline{\lambda -(n)_{\Lambda}})_1-2+m}{\black 1}};
  (0,6)*{\dred\bullet}+(-3,-1)*{\scs };
 \endxy
\quad+\quad
   \xy
  (18,8)*{\lambda-(n)_{\Lambda}};
  (0,0)*{\dred\bbe{}};
  (0,-12)*{\scs n};
  (16,-2)*{\dblue\cbub{\black (\overline{\lambda -(n)_{\Lambda}})_1-1+m}{\black 1}};
 \endxy \nn
\\ 
&-\quad \bbox{y}
   \xy
  (18,8)*{\lambda-(n)_{\Lambda}};
  (0,0)*{\dred\bbe{}};
  (0,-12)*{\scs n};
  (16,-2)*{\dblue\cbub{\black (\overline{\lambda -(n)_{\Lambda}})_1-2+m}{\black 1}};
 \endxy
\label{eq:extrabubble461n}
\end{align}

%%%%%%%%%%%%%%%%%%
\n $\star$ More Reidemeister $3$ like relations:

\n Unless $i=k=j$ we have
\begin{equation} \label{eq_other_r3_1}
\text{$
 \vcenter{
 \xy 0;/r.18pc/:
(0,0)*{\dblue\xybox{
    (-4,-4)*{};(4,4)*{} **\crv{(-4,-1) & (4,1)}?(1)*\dir{>};
    (4,4)*{};(12,12)*{} **\crv{(4,7) & (12,9)}?(1)*\dir{>};
    (12,12)*{}; (12,20) **\dir{-};
}};
(-4,0)*{\dred\xybox{
    (4,-4)*{};(-4,4)*{} **\crv{(4,-1) & (-4,1)}?(0)*\dir{<};
    (-4,12)*{};(4,20)*{} **\crv{(-4,15) & (4,17)}?(0)*\dir{<};
    (-4,4)*{}; (-4,12) **\dir{-};
}};
(0,0)*{\dgreen\xybox{
    (12,4)*{};(4,12)*{} **\crv{(12,7) & (4,9)}?(1)*\dir{>};
    (4,12)*{};(-4,20)*{} **\crv{(4,15) & (-4,17)}?(1)*\dir{>};
    (12,-4)*{}; (12,4) **\dir{-};
}};
  (12,0)*{\lambda};
  ( -10,-11)*{\scs i};
  ( 2.5,-11)*{\scs j};
  (10.5,-11)*{\scs k};
\endxy}
 \;\; =\;\;
 \vcenter{
 \xy 0;/r.18pc/:
(0,0)*{\dgreen\xybox{
    (4,-4)*{};(-4,4)*{} **\crv{(4,-1) & (-4,1)}?(1)*\dir{>};
    (-4,4)*{};(-12,12)*{} **\crv{(-4,7) & (-12,9)}?(1)*\dir{>};
    (-12,12)*{}; (-12,20) **\dir{-};
}};
(4,0)*{\dred\xybox{
    (-4,-4)*{};(4,4)*{} **\crv{(-4,-1) & (4,1)}?(0)*\dir{<};
    (4,12)*{};(-4,20)*{} **\crv{(4,15) & (-4,17)}?(0)*\dir{<};
    (4,4)*{}; (4,12) **\dir{-};
}};
(0,0)*{\dblue\xybox{
    (-12,4)*{};(-4,12)*{} **\crv{(-12,7) & (-4,9)}?(1)*\dir{>};
    (-4,12)*{};(4,20)*{} **\crv{(-4,15) & (4,17)}?(1)*\dir{>};
    (-12,-4)*{}; (-12,4) **\dir{-};
}};
  (12,0)*{\lambda};
  (10,-11)*{\scs k};
  (-2.5,-11)*{\scs j};
  (-9.5,-11)*{\scs i};
\endxy}
$}
\end{equation}
and when $i=j=k$ we have  
\begin{equation} \label{eq_r3_extra}
\text{$
 \vcenter{
 \xy 0;/r.17pc/:
 (0,0)*{\dblue\xybox{
    (-4,-4)*{};(4,4)*{} **\crv{(-4,-1) & (4,1)}?(1)*\dir{>};
    (4,-4)*{};(-4,4)*{} **\crv{(4,-1) & (-4,1)}?(1)*\dir{<};
    ?(0)*\dir{<};
    (4,4)*{};(12,12)*{} **\crv{(4,7) & (12,9)}?(1)*\dir{>};
    (12,4)*{};(4,12)*{} **\crv{(12,7) & (4,9)}?(1)*\dir{>};
    (-4,12)*{};(4,20)*{} **\crv{(-4,15) & (4,17)};
    (4,12)*{};(-4,20)*{} **\crv{(4,15) & (-4,17)}?(1)*\dir{>};
    (-4,4)*{}; (-4,12) **\dir{-};
    (12,-4)*{}; (12,4) **\dir{-};
    (12,12)*{}; (12,20) **\dir{-};
}};
  (12,0)*{\lambda};
  (-10,-11)*{\scs i};
  ( 2.5,-11)*{\scs i};
  ( 9.5,-11)*{\scs i};
\endxy}
-\;
   \vcenter{
 \xy 0;/r.17pc/:
(0,0)*{\dblue\xybox{ 
   (4,-4)*{};(-4,4)*{} **\crv{(4,-1) & (-4,1)}?(1)*\dir{>};
    (-4,-4)*{};(4,4)*{} **\crv{(-4,-1) & (4,1)}?(0)*\dir{<};
    (-4,4)*{};(-12,12)*{} **\crv{(-4,7) & (-12,9)}?(1)*\dir{>};
    (-12,4)*{};(-4,12)*{} **\crv{(-12,7) & (-4,9)}?(1)*\dir{>};
    (4,12)*{};(-4,20)*{} **\crv{(4,15) & (-4,17)};
    (-4,12)*{};(4,20)*{} **\crv{(-4,15) & (4,17)}?(1)*\dir{>};
    (4,4)*{}; (4,12) **\dir{-} ?(.5)*\dir{<};
    (-12,-4)*{}; (-12,4) **\dir{-};
    (-12,12)*{}; (-12,20) **\dir{-};
}};
  (12,0)*{\lambda};
  (10  ,-11)*{\scs i};
  (-2.5,-11)*{\scs i};
  (-9.5,-11)*{\scs i};
\endxy}
  \; = \;
 \sum_{} \; \xy 0;/r.17pc/:
(0,0)*{\dblue\xybox{ 
   (-4,12)*{}="t1";
    (4,12)*{}="t2";
  "t2";"t1" **\crv{(5,5) & (-5,5)}; ?(.15)*\dir{} ?(.9)*\dir{>}
  ?(.2)*\dir{}+(0,-.2)*{\bullet}+(3,-2)*{\bscs f_1};
    (-4,-12)*{}="t1";
    (4,-12)*{}="t2";
  "t2";"t1" **\crv{(5,-5) & (-5,-5)}; ?(.05)*\dir{} ?(.9)*\dir{<}
  ?(.15)*\dir{}+(0,-.2)*{\bullet}+(3,2)*{\bscs f_3};
    (-8.5,0.5)*{\ccbub{\bscs -\llambda_i-3+f_4}{\black i}};
    (13,12)*{};(13,-12)*{} **\dir{-} ?(.5)*\dir{<};
    (13,8)*{\bullet}+(3,2)*{\bscs f_2};
}};
(18,-6)*{\lambda};
  \endxy
+\;
  \sum_{}
\; \xy 0;/r.17pc/: 
(0,0)*{\dblue\xybox{
  (-10,12)*{};(-10,-12)*{} **\dir{-} ?(.5)*\dir{<};
  (-10,8)*{\bullet}+(-3,2)*{\bscs g_2};
  (-4,12)*{}="t1";
  (4,12)*{}="t2";
  "t1";"t2" **\crv{(-4,5) & (4,5)}; ?(.15)*\dir{>} ?(.9)*\dir{>}
  ?(.4)*\dir{}+(0,-.2)*{\bullet}+(3,-2)*{\bscs \;\; g_1};
  (-4,-12)*{}="t1";
  (4,-12)*{}="t2";
  "t2";"t1" **\crv{(4,-5) & (-4,-5)}; ?(.12)*\dir{>} ?(.97)*\dir{>}
  ?(.8)*\dir{}+(0,-.2)*{\bullet}+(1,4)*{\bscs g_3};
  (16.6,-4.5)*{\cbub{\bscs \llambda_i-1+g_4}{\black i}};
}};
  (18,6)*{\lambda};
  \endxy
$}
\end{equation}
where the first sum is over all $f_1, f_2, f_3, f_4 \geq 0$ with
$f_1+f_2+f_3+f_4=\llambda_i$ and the second sum is over all $g_1, g_2,
g_3, g_4 \geq 0$ with $g_1+g_2+g_3+g_4=\llambda_i -2$. Note that the 
first summation is zero if $\llambda_i<0$ and the second is zero 
when $\llambda_i<2$.

Reidemeister $3$ like relations for all other orientations are determined from
\eqref{eq_r3_easy-gen}, \eqref{eq_r3_hard-gen}, and the above relations using
duality.

%%%%%%%%%%%%%%%%%%%%%%%%%%%%%%%%%%%%%%%%%%%%%%%%%%%%%%%%%%%%%%%%%%%%%

\subsubsection{The 2-category $\Scat(n,r)_{[y]}$}
As explained in Section~\ref{SchAlg}, the $q$-Schur algebra $\hat{\SD}(n,r)$ can be seen as a quotient of $\Ugla$ by the ideal generated by all idempotents corresponding to the weights that do not belong to $\Lambda(n,r)$.

It is then natural to define the 2-category $\Scat(n,r)_{[y]}$ as a quotient of 
$\Uglcataffy$ as follows. 

\begin{defn}
The 2-category $\Scat(n,r)_{[y]}$ is the quotient of $\Uglcataffy$ by the ideal 
generated by all 2-morphisms containing a region with a label not in $\Lambda(n,r)$. 
\end{defn}

We remark that we only put real bubbles, whose interior has a label outside 
$\Lambda(n,r)$, equal to zero. To see what happens to a fake bubble, one 
first has to write it in terms of real bubbles with the opposite orientation 
using the infinite Grassmannian relation~\eqref{eq_infinite_Grass}.

\vskip0.2cm 
As in~\cite{MSVschur}, we define $\hat{\mathcal{S}}(n,r)_{[y]}$ as a quotient 
of $\Uglcataffy$, rather than $\Ucataffy$. Therefore, 
just as in~\cite{MSVschur} (see the introduction of Sections $3$ and 
$4.3$ in that paper), we have to show that there exists a 
full and essentially surjective $2$-functor 
$$\Psi_{n,r}\colon \Ucataffy\to 
\hat{\mathcal{S}}(n,r)_{[y]},$$ 
which categorifies the surjective homomorphism 
$$\psi_{n,r}\colon \Uaff \to \hat{\mathbf S}(n,r)$$
defined in~\eqref{eq:psi}. 

On objects $\Psi_{n,r}$ maps $\mu$ to $\lambda:=\phi_{n,r}(\mu)$, which was 
defined in Section~\ref{genalg}. On $1$ and $2$--morphisms $\Psi_{n,r}$ is 
defined to be the identity except for the left cups and caps, on which 
it is given by
\begin{equation}
\label{eq:signs}
\Ucapli_{i,\mu}\mapsto (-1)^{\lambda_{i+1}+1}\,\,\Ucapli_{i,\lambda}
\quad\mbox{and}\quad \Ucupli_{i,\mu}\mapsto (-1)^{\lambda_{i+1}}\,\,
\Ucupli_{i,\lambda}.
\end{equation}  
Note that here we are simply extending the $2$--functor used 
in~\cite{MSVschur}. 

Just for completeness, we now state the following result without proof. 
\begin{prop}
\label{prop:affslnquotient}
The $2$-functor 
$$\Psi_{n,r}\colon \Ucataffy\to 
\hat{\mathcal{S}}(n,r)_{[y]}$$
is well-defined, full and essentially surjective. 
\end{prop}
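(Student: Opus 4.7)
The plan is to verify the three properties in turn, with well-definedness being the main work since essential surjectivity and fullness are essentially immediate from the construction.

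For essential surjectivity, I would observe that for any $\lambda \in \Lambda(n,r)$, the $\hat{\mathfrak{sl}}_n$-weight $\mu = (\lambda_1 - \lambda_2, \ldots, \lambda_{n-1} - \lambda_n) \in \mathbb{Z}^{n-1}$ satisfies $\phi_{n,r}(\mu) = \lambda$, so $\Psi_{n,r}(\mu) = \lambda$. Hence every object of $\hat{\mathcal{S}}(n,r)_{[y]}$ is in the image. For fullness, since $\Psi_{n,r}$ acts as the identity on generating $1$-morphisms (up to relabeling regions via $\phi_{n,r}$) and on generating $2$-morphisms (up to the signs \eqref{eq:signs} on left cups and caps), each generator of $\hat{\mathcal{S}}(n,r)_{[y]}$ lies in the image. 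In particular $\Ucapli_{i,\lambda}$ is in the image because the sign $(-1)^{\lambda_{i+1}+1}$ is invertible, and similarly for $\Ucupli_{i,\lambda}$.

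The bulk of the argument is well-definedness: one must check that $\Psi_{n,r}$ sends each defining relation of $\Ucataffy$ to a valid relation of $\hat{\mathcal{S}}(n,r)_{[y]}$. Here one uses two observations: first, if a relation involves a region labeled by a $\hat{\mathfrak{sl}}_n$-weight $\mu$ with $\phi_{n,r}(\mu) = *$, then both sides of the image relation contain the zero idempotent $1_* = 0$ and the relation is trivially satisfied; second, for relations between $2$-morphisms supported on weights $\mu$ with $\phi_{n,r}(\mu) \in \Lambda(n,r)$, the only difference between the relation in $\Ucataffy$ and its image in $\hat{\mathcal{S}}(n,r)_{[y]}$ is the introduction of sign factors coming from \eqref{eq:signs}. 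Relations which involve no left cups or caps, or which involve the same number on both sides in the same configuration, pass through unchanged, since the signs cancel.

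The relations that need careful sign-chasing are the biadjointness and cyclicity relations \eqref{eq_biadjoint2} and \eqref{eq_cyclic_cross-gen} (which relate left cups/caps to right cups/caps through crossings), the degree-zero bubble equations \eqref{eq:bubb_deg0} (where the $(-1)^{\bar{\lambda}_i}$ factors must be matched against the signs picked up from one left cup and one left cap), the identity decompositions \eqref{eq_ident_decomp0}-\eqref{eq_ident_decomp}, and the infinite Grassmannian relation \eqref{eq_infinite_Grass}. In each case, one counts the left cups and caps on both sides, computes the parity of $\lambda_{i+1}$ contributed by the signs \eqref{eq:signs}, and verifies that it precisely matches the parity difference between the $\hat{\mathfrak{sl}}_n$ formulation (with $\bar{\lambda}_i = \lambda_i - \lambda_{i+1}$) and the $\hat{\mathfrak{gl}}_n$ formulation (with $\lambda_i$). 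This is formally identical to the verification carried out in~\cite{MSVschur} for finite type $A$, and no new phenomenon occurs in the affine setting because the signs \eqref{eq:signs} depend only on the local weight label and not on the Dynkin type.

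The main obstacle, and what distinguishes the affine case from the finite one, is checking the $y$-deformed relation \eqref{eq_r2_ij-gen} for $\{i,j\} = \{1,n\}$: one has to verify that the $y$-correction term is preserved by $\Psi_{n,r}$, which it is because $\Psi_{n,r}$ is $\Q[y]$-linear and acts trivially on $y$, and the affine bubble-slides \eqref{eq:2ndbubbslide1n}-\eqref{eq:extrabubble461n} in which the $y$-deformation only enters through terms that do not involve left cups or caps, so no new sign issues arise. Once these checks are completed, well-definedness follows, and together with the preceding paragraphs this proves the proposition.
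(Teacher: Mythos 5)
The paper states this proposition explicitly \emph{without proof} (``Just for completeness, we now state the following result without proof''), deferring to the finite type $A$ argument in~\cite{MSVschur}, so there is no in-text proof to compare yours against step by step. Your outline is the standard argument and, as far as it goes, it is sound: essential surjectivity because every $\lambda\in\Lambda(n,r)$ equals $\phi_{n,r}(\mu)$ for $\mu_i=\lambda_i-\lambda_{i+1}$; fullness because each generating $1$- and $2$-morphism of $\hat{\mathcal{S}}(n,r)_{[y]}$ is, up to the invertible scalars of~\eqref{eq:signs}, the image of the corresponding generator; and well-definedness by checking the defining relations, with relations supported on weights sent to $*$ holding trivially and the remainder reducing to sign bookkeeping for left cups and caps.

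Two caveats on where your sketch is thinner than a complete verification. First, the sign count for the degree-zero and fake bubbles, the curl relations~\eqref{eq:redtobubbles}, and the decompositions~\eqref{eq_ident_decomp0}--\eqref{eq_ident_decomp} is an actual computation that you name but do not perform; note also that the $\hat{\mathfrak{gl}}_n$ relations are written in terms of $\lambda_{i+1}$ rather than $\bar{\lambda}_i$, which is precisely the discrepancy the rescaling~\eqref{eq:signs} is built to absorb, so your parenthetical ``$(-1)^{\bar{\lambda}_i}$ factors'' misstates the target even though the procedure you describe is the right one. Second, the bubble slides \eqref{eq:2ndbubbslide1n}--\eqref{eq:extrabubble461n} are \emph{derived} relations and need not be checked for well-definedness at all; the only genuinely new affine input is the $\{1,n\}$ case of~\eqref{eq_r2_ij-gen}, with its $-y$ term and the cyclically defined sign $\epsilon(i,j)$, and you treat it correctly: it involves only downward strands, hence no left cups or caps and no new signs, and $\Psi_{n,r}$ fixes $y$. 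Modulo actually carrying out the sign counts, your argument matches what the authors intend and is more detailed than what the paper itself supplies.
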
 

Just as for $\Ucataffy$ and $\Uglcataffy$, we can put $y=0$.
\begin{defn}
Let $\Scat(n,r)$ be the quotient of $\Scat(n,r)_{[y]}$ by the ideal generated 
by $y$. 
\end{defn}
\n Of course there also exists a full and essentially surjective $2$-functor 
$$\Psi_{n,r}\colon \Ucataff\to \Scat(n,r),$$
which is defined and denoted just as above.

\section{A functor from $\debim_{\hat{A}_{r-1}}^*$ to $\Scat(n,r)^*_{[y]}$}
\label{sec:embed}
In this section, we define a functor 
$$\Sigma_{n,r}\colon\debim_{\hat{A}_{r-1}}^*\to \Scat(n,r)^*_{[y]},$$
which categorifies the embedding 
$$\sigma_{n,r}\colon\hat{\He}_{\hat{A}_{r-1}}\to 1_{r}\hat{\SD}(n,r)1_{r}.$$  
Actually its target will be the one-object sub-2-category $\Scat(n,r)^*_{[y]}((1^r),(1^r))$ of~$\Scat(n,r)^*_{[y]}$. Since $\Scat(n,r)^*_{[y]}((1^r),(1^r))$ has only one object, 
it can be seen as a monoidal category. 

This functor is the affine analogue of $\Sigma_{n,d}$ in Section 6.5 in~\cite{MSVschur}. For diagrams with only unoriented $i$-colored strands for 
$i=1,\ldots, r-1$ the definitions in that paper and in this one coincide, 
for diagrams with unoriented $r$--colored strands or 
oriented strands the definition here is new.

In Section~\ref{GrAlg}, we will prove that
$$\Sigma_{n,r}: \debim_{\hat{A}_{r-1}}^* \to \Scat(n,r)^*_{[y]}((1^r),(1^r))$$ 
is faithful. We conjecture that $\Sigma_{n,r}$ is also full and, therefore, that the two categories $\debim_{\hat{A}_{r-1}}$ and $\Scat(n,r)_{[y]}((1^r),(1^r))$ are equivalent. The latter equivalence would the affine analogue of the 
one proved in Proposition 6.9 in~\cite{MSVschur} for finite type $A$.  

\subsection{The definition of the functor}
\label{ssec:scqs}

The functor $\Sigma_{n,r}$ is $\Q$-linear and monoidal, so it is sufficient to specify the image of the generating objects and 
morphisms. 

The functor $\Sigma_{n,r}$ is defined on objects by  
\begin{eqnarray*}
 \emptyset & \mapsto& {\mathbf 1}_r={\mathbf 1}_{(1^r)} \\
 i & \mapsto & \mathcal{E}_{-i}\mathcal{E}_{i}{\mathbf 1}_r \\
 r & \mapsto & \mathcal{E}_{-n} \dots \mathcal{E}_{-r} \mathcal{E}_{r} \dots \mathcal{E}_{n}{\mathbf 1}_r \\
 + & \mapsto & \mathcal{E}_{-n} \dots \mathcal{E}_{-r-1} \mathcal{E}_{-1} \dots \mathcal{E}_{-r}{\mathbf 1}_r \\
 - & \mapsto & \mathcal{E}_{r} \dots \mathcal{E}_{1} \mathcal{E}_{r+1} \dots \mathcal{E}_{n}{\mathbf 1}_r.
\end{eqnarray*}
The functor $\Sigma_{n,r}$ is defined on morphisms as follows:
\begin{itemize}
\item The empty diagram is sent to the empty diagram in the region labeled $(1^r)$

\item The vertical line colored $i$ is sent to the identity $2$-morphism on $\mathcal{E}_{-i}\mathcal{E}_{i}{\mathbf 1}_r$:
\begin{equation*}
\labellist
\tiny\hair 2pt
\pinlabel $i$   at -10  60
\endlabellist
\figins{-16}{0.5}{line}\ \
\longmapsto\ \ \
\text{$
 \xy 
 (0,0)*{\dblue\xybox{
 (-5,7);(-5,-7); **\dir{-} ?(.5)*\dir{>}+(2.3,0)*{\scriptstyle{}};}};
 (-6.3,-9)*{\scs i};
 (0,0)*{\dblue\xybox{
 (10,7);(10,-7); **\dir{-} ?(.5)*\dir{<}+(12.3,0)*{\scriptstyle{}};}};
 ( -1.2,-9)*{\scs i};
 (6,0)*{ (1^r)};
  \endxy
$}
\vspace*{2ex}
\end{equation*}

\item The vertical line colored $r$ is sent to the identity $2$-morphism on $\mathcal{E}_{-n} \dots \mathcal{E}_{-r} \mathcal{E}_{r} \dots \mathcal{E}_{n}{\mathbf 1}_r$:
\begin{equation*}
\labellist
\tiny\hair 2pt
\pinlabel $r$   at -10  60
\endlabellist
\figins{-16}{0.5}{line}\ \
\longmapsto\ \ \
 \xy 
 (-28,0)*{\dblue\xybox{(-28,8);(-28,-8); **\dir{-} ?(.5)*\dir{<};}};
 (-23,0)*{\cdots};
 (-18,0)*{\dred\xybox{(-18,8);(-18,-8); **\dir{-} ?(.5)*\dir{<};}};
 (-13,0)*{\dred\xybox{(-13,8);(-13,-8); **\dir{-} ?(.5)*\dir{>};}};
 (-8,0)*{\cdots};
 (-3,0)*{\dblue\xybox{(-3,8);(-3,-8); **\dir{-} ?(.5)*\dir{>};}};
 (-28,-10)*{\scs n}; (-18,-10)*{\scs r }; (-13,-10)*{ \scs r};(-3,-10)*{\scs n};
 (5,0)*{ (1^r)}; 
  \endxy
\vspace*{2ex}
\end{equation*}

\item The vertical line colored $+$ is sent to the identity $2$-morphism on $\mathcal{E}_{-n} \dots \mathcal{E}_{-r-1} \mathcal{E}_{-1} \dots \mathcal{E}_{-r}{\mathbf 1}_r$:
\begin{equation*}
\figins{-16}{0.5}{rho}\ \
\longmapsto\ \ \
 \xy 
 (-28,0)*{\dblue\xybox{(-28,8);(-28,-8); **\dir{-} ?(.5)*\dir{<};}};
 (-23,0)*{\cdots};
 (-18,0)*{\dpink\xybox{(-18,8);(-18,-8); **\dir{-} ?(.5)*\dir{<};}};
 (-13,0)*{\dgreen\xybox{(-13,8);(-13,-8); **\dir{-} ?(.5)*\dir{<};}};
 (-8,0)*{\cdots};
 (-3,0)*{\dred\xybox{(-3,8);(-3,-8); **\dir{-} ?(.5)*\dir{<};}};
 (-28,-10)*{\scs n}; (-18,-10)*{\scs r+1 }; (-13,-10)*{ \scs 1};(-3,-10)*{\scs r};
 (5,0)*{ (1^r)}; 
  \endxy
\vspace*{2ex}
\end{equation*}

\item The vertical line colored $-$ is sent to the identity $2$-morphism on $\mathcal{E}_{r} \dots \mathcal{E}_{1} \mathcal{E}_{r+1} \dots \mathcal{E}_{n}{\mathbf 1}_r$:
\begin{equation*}
\figins{-16}{0.5}{rho-1}\ \
\longmapsto\ \ \
 \xy 
 (-28,0)*{\dred\xybox{(-28,8);(-28,-8); **\dir{-} ?(.5)*\dir{>};}};
 (-23,0)*{\cdots};
 (-18,0)*{\dgreen\xybox{(-18,8);(-18,-8); **\dir{-} ?(.5)*\dir{>};}};
 (-13,0)*{\dpink\xybox{(-13,8);(-13,-8); **\dir{-} ?(.5)*\dir{>};}};
 (-8,0)*{\cdots};
 (-3,0)*{\dblue\xybox{(-3,8);(-3,-8); **\dir{-} ?(.5)*\dir{>};}};
 (-28,-10)*{\scs r}; (-18,-10)*{\scs 1 }; (-13,-10)*{ \scs r+1};(-3,-10)*{\scs n};
 (5,0)*{ (1^r)}; 
  \endxy
\vspace*{2ex}
\end{equation*}

\item The images of the startdot$_i$ and enddot$_i$ morphisms for $i\neq r$:
\begin{equation*}
\labellist
\tiny\hair 2pt
\pinlabel $i$   at -10  60
\endlabellist
\figins{-16}{0.5}{startdot}
\longmapsto\ 
    {\dblue \xy
    (0,2)*{\bbpef{\black i}};
    (10,2)*{\black (1^r) };
    \endxy}
\mspace{140mu}
\labellist
\tiny\hair 2pt
\pinlabel $i$   at -10  60
\endlabellist
\figins{-16}{0.5}{enddot}
\longmapsto\
    {\dblue \xy
    (0,-2.5)*{\bbcef{\black i}};
    (8,0.5)*{ \black (1^r) };
    \endxy}
\vspace*{2ex}
\end{equation*}

\item The images of the startdot$_r$ and enddot$_r$ morphisms:
\begin{equation*}
\labellist
\tiny\hair 2pt
\pinlabel $r$   at -10  60
\endlabellist
\figins{-16}{0.5}{startdot}
\longmapsto\ 
\xy 
 (0,3)*{\dred\xybox{(-3,0)*{};(3,0)*{} **\crv{(-3,-5) & (3,-5)}; ?(.5)*\dir{>};}};
 (0,-4)*{\dblue\xybox{(-15,0)*{};(15,0)*{} **\crv{(-15,-20) & (15,-20)}; ?(.5)*\dir{>};}};
 (-9,4)*{\cdots};
  (9,4)*{\cdots};
  (-15,8)*{\scs n}; (-3,8)*{\scs r }; (3,8)*{ \scs r};(15,8)*{\scs n};
 (20,0)*{ (1^r)}; 
  \endxy
\mspace{100mu}
\labellist
\tiny\hair 2pt
\pinlabel $r$   at -10  60
\endlabellist
\figins{-16}{0.5}{enddot}
\longmapsto\
\xy 
 (0,-3)*{\dred\xybox{(-3,0)*{};(3,0)*{} **\crv{(-3,5) & (3,5)}; ?(.5)*\dir{<};}};
 (0,4)*{\dblue\xybox{(-15,0)*{};(15,0)*{} **\crv{(-15,20) & (15,20)}; ?(.5)*\dir{<};}};
 (-9,-4)*{\cdots};
  (9,-4)*{\cdots};
  (-15,-8)*{\scs n}; (-3,-8)*{\scs r }; (3,-8)*{ \scs r};(15,-8)*{\scs n};
 (20,0)*{ (1^r)}; 
  \endxy
    \vspace*{2ex}
\end{equation*}

\item The images of the merge$_i$ and split$_i$ morphisms for $i\neq r$: 
\begin{equation*}
\labellist
\hair 2pt
\pinlabel $\scs i$   at  45  95
\endlabellist
\figins{-16}{0.6}{merge}
\longmapsto\
\xy 0;/r.16pc/; 
    (0,-1.5)*{\dblue\bbcfe{\black i}};
    (14,4)*{(1^r) };    
    (-12,3)*{};(12,3)*{};
    ( 7.5,2)*{\dblue\xybox{
    (-3,-5)*{}; (-10,8.5) **\crv{(-3,1) & (-10,3)}?(1)*\dir{>};}};
    ( -7.5,2)*{\dblue\xybox{
    ( 3,-5)*{}; ( 10,8.5) **\crv{( 3,1) & ( 10,3)}?(0)*\dir{<};}};
    (-11,-7)*{\scs i};
    ( 11,-7)*{\scs i};
    \endxy
\mspace{80mu}
\labellist
\hair 2pt
\pinlabel $\scs i$   at  45  45
\endlabellist
\figins{-16}{0.6}{split}
\longmapsto \
\xy 0;/r.16pc/; 
    (0,5)*{\dblue\bbpfe{\black i}};
    (14,0)*{(1^r) };    
    (-12,3)*{};(12,3)*{};
    (-7.5,2)*{\dblue\xybox{
    (-3,-5)*{}; (-10,8.5) **\crv{(-3,1) & (-10,3)}?(0)*\dir{<};}};
    ( 7.5,2)*{\dblue\xybox{
    ( 3,-5)*{}; ( 10,8.5) **\crv{( 3,1) & ( 10,3)}?(1)*\dir{>};}};
    (-4,-7)*{\scs i};
    ( 4,-7)*{\scs i};
    \endxy
\vspace*{2ex}
\end{equation*}

\item The images of the merge$_r$ and split$_r$ morphisms: 
\begin{equation*}
\labellist
\hair 2pt
\pinlabel $\scs r$   at  45  95
\endlabellist
\figins{-16}{0.6}{merge}
\longmapsto\
\xy 
 (-12,6)*{\dred\xybox{(-3,18)*{};(-21,-6)*{} **\crv{(-3,10) & (-21,10)}; ?(.5)*\dir{>};}};
 (12,6)*{\dred\xybox{(3,18)*{};(21,-6)*{} **\crv{(3,10) & (21,10)}; ?(.5)*\dir{<};}};  
 (-24,6)*{\dblue\xybox{(-15,18)*{};(-33,-6)*{} **\crv{(-15,10) & (-33,10)}; ?(.5)*\dir{>};}};
 (24,6)*{\dblue\xybox{(15,18)*{};(33,-6)*{} **\crv{(15,10) & (33,10)}; ?(.5)*\dir{<};}};
 (0,-3)*{\dblue\xybox{(-3,0)*{};(3,0)*{} **\crv{(-3,5) & (3,5)}; ?(.5)*\dir{>};}};
 (0,4)*{\dred\xybox{(-15,0)*{};(15,0)*{} **\crv{(-15,20) & (15,20)}; ?(.5)*\dir{>};}};
 (-9,-4)*{\cdots};
  (9,-4)*{\cdots};
  (-9,16)*{\cdots};
  (9,16)*{\cdots};
 (-27,-4)*{\cdots};
  (27,-4)*{\cdots};
  (-15,-8)*{\scs r}; (-3,-8)*{\scs n }; (3,-8)*{ \scs n};(15,-8)*{\scs r}; (-21,-8)*{\scs r}; (-33,-8)*{\scs n }; (33,-8)*{ \scs n};(21,-8)*{\scs r};
 (38,4)*{ (1^r)}; 
  \endxy
\end{equation*}

\begin{equation*}
\labellist
\hair 2pt
\pinlabel $\scs r$   at  45  45
\endlabellist
\figins{-16}{0.6}{split}
\longmapsto \
\xy 
 (-12,-6)*{\dred\xybox{(-3,-18)*{};(-21,6)*{} **\crv{(-3,-10) & (-21,-10)}; ?(.5)*\dir{<};}};
 (12,-6)*{\dred\xybox{(3,-18)*{};(21,6)*{} **\crv{(3,-10) & (21,-10)}; ?(.5)*\dir{>};}};  
 (-24,-6)*{\dblue\xybox{(-15,-18)*{};(-33,6)*{} **\crv{(-15,-10) & (-33,-10)}; ?(.5)*\dir{<};}};
 (24,-6)*{\dblue\xybox{(15,-18)*{};(33,6)*{} **\crv{(15,-10) & (33,-10)}; ?(.5)*\dir{>};}};
 (0,3)*{\dblue\xybox{(-3,0)*{};(3,0)*{} **\crv{(-3,-5) & (3,-5)}; ?(.5)*\dir{<};}};
 (0,-4)*{\dred\xybox{(-15,0)*{};(15,0)*{} **\crv{(-15,-20) & (15,-20)}; ?(.5)*\dir{<};}};
 (-9,4)*{\cdots};
  (9,4)*{\cdots};
 (-9,-16)*{\cdots};
  (9,-16)*{\cdots};
 (-27,4)*{\cdots};
  (27,4)*{\cdots};
  (-15,8)*{\scs r}; (-3,8)*{\scs n }; (3,8)*{ \scs n};(15,8)*{\scs r}; (-21,8)*{\scs r}; (-33,8)*{\scs n }; (33,8)*{ \scs n};(21,8)*{\scs r};
 (38,-4)*{ (1^r)}; 
  \endxy
\end{equation*}

\item The image of the $4$--valent vertex morphism $4$vert$_{i,j}$ with distant colors $i$ and $j$ different from $r$:
\begin{equation*}
\labellist
\hair 2pt
\pinlabel $\scs i$   at  -5 -12
\pinlabel $\scs j$   at 128 -10
\endlabellist
\figins{-16}{0.6}{4vert}
\longmapsto\ \ \ 
\text{$
\xy 0;/r.16pc/; 
    (14,0)*{(1^r) };    
    ( 0,0)*{\dblue\xybox{
    ( 0,-11)*{}; (-15,8.5) **\crv{( 0,-11) & (-15,8.5)}?(0)*\dir{<};
    ( 5,-11)*{}; (-10,8.5) **\crv{( 5,-11) & (-10,8.5)}?(1)*\dir{>};}};
    ( 0,0)*{\dred\xybox{  
    ( 0,-11)*{}; ( 15,8.5) **\crv{( 0,-11) & ( 15,8.5)}?(0)*\dir{<};
    ( 5,-11)*{}; ( 20,8.5) **\crv{( 5,-11) & ( 20,8.5)}?(1)*\dir{>};}};
    (-10,-12)*{\scs i}; (-5,-12)*{\scs i};
    ( 10,-12)*{\scs j}; ( 5,-12)*{\scs j};
    \endxy
$}
\vspace*{2ex}
\end{equation*}

\item The images of the $4$--valent vertex morphisms $4$vert$_{r,j}$ and $4$vert$_{j,r}$ with distant colors $r$ and $j$:
\begin{equation*}
\labellist
\hair 2pt
\pinlabel $\scs r$   at  -5 -12
\pinlabel $\scs j$   at 128 -10
\endlabellist
\figins{-16}{0.6}{4vert}
\longmapsto\ \ \
 \xy 
 (-28,0)*{\dblue\xybox{(-18,8);(-28,-8); **\dir{-} ?(0)*\dir{<};}};
 (-27,-6)*{\cdots};
 (-19,6)*{\cdots};
 (-18,0)*{\dred\xybox{(-8,8);(-18,-8); **\dir{-} ?(0)*\dir{<};}};
 (-13,0)*{\dred\xybox{(-3,8);(-13,-8); **\dir{-} ?(1)*\dir{>};}};
 (-12,-6)*{\cdots};
 (-4,6)*{\cdots};
 (-3,0)*{\dblue\xybox{(7,8);(-3,-8); **\dir{-} ?(1)*\dir{>};}};
 (-18,0)*{\dgreen\xybox{(-28,8);(2,-8); **\dir{-} ?(0)*\dir{<};}};
 (-13,0)*{\dgreen\xybox{(-23,8);(7,-8); **\dir{-} ?(1)*\dir{>};}};
 (-33,-10)*{\scs n}; (-23,-10)*{\scs r }; (-18,-10)*{ \scs r};(-8,-10)*{\scs n}; (-3,-10)*{\scs j}; (2,-10)*{\scs j};
 (7,0)*{ (1^r)}; 
  \endxy
\vspace*{2ex}
\end{equation*}

\begin{equation*}
\labellist
\hair 2pt
\pinlabel $\scs j$   at  -5 -12
\pinlabel $\scs r$   at 128 -10
\endlabellist
\figins{-16}{0.6}{4vert}
\longmapsto\ \ \
 \xy 
 (-28,0)*{\dblue\xybox{(-18,-8);(-28,8); **\dir{-} ?(1)*\dir{>};}};
 (-19,-6)*{\cdots};
 (-27,6)*{\cdots};
 (-18,0)*{\dred\xybox{(-8,-8);(-18,8); **\dir{-} ?(1)*\dir{>};}};
 (-13,0)*{\dred\xybox{(-3,-8);(-13,8); **\dir{-} ?(0)*\dir{<};}};
 (-4,-6)*{\cdots};
 (-12,6)*{\cdots};
 (-3,0)*{\dblue\xybox{(7,-8);(-3,8); **\dir{-} ?(0)*\dir{<};}};
 (-18,0)*{\dgreen\xybox{(-28,-8);(2,8); **\dir{-} ?(1)*\dir{>};}};
 (-13,0)*{\dgreen\xybox{(-23,-8);(7,8); **\dir{-} ?(0)*\dir{<};}};
 (-33,-10)*{\scs j}; (-23,-10)*{\scs n }; (-28,-10)*{ \scs j};(-8,-10)*{\scs r}; (-13,-10)*{\scs r}; (2,-10)*{\scs n};
 (7,0)*{ (1^r)}; 
  \endxy
\vspace*{2ex}
\end{equation*}

\item The images of the $6$--valent vertex morphisms $6$vert$_{i+1,i}$ and $6$vert$_{i,i+1}$ with colors $i$ and $i+1$ different from $r$:
\begin{equation}\label{eq:sixval}
\labellist
\tiny\hair 2pt
\pinlabel $i+1$ at -5 -10
\pinlabel $i$   at 65 -10
\pinlabel $i+1$ at 135 -10
\endlabellist
\figins{-18}{0.6}{6vertu}\
\longmapsto\ \ \
\text{$
\xy 0;/r.16pc/; 
    (16,0)*{(1^r) };    
    ( 0,0)*{\dblue\xybox{
    (-7.5,10)*{}; (    5,-10) **\crv{(-4.5, 7) & ( 7.5,0) & ( 5,-9)}?(0)*\dir{<};
    (12.5,10)*{}; (    0,-10) **\crv{( 9.5, 7) & (-2.5,0) & ( 0,-9)}?(1)*\dir{>};
    (17.5,10)*{}; (-12.5, 10) **\crv{(   8, 0) & ( 2.5,-6) & (-3,0)}?(0)*\dir{<};
}};
    ( 0,0)*{\dred\xybox{  
    (-10,-20)*{};( 10,-20) **\crv{(-9,-19) & (0,-12) & (8,-19)}?(.2)*\dir{>} ?(.8)*\dir{>};
    ( 2.5, 0)*{};( 15,-20) **\crv{( 2.5,0) & ( 2,-10) & ( 15,-20)}?(0)*\dir{<};
    (-2.5, 0)*{};(-15,-20) **\crv{(-2.5,0) & (-2,-10) & (-15,-20)}?(1)*\dir{>};
}};  
    ( -17,-12)*{\scs i+1}; (-10,-12)*{\scs i+1};
    (-2.5,-12)*{\scs i }; (2.5,-12)*{\scs i };
    ( 16,-12)*{\scs i+1}; 
    ( 16, 12)*{\scs i};
    \endxy
$}
\vspace*{2ex}
\end{equation}

\begin{equation*}
\labellist
\tiny\hair 2pt
\pinlabel $i$ at -5 -10
\pinlabel $i+1$   at 65 -10
\pinlabel $i$ at 135 -10
\endlabellist
\figins{-18}{0.6}{6vertd}\
\longmapsto\ \ \
\text{$
\xy 0;/r.16pc/; 
    (16,0)*{(1^r) };    
    ( 0,0)*{\dblue\xybox{
    (-7.5,-10)*{}; (  5, 10) **\crv{(-4.5,-7) & ( 7.5,0) & ( 5,9)}?(1)*\dir{>};
    (12.5,-10)*{}; (  0, 10) **\crv{( 9.5,-7) & (-2.5,0) & ( 0,9)}?(0)*\dir{<};
    (17.5,-10)*{}; (-12.5,-10) **\crv{(  8,  0) & (2.5,6) & (-3, 0)}?(1)*\dir{>};
}};
    ( 0,0)*{\dred\xybox{  
    (-10,0)*{};(10,0) **\crv{(-9,-1) & (0,-8) & (8,-1)}?(.2)*\dir{<} ?(.8)*\dir{<};
    ( 2.5,-20)*{}; (  15, 0) **\crv{( 2.5,-20) & ( 2,-10) & ( 15,0)}?(1)*\dir{>};
    (-2.5,-20)*{}; ( -15, 0) **\crv{(-2.5,-20) & (-2,-10) & (-15,0)}?(0)*\dir{<};
}};  
    ( -16,-12)*{\scs   i}; (-11,-12)*{\scs   i};
    (-3.5,-12)*{\scs i+1}; (3.5,-12)*{\scs i+1};
    (  11,-12)*{\scs i  }; ( 16,-12)*{\scs   i}; 
    (  10, 12)*{\scs i+1};
    \endxy
$}
\vspace*{2ex}
\end{equation*}

\item The images of the $6$--valent vertex morphisms $6$vert$_{r,1}$, $6$vert$_{1,r}$, $6$vert$_{r-1,r}$ and $6$vert$_{r,r-1}$:
\begin{eqnarray*}
 \labellist
\tiny\hair 2pt
\pinlabel $r$ at -5 -10
\pinlabel $1$   at 65 -10
\pinlabel $r$ at 135 -10
\endlabellist
\figins{-55}{0.6}{6vertd}\
& \xy (0,0)*{}; (0,-12)*{\longmapsto}; \endxy &
 \xy
(-15,-23)*{\cdots};
(-50,-12.5)*{\dred\xybox{(-75,-25);(-55,0) **\crv{(-75,-5) & (-55,-10)} ?(1)*\dir{>};}};
(-40,-12.5)*{\dblue\xybox{(-65,-25);(-45,0) **\crv{(-65,-5) & (-45,-10)} ?(1)*\dir{>};}};
(-45,-12.5)*{\dblue\xybox{(-20,-25)*{};(-40,0)*{} **\crv{(-20,-5) & (-40,-10)}; ?(0)*\dir{<};}};
(-30,-23)*{\cdots};
(-35,-12.5)*{\dred\xybox{(-10,-25);(-30,0) **\crv{(-10,-5) & (-30,-10)}; ?(0)*\dir{<};}};
%(-12.5,12.5)*{\doran\xybox{(-5,25);(-20,0) **\crv{(-3,5) & (-20,10)}; ?(1)*\dir{>};}};
(-55,-23)*{\cdots};
%(-42.5,12.5)*{\dred\xybox{(-60,25);(-25,0) **\crv{(-62,5) & (-25,10)}; ?(1)*\dir{>};}};
(-70,-23)*{\cdots};
(-35,-2)*{\cdots};
(-50,-2)*{\cdots};
(-42.5,-12.5)*{\dred\xybox{(-35,-25);(-80,-25) **\crv{(-35,-20) & (-80,-20)}; ?(1)*\dir{>};}};
(-42.5,-2.5)*{\dgreen\xybox{(-15,0);(-50,0) **\crv{(-15,-5) & (-50,-5)}; ?(1)*\dir{>};}};
(-42.5,-12.5)*{\dblue\xybox{(-25,-25);(-90,-25) **\crv{(-25,-15) & (-90,-15)}; ?(1)*\dir{>};}};
%(-27.5,12.5)*{\dred\xybox{(0,25);(-55,25) **\crv{(0,10) & (-55,10)}; ?(1)*\dir{>};}};
(-30,-12.5)*{\dgreen\xybox{(-65,-25);(-45,0) **\crv{(-65,-5) & (-45,-10)} ?(1)*\dir{>};}};
(-55,-12.5)*{\dgreen\xybox{(-20,-25)*{};(-40,0)*{} **\crv{(-20,-5) & (-40,-10)}; ?(0)*\dir{<};}};
%(0,27)*{\scs r}; (-5,27)*{\scs r-1}; 
(-10,-27)*{\scs n}; (-20,-27)*{\scs r}; (-25,-27)*{\scs r}; (-35,-27)*{\scs n}; (-40,-27)*{\scs 1}; (-50,-27)*{\scs n}; (-45,-27)*{\scs 1}; (-60,-27)*{\scs r}; (-65,-27)*{\scs r}; (-75,-27)*{\scs n}; (-25,2)*{\scs 1};
(0,-12)*{ (1^r)};
\endxy
\end{eqnarray*}

\begin{eqnarray*}
  \labellist
\tiny\hair 2pt
\pinlabel $1$ at -5 -10
\pinlabel $r$   at 65 -10
\pinlabel $1$ at 135 -10
\endlabellist
\figins{15}{0.6}{6vertu}\
& \xy (0,0)*{}; (0,12)*{\longmapsto}; \endxy &
 \xy
(-15,23)*{\cdots};
(-50,12.5)*{\dred\xybox{(-75,25);(-55,0) **\crv{(-75,5) & (-55,10)} ?(0)*\dir{<};}};
(-40,12.5)*{\dblue\xybox{(-65,25);(-45,0) **\crv{(-65,5) & (-45,10)} ?(0)*\dir{<};}};
(-45,12.5)*{\dblue\xybox{(-20,25)*{};(-40,0)*{} **\crv{(-20,5) & (-40,10)}; ?(1)*\dir{>};}};
(-30,23)*{\cdots};
(-35,12.5)*{\dred\xybox{(-10,25);(-30,0) **\crv{(-10,5) & (-30,10)}; ?(1)*\dir{>};}};
%(-12.5,12.5)*{\doran\xybox{(-5,25);(-20,0) **\crv{(-3,5) & (-20,10)}; ?(1)*\dir{>};}};
(-55,23)*{\cdots};
%(-42.5,12.5)*{\dred\xybox{(-60,25);(-25,0) **\crv{(-62,5) & (-25,10)}; ?(1)*\dir{>};}};
(-70,23)*{\cdots};
(-35,2)*{\cdots};
(-50,2)*{\cdots};
(-42.5,12.5)*{\dred\xybox{(-35,25);(-80,25) **\crv{(-35,20) & (-80,20)}; ?(0)*\dir{<};}};
(-42.5,2.5)*{\dgreen\xybox{(-15,0);(-50,0) **\crv{(-15,5) & (-50,5)}; ?(0)*\dir{<};}};
(-42.5,12.5)*{\dblue\xybox{(-25,25);(-90,25) **\crv{(-25,15) & (-90,15)}; ?(0)*\dir{<};}};
%(-27.5,12.5)*{\dred\xybox{(0,25);(-55,25) **\crv{(0,10) & (-55,10)}; ?(1)*\dir{>};}};
(-30,12.5)*{\dgreen\xybox{(-65,25);(-45,0) **\crv{(-65,5) & (-45,10)} ?(0)*\dir{<};}};
(-55,12.5)*{\dgreen\xybox{(-20,25)*{};(-40,0)*{} **\crv{(-20,5) & (-40,10)}; ?(1)*\dir{>};}};
%(0,27)*{\scs r}; (-5,27)*{\scs r-1}; 
(-10,27)*{\scs n}; (-20,27)*{\scs r}; (-25,27)*{\scs r}; (-35,27)*{\scs n}; (-40,27)*{\scs 1}; (-50,27)*{\scs n}; (-45,27)*{\scs 1}; (-60,27)*{\scs r}; (-65,27)*{\scs r}; (-75,27)*{\scs n}; (-25,-2)*{\scs 1};
(0,12)*{ (1^r)};
\endxy
\end{eqnarray*}

\begin{eqnarray*}
 \labellist
\tiny\hair 2pt
\pinlabel $r-1$ at -5 -10
\pinlabel $r$   at 65 -10
\pinlabel $r-1$ at 135 -10
\endlabellist
\figins{15}{0.6}{6vertd}\
& \xy (0,0)*{}; (0,12)*{\longmapsto}; \endxy &
 \xy
(-15,23)*{\cdots};
(-65,12.5)*{\dblue\xybox{(-75,25);(-55,0) **\crv{(-75,15) & (-55,10)} ?(1)*\dir{>};}};
(-55,12.5)*{\dred\xybox{(-65,25);(-45,0) **\crv{(-65,15) & (-45,10)} ?(1)*\dir{>};}};
(-30,12.5)*{\dred\xybox{(-20,25)*{};(-40,0)*{} **\crv{(-20,15) & (-40,10)}; ?(0)*\dir{<};}};
(-30,23)*{\cdots};
(-20,12.5)*{\dblue\xybox{(-10,25);(-30,0) **\crv{(-10,15) & (-30,10)}; ?(0)*\dir{<};}};
%(-12.5,12.5)*{\doran\xybox{(-5,25);(-20,0) **\crv{(-3,5) & (-20,10)}; ?(1)*\dir{>};}};
(-55,23)*{\cdots};
%(-42.5,12.5)*{\dred\xybox{(-60,25);(-25,0) **\crv{(-62,5) & (-25,10)}; ?(1)*\dir{>};}};
(-70,23)*{\cdots};
(-35,2)*{\cdots};
(-50,2)*{\cdots};
(-42.5,12.5)*{\dblue\xybox{(-35,25);(-50,25) **\crv{(-35,20) & (-50,20)}; ?(1)*\dir{>};}};
(-42.5,7.5)*{\doran\xybox{(-15,0);(-60,0) **\crv{(-15,15) & (-60,15)}; ?(1)*\dir{>};}};
(-42.5,12.5)*{\dred\xybox{(-25,25);(-60,25) **\crv{(-25,15) & (-60,15)}; ?(1)*\dir{>};}};
%(-27.5,12.5)*{\dred\xybox{(0,25);(-55,25) **\crv{(0,10) & (-55,10)}; ?(1)*\dir{>};}};
(-35,12.5)*{\doran\xybox{(-65,25);(-45,0) **\crv{(-65,5) & (-45,10)} ?(1)*\dir{>};}};
(-50,12.5)*{\doran\xybox{(-20,25)*{};(-40,0)*{} **\crv{(-20,5) & (-40,10)}; ?(0)*\dir{<};}};
%(0,27)*{\scs r}; (-5,27)*{\scs r-1}; 
(-10,27)*{\scs n}; (-20,27)*{\scs r}; (-25,27)*{\scs r}; (-35,27)*{\scs n}; (-40,27)*{\scs r-1}; (-50,27)*{\scs n}; (-45,27)*{\scs r-1}; (-60,27)*{\scs r}; (-65,27)*{\scs r}; (-75,27)*{\scs n}; (-20,-2)*{\scs r-1};
(0,12)*{ (1^r)};
\endxy
\end{eqnarray*}

\begin{eqnarray*}
  \labellist
\tiny\hair 2pt
\pinlabel $r$ at -5 -10
\pinlabel $r-1$   at 65 -10
\pinlabel $r$ at 135 -10
\endlabellist
\figins{-55}{0.6}{6vertu}\
& \xy (0,0)*{}; (0,-12)*{\longmapsto}; \endxy &
 \xy
(-15,-23)*{\cdots};
(-65,-12.5)*{\dblue\xybox{(-75,-25);(-55,0) **\crv{(-75,-15) & (-55,-10)} ?(0)*\dir{<};}};
(-55,-12.5)*{\dred\xybox{(-65,-25);(-45,0) **\crv{(-65,-15) & (-45,-10)} ?(0)*\dir{<};}};
(-30,-12.5)*{\dred\xybox{(-20,-25)*{};(-40,0)*{} **\crv{(-20,-15) & (-40,-10)}; ?(1)*\dir{>};}};
(-30,-23)*{\cdots};
(-20,-12.5)*{\dblue\xybox{(-10,-25);(-30,0) **\crv{(-10,-15) & (-30,-10)}; ?(1)*\dir{>};}};
%(-12.5,12.5)*{\doran\xybox{(-5,25);(-20,0) **\crv{(-3,5) & (-20,10)}; ?(1)*\dir{>};}};
(-55,-23)*{\cdots};
%(-42.5,12.5)*{\dred\xybox{(-60,25);(-25,0) **\crv{(-62,5) & (-25,10)}; ?(1)*\dir{>};}};
(-70,-23)*{\cdots};
(-35,-2)*{\cdots};
(-50,-2)*{\cdots};
(-42.5,-12.5)*{\dblue\xybox{(-35,-25);(-50,-25) **\crv{(-35,-20) & (-50,-20)}; ?(0)*\dir{<};}};
(-42.5,-7.5)*{\doran\xybox{(-15,0);(-60,0) **\crv{(-15,-15) & (-60,-15)}; ?(0)*\dir{<};}};
(-42.5,-12.5)*{\dred\xybox{(-25,-25);(-60,-25) **\crv{(-25,-15) & (-60,-15)}; ?(0)*\dir{<};}};
%(-27.5,12.5)*{\dred\xybox{(0,25);(-55,25) **\crv{(0,10) & (-55,10)}; ?(1)*\dir{>};}};
(-35,-12.5)*{\doran\xybox{(-65,-25);(-45,0) **\crv{(-65,-5) & (-45,-10)} ?(0)*\dir{<};}};
(-50,-12.5)*{\doran\xybox{(-20,-25)*{};(-40,0)*{} **\crv{(-20,-5) & (-40,-10)}; ?(1)*\dir{>};}};
%(0,27)*{\scs r}; (-5,27)*{\scs r-1}; 
(-10,-27)*{\scs n}; (-20,-27)*{\scs r}; (-25,-27)*{\scs r}; (-35,-27)*{\scs n}; (-40,-27)*{\scs r-1}; (-50,-27)*{\scs n}; (-45,-27)*{\scs r-1}; (-60,-27)*{\scs r}; (-65,-27)*{\scs r}; (-75,-27)*{\scs n}; (-20,2)*{\scs r-1};
(0,-12)*{ (1^r)};
\endxy
\end{eqnarray*}

\item The images of the oriented caps and cups $+$cap, $-$cap, $+$cup and $-$cup:
\begin{equation*}
\figins{-16}{0.5}{cap-r}
\longmapsto\
\xy 
 (0,-4)*{\dred\xybox{(-2,0)*{};(2,0)*{} **\crv{(-2,3) & (2,3)}; ?(.5)*\dir{<};}};
 (0,1.5)*{\dgreen\xybox{(-10,0)*{};(10,0)*{} **\crv{(-10,14) & (10,14)}; ?(.5)*\dir{<};}};
 (-6,-4)*{\cdots};
  (6,-4)*{\cdots};
(0,4)*{\dpink\xybox{(-14,0)*{};(14,0)*{} **\crv{(-14,19) & (14,19)}; ?(.5)*\dir{<};}};
 (0,9.5)*{\dblue\xybox{(-24,0)*{};(24,0)*{} **\crv{(-24,30) & (24,30)}; ?(.5)*\dir{<};}};
 (-19,-4)*{\cdots};
  (19,-4)*{\cdots};
  (-10,-8)*{\scs 1}; (-2,-8)*{\scs r}; (2,-8)*{ \scs r};(10,-8)*{\scs 1};(-24,-8)*{\scs n}; (-14,-8)*{\scs r+1}; (14,-8)*{ \scs r+1};(24,-8)*{\scs n};
 (30,0)*{ (1^r)}; 
  \endxy
    \vspace*{2ex}
\end{equation*}
\begin{equation*}
\figins{-16}{0.5}{cap-l}
\longmapsto\
\xy 
 (0,-4)*{\dblue\xybox{(-2,0)*{};(2,0)*{} **\crv{(-2,3) & (2,3)}; ?(.5)*\dir{>};}};
 (0,1.5)*{\dpink\xybox{(-10,0)*{};(10,0)*{} **\crv{(-10,14) & (10,14)}; ?(.5)*\dir{>};}};
 (-6,-4)*{\cdots};
  (6,-4)*{\cdots};
(0,4)*{\dgreen\xybox{(-14,0)*{};(14,0)*{} **\crv{(-14,19) & (14,19)}; ?(.5)*\dir{>};}};
 (0,9.5)*{\dred\xybox{(-24,0)*{};(24,0)*{} **\crv{(-24,30) & (24,30)}; ?(.5)*\dir{>};}};
 (-19,-4)*{\cdots};
  (19,-4)*{\cdots};
  (-10,-8)*{\scs r+1}; (-2,-8)*{\scs n}; (2,-8)*{ \scs n};(10,-8)*{\scs r+1};(-24,-8)*{\scs r}; (-14,-8)*{\scs 1}; (14,-8)*{ \scs 1};(24,-8)*{\scs r};
 (30,0)*{ (1^r)}; 
  \endxy
    \vspace*{2ex}
\end{equation*}
\begin{equation*}
\figins{-16}{0.5}{cup-r}
\longmapsto\
\xy 
 (0,4)*{\dblue\xybox{(-2,0)*{};(2,0)*{} **\crv{(-2,-3) & (2,-3)}; ?(.5)*\dir{<};}};
 (0,-1.5)*{\dpink\xybox{(-10,0)*{};(10,0)*{} **\crv{(-10,-14) & (10,-14)}; ?(.5)*\dir{<};}};
 (-6,4)*{\cdots};
  (6,4)*{\cdots};
(0,-4)*{\dgreen\xybox{(-14,0)*{};(14,0)*{} **\crv{(-14,-19) & (14,-19)}; ?(.5)*\dir{<};}};
 (0,-9.5)*{\dred\xybox{(-24,0)*{};(24,0)*{} **\crv{(-24,-30) & (24,-30)}; ?(.5)*\dir{<};}};
 (-19,4)*{\cdots};
  (19,4)*{\cdots};
  (-10,8)*{\scs r+1}; (-2,8)*{\scs n}; (2,8)*{ \scs n};(10,8)*{\scs r+1};(-24,8)*{\scs r}; (-14,8)*{\scs 1}; (14,8)*{ \scs 1};(24,8)*{\scs r};
 (30,0)*{ (1^r)}; 
  \endxy
    \vspace*{2ex} 
\end{equation*}
\begin{equation*}
\figins{-16}{0.5}{cup-l}
\longmapsto\
\xy 
 (0,4)*{\dred\xybox{(-2,0)*{};(2,0)*{} **\crv{(-2,-3) & (2,-3)}; ?(.5)*\dir{>};}};
 (0,-1.5)*{\dgreen\xybox{(-10,0)*{};(10,0)*{} **\crv{(-10,-14) & (10,-14)}; ?(.5)*\dir{>};}};
 (-6,4)*{\cdots};
  (6,4)*{\cdots};
(0,-4)*{\dpink\xybox{(-14,0)*{};(14,0)*{} **\crv{(-14,-19) & (14,-19)}; ?(.5)*\dir{>};}};
 (0,-9.5)*{\dblue\xybox{(-24,0)*{};(24,0)*{} **\crv{(-24,-30) & (24,-30)}; ?(.5)*\dir{>};}};
 (-19,4)*{\cdots};
  (19,4)*{\cdots};
  (-10,8)*{\scs 1}; (-2,8)*{\scs r}; (2,8)*{ \scs r};(10,8)*{\scs 1};(-24,8)*{\scs n}; (-14,8)*{\scs r+1}; (14,8)*{ \scs r+1};(24,8)*{\scs n};
 (30,0)*{ (1^r)}; 
  \endxy
    \vspace*{2ex}
\end{equation*}

\item The images of the $4$--valent vertex morphisms $4$vert$_{+,i}$, $4$vert$_{i+1,+}$, $4$vert$_{i,-}$ and $4$vert$_{-,i+1}$ with colors $i$ different from $r-1$ and $r$ and $i+1$ different from $r$ and $1$:
\begin{equation*}
\labellist
\hair 2pt
\pinlabel $\scs i+1$   at  -5 140
\pinlabel $\scs i$   at 128 -10
\endlabellist
\figins{-16}{0.6}{4mvert-ur}\ \
\longmapsto\ \ \
 \xy 
 (-28,0)*{\dblue\xybox{(-18,8);(-28,-8); **\dir{-} ?(0)*\dir{<};}};
 (-27,-6)*{\cdots};
 (-19,6)*{\cdots};
 (-18,0)*{\dpink\xybox{(-8,8);(-18,-8); **\dir{-} ?(0)*\dir{<};}};
 (-13,0)*{\dgreen\xybox{(-3,8);(-13,-8); **\dir{-} ?(0)*\dir{<};}};
 (-12,-6)*{\cdots};
 (-4,6)*{\cdots};
 (-3,0)*{\doran\xybox{(7,8);(-3,-8); **\dir{-} ?(0)*\dir{<};}};
 (12,0)*{\dbrun\xybox{(22,8);(12,-8); **\dir{-} ?(0)*\dir{<};}};
 (13,-6)*{\cdots};
 (21,6)*{\cdots};
 (22,0)*{\dred\xybox{(32,8);(22,-8); **\dir{-} ?(0)*\dir{<};}};
 (12,-4)*{\dturq\xybox{(-3,-8)*{};(27,-8)*{} **\crv{(-2,0) & (27,0)}; ?(.5)*\dir{<};}};
 (14.5,0)*{\dturq\xybox{(7,8)*{};(22,-8)*{} **\crv{(5,0) & (24,6)}; ?(1)*\dir{>};}};
 (-8,3.5)*{\dyellow\xybox{(-23,8)*{};(17,8)*{} **\crv{(-23,0) & (17,0)}; ?(.5)*\dir{<};}};
 (-15.5,0)*{\dyellow\xybox{(-28,8)*{};(7,-8)*{} **\crv{(-30,-4) & (9,2)}; ?(1)*\dir{>};}};
 (-33,-10)*{\scs n}; (-23,-10)*{\scs r+1 }; (-18,-10)*{ \scs 1};(-8,-10)*{\scs i-1}; (-3,-10)*{\scs i}; (2,-10)*{\scs i+1}; (7,-10)*{\scs i+2}; (17,-10)*{\scs r};(22,-10)*{\scs i};(27,-10)*{\scs i};
 (32,0)*{ (1^r)}; 
  \endxy
\vspace*{2ex}
\end{equation*}

\begin{equation*}
\labellist
\hair 2pt
\pinlabel $\scs i+1$   at  -5 -10
\pinlabel $\scs i$   at 128 140
\endlabellist
\figins{-16}{0.6}{4mvert-ul}\ \
\longmapsto\ \ \
 \xy 
 (-28,0)*{\dblue\xybox{(-18,-8);(-28,8); **\dir{-} ?(1)*\dir{>};}};
 (-27,6)*{\cdots};
 (-19,-6)*{\cdots};
 (-18,0)*{\dpink\xybox{(-8,-8);(-18,8); **\dir{-} ?(1)*\dir{>};}};
 (-13,0)*{\dgreen\xybox{(-3,-8);(-13,8); **\dir{-} ?(1)*\dir{>};}};
 (-12,6)*{\cdots};
 (-4,-6)*{\cdots};
 (-3,0)*{\doran\xybox{(7,-8);(-3,8); **\dir{-} ?(1)*\dir{>};}};
 (12,0)*{\dbrun\xybox{(22,-8);(12,8); **\dir{-} ?(1)*\dir{>};}};
 (13,6)*{\cdots};
 (21,-6)*{\cdots};
 (22,0)*{\dred\xybox{(32,-8);(22,8); **\dir{-} ?(1)*\dir{>};}};
 (12,4)*{\dturq\xybox{(-3,8)*{};(27,8)*{} **\crv{(-2,0) & (27,0)}; ?(.5)*\dir{>};}};
 (14.5,0)*{\dturq\xybox{(7,-8)*{};(22,8)*{} **\crv{(5,0) & (24,-6)}; ?(0)*\dir{<};}};
 (-8,-3.5)*{\dyellow\xybox{(-23,-8)*{};(17,-8)*{} **\crv{(-23,0) & (17,0)}; ?(.5)*\dir{>};}};
 (-15.5,0)*{\dyellow\xybox{(-28,-8)*{};(7,8)*{} **\crv{(-30,4) & (9,-2)}; ?(0)*\dir{<};}};
 (-23,-10)*{\scs n}; (-13,-10)*{\scs r+1 }; (-8,-10)*{ \scs 1};(2,-10)*{\scs i-1}; (7,-10)*{\scs i}; (12,-10)*{\scs i+1}; (17,-10)*{\scs i+2}; (27,-10)*{\scs r};(-33,-10)*{\scs i+1};(-27,-10)*{\scs i+1};
 (32,0)*{ (1^r)}; 
  \endxy
\vspace*{2ex}
\end{equation*}

\begin{equation*}
\labellist
\hair 2pt
\pinlabel $\scs i$   at  -5 -10
\pinlabel $\scs i+1$   at 128 140
\endlabellist
\figins{-16}{0.6}{4mvert-dr}\ \
\longmapsto\ \ \
\xy 
 (27,0)*{\dblue\xybox{(17,8);(27,-8); **\dir{-} ?(1)*\dir{>};}};
 (-15,-6)*{\cdots};
 (-22,6)*{\cdots};
 (17,0)*{\dpink\xybox{(7,8);(17,-8); **\dir{-} ?(1)*\dir{>};}};
 (12,0)*{\dgreen\xybox{(2,8);(12,-8); **\dir{-} ?(1)*\dir{>};}};
 (10,-6)*{\cdots};
 (3,6)*{\cdots};
 (2,0)*{\doran\xybox{(-8,8);(2,-8); **\dir{-} ?(1)*\dir{>};}};
 (-13,0)*{\dbrun\xybox{(-23,8);(-13,-8); **\dir{-} ?(1)*\dir{>};}};
 (25,-6)*{\cdots};
 (18,6)*{\cdots};
 (-23,0)*{\dred\xybox{(-33,8);(-23,-8); **\dir{-} ?(1)*\dir{>};}};
 (-13,-4)*{\dturq\xybox{(-3,-8)*{};(27,-8)*{} **\crv{(-2,0) & (27,0)}; ?(.5)*\dir{<};}};
 (-15.5,0)*{\dturq\xybox{(-13,8)*{};(-28,-8)*{} **\crv{(-11,0) & (-30,6)}; ?(0)*\dir{<};}};
 (7,3.5)*{\dyellow\xybox{(-23,8)*{};(17,8)*{} **\crv{(-23,0) & (17,0)}; ?(.5)*\dir{<};}};
 (14.5,0)*{\dyellow\xybox{(27,8)*{};(-8,-8)*{} **\crv{(29,-4) & (-10,2)}; ?(0)*\dir{<};}};
 (32,-10)*{\scs n}; (22,-10)*{\scs r+1 }; (17,-10)*{ \scs 1};(7,-10)*{\scs i-1}; (2,-10)*{\scs i}; (-3,-10)*{\scs i+1}; (-8,-10)*{\scs i+2}; (-18,-10)*{\scs r};(-23,-10)*{\scs i};(-28,-10)*{\scs i};
 (37,0)*{ (1^r)}; 
  \endxy
\vspace*{2ex}
\end{equation*}

\begin{equation*}
\labellist
\hair 2pt
\pinlabel $\scs i$   at  -5 140
\pinlabel $\scs i+1$   at 128 -10
\endlabellist
\figins{-16}{0.6}{4mvert-dl}\ \
\longmapsto\ \ \
\xy 
 (27,0)*{\dblue\xybox{(17,-8);(27,8); **\dir{-} ?(0)*\dir{<};}};
 (-15,6)*{\cdots};
 (-22,-6)*{\cdots};
 (17,0)*{\dpink\xybox{(7,-8);(17,8); **\dir{-} ?(0)*\dir{<};}};
 (12,0)*{\dgreen\xybox{(2,-8);(12,8); **\dir{-} ?(0)*\dir{<};}};
 (10,6)*{\cdots};
 (3,-6)*{\cdots};
 (2,0)*{\doran\xybox{(-8,-8);(2,8); **\dir{-} ?(0)*\dir{<};}};
 (-13,0)*{\dbrun\xybox{(-23,-8);(-13,8); **\dir{-} ?(0)*\dir{<};}};
 (25,6)*{\cdots};
 (18,-6)*{\cdots};
 (-23,0)*{\dred\xybox{(-33,-8);(-23,8); **\dir{-} ?(0)*\dir{<};}};
 (-13,4)*{\dturq\xybox{(-3,8)*{};(27,8)*{} **\crv{(-2,0) & (27,0)}; ?(.5)*\dir{>};}};
 (-15.5,0)*{\dturq\xybox{(-13,-8)*{};(-28,8)*{} **\crv{(-11,0) & (-30,-6)}; ?(1)*\dir{>};}};
 (7,-3.5)*{\dyellow\xybox{(-23,-8)*{};(17,-8)*{} **\crv{(-23,0) & (17,0)}; ?(.5)*\dir{>};}};
 (14.5,0)*{\dyellow\xybox{(27,-8)*{};(-8,8)*{} **\crv{(29,4) & (-10,-2)}; ?(1)*\dir{>};}};
 (22,-10)*{\scs n}; (12,-10)*{\scs r+1 }; (7,-10)*{ \scs 1};(-3,-10)*{\scs i-1}; (-8,-10)*{\scs i}; (-13,-10)*{\scs i+1}; (-18,-10)*{\scs i+2}; (-28,-10)*{\scs r};(32,-10)*{\scs i+1};(27,-10)*{\scs i+1};
 (37,0)*{ (1^r)}; 
  \endxy
\vspace*{2ex}
\end{equation*}

\item The images of the $4$--valent vertex morphisms $4$vert$_{+,r}$, $4$vert$_{1,+}$, $4$vert$_{r,-}$ and $4$vert$_{-,1}$:
\begin{eqnarray*}
\labellist
\hair 2pt
\pinlabel $\scs 1$   at  -5 140
\pinlabel $\scs r$   at 128 -10
\endlabellist
\figins{15}{0.6}{4mvert-ur}
& \xy (0,0)*{}; (0,12)*{\longmapsto}; \endxy &
 \xy
(5,2)*{\cdots};
(15,12.5)*{\dgreen\xybox{(15,0);(15,25); **\dir{-} ?(1)*\dir{>};}};
(32.5,12.5)*{\dturq\xybox{(20,0)*{};(45,25)*{} **\crv{(18,20) & (45,15)}; ?(0)*\dir{<};}};
(25,2)*{\cdots};
(42.5,12.5)*{\doran\xybox{(30,0);(55,25) **\crv{(28,20) & (55,15)}; ?(0)*\dir{<};}};
(32.5,12.5)*{\dblue\xybox{(40,0);(25,25) **\crv{(42,20) & (25,15)}; ?(0)*\dir{<};}};
(45,2)*{\cdots};
(42.5,12.5)*{\dpink\xybox{(50,0);(35,25) **\crv{(52,20) & (35,15)}; ?(0)*\dir{<};}};
(57.5,12.5)*{\dred\xybox{(55,0);(60,25); **\dir{-} ?(1)*\dir{>};}};
(70,2)*{\cdots};
(30,23)*{\cdots};
(50,23)*{\cdots};
(47.5,2.5)*{\dred\xybox{(35,0);(60,0) **\crv{(35,5) & (60,5)}; ?(0)*\dir{<};}};
(30,12.5)*{\dgreen\xybox{(20,25);(40,25) **\crv{(20,20) & (40,20)}; ?(0)*\dir{<};}};
(37.5,5)*{\dpink\xybox{(10,0);(65,0) **\crv{(10,10) & (65,10)}; ?(0)*\dir{<};}};
(37.5,7.5)*{\dblue\xybox{(0,0);(75,0) **\crv{(0,15) & (75,15)}; ?(0)*\dir{<};}};
(0,-2)*{\scs n}; (10,-2)*{\scs r+1}; (15,-2)*{\scs 1}; (20,-2)*{\scs 2}; (30,-2)*{\scs r-1}; (35,-2)*{\scs r}; (40,-2)*{\scs n}; (50,-2)*{\scs r+1}; (55,-2)*{\scs r}; (60,-2)*{\scs r}; (65,-2)*{\scs r+1}; (75,-2)*{\scs n}; (20,27)*{\scs 1}; 
(82,12)*{ (1^r)};
\endxy
\end{eqnarray*}

\begin{eqnarray*}
\labellist
\hair 2pt
\pinlabel $\scs 1$   at  -5 -10
\pinlabel $\scs r$   at 128 140
\endlabellist
\figins{15}{0.6}{4mvert-ul}
& \xy (0,0)*{}; (0,12)*{\longmapsto}; \endxy &
 \xy
(5,23)*{\cdots};
(15,12.5)*{\dgreen\xybox{(15,25);(15,0); **\dir{-} ?(0)*\dir{<};}};
(32.5,12.5)*{\dturq\xybox{(20,25)*{};(45,0)*{} **\crv{(18,5) & (45,10)}; ?(1)*\dir{>};}};
(25,23)*{\cdots};
(42.5,12.5)*{\doran\xybox{(30,25);(55,0) **\crv{(28,5) & (55,10)}; ?(1)*\dir{>};}};
(32.5,12.5)*{\dblue\xybox{(40,25);(25,0) **\crv{(42,5) & (25,10)}; ?(1)*\dir{>};}};
(45,23)*{\cdots};
(42.5,12.5)*{\dpink\xybox{(50,25);(35,0) **\crv{(52,5) & (35,10)}; ?(1)*\dir{>};}};
(57.5,12.5)*{\dred\xybox{(55,25);(60,0); **\dir{-} ?(0)*\dir{<};}};
(70,23)*{\cdots};
(30,2)*{\cdots};
(50,2)*{\cdots};
(47.5,12.5)*{\dred\xybox{(35,25);(60,25) **\crv{(35,20) & (60,20)}; ?(1)*\dir{>};}};
(30,2.5)*{\dgreen\xybox{(20,0);(40,0) **\crv{(20,5) & (40,5)}; ?(1)*\dir{>};}};
(37.5,12.5)*{\dpink\xybox{(10,25);(65,25) **\crv{(10,15) & (65,15)}; ?(1)*\dir{>};}};
(37.5,12.5)*{\dblue\xybox{(0,25);(75,25) **\crv{(0,10) & (75,10)}; ?(1)*\dir{>};}};
(0,27)*{\scs n}; (10,27)*{\scs r+1}; (15,27)*{\scs 1}; (20,27)*{\scs 2}; (30,27)*{\scs r-1}; (35,27)*{\scs r}; (40,27)*{\scs n}; (50,27)*{\scs r+1}; (55,27)*{\scs r}; (60,27)*{\scs r}; (65,27)*{\scs r+1}; (75,27)*{\scs n}; (20,-2)*{\scs 1}; 
(82,12)*{ (1^r)};
\endxy
\end{eqnarray*}

\begin{eqnarray*}
\labellist
\hair 2pt
\pinlabel $\scs 1$   at 128 140
\pinlabel $\scs r$   at -5 -10
\endlabellist
\figins{15}{0.6}{4mvert-dr}
& \xy (0,0)*{}; (0,12)*{\longmapsto}; \endxy &
 \xy
(-5,2)*{\cdots};
(-15,12.5)*{\dgreen\xybox{(-15,0);(-15,25); **\dir{-} ?(0)*\dir{<};}};
(-32.5,12.5)*{\dturq\xybox{(-20,0)*{};(-45,25)*{} **\crv{(-18,20) & (-45,15)}; ?(1)*\dir{>};}};
(-25,2)*{\cdots};
(-42.5,12.5)*{\doran\xybox{(-30,0);(-55,25) **\crv{(-28,20) & (-55,15)}; ?(1)*\dir{>};}};
(-32.5,12.5)*{\dblue\xybox{(-40,0);(-25,25) **\crv{(-42,20) & (-25,15)}; ?(1)*\dir{>};}};
(-45,2)*{\cdots};
(-42.5,12.5)*{\dpink\xybox{(-50,0);(-35,25) **\crv{(-52,20) & (-35,15)}; ?(1)*\dir{>};}};
(-57.5,12.5)*{\dred\xybox{(-55,0);(-60,25); **\dir{-} ?(0)*\dir{<};}};
(-70,2)*{\cdots};
(-30,23)*{\cdots};
(-50,23)*{\cdots};
(-47.5,2.5)*{\dred\xybox{(-35,0);(-60,0) **\crv{(-35,5) & (-60,5)}; ?(1)*\dir{>};}};
(-30,12.5)*{\dgreen\xybox{(-20,25);(-40,25) **\crv{(-20,20) & (-40,20)}; ?(1)*\dir{>};}};
(-37.5,5)*{\dpink\xybox{(-10,0);(-65,0) **\crv{(-10,10) & (-65,10)}; ?(1)*\dir{>};}};
(-37.5,7.5)*{\dblue\xybox{(0,0);(-75,0) **\crv{(0,15) & (-75,15)}; ?(1)*\dir{>};}};
(0,-2)*{\scs n}; (-10,-2)*{\scs r+1}; (-15,-2)*{\scs 1}; (-20,-2)*{\scs 2}; (-30,-2)*{\scs r-1}; (-35,-2)*{\scs r}; (-40,-2)*{\scs n}; (-50,-2)*{\scs r+1}; (-55,-2)*{\scs r}; (-60,-2)*{\scs r}; (-65,-2)*{\scs r+1}; (-75,-2)*{\scs n}; (-20,27)*{\scs 1}; 
(7,12)*{ (1^r)};
\endxy
\end{eqnarray*}

\begin{eqnarray*}
\labellist
\hair 2pt
\pinlabel $\scs 1$   at 128 -10
\pinlabel $\scs r$   at -5 140
\endlabellist
\figins{15}{0.6}{4mvert-dl}
& \xy (0,0)*{}; (0,12)*{\longmapsto}; \endxy &
 \xy
(-5,23)*{\cdots};
(-15,12.5)*{\dgreen\xybox{(-15,25);(-15,0); **\dir{-} ?(1)*\dir{>};}};
(-32.5,12.5)*{\dturq\xybox{(-20,25)*{};(-45,0)*{} **\crv{(-18,5) & (-45,10)}; ?(0)*\dir{<};}};
(-25,23)*{\cdots};
(-42.5,12.5)*{\doran\xybox{(-30,25);(-55,0) **\crv{(-28,5) & (-55,10)}; ?(0)*\dir{<};}};
(-32.5,12.5)*{\dblue\xybox{(-40,25);(-25,0) **\crv{(-42,5) & (-25,10)}; ?(0)*\dir{<};}};
(-45,23)*{\cdots};
(-42.5,12.5)*{\dpink\xybox{(-50,25);(-35,0) **\crv{(-52,5) & (-35,10)}; ?(0)*\dir{<};}};
(-57.5,12.5)*{\dred\xybox{(-55,25);(-60,0); **\dir{-} ?(1)*\dir{>};}};
(-70,23)*{\cdots};
(-30,2)*{\cdots};
(-50,2)*{\cdots};
(-47.5,12.5)*{\dred\xybox{(-35,25);(-60,25) **\crv{(-35,20) & (-60,20)}; ?(0)*\dir{<};}};
(-30,2.5)*{\dgreen\xybox{(-20,0);(-40,0) **\crv{(-20,5) & (-40,5)}; ?(0)*\dir{<};}};
(-37.5,12.5)*{\dpink\xybox{(-10,25);(-65,25) **\crv{(-10,15) & (-65,15)}; ?(0)*\dir{<};}};
(-37.5,12.5)*{\dblue\xybox{(0,25);(-75,25) **\crv{(0,10) & (-75,10)}; ?(0)*\dir{<};}};
(0,27)*{\scs n}; (-10,27)*{\scs r+1}; (-15,27)*{\scs 1}; (-20,27)*{\scs 2}; (-30,27)*{\scs r-1}; (-35,27)*{\scs r}; (-40,27)*{\scs n}; (-50,27)*{\scs r+1}; (-55,27)*{\scs r}; (-60,27)*{\scs r}; (-65,27)*{\scs r+1}; (-75,27)*{\scs n}; (-20,-2)*{\scs 1}; 
(7,12)*{ (1^r)};
\endxy
\end{eqnarray*}

\item The images of the $4$--valent vertex morphisms $4$vert$_{+,r-1}$, $4$vert$_{r,+}$, $4$vert$_{r-1,-}$ and $4$vert$_{-,r}$:
\begin{eqnarray*}
\labellist
\hair 2pt
\pinlabel $\scs r-1$   at 128 -10
\pinlabel $\scs r$   at -5 140
\endlabellist
\figins{15}{0.6}{4mvert-ur}
& \xy (0,0)*{}; (0,12)*{\longmapsto}; \endxy &
 \xy
(-15,23)*{\cdots};
(-67.5,12.5)*{\dblue\xybox{(-75,25);(-60,0) **\crv{(-75,15) & (-60,10)} ?(1)*\dir{>};}};
(-57.5,12.5)*{\dpink\xybox{(-65,25);(-50,0) **\crv{(-65,15) & (-50,10)} ?(1)*\dir{>};}};
(-32.5,12.5)*{\dgreen\xybox{(-20,25)*{};(-45,0)*{} **\crv{(-18,5) & (-45,10)}; ?(1)*\dir{>};}};
(-30,23)*{\cdots};
(-22.5,12.5)*{\dyellow\xybox{(-10,25);(-35,0) **\crv{(-8,5) & (-35,10)}; ?(1)*\dir{>};}};
(-12.5,12.5)*{\doran\xybox{(-5,25);(-20,0) **\crv{(-3,5) & (-20,10)}; ?(1)*\dir{>};}};
(-45,23)*{\cdots};
(-42.5,12.5)*{\dred\xybox{(-60,25);(-25,0) **\crv{(-62,5) & (-25,10)}; ?(1)*\dir{>};}};
(-70,23)*{\cdots};
(-40,2)*{\cdots};
(-55,2)*{\cdots};
(-37.5,12.5)*{\dblue\xybox{(-35,25);(-40,25) **\crv{(-35,20) & (-40,20)}; ?(1)*\dir{>};}};
(-22.5,2.5)*{\doran\xybox{(-15,0);(-30,0) **\crv{(-15,5) & (-30,5)}; ?(1)*\dir{>};}};
(-37.5,12.5)*{\dpink\xybox{(-25,25);(-50,25) **\crv{(-25,15) & (-50,15)}; ?(1)*\dir{>};}};
(-27.5,12.5)*{\dred\xybox{(0,25);(-55,25) **\crv{(0,10) & (-55,10)}; ?(1)*\dir{>};}};
(0,27)*{\scs r}; (-5,27)*{\scs r-1}; (-10,27)*{\scs r-2}; (-20,27)*{\scs 1}; (-25,27)*{\scs r+1}; (-35,27)*{\scs n}; (-40,27)*{\scs n}; (-50,27)*{\scs r+1}; (-55,27)*{\scs r}; (-60,27)*{\scs r}; (-65,27)*{\scs r+1}; (-75,27)*{\scs n}; (-15,-2)*{\scs r-1}; 
(7,12)*{ (1^r)};
\endxy
\end{eqnarray*}

\begin{eqnarray*}
\labellist
\hair 2pt
\pinlabel $\scs r-1$   at 128 140
\pinlabel $\scs r$   at -5 -10
\endlabellist
\figins{15}{0.6}{4mvert-ul}
& \xy (0,0)*{}; (0,12)*{\longmapsto}; \endxy &
 \xy
(-15,2)*{\cdots};
(-67.5,12.5)*{\dblue\xybox{(-75,0);(-60,25) **\crv{(-75,10) & (-60,15)} ?(0)*\dir{<};}};
(-57.5,12.5)*{\dpink\xybox{(-65,0);(-50,25) **\crv{(-65,10) & (-50,15)} ?(0)*\dir{<};}};
(-32.5,12.5)*{\dgreen\xybox{(-20,0)*{};(-45,25)*{} **\crv{(-18,20) & (-45,15)}; ?(0)*\dir{<};}};
(-30,2)*{\cdots};
(-22.5,12.5)*{\dyellow\xybox{(-10,0);(-35,25) **\crv{(-8,20) & (-35,15)}; ?(0)*\dir{<};}};
(-12.5,12.5)*{\doran\xybox{(-5,0);(-20,25) **\crv{(-3,20) & (-20,15)}; ?(0)*\dir{<};}};
(-45,2)*{\cdots};
(-42.5,12.5)*{\dred\xybox{(-60,0);(-25,25) **\crv{(-62,20) & (-25,15)}; ?(0)*\dir{<};}};
(-70,2)*{\cdots};
(-40,23)*{\cdots};
(-55,23)*{\cdots};
(-37.5,2.5)*{\dblue\xybox{(-35,0);(-40,0) **\crv{(-35,5) & (-40,5)}; ?(0)*\dir{<};}};
(-22.5,12.5)*{\doran\xybox{(-15,25);(-30,25) **\crv{(-15,20) & (-30,20)}; ?(0)*\dir{<};}};
(-37.5,5.5)*{\dpink\xybox{(-25,0);(-50,0) **\crv{(-25,10) & (-50,10)}; ?(0)*\dir{<};}};
(-27.5,7.5)*{\dred\xybox{(0,0);(-55,0) **\crv{(0,15) & (-55,15)}; ?(0)*\dir{<};}};
(0,-2)*{\scs r}; (-5,-2)*{\scs r-1}; (-10,-2)*{\scs r-2}; (-20,-2)*{\scs 1}; (-25,-2)*{\scs r+1}; (-35,-2)*{\scs n}; (-40,-2)*{\scs n}; (-50,-2)*{\scs r+1}; (-55,-2)*{\scs r}; (-60,-2)*{\scs r}; (-65,-2)*{\scs r+1}; (-75,-2)*{\scs n}; (-15,27)*{\scs r-1}; 
(7,12)*{ (1^r)};
\endxy
\end{eqnarray*}

\begin{eqnarray*}
\labellist
\hair 2pt
\pinlabel $\scs r-1$   at -5 -10
\pinlabel $\scs r$   at 128 140
\endlabellist
\figins{15}{0.6}{4mvert-dr}
& \xy (0,0)*{}; (0,12)*{\longmapsto}; \endxy &
 \xy
(15,23)*{\cdots};
(67.5,12.5)*{\dblue\xybox{(75,25);(60,0) **\crv{(75,15) & (60,10)} ?(0)*\dir{<};}};
(57.5,12.5)*{\dpink\xybox{(65,25);(50,0) **\crv{(65,15) & (50,10)} ?(0)*\dir{<};}};
(32.5,12.5)*{\dgreen\xybox{(20,25)*{};(45,0)*{} **\crv{(18,5) & (45,10)}; ?(0)*\dir{<};}};
(30,23)*{\cdots};
(22.5,12.5)*{\dyellow\xybox{(10,25);(35,0) **\crv{(8,5) & (35,10)}; ?(0)*\dir{<};}};
(12.5,12.5)*{\doran\xybox{(5,25);(20,0) **\crv{(3,5) & (20,10)}; ?(0)*\dir{<};}};
(45,23)*{\cdots};
(42.5,12.5)*{\dred\xybox{(60,25);(25,0) **\crv{(62,5) & (25,10)}; ?(0)*\dir{<};}};
(70,23)*{\cdots};
(40,2)*{\cdots};
(55,2)*{\cdots};
(37.5,12.5)*{\dblue\xybox{(35,25);(40,25) **\crv{(35,20) & (40,20)}; ?(0)*\dir{<};}};
(22.5,2.5)*{\doran\xybox{(15,0);(30,0) **\crv{(15,5) & (30,5)}; ?(0)*\dir{<};}};
(37.5,12.5)*{\dpink\xybox{(25,25);(50,25) **\crv{(25,15) & (50,15)}; ?(0)*\dir{<};}};
(27.5,12.5)*{\dred\xybox{(0,25);(55,25) **\crv{(0,10) & (55,10)}; ?(0)*\dir{<};}};
(0,27)*{\scs r}; (5,27)*{\scs r-1}; (10,27)*{\scs r-2}; (20,27)*{\scs 1}; (25,27)*{\scs r+1}; (35,27)*{\scs n}; (40,27)*{\scs n}; (50,27)*{\scs r+1}; (55,27)*{\scs r}; (60,27)*{\scs r}; (65,27)*{\scs r+1}; (75,27)*{\scs n}; (15,-2)*{\scs r-1}; 
(82,12)*{ (1^r)};
\endxy
\end{eqnarray*}

\begin{eqnarray*}
\labellist
\hair 2pt
\pinlabel $\scs r-1$   at -5 140
\pinlabel $\scs r$   at 128 -10
\endlabellist
\figins{15}{0.6}{4mvert-dl}
& \xy (0,0)*{}; (0,12)*{\longmapsto}; \endxy &
 \xy
(15,2)*{\cdots};
(67.5,12.5)*{\dblue\xybox{(75,0);(60,25) **\crv{(75,10) & (60,15)} ?(1)*\dir{>};}};
(57.5,12.5)*{\dpink\xybox{(65,0);(50,25) **\crv{(65,10) & (50,15)} ?(1)*\dir{>};}};
(32.5,12.5)*{\dgreen\xybox{(20,0)*{};(45,25)*{} **\crv{(18,20) & (45,15)}; ?(1)*\dir{>};}};
(30,2)*{\cdots};
(22.5,12.5)*{\dyellow\xybox{(10,0);(35,25) **\crv{(8,20) & (35,15)}; ?(1)*\dir{>};}};
(12.5,12.5)*{\doran\xybox{(5,0);(20,25) **\crv{(3,20) & (20,15)}; ?(1)*\dir{>};}};
(45,2)*{\cdots};
(42.5,12.5)*{\dred\xybox{(60,0);(25,25) **\crv{(62,20) & (25,15)}; ?(1)*\dir{>};}};
(70,2)*{\cdots};
(40,23)*{\cdots};
(55,23)*{\cdots};
(37.5,2.5)*{\dblue\xybox{(35,0);(40,0) **\crv{(35,5) & (40,5)}; ?(1)*\dir{>};}};
(22.5,12.5)*{\doran\xybox{(15,25);(30,25) **\crv{(15,20) & (30,20)}; ?(1)*\dir{>};}};
(37.5,5.5)*{\dpink\xybox{(25,0);(50,0) **\crv{(25,10) & (50,10)}; ?(1)*\dir{>};}};
(27.5,7.5)*{\dred\xybox{(0,0);(55,0) **\crv{(0,15) & (55,15)}; ?(1)*\dir{>};}};
(0,-2)*{\scs r}; (5,-2)*{\scs r-1}; (10,-2)*{\scs r-2}; (20,-2)*{\scs 1}; (25,-2)*{\scs r+1}; (35,-2)*{\scs n}; (40,-2)*{\scs n}; (50,-2)*{\scs r+1}; (55,-2)*{\scs r}; (60,-2)*{\scs r}; (65,-2)*{\scs r+1}; (75,-2)*{\scs n}; (15,27)*{\scs r-1}; 
(82,12)*{ (1^r)};
\endxy
\end{eqnarray*}

\item The images of the box morphisms box$_{i}$ for $i = 1, \dots , r$:

\begin{equation*}
\xy (0,0)*{\bbox{i}} \endxy \longmapsto 
- \sum\limits_{j=i}^{r-1}\
\xy
(0,0)*{\dpurple\xybox{%
    (3,0);(-3,0) **\crv{(3,4.2) & (-3,4.2)};
    ?(.05)*\dir{>} ?(1)*\dir{>}; 
    (3,0);(-3,0) **\crv{(3,-4.2) & (-3,-4.2)} ?(.3)*\dir{}+(2,0)*{\bscs j};
}};
(6,3)*{\scs (1^r)},
\endxy + \xy 0;/r.18pc/:
 (0,-1)*{\dred\ccbub{\black -1}{\black r}};
  (8,4)*{\scs(1^r)};
 \endxy
\end{equation*}

\item The image of the box morphism box$_{y}$:

\begin{equation*}
\xy (0,0)*{\bbox{y}} \endxy \longmapsto 
 \xy
(0,0)*{\bbox{y}};
(6,2)*{\scs (1^r)}
\endxy
\end{equation*}

\end{itemize}

It is easy to check that $\Sigma_{n,r}$ is degree preserving and monoidal.  
\begin{lem}
$\Sigma_{n,r}$ is well-defined.
\end{lem}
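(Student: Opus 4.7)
The plan is to verify that every defining relation \eqref{eq:adj}--\eqref{eq:boxlast} of $\debim_{\hat{A}_{r-1}}^*$ is sent by $\Sigma_{n,r}$ to a valid identity in $\Scat(n,r)^*_{[y]}((1^r),(1^r))$. The verifications naturally split into three classes. The first class consists of relations involving only non-oriented strands colored by $\{1,\ldots,r-1\}$; these live entirely in the image of the finite-type sub-2-category, so they follow verbatim from the analogous verification for $\Sigma_{n,d}$ in Section~6.5 of~\cite{MSVschur}. I would cite that result and dispose of these cases in one line.

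The second class consists of relations involving an $r$-colored strand. Here the image is a ``long strand'' obtained by bending around using the full cycle of colors $r,r+1,\ldots,n,1,\ldots,r-1$. The strategy is to isotope, then apply the sideways $\mathcal{E}\mathcal{F}$ and bubble-slide relations \eqref{eq:redtobubbles}, \eqref{eq:EF}--\eqref{eq_ident_decomp}, \eqref{eq:bub_slides}--\eqref{eq:extrabubble461n} to peel off cap/cup pairs and reduce each such relation to an $r\leftrightarrow r-1$ (or $r\leftrightarrow 1$) version of an already-verified finite-type relation, modulo fake bubbles with negative labels that vanish because the weight $(1^r)$ satisfies $\lambda_k\in\{0,1\}$. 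Most of the work for relations like the adjacent-color $6$-valent relations \eqref{eq:dot6v}--\eqref{eq:slidenext} involving color $r$ is routine once the long strand is unpacked.

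The third and genuinely new class consists of the oriented-strand relations and the box relations. For isotopy relations \eqref{eq:adjmu}--\eqref{eq:v4mrotd}, one checks that the defining image of $+$ and $-$ indeed form a biadjoint pair up to isotopy inside $\Scat(n,r)^*_{[y]}$, which amounts to iterating the sideways biadjoint relations \eqref{eq_biadjoint1}--\eqref{eq_biadjoint2}. Relations \eqref{eq:orbub}--\eqref{eq:capcupdu} reduce to a cascade of sideways cup/cap cancellations. Relations \eqref{eq:slide4mv}--\eqref{eq:slide6mv2} follow by combining \eqref{eq_downup_ij-gen} with the $\mathcal{EE}$ and $\mathcal{FF}$ braid moves \eqref{eq_r2_ij-gen}--\eqref{eq_r3_easy-gen}, case-splitting on whether the adjacent color is generic or one of the special indices $1,r-1,r,r+1$, $n$.

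The hard part will be the box relations \eqref{eq:box1}--\eqref{eq:boxlast}, and in particular the ``affine'' cases \eqref{eq:box12}, \eqref{eq:box15}, \eqref{eq:box33}, \eqref{eq:boxlast} where the variable $y$ is explicitly involved. The image of $\mathrm{box}_i$ is defined as a signed sum of oriented bubbles plus a negative-degree counterclockwise $r$-bubble, and the image of $\mathrm{box}_y$ is multiplication by $y$. Verifying \eqref{eq:box1} requires using the bubble-slide relations \eqref{eq:bub_slides}--\eqref{eq:extrabubble4} together with the nilHecke dot-slide \eqref{eq_nil_dotslide} to compare two nested bubble sums differing in which of two adjacent colors sits on the strand; the telescoping sum collapses to a single bubble. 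For the special relation \eqref{eq:box12} and its descendants, one also needs to invoke the $\{1,n\}$-variants \eqref{eq:2ndbubbslide1n}--\eqref{eq:extrabubble461n}, which is precisely where the $y$-correction terms show up — this is exactly why $\Scat(n,r)_{[y]}$ rather than $\Scat(n,r)$ is the correct target, and matching the $y$ deformation on both sides is the delicate computation. Once \eqref{eq:box1} and \eqref{eq:box12} are established, relations \eqref{eq:box13}--\eqref{eq:boxlast} follow by the reductions already noted in Remark~\ref{uselessrels}, so one need only check the base cases directly.
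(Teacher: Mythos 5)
Your overall strategy -- verify every relation, dispose of the finite-type ones by citing~\cite{MSVschur}, and isolate the oriented-strand and box relations as the new content -- is the same as the paper's, and your treatment of the oriented-strand relations \eqref{eq:orbub}--\eqref{eq:slide6mv2} matches the paper's in outline. Two points, however, need attention.

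First, for the non-oriented relations containing an $r$-colored strand, the paper does not ``unpack the long strand'' and redo the computation. It uses a cleaner reduction: insert an oriented bubble (equal to $1$ by \eqref{eq:orbub}) next to the offending part of the diagram, slide it across using the already-verified mixed relations \eqref{eq:slide4mv}--\eqref{eq:slide6mv2} until it encloses exactly the subdiagram that differs between the two sides, choose the orientation so that the interior contains no $r$-colored strands, apply the generic-color relation inside the bubble, and slide the bubble back out. This makes the entire second class a formal consequence of the first and third classes. Your alternative is not obviously wrong, but it is substantially more work and you give no argument that the peeling procedure terminates in an already-verified identity; the paper's trick is the idea you are missing here.

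Second, and more seriously, your treatment of the box relations has a genuine gap. The irreducible base cases are \eqref{eq:box12} and \eqref{eq:box32}, not \eqref{eq:box1} and \eqref{eq:box12}: Remark~\ref{uselessrels} derives \eqref{eq:box33}, \eqref{eq:box34} and \eqref{eq:boxlast} \emph{from} \eqref{eq:box32}, so \eqref{eq:box32} must be checked directly, and it is where the real difficulty sits. Both \eqref{eq:box12} and \eqref{eq:box32} ultimately reduce to a single linear identity among the degree-two endomorphisms of $\mathbf{1}_{(1^r)}$, namely \eqref{eqexpy3}, relating the sum of the clockwise-over-counterclockwise $j$-bubble pairs for $j=1,\dots,r-1$, the counterclockwise $r$- and $n$-bubbles, and $\mathrm{box}_y$. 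This identity cannot be obtained by the bubble-slide manipulations you list; the paper proves it by showing that the restriction of the $2$-representation $\mathcal{F}'$ of Section~\ref{sec:2rep} to $\mathrm{END}(\mathbf{1}_r)$ is an algebra isomorphism onto $\Q[y,x_1,\ldots,x_r]$ (which in turn requires showing that $\mathrm{END}(\mathbf{1}_r)$ is generated by the degree-two bubbles in \eqref{eq:bubblegenerators}, via the recursion \eqref{eq:expcci}), and then observing that $\mathcal{F}'$ kills the left-hand side of \eqref{eqexpy3}. Without this external input -- or a substitute for it -- your plan for the box relations does not close.
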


\begin{proof} 
We check that $\Sigma_{n,r}$ preserves all relations in $\debim_{\hat{A}_{r-1}}^*$. 

First of all, note that all ``finite type A'' relations, i.e. the relations between diagrams without $r$-colored or oriented strands, 
are preserved by precisely the same arguments as in~\cite{MSVschur}.  

Let us also remark that, except in the checks of the box relations, we will use neither Relation~\eqref{eq_r2_ij-gen} nor the bubble slides Relations~\eqref{eq:2ndbubbslide1n}--\eqref{eq:extrabubble461n} for $\{i,j\} = \{1,n\}$.

Let us now go through the list of the remaining relations and explain why they are preserved: 

\noindent $\bullet$ The Isotopy Relations~\eqref{eq:adj}--\eqref{eq:v4mrotd} are straightforward. 
\\
\noindent $\bullet$ Relations \eqref{eq:dumbrot}--\eqref{eq:dumbdumbsquare} with at least one $r$--colored strand follow from 
the same relations without any $r$--colored strand together with Relations \eqref{eq:orbub}--\eqref{eq:slide6mv2}. So it suffices to 
prove the latter relations.  

Indeed, for any of these relations with an $r$--colored strand, one can add an oriented bubble (or if necessary two nested ones) to the diagram on the left-hand side of the equation.  
By Relation \eqref{eq:orbub} the oriented bubbles are equal to one and we will show that that relation is preserved by $\Sigma_{n,r}$ below. 
Using the appropriate relations involving colored and oriented strands 
(which are checked below), slide that bubble across the diagram until it encloses that part of the diagram which differs from the diagram on the 
right-hand side of the equation. It is always possible to choose the orientation of the bubble so that the diagram in its interior does not have any $r$--colored 
strands. Then apply the relevant relation for colors different from $r$ to the part of the diagram contained inside the bubble. Finally, slide the bubble aside again. 
This works since all these operations are proved to be preserved by $\Sigma_{n,r}$ below.
\\
\noindent $\bullet$ Relation~\eqref{eq:orbub} follows directly from the fact that in $\Scat(n,r)^*_{[y]}((1^r),(1^r))$ all dotted bubbles of degree zero are equal to 
$\pm 1$. Indeed we can apply successively Relations \eqref{eq:bubb_deg0} to the nested bubbles in the image of 
$\figins{-5}{0.2}{bubble-p.eps}$ and $\figins{-5}{0.2}{bubble-n.eps}$. 
\\
\noindent $\bullet$ For Relations~\eqref{eq:capcupud} and~\eqref{eq:capcupdu} use repeatedly Relations~\eqref{eq_ident_decomp} and~\eqref{eq_ident_decomp0}. 
We only give the details for Relation~\eqref{eq:capcupud}, the other relation being completely analogous.  

The diagram $\Sigma_{n,r} \bigl( \figins{-5}{0.2}{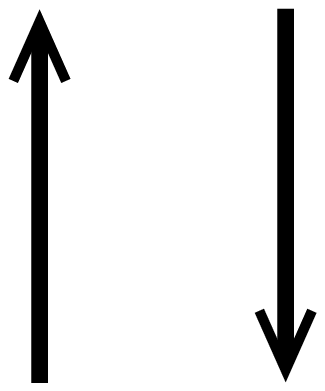} \bigr) $ is as follows
\begin{equation*}
 \xy 
 (-28,0)*{\dblue\xybox{(-28,8);(-28,-8); **\dir{-} ?(.5)*\dir{<};}};
 (-23,0)*{\cdots};
 (-18,0)*{\dpink\xybox{(-18,8);(-18,-8); **\dir{-} ?(.5)*\dir{<};}};
 (-13,0)*{\dgreen\xybox{(-13,8);(-13,-8); **\dir{-} ?(.5)*\dir{<};}};
 (-8,0)*{\cdots};
 (-3,0)*{\dred\xybox{(-3,8);(-3,-8); **\dir{-} ?(.5)*\dir{<};}};
 (-28,-10)*{\scs n}; (-18,-10)*{\scs r+1 }; (-13,-10)*{ \scs 1};(-3,-10)*{\scs r};
   \endxy
\vspace*{2ex}
\quad
 \xy 
 (-28,0)*{\dred\xybox{(-28,8);(-28,-8); **\dir{-} ?(.5)*\dir{>};}};
 (-23,0)*{\cdots};
 (-18,0)*{\dgreen\xybox{(-18,8);(-18,-8); **\dir{-} ?(.5)*\dir{>};}};
 (-13,0)*{\dpink\xybox{(-13,8);(-13,-8); **\dir{-} ?(.5)*\dir{>};}};
 (-8,0)*{\cdots};
 (-3,0)*{\dblue\xybox{(-3,8);(-3,-8); **\dir{-} ?(.5)*\dir{>};}};
 (-28,-10)*{\scs r}; (-18,-10)*{\scs 1 }; (-13,-10)*{ \scs r+1};(-3,-10)*{\scs n};
 (5,0)*{ (1^r)}; 
  \endxy.
\vspace*{2ex}
\end{equation*}
Apply successively Relation \eqref{eq_ident_decomp} to each pair of the form 
\begin{equation*}
\text{$
 \xy 
 (0,0)*{\dturq\xybox{
 (-5,7);(-5,-7); **\dir{-} ?(.5)*\dir{<}+(2.3,0)*{\scriptstyle{}};}};
 (-6.3,-9)*{\scs i};
 (0,0)*{\dturq\xybox{
 (10,7);(10,-7); **\dir{-} ?(.5)*\dir{>}+(12.3,0)*{\scriptstyle{}};}};
 ( -1.2,-9)*{\scs i};
 (6,0)*{ \lambda };
  \endxy
$}.
\vspace*{2ex}
\end{equation*}
For $i=r+1, \dots,n$, we have $\lambda = (0,1, \dots,1, 0, \dots,0,1,0, \dots , 0)$, where the entries which are equal to one are the $2$nd until the $r$th and the 
$i+1$st (mod n). For $i=1, \dots,r$, we have $\lambda = (1, \dots,1, 0,1, \dots,1,0, \dots , 0)$ where the entries which are equal to zero are the $r+2$nd until the 
$n$th and the $i$th.

For these $\lambda$, Relation \eqref{eq_ident_decomp} becomes 

\begin{eqnarray*}
 \vcenter{\xy 0;/r.18pc/:
  (0,0)*{\dturq\xybox{
  (-8,0)*{};(8,0)*{};
  (-4,10)*{}="t1";
  (4,10)*{}="t2";
  (-4,-10)*{}="b1";
  (4,-10)*{}="b2";
  "t1";"b1" **\dir{-} ?(.5)*\dir{>};
  "t2";"b2" **\dir{-} ?(.5)*\dir{<};}};
  (-6,-8)*{\scs i};(6,-8)*{\scs i};
  (10,2)*{\lambda};
    \endxy}
&\quad = \quad&
   \vcenter{\xy 0;/r.18pc/:
    (0,0)*{\dturq\xybox{
    (-4,-4)*{};(4,4)*{} **\crv{(-4,-1) & (4,1)}?(1)*\dir{<};?(0)*\dir{<};
    (4,-4)*{};(-4,4)*{} **\crv{(4,-1) & (-4,1)}?(1)*\dir{>};
    (-4,4)*{};(4,12)*{} **\crv{(-4,7) & (4,9)}?(1)*\dir{>};
    (4,4)*{};(-4,12)*{} **\crv{(4,7) & (-4,9)};}};
    (8,8)*{\lambda};(-6.8,-7)*{\scs i};(6,-7)*{\scs i};
 \endxy}
  \quad - \quad
    \vcenter{\xy 0;/r.18pc/:
  (0,0)*{\dturq\xybox{ 
  (-8,0)*{}; (8,0)*{};
  (-4,-15)*{}="b1";
  (4,-15)*{}="b2";
  "b2";"b1" **\crv{(5,-8) & (-5,-8)}; ?(.1)*\dir{>} ?(.95)*\dir{>}
  ?(.8)*\dir{};
  (-4,15)*{}="t1";
  (4,15)*{}="t2";
  "t2";"t1" **\crv{(5,8) & (-5,8)}; ?(.15)*\dir{<} ?(.97)*\dir{<}
  ?(.4)*\dir{};
  (0,0)*{\cbub{\black\scs \quad\; -2}{i}};}};
  (-10,10)*{\lambda};
  \endxy} .
\end{eqnarray*}
The first term on the right-hand side of this relation is equal to zero because the label appearing in interior region contains a negative entry. 
The bubble appearing in the second term on the right-hand side is equal to $-1$. Thus, for all $i$, we are left with
\begin{equation}\label{id_decomp_red}
\vcenter{\xy 0;/r.18pc/:
  (0,0)*{\dturq\xybox{
  (-8,0)*{};(8,0)*{};
  (-4,10)*{}="t1";
  (4,10)*{}="t2";
  (-4,-10)*{}="b1";
  (4,-10)*{}="b2";
  "t1";"b1" **\dir{-} ?(.5)*\dir{>};
  "t2";"b2" **\dir{-} ?(.5)*\dir{<};}};
  (-6,-8)*{\scs i};(6,-8)*{\scs i};
  (10,2)*{\lambda};
    \endxy}
\quad = \quad
    {\dturq \xy
    (0,4)*{\bbpef{\black i}};
    (0,-4.5)*{\bbcef{\black i}};
    (8,0.5)*{ \black \lambda };
    \endxy}.
\vspace*{2ex}
\end{equation}
Taking all strands together, we get the following nested cups and caps:
\begin{equation*}
\xy 
 (0,4)*{\dred\xybox{(-2,0)*{};(2,0)*{} **\crv{(-2,-3) & (2,-3)}; ?(.5)*\dir{>};}};
 (0,-1.5)*{\dgreen\xybox{(-10,0)*{};(10,0)*{} **\crv{(-10,-14) & (10,-14)}; ?(.5)*\dir{>};}};
 (-6,4)*{\cdots};
  (6,4)*{\cdots};
(0,-4)*{\dpink\xybox{(-14,0)*{};(14,0)*{} **\crv{(-14,-19) & (14,-19)}; ?(.5)*\dir{>};}};
 (0,-9.5)*{\dblue\xybox{(-24,0)*{};(24,0)*{} **\crv{(-24,-30) & (24,-30)}; ?(.5)*\dir{>};}};
 (-19,4)*{\cdots};
  (19,4)*{\cdots};
  (-10,8)*{\scs 1}; (-2,8)*{\scs r}; (2,8)*{ \scs r};(10,8)*{\scs 1};(-24,8)*{\scs n}; (-14,8)*{\scs r+1}; (14,8)*{ \scs r+1};(24,8)*{\scs n};
 (0,-20)*{ \lambda}; 
 (0,-54)*{\dred\xybox{(-2,0)*{};(2,0)*{} **\crv{(-2,3) & (2,3)}; ?(.5)*\dir{<};}};
 (0,-48.5)*{\dgreen\xybox{(-10,0)*{};(10,0)*{} **\crv{(-10,14) & (10,14)}; ?(.5)*\dir{<};}};
 (-6,-54)*{\cdots};
  (6,-54)*{\cdots};
(0,-46)*{\dpink\xybox{(-14,0)*{};(14,0)*{} **\crv{(-14,19) & (14,19)}; ?(.5)*\dir{<};}};
 (0,-40.5)*{\dblue\xybox{(-24,0)*{};(24,0)*{} **\crv{(-24,30) & (24,30)}; ?(.5)*\dir{<};}};
 (-19,-54)*{\cdots};
  (19,-54)*{\cdots};
  (-10,-58)*{\scs 1}; (-2,-58)*{\scs r}; (2,-58)*{ \scs r};(10,-58)*{\scs 1};(-24,-58)*{\scs n}; (-14,-58)*{\scs r+1}; (14,-58)*{ \scs r+1};(24,-58)*{\scs n};
   \endxy
    \vspace*{2ex}
\end{equation*}
which is equal to $\Sigma_{n,r} \bigl( \figins{-3}{0.25}{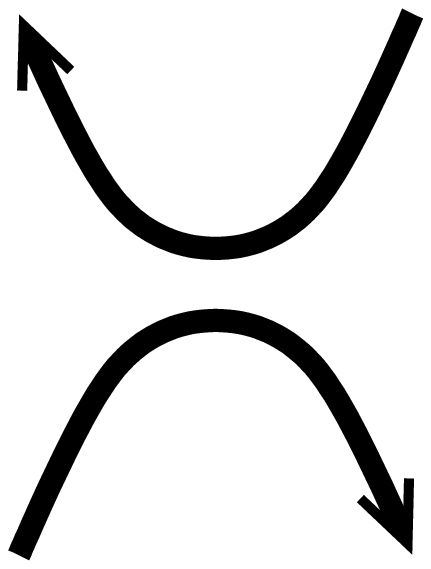} \bigr)$.
\\
\noindent $\bullet$ Relation \eqref{eq:slide4mv}

\begin{equation*}
\labellist
\tiny\hair 2pt
\pinlabel $i$ at  -4 -10
\pinlabel $j$ at 134 -10
\pinlabel $i+1$ at 134 140
\pinlabel $j+1$ at -4 140
\pinlabel $i$ at  191 -10
\pinlabel $j$ at 329 -10
\pinlabel $i+1$ at 329 140
\pinlabel $j+1$ at 191 140
\endlabellist
\figins{-18}{0.6}{4mvert-slide-u}\
=\
\figins{-18}{0.6}{4mvert-slide-d}.
\end{equation*}
For $j < i $ and $i,j \neq r$, this relation follows from the fact that, using repeatedly Relations \eqref{eq_downup_ij-gen}, \eqref{eq_r2_ij-gen} 
for distant $i,j$, \eqref{eq_r3_easy-gen} and \eqref{eq_other_r3_1}, one can reduce both 
$\Sigma_{n,r} \Bigl(  \figins{-7}{0.3}{4mvert-slide-u}  \, \Bigr)$ and $\Sigma_{n,r} \Bigl(  \figins{-7}{0.3}{4mvert-slide-d}  \, \Bigr)$ 
to the following diagram

\begin{equation*}
 \xy 
 (-28,0)*{\dblue\xybox{(-8,24);(-38,-24); **\dir{-} ?(0)*\dir{<};}};
 (-37,-22)*{\cdots};
 (-9,22)*{\cdots};
 (-18,0)*{\dpink\xybox{(2,24);(-28,-24); **\dir{-} ?(0)*\dir{<};}};
 (-13,0)*{\dgreen\xybox{(7,24);(-23,-24); **\dir{-} ?(0)*\dir{<};}};
 (-22,-22)*{\cdots};
 (6,22)*{\cdots};
 (-3,0)*{\dgrey\xybox{(17,24);(-13,-24); **\dir{-} ?(0)*\dir{<};}};
 (37,0)*{\dbrun\xybox{(32,24);(2,-24); **\dir{-} ?(0)*\dir{<};}};
 (28,-22)*{\cdots};
 (56,22)*{\cdots};
 (47,0)*{\dred\xybox{(42,24);(12,-24); **\dir{-} ?(0)*\dir{<};}};
  %(-5.5,11)*{\dyellow\xybox{(-18,24)*{};(7,15)*{} **\crv{(-18,20) & (2,15)}; ?(0)*\dir{<};}};
 (-10,11)*{\dyellow\xybox{(-23,24)*{};(3.1,9)*{} **\crv{(-23,17) & (-4,9)}; ?(0)*\dir{<};}};
 (-14.4,11)*{\dyellow\xybox{(-28,24)*{};(-0.8,3)*{} **\crv{(-28,14) & (-10,3)}; ?(1)*\dir{>};}};
 (-18.7,10)*{\dblack\xybox{(-33,24)*{};(-4.2,-3)*{} **\crv{(-33,11) & (-16,-3)}; ?(0)*\dir{<};}};
 (-23.1,7)*{\dblack\xybox{(-38,24)*{};(-8.2,-9)*{} **\crv{(-38,8) & (-22,-9)}; ?(1)*\dir{>};}};
 %(-27.6,4)*{\dyellow\xybox{(-43,24)*{};(-12,-15)*{} **\crv{(-43,5) & (-28,-15)}; ?(1)*\dir{>};}};
 %(31.6,-12.5)*{\doran\xybox{(18,-24)*{};(8,-15)*{} **\crv{(18,-20) & (13,-15)}; ?(0)*\dir{<};}};
 (36.1,-12.5)*{\dturq\xybox{(23,-24)*{};(11.9,-9)*{} **\crv{(23,-17) & (19,-9)}; ?(0)*\dir{<};}};
 (40.5,-12.5)*{\dturq\xybox{(28,-24)*{};(15.8,-3)*{} **\crv{(28,-14) & (25,-3)}; ?(1)*\dir{>};}};
 (44.8,-10.5)*{\dpurple\xybox{(33,-24)*{};(19.2,3)*{} **\crv{(33,-11) & (31,3)}; ?(0)*\dir{<};}};
 (49.2,-7.5)*{\dpurple\xybox{(38,-24)*{};(23.2,9)*{} **\crv{(38,-8) & (37,9)}; ?(1)*\dir{>};}};
 %(53.7,-4.5)*{\doran\xybox{(43,-24)*{};(27,15)*{} **\crv{(43,-5) & (43,15)}; ?(1)*\dir{>};}};
%merdier au milieu
(12,0)*{\dduck\xybox{(7,24);(-23,-24); **\dir{-} ?(0)*\dir{<};}};
 (3,-22)*{\cdots};
 (31,22)*{\cdots};
 (22,0)*{\doran\xybox{(17,24);(-13,-24); **\dir{-} ?(0)*\dir{<};}};
 (24.8,11)*{\dyellow\xybox{(46.5,24)*{};(3.1,9)*{} **\crv{(46.5,20) & (20,9)}; ?(0.7)*\dir{>};}};
 %(24.5,10.95)*{\dblack\xybox{(42,24)*{};(7,15)*{} **\crv{(22,0) & (22,15)}; ?(0.5)*\dir{>};}};
 (-6,-13.05)*{\dblack\xybox{(-12,-24)*{};(-12,-9)*{} **\crv{(-10,-20) & (-6,-9)}; ?(0.5)*\dir{<};}};
 (15,-7.5)*{\dpurple\xybox{(-32,-24)*{};(23.2,9)*{} **\crv{(-32,-20) & (-10,9)}; ?(0.5)*\dir{<};}};
 (33,7.5)*{\dturq\xybox{(22,24)*{};(11.9,-9)*{} **\crv{(22,20) & (2,-9)}; ?(0.5)*\dir{>};}};
 %(24.2,0)*{\doran\xybox{(8,-15)*{};(27,15)*{} **\crv{(-20,-15) & (-12,15)}; ?(0.5)*\dir{<};}};
 %(17,-1)*{\dturq\xybox{(-18,-24)*{};(12,24)*{} **\crv{(-15,15) & (-10,20)}; ?(0.5)*\dir{<};}};
 (8,10)*{\dblack\xybox{(-22,-3)*{};(4,24)*{} **\crv{(-10,-3) & (4,20)}; ?(0.7)*\dir{<};}};
 (8,-11)*{\dyellow\xybox{(-18,3)*{};(0,-24)*{} **\crv{(-10,3) & (0,-20)}; ?(0.7)*\dir{>};}};
 (23.5,-12.5)*{\dturq\xybox{(-7,-24)*{};(16.25,-3)*{} **\crv{(-7,-20) & (5,-3)}; ?(0.7)*\dir{<};}};
 (27,11.4)*{\dpurple\xybox{(-3,24)*{};(19.2,3)*{} **\crv{(19.2,20) & (16,3)}; ?(0.3)*\dir{>};}};
 (-43,-26)*{\scs n}; (-33,-26)*{\scs r+1 }; (-28,-26)*{ \scs 1};(-18,-26)*{\scs j-1}; (-13,-26)*{\scs j}; (-8,-26)*{\scs j+1}; (-3,-26)*{\scs j+2}; (7,-26)*{\scs i-1}; (12,-26)*{\scs i}; (17,-26)*{\scs i+1}; (22,-26)*{\scs i+2}; (32,-26)*{\scs r};
%(37,-26)*{\scs i-1};
(42,-26)*{\scs i}; (47,-26)*{\scs i};(52,-26)*{\scs j}; (57,-26)*{\scs j};
%(62,-26)*{\scs i-1};
% (-43,26)*{\scs i+1};
(-38,26)*{\scs j+1}; (-33,26)*{\scs j+1};(-28,26)*{\scs i+1}; (-23,26)*{\scs i+1};
%(-18,26)*{\scs i+1};
 (72,0)*{ (1^r)}; 
  \endxy
\vspace*{2ex}
\end{equation*}

Similarly, for $j > i $ and $i,j \neq r$, Relation \eqref{eq:slide4mv} follows from the fact that, using repeatedly 
Relations \eqref{eq_downup_ij-gen}, \eqref{eq_r2_ij-gen} for distant $i,j$, \eqref{eq_r3_easy-gen} and \eqref{eq_other_r3_1}, 
one can reduce both $\Sigma_{n,r} \Bigl(  \figins{-7}{0.3}{4mvert-slide-u}  \, \Bigr)$ and $\Sigma_{n,r} \Bigl(  \figins{-7}{0.3}{4mvert-slide-d}  \, \Bigr)$ 
to the following diagram

\begin{equation*}
 \xy 
 (-28,0)*{\dblue\xybox{(-8,24);(-38,-24); **\dir{-} ?(0)*\dir{<};}};
 (-37,-22)*{\cdots};
 (-9,22)*{\cdots};
 (-18,0)*{\dpink\xybox{(2,24);(-28,-24); **\dir{-} ?(0)*\dir{<};}};
 (-13,0)*{\dgreen\xybox{(7,24);(-23,-24); **\dir{-} ?(0)*\dir{<};}};
 (-22,-22)*{\cdots};
 (6,22)*{\cdots};
 (-3,0)*{\doran\xybox{(17,24);(-13,-24); **\dir{-} ?(0)*\dir{<};}};
 (37,0)*{\dduck\xybox{(32,24);(2,-24); **\dir{-} ?(0)*\dir{<};}};
 (28,-22)*{\cdots};
 (56,22)*{\cdots};
 (47,0)*{\dred\xybox{(42,24);(12,-24); **\dir{-} ?(0)*\dir{<};}};
  %(-5.5,11)*{\dyellow\xybox{(-18,24)*{};(7,15)*{} **\crv{(-18,20) & (2,15)}; ?(0)*\dir{<};}};
 (-10,11)*{\dyellow\xybox{(-23,24)*{};(3.1,9)*{} **\crv{(-23,17) & (-4,9)}; ?(0)*\dir{<};}};
 (-14.4,11)*{\dyellow\xybox{(-28,24)*{};(-0.8,3)*{} **\crv{(-28,14) & (-10,3)}; ?(1)*\dir{>};}};
 (-18.7,10)*{\dblack\xybox{(-33,24)*{};(-4.2,-3)*{} **\crv{(-33,11) & (-16,-3)}; ?(0)*\dir{<};}};
 (-23.1,7)*{\dblack\xybox{(-38,24)*{};(-8.2,-9)*{} **\crv{(-38,8) & (-22,-9)}; ?(1)*\dir{>};}};
 %(-27.6,4)*{\dyellow\xybox{(-43,24)*{};(-12,-15)*{} **\crv{(-43,5) & (-28,-15)}; ?(1)*\dir{>};}};
 %(31.6,-12.5)*{\doran\xybox{(18,-24)*{};(8,-15)*{} **\crv{(18,-20) & (13,-15)}; ?(0)*\dir{<};}};
 (36.1,-12.5)*{\dturq\xybox{(23,-24)*{};(11.9,-9)*{} **\crv{(23,-17) & (19,-9)}; ?(0)*\dir{<};}};
 (40.5,-12.5)*{\dturq\xybox{(28,-24)*{};(15.8,-3)*{} **\crv{(28,-14) & (25,-3)}; ?(1)*\dir{>};}};
 (44.8,-10.5)*{\dpurple\xybox{(33,-24)*{};(19.2,3)*{} **\crv{(33,-11) & (31,3)}; ?(0)*\dir{<};}};
 (49.2,-7.5)*{\dpurple\xybox{(38,-24)*{};(23.2,9)*{} **\crv{(38,-8) & (37,9)}; ?(1)*\dir{>};}};
 %(53.7,-4.5)*{\doran\xybox{(43,-24)*{};(27,15)*{} **\crv{(43,-5) & (43,15)}; ?(1)*\dir{>};}};
%merdier au milieu
(12,0)*{\dbrun\xybox{(7,24);(-23,-24); **\dir{-} ?(0)*\dir{<};}};
 (3,-22)*{\cdots};
 (31,22)*{\cdots};
 (22,0)*{\dgrey\xybox{(17,24);(-13,-24); **\dir{-} ?(0)*\dir{<};}};
 (4.5,-13)*{\dblack\xybox{(28.5,-24)*{};(3.1,-9)*{} **\crv{(28.5,-15) & (20,-9)}; ?(0.7)*\dir{<};}};
 (12,10.95)*{\dyellow\xybox{(6,24)*{};(-12,9)*{} **\crv{(6,20) & (-3,9)}; ?(0.5)*\dir{>};}};
 (18.5,7.45)*{\dturq\xybox{(8,24)*{};(23.2,-9)*{} **\crv{(8,20) & (-10,-9)}; ?(0.5)*\dir{>};}};
 (27,-7.5)*{\dpurple\xybox{(-18,-24)*{};(11.9,9)*{} **\crv{(-18,-20) & (-2,9)}; ?(0.5)*\dir{<};}};
  (-1.5,-11)*{\dyellow\xybox{(-22,3)*{};(-29,-24)*{} **\crv{(-10,3) & (-29,-20)}; ?(0.7)*\dir{>};}};
 (20,10)*{\dblack\xybox{(-18,-3)*{};(33.5,24)*{} **\crv{(0,-3) & (33.5,20)}; ?(0.7)*\dir{<};}};
 (36.8,11.5)*{\dpurple\xybox{(20,24)*{};(16.25,3)*{} **\crv{(20,20) & (5,3)}; ?(0.7)*\dir{>};}};
 (11,-12.6)*{\dturq\xybox{(-28,-24)*{};(19.2,-3)*{} **\crv{(-28,-18) &(19.2,-24) & (14,-3)}; ?(0.3)*\dir{<};}};
 (-43,-26)*{\scs n}; (-33,-26)*{\scs r+1 }; (-28,-26)*{ \scs 1};(-18,-26)*{\scs i-1}; (-13,-26)*{\scs i}; (-8,-26)*{\scs i+1}; (-3,-26)*{\scs i+2}; (7,-26)*{\scs j-1}; (12,-26)*{\scs j}; (17,-26)*{\scs j+1}; (22,-26)*{\scs j+2}; (32,-26)*{\scs r};
%(37,-26)*{\scs i-1};
(42,-26)*{\scs i}; (47,-26)*{\scs i};(52,-26)*{\scs j}; (57,-26)*{\scs j};
%(62,-26)*{\scs i-1};
% (-43,26)*{\scs i+1};
(-38,26)*{\scs j+1}; (-33,26)*{\scs j+1};(-28,26)*{\scs i+1}; (-23,26)*{\scs i+1};
%(-18,26)*{\scs i+1};
 (72,0)*{ (1^r)}; 
  \endxy
\vspace*{2ex}
\end{equation*}

One can prove the remaining cases, in which one of the integers $i$ or $j$ is equal to $r-1$ or $r$, in exactly the same way. Just use repeatedly Relations 
\eqref{eq_downup_ij-gen}, \eqref{eq_r2_ij-gen} for $i \cdot j=0$, \eqref{eq_r3_easy-gen} and \eqref{eq_other_r3_1}.
\\
\noindent $\bullet$ Relations~\eqref{eq:reid2ml} and~\eqref{eq:reid2mr}. 

First when the colors $(i, i+1)$ differ from $(r, 1)$ and $(r-1, r)$. We only give the details for 
Relation~\eqref{eq:reid2ml}, because the proof of the other relation is very similar. 

The image under $\Sigma_{n,r}$ of $\figins{-4}{0.25}{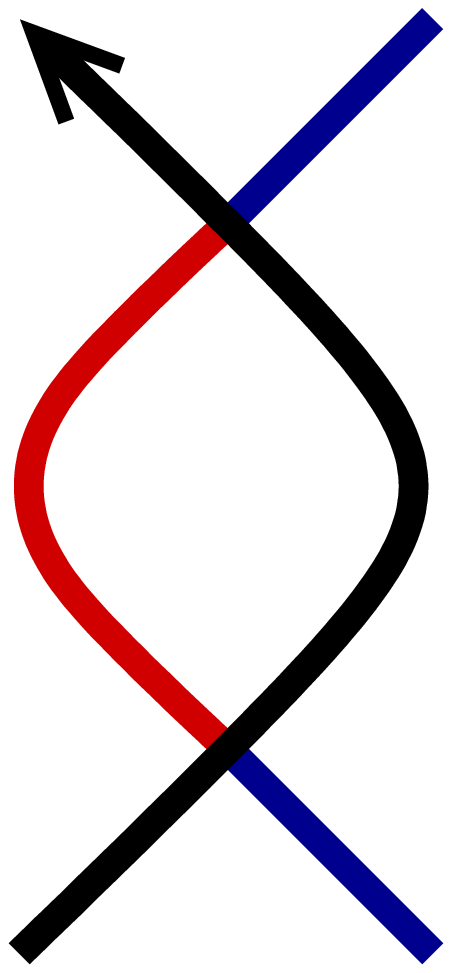} $ is equal to 

\begin{equation*}
 \xy 
 (-28,0)*{\dblue\xybox{(-28,-24)*{};(-28,8)*{} **\crv{(-18,-16) & (-18,0)}; ?(0)*\dir{<};}};
 (-18,0)*{\dpink\xybox{(-18,-24)*{};(-18,8)*{} **\crv{(-8,-16) & (-8,0)}; ?(0)*\dir{<};}};
 (-13,0)*{\dgreen\xybox{(-13,-24)*{};(-13,8)*{} **\crv{(-3,-16) & (-3,0)}; ?(0)*\dir{<};}};
 (-3,0)*{\doran\xybox{(-3,-24)*{};(-3,8)*{} **\crv{(7,-16) & (7,0)}; ?(0)*\dir{<};}};
 (12,0)*{\dbrun\xybox{(12,-24)*{};(12,8)*{} **\crv{(22,-16) & (22,0)}; ?(0)*\dir{<};}};
 (22,0)*{\dred\xybox{(22,-24)*{};(22,8)*{} **\crv{(32,-16) & (32,0)}; ?(0)*\dir{<};}};
 (-26,14)*{\cdots};
 %(-19,-6)*{\cdots};
 %(-18,8)*{\dpink\xybox{(-8,-8);(-18,8); **\dir{-} ?(1)*\dir{>};}};
 %(-13,8)*{\dgreen\xybox{(-3,-8);(-13,8); **\dir{-} ?(1)*\dir{>};}};
 (-11,14)*{\cdots};
 %(-4,-6)*{\cdots};
 %(-3,8)*{\doran\xybox{(7,-8);(-3,8); **\dir{-} ?(1)*\dir{>};}};
 %(12,8)*{\dbrun\xybox{(22,-8);(12,8); **\dir{-} ?(1)*\dir{>};}};
 (14,14)*{\cdots};
 %(21,-6)*{\cdots};
 %(22,8)*{\dred\xybox{(32,-8);(22,8); **\dir{-} ?(1)*\dir{>};}};
 (12,12)*{\dturq\xybox{(-3,8)*{};(27,8)*{} **\crv{(-2,0) & (27,0)}; ?(.5)*\dir{>};}};
 (14.5,8)*{\dturq\xybox{(7,-8)*{};(22,8)*{} **\crv{(5,0) & (24,-6)}; ?(0)*\dir{<};}};
 (-8,4.3)*{\dyellow\xybox{(-23,-8)*{};(17,-8)*{} **\crv{(-23,0) & (17,0)}; ?(.5)*\dir{>};}};
 (-15.5,8)*{\dyellow\xybox{(-28,-8)*{};(7,8)*{} **\crv{(-30,4) & (9,-2)}; ?(0)*\dir{<};}};
 %(-28,-8)*{\dblue\xybox{(-18,8);(-28,-8); **\dir{-} ?(0)*\dir{<};}};
 (-26,-14)*{\cdots};
 %(-19,6)*{\cdots};
 %(-18,-8)*{\dpink\xybox{(-8,8);(-18,-8); **\dir{-} ?(0)*\dir{<};}};
 %(-13,-8)*{\dgreen\xybox{(-3,8);(-13,-8); **\dir{-} ?(0)*\dir{<};}};
 (-11,-14)*{\cdots};
 %(-4,6)*{\cdots};
 %(-3,-8)*{\doran\xybox{(7,8);(-3,-8); **\dir{-} ?(0)*\dir{<};}};
 %(12,-8)*{\dbrun\xybox{(22,8);(12,-8); **\dir{-} ?(0)*\dir{<};}};
 (14,-14)*{\cdots};
 %(21,6)*{\cdots};
 %(22,-8)*{\dred\xybox{(32,8);(22,-8); **\dir{-} ?(0)*\dir{<};}};
 (12,-12)*{\dturq\xybox{(-3,-8)*{};(27,-8)*{} **\crv{(-2,0) & (27,0)}; ?(.5)*\dir{<};}};
 (14.5,-8)*{\dturq\xybox{(7,8)*{};(22,-8)*{} **\crv{(5,0) & (24,6)}; ?(1)*\dir{>};}};
 (-8,-4.3)*{\dyellow\xybox{(-23,8)*{};(17,8)*{} **\crv{(-23,0) & (17,0)}; ?(.5)*\dir{<};}};
 (-15.5,-8)*{\dyellow\xybox{(-28,8)*{};(7,-8)*{} **\crv{(-30,-4) & (9,2)}; ?(1)*\dir{>};}};
 (-33,-18)*{\scs n}; (-23,-18)*{\scs r+1 }; (-18,-18)*{ \scs 1};(-8,-18)*{\scs i-1}; (-3,-18)*{\scs i}; (2,-18)*{\scs i+1}; (7,-18)*{\scs i+2}; (17,-18)*{\scs r};(22,-18)*{\scs i};(27,-18)*{\scs i}; (4,7)*{\scs i+1};
 (32,0)*{ (1^r)}; 
  \endxy
\vspace*{2ex}
\end{equation*}

Thanks to Relation \eqref{eq_downup_ij-gen}, the left part of the central bubble can be slid to the right until we end up in the position
\begin{equation*} 
 \xy
 (-12,0)*{\dyellow\ncbub{}{\black \scs \mspace{-100mu} i+1}};
 (4,0)*{\lambda};
 (-13,6)*{\scs i};
 (-16,0)*{\dturq\xybox{(8,-5)*{};(8,5)*{} **\crv{(-2,-2) & (-2,2)}; ?(0)*\dir{<};}};
 \endxy
 \end{equation*}
with $\lambda=(1,\dots,1,0,1,\dots,1,0,\dots,0)$ where the first $0$ is in $i+2$nd position and the last $1$ in $r+1$st position. That degree-zero bubble 
is equal to one, by Relation \eqref{eq:bubb_deg0}.

After removing the bubble, we can slide the $i$--colored strand over the $r-i-1$ rightmost strands 
and slide the $i+1$--colored strand over the $n-r+i-1$ leftmost strands, using Relation \eqref{eq_r2_ij-gen}. We get
\begin{equation*}
 \xy 
 (-28,0)*{\dblue\xybox{(-28,8);(-28,-8); **\dir{-} ?(.5)*\dir{<};}};
 (-23,0)*{\cdots};
 (-18,0)*{\dpink\xybox{(-18,8);(-18,-8); **\dir{-} ?(.5)*\dir{<};}};
 (-13,0)*{\dgreen\xybox{(-13,8);(-13,-8); **\dir{-} ?(.5)*\dir{<};}};
 (-8,0)*{\cdots};
 (-3,0)*{\doran\xybox{(-3,8);(-3,-8); **\dir{-} ?(.5)*\dir{<};}};
 (18,-4)*{\dturq\xybox{(-8,-8)*{};(27,-8)*{} **\crv{(-12,6) & (10,-6)}; ?(.35)*\dir{<};}};
 (18,3)*{\dturq\xybox{(-8,8)*{};(27,8)*{} **\crv{(-12,-6) & (10,6)}; ?(.35)*\dir{>};}};
 (12,0)*{\dyellow\xybox{(12,8);(12,-8); **\dir{-} ?(.5)*\dir{<};}};
 (17,0)*{\dbrun\xybox{(17,8);(17,-8); **\dir{-} ?(.5)*\dir{<};}};
 (22,0)*{\cdots};
 (27,0)*{\dred\xybox{(27,8);(27,-8); **\dir{-} ?(.5)*\dir{<};}};
 (32,0)*{\dturq\xybox{(32,8);(32,-8); **\dir{-} ?(.5)*\dir{<};}};
 (-28,-10)*{\scs n}; (-18,-10)*{\scs r+1 }; (-13,-10)*{ \scs 1};(-3,-10)*{\scs i-1}; (2,-10)*{\scs i}; (12,-10)*{\scs i+1}; (17,-10)*{\scs i+2}; (27,-10)*{\scs r}; (32,-10)*{\scs i};
 (40,0)*{ (1^r)}; (37,-10)*{\scs i}; (0,0)*{ \lambda};
  \endxy
\vspace*{2ex}
\end{equation*}
with $\lambda= (1,\dots, 1,0,1 ,\dots,1,0, \dots, 0)$ where the first $0$ is in $i$th position and the last $1$ in $r+1$st position. 
Apply Relation \eqref{id_decomp_red}, which has only one non-zero term as before. We get
\begin{equation*}
 \xy 
 (-28,0)*{\dblue\xybox{(-28,8);(-28,-8); **\dir{-} ?(.5)*\dir{<};}};
 (-23,0)*{\cdots};
 (-18,0)*{\dpink\xybox{(-18,8);(-18,-8); **\dir{-} ?(.5)*\dir{<};}};
 (-13,0)*{\dgreen\xybox{(-13,8);(-13,-8); **\dir{-} ?(.5)*\dir{<};}};
 (-8,0)*{\cdots};
 (-3,0)*{\doran\xybox{(-3,8);(-3,-8); **\dir{-} ?(.5)*\dir{<};}};
 (2,0)*{\dturq\xybox{(2,8);(2,-8); **\dir{-} ?(.5)*\dir{<};}};
 (17,0)*{\dturq\xybox{(37,-16)*{};(37,0)*{} **\crv{(-3,-9) & (-3,-7)}; ?(0.5)*\dir{>};}};
 %(18,-4)*{\dturq\xybox{(-8,-8)*{};(27,-8)*{} **\crv{(-12,6) & (10,-6)}; ?(.35)*\dir{<};}};
 %(18,3)*{\dturq\xybox{(-8,8)*{};(27,8)*{} **\crv{(-12,-6) & (10,6)}; ?(.35)*\dir{>};}};
 (12,0)*{\dyellow\xybox{(12,8);(12,-8); **\dir{-} ?(.5)*\dir{<};}};
 (17,0)*{\dbrun\xybox{(17,8);(17,-8); **\dir{-} ?(.5)*\dir{<};}};
 (22,0)*{\cdots};
 (27,0)*{\dred\xybox{(27,8);(27,-8); **\dir{-} ?(.5)*\dir{<};}};
 (32,0)*{\dturq\xybox{(32,8);(32,-8); **\dir{-} ?(.5)*\dir{<};}};
 (-28,-10)*{\scs n}; (-18,-10)*{\scs r+1 }; (-13,-10)*{ \scs 1};(-3,-10)*{\scs i-1}; (2,-10)*{\scs i}; (12,-10)*{\scs i+1}; (17,-10)*{\scs i+2}; (27,-10)*{\scs r}; (32,-10)*{\scs i};
 (42,0)*{ (1^r)}; (37,-10)*{\scs i}; (0,0)*{ \lambda};
  \endxy
\vspace*{2ex}
\end{equation*}
Then the strand with the rightmost endpoints can be slid all the way to the right, first using Relation \eqref{eq_downup_ij-gen} and 
then Relation \eqref{eq_ident_decomp}, which again is simply equal to  
\begin{eqnarray}\label{id_decomp_red2}
 \vcenter{\xy 0;/r.18pc/:
  (0,0)*{\dturq\xybox{
  (-8,0)*{};(8,0)*{};
  (-4,10)*{}="t1";
  (4,10)*{}="t2";
  (-4,-10)*{}="b1";
  (4,-10)*{}="b2";
  "t1";"b1" **\dir{-} ?(.5)*\dir{>};
  "t2";"b2" **\dir{-} ?(.5)*\dir{<};}};
  (-6,-8)*{\scs i};(6,-8)*{\scs i};
  (10,2)*{\lambda};
    \endxy}
&\quad = \quad&
   \vcenter{\xy 0;/r.18pc/:
    (0,0)*{\dturq\xybox{
    (-4,-4)*{};(4,4)*{} **\crv{(-4,-1) & (4,1)}?(1)*\dir{<};?(0)*\dir{<};
    (4,-4)*{};(-4,4)*{} **\crv{(4,-1) & (-4,1)}?(1)*\dir{>};
    (-4,4)*{};(4,12)*{} **\crv{(-4,7) & (4,9)}?(1)*\dir{>};
    (4,4)*{};(-4,12)*{} **\crv{(4,7) & (-4,9)};}};
    (10,2)*{\lambda};(-6.8,-7)*{\scs i};(6,-7)*{\scs i};
 \endxy} .
\end{eqnarray}
because $\lambda= (1^r)$. Finally, we end up with a diagram which is indeed equal to $\Sigma_{n,r} \Bigl( \figins{-7}{0.3}{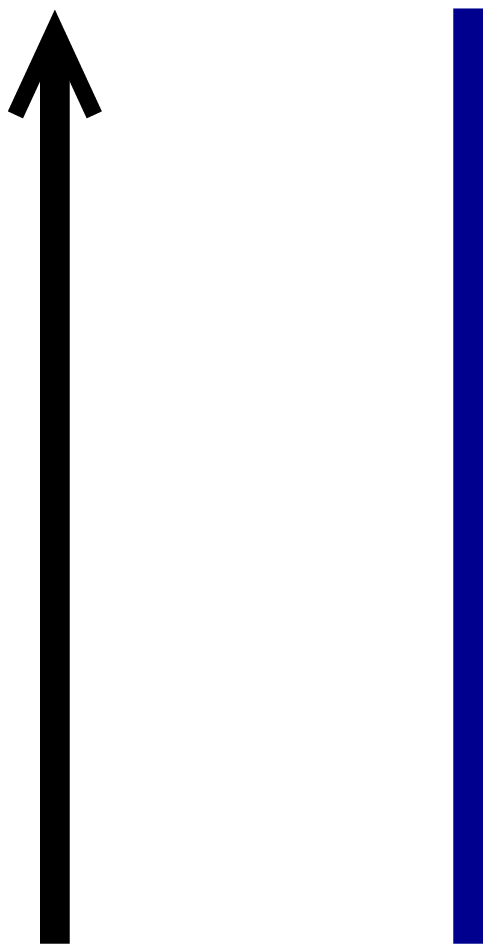} \, \Bigr)$.

We can prove Relations \eqref{eq:reid2ml} and \eqref{eq:reid2mr} with colors $(r, 1)$ and $(r-1, r)$ in almost the same way. 
We can slide bubbles and strands using Relations \eqref{eq_downup_ij-gen} and \eqref{eq_r2_ij-gen}, evaluate bubbles using Relation~\eqref{eq:bubb_deg0} 
and use Relations \eqref{eq:EF} and \eqref{eq_ident_decomp}, which again are particularly simple for the relevant labels:  
\begin{equation*}
\vcenter{\xy 0;/r.18pc/:
  (0,0)*{\dturq\xybox{
  (-8,0)*{};(8,0)*{};
  (-4,10)*{}="t1";
  (4,10)*{}="t2";
  (-4,-10)*{}="b1";
  (4,-10)*{}="b2";
  "t1";"b1" **\dir{-} ?(.5)*\dir{<};
  "t2";"b2" **\dir{-} ?(.5)*\dir{>};}};
  (-6,-8)*{\scs i};(6,-8)*{\scs i};
  (10,2)*{\lambda};
    \endxy}
\, = \,
   \vcenter{\xy 0;/r.18pc/:
    (0,0)*{\dturq\xybox{
    (-4,-4)*{};(4,4)*{} **\crv{(-4,-1) & (4,1)}?(1)*\dir{>};?(0)*\dir{>};
    (4,-4)*{};(-4,4)*{} **\crv{(4,-1) & (-4,1)}?(1)*\dir{<};
    (-4,4)*{};(4,12)*{} **\crv{(-4,7) & (4,9)}?(1)*\dir{<};
    (4,4)*{};(-4,12)*{} **\crv{(4,7) & (-4,9)};}};
    (10,2)*{\lambda};(-6.8,-7)*{\scs i};(6,-7)*{\scs i};
 \endxy}
 \quad \mbox{or} \quad
 \vcenter{\xy 0;/r.18pc/:
  (0,0)*{\dturq\xybox{
  (-8,0)*{};(8,0)*{};
  (-4,10)*{}="t1";
  (4,10)*{}="t2";
  (-4,-10)*{}="b1";
  (4,-10)*{}="b2";
  "t1";"b1" **\dir{-} ?(.5)*\dir{>};
  "t2";"b2" **\dir{-} ?(.5)*\dir{<};}};
  (-6,-8)*{\scs i};(6,-8)*{\scs i};
  (10,2)*{\lambda};
    \endxy}
\, = \,
   \vcenter{\xy 0;/r.18pc/:
    (0,0)*{\dturq\xybox{
    (-4,-4)*{};(4,4)*{} **\crv{(-4,-1) & (4,1)}?(1)*\dir{<};?(0)*\dir{<};
    (4,-4)*{};(-4,4)*{} **\crv{(4,-1) & (-4,1)}?(1)*\dir{>};
    (-4,4)*{};(4,12)*{} **\crv{(-4,7) & (4,9)}?(1)*\dir{>};
    (4,4)*{};(-4,12)*{} **\crv{(4,7) & (-4,9)};}};
    (10,2)*{\lambda};(-6.8,-7)*{\scs i};(6,-7)*{\scs i};
 \endxy}
\quad \mbox{or} \quad
\vcenter{\xy 0;/r.18pc/:
  (0,0)*{\dturq\xybox{
  (-8,0)*{};(8,0)*{};
  (-4,10)*{}="t1";
  (4,10)*{}="t2";
  (-4,-10)*{}="b1";
  (4,-10)*{}="b2";
  "t1";"b1" **\dir{-} ?(.5)*\dir{>};
  "t2";"b2" **\dir{-} ?(.5)*\dir{<};}};
  (-6,-8)*{\scs i};(6,-8)*{\scs i};
  (10,2)*{\lambda};
    \endxy}
\, = 
    {\dturq \xy
    (0,4)*{\bbpef{\black i}};
    (0,-4.5)*{\bbcef{\black i}};
    (8,0.5)*{ \black \lambda };
    \endxy}
\end{equation*}
depending on the value of $\lambda$. The difference here is that we have to iterate some of the steps that we used above, e.g. because we get nested bubbles or 
various pairs of strands to which we can apply Relations~\eqref{eq:EF} and~\eqref{eq_ident_decomp}. 
\\
\noindent $\bullet$ Relations~\eqref{eq:slidedotdist-md} and \eqref{eq:slidedotdist-mu}. 

Again, first when $(i, i+1)$ differ from $(r, 1)$ and $(r-1, r)$. 
The proofs for these two relations are very similar. We only give the proof for Relation \eqref{eq:slidedotdist-mu}. 

The image under $\Sigma_{n,r}$ of  $\figins{-4}{0.25}{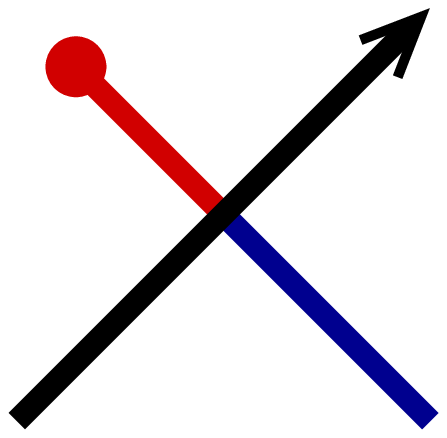} $ is equal to 
\begin{equation*}
 \xy 
 (-30.5,5.5)*{\dyellow\xybox{(-33,8)*{};(-28,8)*{} **\crv{(-33,12) & (-28,12)}; ?(.5)*\dir{<};}};
 (-28,0)*{\dblue\xybox{(-18,8);(-28,-8); **\dir{-} ?(0)*\dir{<};}};
 (-27,-6)*{\cdots};
 (-19,6)*{\cdots};
 (-18,0)*{\dpink\xybox{(-8,8);(-18,-8); **\dir{-} ?(0)*\dir{<};}};
 (-13,0)*{\dgreen\xybox{(-3,8);(-13,-8); **\dir{-} ?(0)*\dir{<};}};
 (-12,-6)*{\cdots};
 (-4,6)*{\cdots};
 (-3,0)*{\doran\xybox{(7,8);(-3,-8); **\dir{-} ?(0)*\dir{<};}};
 (12,0)*{\dbrun\xybox{(22,8);(12,-8); **\dir{-} ?(0)*\dir{<};}};
 (13,-6)*{\cdots};
 (21,6)*{\cdots};
 (22,0)*{\dred\xybox{(32,8);(22,-8); **\dir{-} ?(0)*\dir{<};}};
 (12,-4)*{\dturq\xybox{(-3,-8)*{};(27,-8)*{} **\crv{(-2,0) & (27,0)}; ?(.5)*\dir{<};}};
 (14.5,0)*{\dturq\xybox{(7,8)*{};(22,-8)*{} **\crv{(5,0) & (24,6)}; ?(1)*\dir{>};}};
 (-8,3.5)*{\dyellow\xybox{(-23,8)*{};(17,8)*{} **\crv{(-23,0) & (17,0)}; ?(.5)*\dir{<};}};
 (-15.5,0)*{\dyellow\xybox{(-28,8)*{};(7,-8)*{} **\crv{(-30,-4) & (9,2)}; ?(1)*\dir{>};}};
 (-33,-10)*{\scs n}; (-23,-10)*{\scs r+1 }; (-18,-10)*{ \scs 1};(-8,-10)*{\scs i-1}; (-3,-10)*{\scs i}; (2,-10)*{\scs i+1}; (7,-10)*{\scs i+2}; (17,-10)*{\scs r};(22,-10)*{\scs i};(27,-10)*{\scs i};
 (32,0)*{ (1^r)}; 
  \endxy
\vspace*{2ex}
\end{equation*}
We can slide the $i+1$--colored strand over the $n-r+i-1$ leftmost strands using Relation \eqref{eq_r2_ij-gen}, 
so that we end up with the following picture in the central part of the diagram:
\begin{equation*}
 \xy 
 (11,-4)*{\dturq\xybox{(-3,-8)*{};(15,-2)*{} **\crv{(-2,0) & (12,-2)}; ?(.5)*\dir{<};}};
 (11,4)*{\dturq\xybox{(-3,8)*{};(15,2)*{} **\crv{(-2,0) & (12,2)}; ?(.5)*\dir{>};}};
 (7,0)*{\dyellow\xybox{(7,8);(7,-8); **\dir{-} ?(0)*\dir{<};}};
 (2,-10)*{\scs i}; (7,-10)*{\scs i+1}; (2,10)*{\scs i};
 (7,0)*{ \lambda}; 
  \endxy
\vspace*{2ex}
\end{equation*}
with $\lambda= (1,\dots, 1,0,1 ,\dots,1,0, \dots, 0)$ where the first $0$ is in $i$th position and the last $1$ in $r+1$st position. 
We now use Relation \eqref{eq_ident_decomp}, which again reduces to \eqref{id_decomp_red}. Then we apply Relation \eqref{eq_downup_ij-gen} 
to the two rightmost strands. After doing all this, the central part of the diagram becomes equal to  
\begin{equation*}
 \xy 
 (12,0)*{\dturq\xybox{(7,8);(7,-8); **\dir{-} ?(1)*\dir{>};}};
 (7,0)*{\dyellow\xybox{(7,8);(7,-8); **\dir{-} ?(0)*\dir{<};}};
 (2,0)*{\dturq\xybox{(7,8);(7,-8); **\dir{-} ?(0)*\dir{<};}};
 (2,-10)*{\scs i}; (7,-10)*{\scs i+1}; (12,-10)*{\scs i};
   \endxy
\vspace*{2ex}
\end{equation*}
Then, in the full diagram, the rightmost $i$-colored strand above can be slid over the $r-i-1$ rightmost parallel strands 
using Relation \eqref{eq_downup_ij-gen}, so that we end up with
\begin{equation}
\label{eq:kink}
 \xy 
 (-28,0)*{\dblue\xybox{(-28,8);(-28,-8); **\dir{-} ?(.5)*\dir{<};}};
 (-23,0)*{\cdots};
 (-18,0)*{\dpink\xybox{(-18,8);(-18,-8); **\dir{-} ?(.5)*\dir{<};}};
 (-13,0)*{\dgreen\xybox{(-13,8);(-13,-8); **\dir{-} ?(.5)*\dir{<};}};
 (-8,0)*{\cdots};
 (-3,0)*{\dred\xybox{(-3,8);(-3,-8); **\dir{-} ?(.5)*\dir{<};}};
(5,-4.5)*{\dturq\xybox{(17,-4)*{};(22,-8)*{} **\crv{(17,-6) & (22,-6)}; ?(1)*\dir{};}};
(5,-4.5)*{\dturq\xybox{(22,-4)*{};(17,-8)*{} **\crv{(22,-6) & (17,-6)}; ?(0)*\dir{};}};
(5,-6.5)*{\dturq\xybox{(-33,8)*{};(-28,8)*{} **\crv{(-33,12) & (-28,12)}; ?(.5)*\dir{>};}};
 (-28,-10)*{\scs n}; (-18,-10)*{\scs r+1 }; (-13,-10)*{ \scs 1};(-3,-10)*{\scs r}; (2,-10)*{\scs i}; (7,-10)*{\scs i};
 (5,3)*{ (1^r)}; 
  \endxy
\vspace*{2ex}
\end{equation}
We apply Relation \eqref{eq:redtobubbles} to the curl. For $\lambda=(1^r)$, this relation becomes 
\begin{eqnarray*}
  \text{$\xy 0;/r.18pc/:
  (12,8)*{(1^r)};
  %(0,0)*{\twoIu{i}};
  (0,-3)*{\dturq\xybox{
  (-3,-8)*{};(3,8)*{} **\crv{(-3,-1) & (3,1)}?(1)*\dir{>};?(0)*\dir{>};
    (3,-8)*{};(-3,8)*{} **\crv{(3,-1) & (-3,1)}?(1)*\dir{>};
  (-3,-12)*{\bbsid};
  (-3,8)*{\bbsid};
  (3,8)*{}="t1";
  (9,8)*{}="t2";
  (3,-8)*{}="t1'";
  (9,-8)*{}="t2'";
   "t1";"t2" **\crv{(3,14) & (9, 14)};
   "t1'";"t2'" **\crv{(3,-14) & (9, -14)};
   "t2'";"t2" **\dir{-} ?(.5)*\dir{<};}};
   (9,0)*{}; (-7.5,-12)*{\scs i};
 \endxy$} \; = \; -
   \xy
  (19,4)*{(1^r)};
  (0,0)*{\dturq\bbe{}};(-2,-8)*{\scs i};
  (12,-2)*{\dturq\cbub{\black-1}{\black i}};
  (0,6)*{\dturq\bullet}+(2,1)*{\scs 0};
 \endxy
\end{eqnarray*}
Since the bubble is equal to $-1$, the diagram in~\eqref{eq:kink} becomes equal to 
\begin{equation*}
 \xy 
 (-28,0)*{\dblue\xybox{(-28,8);(-28,-8); **\dir{-} ?(.5)*\dir{<};}};
 (-23,0)*{\cdots};
 (-18,0)*{\dpink\xybox{(-18,8);(-18,-8); **\dir{-} ?(.5)*\dir{<};}};
 (-13,0)*{\dgreen\xybox{(-13,8);(-13,-8); **\dir{-} ?(.5)*\dir{<};}};
 (-8,0)*{\cdots};
 (-3,0)*{\dred\xybox{(-3,8);(-3,-8); **\dir{-} ?(.5)*\dir{<};}};
(4.5,-10.5)*{\dturq\xybox{(-33,8)*{};(-28,8)*{} **\crv{(-33,12) & (-28,12)}; ?(.5)*\dir{<};}};
 (-28,-10)*{\scs n}; (-18,-10)*{\scs r+1 }; (-13,-10)*{ \scs 1};(-3,-10)*{\scs r}; (2,-10)*{\scs i}; (7,-10)*{\scs i};
 (5,3)*{ (1^r)}; 
  \endxy
\vspace*{2ex}
\end{equation*}
which is indeed the image under $\Sigma_{n,r}$ of  $\figins{-4}{0.25}{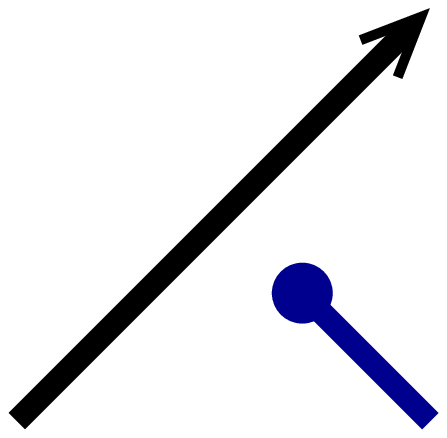} $.
\\
To prove Relations \eqref{eq:slidedotdist-md} and  \eqref{eq:slidedotdist-mu} with colors $(r, 1)$ and $(r-1, r)$, we use exactly the same ingredients as above: 
sliding strands using Relations \eqref{eq_downup_ij-gen} and \eqref{eq_r2_ij-gen}, removing curls using 
\begin{equation*}
  \text{$ \xy 0;/r.18pc/:
  (-12,8)*{\lambda};
   (0,-2)*{\dturq\xybox{
   (-3,-8)*{};(3,8)*{} **\crv{(-3,-1) & (3,1)}?(1)*\dir{>};?(0)*\dir{>};
    (3,-8)*{};(-3,8)*{} **\crv{(3,-1) & (-3,1)}?(1)*\dir{>};
  (3,-12)*{\bbsid};
  (3,8)*{\bbsid}; %(6,-8)*{\scs i};
  (-9,8)*{}="t1";
  (-3,8)*{}="t2";
  (-9,-8)*{}="t1'";
  (-3,-8)*{}="t2'";
   "t1";"t2" **\crv{(-9,14) & (-3, 14)};
   "t1'";"t2'" **\crv{(-9,-14) & (-3, -14)};
  "t1'";"t1" **\dir{-} ?(.5)*\dir{<};}};(7.5,-11)*{\scs i};
 \endxy$} \; = \;
    \xy
  (-3,6)*{\lambda};
  (0,0)*{\dturq\bbe{}};(2,-8)*{\scs i};
  %(-12,-2)*{\dturq\ccbub{\black -1}{\black i}};
  %(0,6)*{\dturq\bullet}+(2,-1)*{\scs 0};
 \endxy
\quad \mbox{or} \quad
\text{$\xy 0;/r.18pc/:
  (12,8)*{\lambda};
  %(0,0)*{\twoIu{i}};
  (0,-3)*{\dturq\xybox{
  (-3,-8)*{};(3,8)*{} **\crv{(-3,-1) & (3,1)}?(1)*\dir{>};?(0)*\dir{>};
    (3,-8)*{};(-3,8)*{} **\crv{(3,-1) & (-3,1)}?(1)*\dir{>};
  (-3,-12)*{\bbsid};
  (-3,8)*{\bbsid};
  (3,8)*{}="t1";
  (9,8)*{}="t2";
  (3,-8)*{}="t1'";
  (9,-8)*{}="t2'";
   "t1";"t2" **\crv{(3,14) & (9, 14)};
   "t1'";"t2'" **\crv{(3,-14) & (9, -14)};
   "t2'";"t2" **\dir{-} ?(.5)*\dir{<};}};
   (9,0)*{}; (-7.5,-12)*{\scs i};
 \endxy$} \; = \; 
   \xy
  (3,6)*{\lambda};
  (0,0)*{\dturq\bbe{}};(-2,-8)*{\scs i};
  %(12,-2)*{\dturq\cbub{\black-1}{\black i}};
  %(0,6)*{\dturq\bullet}+(2,1)*{\scs 0};
 \endxy
\end{equation*}
and applying Relation \eqref{eq_ident_decomp}, which is equal to  
\begin{equation*}
{\dturq \xy
    (0,4)*{\bbpef{\black i}};
    (0,-4.5)*{\bbcef{\black i}};
    (8,0.5)*{ \black \lambda };
    \endxy}
\, = 
\vcenter{\xy 0;/r.18pc/:
  (0,0)*{\dturq\xybox{
  (-8,0)*{};(8,0)*{};
  (-4,10)*{}="t1";
  (4,10)*{}="t2";
  (-4,-10)*{}="b1";
  (4,-10)*{}="b2";
  "t1";"b1" **\dir{-} ?(.5)*\dir{>};
  "t2";"b2" **\dir{-} ?(.5)*\dir{<};}};
  (-6,-8)*{\scs i};(6,-8)*{\scs i};
  (10,2)*{\lambda};
    \endxy}
\end{equation*}
for the relevant value of $\lambda$. 
\\
\noindent $\bullet$ Relation \eqref{eq:mslide3v}. Actually, we will prove 
\begin{equation}\label{eq:mslide3v2}
\ \figins{-17}{0.55}{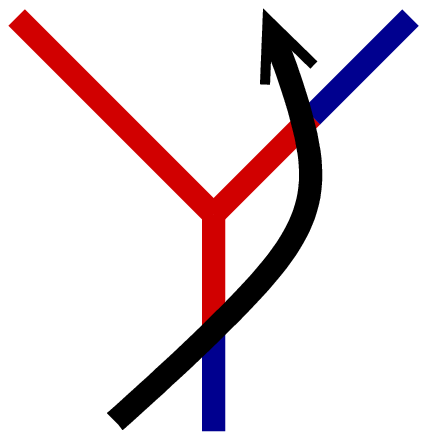}\
=\
\figins{-17}{0.55}{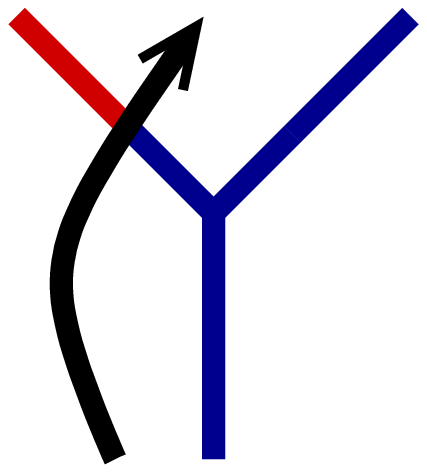}
\end{equation}
which is equivalent. 

Let us start with colors $(i, i+1)$ different from $(r, 1)$ and $(r-1, r)$.

The image under $\Sigma_{n,r}$ of $ \figins{-7}{0.3}{msplitslide-d2.eps} $ is
\begin{equation*}
 \xy 
 (-28,0)*{\dblue\xybox{(-28,-24)*{};(-28,8)*{} **\crv{(-18,-16) & (-18,0)}; ?(0)*\dir{<};}};
 (-18,0)*{\dpink\xybox{(-18,-24)*{};(-18,8)*{} **\crv{(-8,-16) & (-8,0)}; ?(0)*\dir{<};}};
 (-13,0)*{\dgreen\xybox{(-13,-24)*{};(-13,8)*{} **\crv{(-3,-16) & (-3,0)}; ?(0)*\dir{<};}};
 (-3,0)*{\doran\xybox{(-3,-24)*{};(-3,8)*{} **\crv{(7,-16) & (7,0)}; ?(0)*\dir{<};}};
 (12,0)*{\dbrun\xybox{(12,-24)*{};(12,8)*{} **\crv{(22,-16) & (22,0)}; ?(0)*\dir{<};}};
 (22,0)*{\dred\xybox{(22,-24)*{};(22,8)*{} **\crv{(32,-16) & (32,0)}; ?(0)*\dir{<};}};
 (-26,14)*{\cdots};
 %(-19,-6)*{\cdots};
 %(-18,8)*{\dpink\xybox{(-8,-8);(-18,8); **\dir{-} ?(1)*\dir{>};}};
 %(-13,8)*{\dgreen\xybox{(-3,-8);(-13,8); **\dir{-} ?(1)*\dir{>};}};
 (-11,14)*{\cdots};
 %(-4,-6)*{\cdots};
 %(-3,8)*{\doran\xybox{(7,-8);(-3,8); **\dir{-} ?(1)*\dir{>};}};
 %(12,8)*{\dbrun\xybox{(22,-8);(12,8); **\dir{-} ?(1)*\dir{>};}};
 (14,14)*{\cdots};
 %(21,-6)*{\cdots};
 %(22,8)*{\dred\xybox{(32,-8);(22,8); **\dir{-} ?(1)*\dir{>};}};
 (12,11.5)*{\dturq\xybox{(-3,8)*{};(27,8)*{} **\crv{(-2,0) & (27,0)}; ?(.5)*\dir{>};}};
 (14.5,8)*{\dturq\xybox{(7,-8)*{};(22,8)*{} **\crv{(5,0) & (24,-6)}; ?(0)*\dir{<};}};
 (-8,4.3)*{\dyellow\xybox{(-23,-8)*{};(17,-8)*{} **\crv{(-23,0) & (17,0)}; ?(.5)*\dir{>};}};
 (-17.5,11.5)*{\dyellow\xybox{(-33,8)*{};(7,8)*{} **\crv{(-33,0) & (7,0)}; ?(0)*\dir{<};}};
 %(-28,-8)*{\dblue\xybox{(-18,8);(-28,-8); **\dir{-} ?(0)*\dir{<};}};
 (-26,-14)*{\cdots};
 %(-19,6)*{\cdots};
 %(-18,-8)*{\dpink\xybox{(-8,8);(-18,-8); **\dir{-} ?(0)*\dir{<};}};
 %(-13,-8)*{\dgreen\xybox{(-3,8);(-13,-8); **\dir{-} ?(0)*\dir{<};}};
 (-11,-14)*{\cdots};
 %(-4,6)*{\cdots};
 %(-3,-8)*{\doran\xybox{(7,8);(-3,-8); **\dir{-} ?(0)*\dir{<};}};
 %(12,-8)*{\dbrun\xybox{(22,8);(12,-8); **\dir{-} ?(0)*\dir{<};}};
 (14,-14)*{\cdots};
 %(21,6)*{\cdots};
 %(22,-8)*{\dred\xybox{(32,8);(22,-8); **\dir{-} ?(0)*\dir{<};}};
 (12,-12)*{\dturq\xybox{(-3,-8)*{};(27,-8)*{} **\crv{(-2,0) & (27,0)}; ?(.5)*\dir{<};}};
 (14.5,-8)*{\dturq\xybox{(7,8)*{};(22,-8)*{} **\crv{(5,0) & (24,6)}; ?(1)*\dir{>};}};
 (-8,-4.3)*{\dyellow\xybox{(-23,8)*{};(17,8)*{} **\crv{(-23,0) & (17,0)}; ?(.5)*\dir{<};}};
 (-20.5,0)*{\dyellow\xybox{(-38,24)*{};(7,-8)*{} **\crv{(-42,-6) & (9,2)}; ?(1)*\dir{>};}};
 (-33,-18)*{\scs n}; (-23,-18)*{\scs r+1 }; (-18,-18)*{ \scs 1};(-8,-18)*{\scs i-1}; (-3,-18)*{\scs i}; (2,-18)*{\scs i+1}; (7,-18)*{\scs i+2}; (17,-18)*{\scs r};(22,-18)*{\scs i};(27,-18)*{\scs i}; (4,7)*{\scs i+1};
 (32,0)*{ (1^r)}; 
  \endxy
\vspace*{2ex}
\end{equation*}

The image under $\Sigma_{n,r}$ of $ \figins{-7}{0.3}{msplitslide-u2.eps} $ is
\begin{equation*}
 \xy 
 (-28,-5)*{\dblue\xybox{(-18,8);(-28,-18); **\dir{-} ?(0)*\dir{<};}};
 (-27,-16)*{\cdots};
 (-19,6)*{\cdots};
 (-18,-5)*{\dpink\xybox{(-8,8);(-18,-18); **\dir{-} ?(0)*\dir{<};}};
 (-13,-5)*{\dgreen\xybox{(-3,8);(-13,-18); **\dir{-} ?(0)*\dir{<};}};
 (-12,-16)*{\cdots};
 (-4,6)*{\cdots};
 (-3,-5)*{\doran\xybox{(7,8);(-3,-18); **\dir{-} ?(0)*\dir{<};}};
 (12,-5)*{\dbrun\xybox{(22,8);(12,-18); **\dir{-} ?(0)*\dir{<};}};
 (13,-16)*{\cdots};
 (21,6)*{\cdots};
 (22,-5)*{\dred\xybox{(32,8);(22,-18); **\dir{-} ?(0)*\dir{<};}};
 (-3,-9)*{\dturq\xybox{(-3,-13);(-3,-18.5); **\dir{-} ?(0)*\dir{<};}};
 (32,-2.2)*{\dturq\xybox{(32,8);(32,-12.5); **\dir{-} ?(0)*\dir{<};}};
 (12,-4)*{\dturq\xybox{(-3,-18)*{};(27,-18)*{} **\crv{(-2,-12) & (27,-12)}; ?(.5)*\dir{<};}};
 (29.5,-5)*{\dturq\xybox{(-3,-18)*{};(2,-18)*{} **\crv{(-2,-20) & (1,-20)}; ?(.5)*\dir{};}};
 (14.5,-5)*{\dturq\xybox{(7,8)*{};(22,-18)*{} **\crv{(5,0) & (24,6)}; ?(1)*\dir{>};}};
 (32,-5.5)*{\dturq\xybox{(32,8)*{};(22,-18)*{} **\crv{(34,-28) & (20,-10)}; ?(0)*\dir{<};}};
 (-8,3.5)*{\dyellow\xybox{(-23,8)*{};(17,8)*{} **\crv{(-23,0) & (17,0)}; ?(.5)*\dir{<};}};
 (-15.5,-5)*{\dyellow\xybox{(-28,8)*{};(7,-18)*{} **\crv{(-30,-4) & (9,2)}; ?(1)*\dir{>};}};
 (-33,-20)*{\scs n}; (-23,-20)*{\scs r+1 }; (-18,-20)*{ \scs 1};(-8,-20)*{\scs i-1}; (-3,-20)*{\scs i}; (2,-20)*{\scs i+1}; (7,-20)*{\scs i+2}; (17,-20)*{\scs r};(22,-20)*{\scs i};(27,-20)*{\scs i};
 (44,0)*{ (1^r)}; 
  \endxy
\vspace*{2ex}
\end{equation*}

We can apply to the first of these two diagrams the same arguments as in the beginning of the proof of Relation \eqref{eq:reid2ml}, in order to simplify 
it to
\begin{equation*}
 \xy 
 (-28,0)*{\dblue\xybox{(-28,-24)*{};(-28,8)*{} **\crv{(-18,-16) & (-18,0)}; ?(0)*\dir{<};}};
 (-18,0)*{\dpink\xybox{(-18,-24)*{};(-18,8)*{} **\crv{(-8,-16) & (-8,0)}; ?(0)*\dir{<};}};
 (-13,0)*{\dgreen\xybox{(-13,-24)*{};(-13,8)*{} **\crv{(-3,-16) & (-3,0)}; ?(0)*\dir{<};}};
 (-3,0)*{\doran\xybox{(-3,-24)*{};(-3,8)*{} **\crv{(7,-16) & (7,0)}; ?(0)*\dir{<};}};
 (12,0)*{\dbrun\xybox{(12,-24)*{};(12,8)*{} **\crv{(22,-16) & (22,0)}; ?(0)*\dir{<};}};
 (22,0)*{\dred\xybox{(22,-24)*{};(22,8)*{} **\crv{(32,-16) & (32,0)}; ?(0)*\dir{<};}};
 (27,0)*{\dturq\xybox{(22,-24)*{};(22,8)*{} **\crv{(32,-16) & (32,0)}; ?(0)*\dir{<};}};
 (-26,14)*{\cdots};
 %(-19,-6)*{\cdots};
 %(-18,8)*{\dpink\xybox{(-8,-8);(-18,8); **\dir{-} ?(1)*\dir{>};}};
 %(-13,8)*{\dgreen\xybox{(-3,-8);(-13,8); **\dir{-} ?(1)*\dir{>};}};
 (-11,14)*{\cdots};
 %(-4,-6)*{\cdots};
 %(-3,8)*{\doran\xybox{(7,-8);(-3,8); **\dir{-} ?(1)*\dir{>};}};
 %(12,8)*{\dbrun\xybox{(22,-8);(12,8); **\dir{-} ?(1)*\dir{>};}};
 (14,14)*{\cdots};
 %(21,-6)*{\cdots};
 %(22,8)*{\dred\xybox{(32,-8);(22,8); **\dir{-} ?(1)*\dir{>};}};
 (12,11.5)*{\dturq\xybox{(-3,8)*{};(27,8)*{} **\crv{(-2,0) & (27,0)}; ?(.5)*\dir{>};}};
 %(14.5,8)*{\dturq\xybox{(7,-8)*{};(22,8)*{} **\crv{(5,0) & (24,-6)}; ?(0)*\dir{<};}};
 %(-8,4.3)*{\dyellow\xybox{(-23,-8)*{};(17,-8)*{} **\crv{(-23,0) & (17,0)}; ?(.5)*\dir{>};}};
 (-17.5,11.5)*{\dyellow\xybox{(-33,8)*{};(7,8)*{} **\crv{(-33,0) & (7,0)}; ?(0)*\dir{<};}};
 %(-28,-8)*{\dblue\xybox{(-18,8);(-28,-8); **\dir{-} ?(0)*\dir{<};}};
 (-26,-14)*{\cdots};
 %(-19,6)*{\cdots};
 %(-18,-8)*{\dpink\xybox{(-8,8);(-18,-8); **\dir{-} ?(0)*\dir{<};}};
 %(-13,-8)*{\dgreen\xybox{(-3,8);(-13,-8); **\dir{-} ?(0)*\dir{<};}};
 (-11,-14)*{\cdots};
 %(-4,6)*{\cdots};
 %(-3,-8)*{\doran\xybox{(7,8);(-3,-8); **\dir{-} ?(0)*\dir{<};}};
 %(12,-8)*{\dbrun\xybox{(22,8);(12,-8); **\dir{-} ?(0)*\dir{<};}};
 (14,-14)*{\cdots};
 %(21,6)*{\cdots};
 %(22,-8)*{\dred\xybox{(32,8);(22,-8); **\dir{-} ?(0)*\dir{<};}};
 (12,-12)*{\dturq\xybox{(-3,-8)*{};(27,-8)*{} **\crv{(-2,0) & (27,0)}; ?(.5)*\dir{<};}};
 %(14.5,-8)*{\dturq\xybox{(7,8)*{};(22,-8)*{} **\crv{(5,0) & (24,6)}; ?(1)*\dir{>};}};
 %(-8,-4.3)*{\dyellow\xybox{(-23,8)*{};(17,8)*{} **\crv{(-23,0) & (17,0)}; ?(.5)*\dir{<};}};
 (-20.5,0)*{\dyellow\xybox{(-38,24)*{};(7,-8)*{} **\crv{(-42,-6) & (9,2)}; ?(1)*\dir{>};}};
 (-33,-18)*{\scs n}; (-23,-18)*{\scs r+1 }; (-18,-18)*{ \scs 1};(-8,-18)*{\scs i-1}; (-3,-18)*{\scs i}; (2,-18)*{\scs i+1}; (7,-18)*{\scs i+2}; (17,-18)*{\scs r};(22,-18)*{\scs i};(27,-18)*{\scs i}; 
 (35,0)*{ (1^r)}; 
  \endxy
\vspace*{2ex}
\end{equation*}
In order to prove that this diagram is equal to the second one, it only remains to check that
\begin{equation}\label{eqtriple1}
 \xy 
 (11,-4)*{\dturq\xybox{(15,-8)*{};(-3,-2)*{} **\crv{(12,0) & (-2,-2)}; ?(.5)*\dir{>};}};
 (11,4)*{\dturq\xybox{(15,8)*{};(-3,2)*{} **\crv{(12,0) & (-2,2)}; ?(.5)*\dir{<};}};
 (15,0)*{\dturq\xybox{(7,8);(7,-8); **\dir{-} ?(0)*\dir{<};}};
 (20,-10)*{\scs i}; (15,-10)*{\scs i}; (20,10.5)*{\scs i};
 (24,4)*{ \lambda}; 
  \endxy
\vspace*{2ex}
\quad = \quad 
\xy 
 (11,-3)*{\dturq\xybox{(5,-8)*{};(-3,2)*{} **\crv{(2,0) & (-2,2)}; ?(.1)*\dir{<};}};
 (11,3)*{\dturq\xybox{(5,8)*{};(-3,-2)*{} **\crv{(2,0) & (-2,-2)}; ?(.3)*\dir{>};}};
 (20,0)*{\dturq\xybox{(7,8);(7,-8); **\dir{-} ?(1)*\dir{>};}};
 (20,-10)*{\scs i}; (15,-10)*{\scs i}; (15,10.5)*{\scs i};
 (24,4)*{ \lambda}; 
  \endxy
\end{equation}
where $\lambda = (1^r)$. Because Relation \eqref{eq:EF} reduces to 
\begin{eqnarray}\label{id_decomp_red3}
 \vcenter{\xy 0;/r.18pc/:
  (0,0)*{\dturq\xybox{
  (-8,0)*{};(8,0)*{};
  (-4,10)*{}="t1";
  (4,10)*{}="t2";
  (-4,-10)*{}="b1";
  (4,-10)*{}="b2";
  "t1";"b1" **\dir{-} ?(.5)*\dir{<};
  "t2";"b2" **\dir{-} ?(.5)*\dir{>};}};
  (-6,-8)*{\scs i};(6,-8)*{\scs i};
  (10,2)*{\lambda};
    \endxy}
&\quad = \quad&
   \vcenter{\xy 0;/r.18pc/:
    (0,0)*{\dturq\xybox{
    (-4,-4)*{};(4,4)*{} **\crv{(-4,-1) & (4,1)}?(1)*\dir{>};?(0)*\dir{>};
    (4,-4)*{};(-4,4)*{} **\crv{(4,-1) & (-4,1)}?(1)*\dir{<};
    (-4,4)*{};(4,12)*{} **\crv{(-4,7) & (4,9)}?(1)*\dir{<};
    (4,4)*{};(-4,12)*{} **\crv{(4,7) & (-4,9)};}};
    (10,2)*{\lambda};(-6.8,-7)*{\scs i};(6,-7)*{\scs i};
 \endxy} 
\end{eqnarray}
when $\lambda= (1^r)$, the left hand side of \eqref{eqtriple1} is equal to
\begin{equation*}
 \xy 
 (11,-4)*{\dturq\xybox{(15,-8)*{};(-3,-2)*{} **\crv{(12,0) & (-2,-2)}; ?(.5)*\dir{>};}};
 (11,4)*{\dturq\xybox{(15,8)*{};(-3,2)*{} **\crv{(12,0) & (-2,2)}; ?(.5)*\dir{<};}};
 (15,0)*{\dturq\xybox{(7,8);(7,-8); **\dir{-} ?(0)*\dir{<};}};
 (20,-10)*{\scs i}; (15,-10)*{\scs i}; (20,10.5)*{\scs i};
 (24,4)*{ (1^r)}; 
  \endxy
\vspace*{2ex}
\quad = \quad 
\xy 
 (11,-2)*{\dturq\xybox{(15,-8)*{};(-3,-2)*{} **\crv{(3,5) & (0,5)}; ?(.5)*\dir{>};}};
 (11,2)*{\dturq\xybox{(15,8)*{};(-3,2)*{} **\crv{(3,-5) & (0,-5)}; ?(.5)*\dir{<};}};
 (15,0)*{\dturq\xybox{(7,8);(7,-8); **\dir{-} ?(0)*\dir{<};}};
 (20,-10)*{\scs i}; (15,-10)*{\scs i}; (20,10.5)*{\scs i};
 (24,4)*{ (1^r)}; 
  \endxy
\end{equation*}
So proving \eqref{eqtriple1} is equivalent to proving that:
\begin{equation}\label{eqtriple2}
 \xy 
 (11,-3)*{\dturq\xybox{(15,-8)*{};(-3,2)*{} **\crv{(12,-2) & (0,0)}; ?(.5)*\dir{>};}};
 (11,3)*{\dturq\xybox{(15,8)*{};(-3,-2)*{} **\crv{(12,2) & (0,0)}; ?(.5)*\dir{<};}};
 (15,0)*{\dturq\xybox{(7,8);(7,-8); **\dir{-} ?(0)*\dir{<};}};
 (20,-10)*{\scs i}; (15,-10)*{\scs i}; (20,10.5)*{\scs i};
 (24,4)*{ (1^r)}; 
  \endxy
\vspace*{2ex}
\quad = \quad 
\xy 
 (11,-4)*{\dturq\xybox{(5,-8)*{};(-3,-2)*{} **\crv{(4,-5) & (0,-3)}; ?(0.5)*\dir{<};}};
 (11,3)*{\dturq\xybox{(5,8)*{};(-3,2)*{} **\crv{(4,5) & (0,3)}; ?(.5)*\dir{>};}};
 (20,0)*{\dturq\xybox{(7,8);(7,-8); **\dir{-} ?(1)*\dir{>};}};
 (20,-10)*{\scs i}; (15,-10)*{\scs i}; (15,10.5)*{\scs i};
 (24,4)*{ (1^r)}; 
  \endxy
\end{equation}
Let us apply twice Relation \eqref{id_decomp_red2} to the right hand side. This gives us
\begin{equation*}
 \xy 
 (11,-5)*{\dturq\xybox{(15,-12)*{};(-3,-2)*{} **\crv{(20,0) & (-2,-2)}; ?(.3)*\dir{<};}};
 (11,4)*{\dturq\xybox{(15,12)*{};(-3,2)*{} **\crv{(20,0) & (-2,2)}; ?(.3)*\dir{>};}};
 (15,0)*{\dturq\xybox{(15,8)*{};(15,-8)*{} **\crv{(0,3) & (0,-3)}; ?(.7)*\dir{<};}};
 (20,-10)*{\scs i}; (22,-6)*{\scs i}; (20,10.5)*{\scs i};
 (24,4)*{ (1^r)}; 
  \endxy.
\vspace*{2ex}
\end{equation*}
Then we apply Relation \eqref{eq_ident_decomp}, with $\lambda = (1,\dots,1,0,2,1,\dots,1,0,\dots,0)$ where $2$ is in $i+1$st position and the 
last $1$ in $r$th position, which gives us  
\begin{equation}\label{eqtriple3}
-
\xy 
 (11,0)*{\dturq\xybox{(-3,2)*{};(-3,-2)*{} **\crv{(10,2) & (10,-2)}; ?(.7)*\dir{<};}};
 (21,0)*{\dturq\xybox{(15,12)*{};(15,-12)*{} **\crv{(20,3) & (20,-3)}; ?(.3)*\dir{>}+(0.5,-5)*{\bullet};}};
 (15,0)*{\dturq\xybox{(15,8)*{};(15,-8)*{} **\crv{(0,3) & (0,-3)}; ?(.7)*\dir{<};}};
 (21,-11)*{\scs i}; (24,-7)*{\scs i}; (3,-3)*{\scs i};
 (26,4)*{ (1^r)}; 
  \endxy
\quad - \quad 
\xy 
 (11,0)*{\dturq\xybox{(-3,2)*{};(-3,-2)*{} **\crv{(10,2) & (10,-2)}; ?(.7)*\dir{<}+(2,1)*{\bullet};}};
 (21,0)*{\dturq\xybox{(15,12)*{};(15,-12)*{} **\crv{(20,3) & (20,-3)}; ?(.3)*\dir{>};}};
 (15,0)*{\dturq\xybox{(15,8)*{};(15,-8)*{} **\crv{(0,3) & (0,-3)}; ?(.7)*\dir{<};}};
 (21,-11)*{\scs i}; (24,-7)*{\scs i}; (3,-3)*{\scs i};
 (26,4)*{ (1^r)}; 
  \endxy
\quad - \quad 
\xy 
 (11,0)*{\dturq\xybox{(-3,2)*{};(-3,-2)*{} **\crv{(10,2) & (10,-2)}; ?(.7)*\dir{<};}};
 (21,0)*{\dturq\xybox{(15,12)*{};(15,-12)*{} **\crv{(20,3) & (20,-3)}; ?(.3)*\dir{>};}};
 (15,0)*{\dturq\xybox{(15,8)*{};(15,-8)*{} **\crv{(0,3) & (0,-3)}; ?(.7)*\dir{<};}};
(21,-11)*{\scs i}; (24,-7)*{\scs i}; (3,-3)*{\scs i};
 (26,4)*{ (1^r)}; 
 0;/r.15pc/:(28,-1)*{\dturq\xybox{(0,0)*{\ccbub{\black 2}{\black i}};}};
   \endxy.
\end{equation}
Here the first and last term are equal to zero, due to Relation \eqref{eq_nil_rels}. To the second term we can apply 
Relation \eqref{eq_nil_dotslide}, so that we get 
\begin{equation*}
\xy 
 (11,-4)*{\dturq\xybox{(5,-8)*{};(-3,-2)*{} **\crv{(4,-5) & (0,-3)}; ?(0.5)*\dir{<};}};
 (11,3)*{\dturq\xybox{(5,8)*{};(-3,2)*{} **\crv{(4,5) & (0,3)}; ?(.5)*\dir{>};}};
 (20,0)*{\dturq\xybox{(7,8);(7,-8); **\dir{-} ?(1)*\dir{>};}};
 (20,-10)*{\scs i}; (15,-10)*{\scs i}; (15,10.5)*{\scs i};
 (24,4)*{ (1^r)}; 
  \endxy
\vspace*{2ex}
\quad = \quad -
\xy 
 (11,0)*{\dturq\xybox{(-3,2)*{};(-3,-2)*{} **\crv{(10,2) & (10,-2)}; ?(.7)*\dir{<}+(-5,-1)*{\bullet};}};
 (21,0)*{\dturq\xybox{(15,12)*{};(15,-12)*{} **\crv{(20,3) & (20,-3)}; ?(.3)*\dir{>};}};
 (15,0)*{\dturq\xybox{(15,8)*{};(15,-8)*{} **\crv{(0,3) & (0,-3)}; ?(.7)*\dir{<};}};
 (21,-11)*{\scs i}; (24,-7)*{\scs i}; (3,-3)*{\scs i};
 (26,4)*{ (1^r)}; 
  \endxy
\quad + \quad 
\xy 
 (11,-3)*{\dturq\xybox{(15,-8)*{};(-3,2)*{} **\crv{(12,-2) & (0,0)}; ?(.5)*\dir{>};}};
 (11,3)*{\dturq\xybox{(15,8)*{};(-3,-2)*{} **\crv{(12,2) & (0,0)}; ?(.5)*\dir{<};}};
 (15,0)*{\dturq\xybox{(7,8);(7,-8); **\dir{-} ?(0)*\dir{<};}};
 (20,-10)*{\scs i}; (15,-10)*{\scs i}; (20,10.5)*{\scs i};
 (24,4)*{ (1^r)}; 
  \endxy
\end{equation*}
Here again the first term on the right-hand side is killed by Relation \eqref{eq_nil_rels}, so Relation \eqref{eqtriple2} is satisfied 
and $\Sigma_{n,r} \Bigl( \figins{-7}{0.3}{msplitslide-d2.eps} \, \Bigr) $ and $\Sigma_{n,r} \Bigl( \figins{-7}{0.3}{msplitslide-u2.eps} \, \Bigr) $ are equal.

Proving Relation \eqref{eq:mslide3v2} for colors $(r-1,r)$ is not much more complicated and is left to the reader. 
\\
\noindent $\bullet$ Relations \eqref{eq:slide6mv} and \eqref{eq:slide6mv2} involving oriented strands and three adjacent colored strands.

In order to prove these relations we bend the left end of the oriented strand downward and the right end upward in the diagrams. 

Let us start with the case when there are no $r$-colored strands. We prove Relation \eqref{eq:slide6mv} for the case in which the bottom strands 
are colored $(i-1,i,i-1)$. Relation \eqref{eq:slide6mv2} for the case in which the bottom strands are colored $(i,i-1,i)$ can be proved similarly. 

By applying Relations \eqref{eq_downup_ij-gen}, \eqref{eq_r2_ij-gen}, \eqref{eq_r3_easy-gen} and \eqref{eq_other_r3_1} 
to the diagrams $\Sigma_{n,r} \Bigl( \figins{-7}{0.3}{6mvert-slide-d.eps}  \,  \Bigr)$ and $\Sigma_{n,r} \Bigl( \figins{-7}{0.3}{6mvert-slide-u.eps} \, \Bigr)$, 
we can slide the entangled parts of the strands colored $i-1$, $i$ and $i+1$ to the middle of the diagrams. In this way, we can turn  
$\Sigma_{n,r} \Bigl( \figins{-7}{0.3}{6mvert-slide-d.eps}  \,  \Bigr)$ and $\Sigma_{n,r} \Bigl( \figins{-7}{0.3}{6mvert-slide-u.eps} \, \Bigr)$ 
into the following two diagrams: 

the diagram $\Sigma_{n,r} \Bigl( \figins{-7}{0.3}{6mvert-slide-d.eps}  \,  \Bigr)$ becomes 
\begin{equation*}
 \xy 
 (-28,0)*{\dblue\xybox{(-8,24);(-38,-24); **\dir{-} ?(0)*\dir{<};}};
 (-37,-22)*{\cdots};
 (-9,22)*{\cdots};
 (-18,0)*{\dpink\xybox{(2,24);(-28,-24); **\dir{-} ?(0)*\dir{<};}};
 (-13,0)*{\dgreen\xybox{(7,24);(-23,-24); **\dir{-} ?(0)*\dir{<};}};
 (-22,-22)*{\cdots};
 (6,22)*{\cdots};
 (-3,0)*{\dgrey\xybox{(17,24);(-13,-24); **\dir{-} ?(0)*\dir{<};}};
 (37,0)*{\dbrun\xybox{(32,24);(2,-24); **\dir{-} ?(0)*\dir{<};}};
 (28,-22)*{\cdots};
 (56,22)*{\cdots};
 (47,0)*{\dred\xybox{(42,24);(12,-24); **\dir{-} ?(0)*\dir{<};}};
 %(12,-4)*{\dturq\xybox{(-3,-8)*{};(27,-8)*{} **\crv{(-2,0) & (27,0)}; ?(.5)*\dir{<};}};
 %(14.5,0)*{\dturq\xybox{(7,8)*{};(22,-8)*{} **\crv{(5,0) & (24,6)}; ?(1)*\dir{>};}};
 %(-8,3.5)*{\dyellow\xybox{(-23,8)*{};(17,8)*{} **\crv{(-23,0) & (17,0)}; ?(.5)*\dir{<};}};
 (-5.5,11)*{\dyellow\xybox{(-18,24)*{};(7,15)*{} **\crv{(-18,20) & (2,15)}; ?(0)*\dir{<};}};
 (-10,11)*{\dyellow\xybox{(-23,24)*{};(3.1,9)*{} **\crv{(-23,17) & (-4,9)}; ?(1)*\dir{>};}};
 (-14.4,11)*{\dturq\xybox{(-28,24)*{};(-0.8,3)*{} **\crv{(-28,14) & (-10,3)}; ?(0)*\dir{<};}};
 (-18.7,10)*{\dturq\xybox{(-33,24)*{};(-4.2,-3)*{} **\crv{(-33,11) & (-16,-3)}; ?(1)*\dir{>};}};
 (-23.1,7)*{\dyellow\xybox{(-38,24)*{};(-8.2,-9)*{} **\crv{(-38,8) & (-22,-9)}; ?(0)*\dir{<};}};
 (-27.6,4)*{\dyellow\xybox{(-43,24)*{};(-12,-15)*{} **\crv{(-43,5) & (-28,-15)}; ?(1)*\dir{>};}};
 (31.6,-12.5)*{\doran\xybox{(18,-24)*{};(8,-15)*{} **\crv{(18,-20) & (13,-15)}; ?(0)*\dir{<};}};
 (36.1,-12.5)*{\doran\xybox{(23,-24)*{};(11.9,-9)*{} **\crv{(23,-17) & (19,-9)}; ?(1)*\dir{>};}};
 (40.5,-12.5)*{\dturq\xybox{(28,-24)*{};(15.8,-3)*{} **\crv{(28,-14) & (25,-3)}; ?(0)*\dir{<};}};
 (44.8,-10.5)*{\dturq\xybox{(33,-24)*{};(19.2,3)*{} **\crv{(33,-11) & (31,3)}; ?(1)*\dir{>};}};
 (49.2,-7.5)*{\doran\xybox{(38,-24)*{};(23.2,9)*{} **\crv{(38,-8) & (37,9)}; ?(0)*\dir{<};}};
 (53.7,-4.5)*{\doran\xybox{(43,-24)*{};(27,15)*{} **\crv{(43,-5) & (43,15)}; ?(1)*\dir{>};}};
%merdier au milieu
 (-2,-0.5)*{\dyellow\xybox{(-8.2,-9)*{};(3.1,9)*{} **\crv{(5,-9) & (5,9)}; ?(0.5)*\dir{<};}};
 (24.5,10.95)*{\dyellow\xybox{(42,24)*{};(7,15)*{} **\crv{(22,0) & (22,15)}; ?(0.5)*\dir{>};}};
 (5,-13.05)*{\dyellow\xybox{(12,-24)*{};(-12,-15)*{} **\crv{(32,0) & (3,-15)}; ?(0.5)*\dir{<};}};
 (31,11.6)*{\doran\xybox{(2,24)*{};(23.2,9)*{} **\crv{(21,20) & (20,9)}; ?(0.5)*\dir{>};}};
 (11.1,-12.4)*{\doran\xybox{(-28,-24)*{};(11.9,-9)*{} **\crv{(9,-20) & (6,-9)}; ?(0.5)*\dir{<};}};
 (24.2,0)*{\doran\xybox{(8,-15)*{};(27,15)*{} **\crv{(-20,-15) & (-12,15)}; ?(0.5)*\dir{<};}};
 (17,-1)*{\dturq\xybox{(-18,-24)*{};(12,24)*{} **\crv{(-15,15) & (-10,20)}; ?(0.5)*\dir{<};}};
 (15.1,7.3)*{\dturq\xybox{(-22.2,-3)*{};(16.25,-2.6)*{} **\crv{(-6,-3) & (10,40) & (10,-3)}; ?(0.7)*\dir{>};}};
 (18.8,-6.5)*{\dturq\xybox{(-19.7,2.45)*{};(19.2,3)*{} **\crv{(-5,3) & (-17,-35) & (7,3)}; ?(0.7)*\dir{<};}};
 %(-14.4,11)*{\dturq\xybox{(52,24)*{};(-0.8,3)*{} **\crv{(52,14) & (10,3)}; ?(0)*\dir{<};}};
 (-43,-26)*{\scs n}; (-33,-26)*{\scs r+1 }; (-28,-26)*{ \scs 1};(-18,-26)*{\scs i-2}; (-8,-26)*{\scs i-1}; (2,-26)*{\scs i}; (12,-26)*{\scs i+1}; (22,-26)*{\scs i+2}; (32,-26)*{\scs r};(37,-26)*{\scs i-1};(42,-26)*{\scs i-1}; (47,-26)*{\scs i};(52,-26)*{\scs i}; (57,-26)*{\scs i-1};(62,-26)*{\scs i-1}; (-43,26)*{\scs i+1};(-38,26)*{\scs i+1}; (-33,26)*{\scs i};(-28,26)*{\scs i}; (-23,26)*{\scs i+1};(-18,26)*{\scs i+1};
 (72,0)*{ (1^r)}; 
  \endxy
\vspace*{2ex}
\end{equation*}
and $\Sigma_{n,r} \Bigl( \figins{-7}{0.3}{6mvert-slide-u.eps} \, \Bigr)$ becomes
\begin{equation*}
 \xy 
 (-28,0)*{\dblue\xybox{(-8,24);(-38,-24); **\dir{-} ?(0)*\dir{<};}};
 (-37,-22)*{\cdots};
 (-9,22)*{\cdots};
 (-18,0)*{\dpink\xybox{(2,24);(-28,-24); **\dir{-} ?(0)*\dir{<};}};
 (-13,0)*{\dgreen\xybox{(7,24);(-23,-24); **\dir{-} ?(0)*\dir{<};}};
 (-22,-22)*{\cdots};
 (6,22)*{\cdots};
 (-3,0)*{\dgrey\xybox{(17,24);(-13,-24); **\dir{-} ?(0)*\dir{<};}};
 (37,0)*{\dbrun\xybox{(32,24);(2,-24); **\dir{-} ?(0)*\dir{<};}};
 (28,-22)*{\cdots};
 (56,22)*{\cdots};
 (47,0)*{\dred\xybox{(42,24);(12,-24); **\dir{-} ?(0)*\dir{<};}};
 %(12,-4)*{\dturq\xybox{(-3,-8)*{};(27,-8)*{} **\crv{(-2,0) & (27,0)}; ?(.5)*\dir{<};}};
 %(14.5,0)*{\dturq\xybox{(7,8)*{};(22,-8)*{} **\crv{(5,0) & (24,6)}; ?(1)*\dir{>};}};
 %(-8,3.5)*{\dyellow\xybox{(-23,8)*{};(17,8)*{} **\crv{(-23,0) & (17,0)}; ?(.5)*\dir{<};}};
 (-5.5,11)*{\dyellow\xybox{(-18,24)*{};(7,15)*{} **\crv{(-18,20) & (2,15)}; ?(0)*\dir{<};}};
 (-10,11)*{\dyellow\xybox{(-23,24)*{};(3.1,9)*{} **\crv{(-23,17) & (-4,9)}; ?(1)*\dir{>};}};
 (-14.4,11)*{\dturq\xybox{(-28,24)*{};(-0.8,3)*{} **\crv{(-28,14) & (-10,3)}; ?(0)*\dir{<};}};
 (-18.7,10)*{\dturq\xybox{(-33,24)*{};(-4.2,-3)*{} **\crv{(-33,11) & (-16,-3)}; ?(1)*\dir{>};}};
 (-23.1,7)*{\dyellow\xybox{(-38,24)*{};(-8.2,-9)*{} **\crv{(-38,8) & (-22,-9)}; ?(0)*\dir{<};}};
 (-27.6,4)*{\dyellow\xybox{(-43,24)*{};(-12,-15)*{} **\crv{(-43,5) & (-28,-15)}; ?(1)*\dir{>};}};
 (31.6,-12.5)*{\doran\xybox{(18,-24)*{};(8,-15)*{} **\crv{(18,-20) & (13,-15)}; ?(0)*\dir{<};}};
 (36.1,-12.5)*{\doran\xybox{(23,-24)*{};(11.9,-9)*{} **\crv{(23,-17) & (19,-9)}; ?(1)*\dir{>};}};
 (40.5,-12.5)*{\dturq\xybox{(28,-24)*{};(15.8,-3)*{} **\crv{(28,-14) & (25,-3)}; ?(0)*\dir{<};}};
 (44.8,-10.5)*{\dturq\xybox{(33,-24)*{};(19.2,3)*{} **\crv{(33,-11) & (31,3)}; ?(1)*\dir{>};}};
 (49.2,-7.5)*{\doran\xybox{(38,-24)*{};(23.2,9)*{} **\crv{(38,-8) & (37,9)}; ?(0)*\dir{<};}};
 (53.7,-4.5)*{\doran\xybox{(43,-24)*{};(27,15)*{} **\crv{(43,-5) & (43,15)}; ?(1)*\dir{>};}};
%merdier au milieu
 (5.5,-0.5)*{\dyellow\xybox{(-8.2,-9)*{};(3.1,9)*{} **\crv{(25,-9) & (20,9)}; ?(0.5)*\dir{<};}};
 (24.5,10.95)*{\dyellow\xybox{(42,24)*{};(7,15)*{} **\crv{(32,20) & (20,15)}; ?(0.5)*\dir{>};}};
 (-0.2,-13)*{\dyellow\xybox{(12,-24)*{};(-12,-15)*{} **\crv{(2,-20) & (-7,-15)}; ?(0.5)*\dir{<};}};
 (24,9.35)*{\doran\xybox{(2,24)*{};(23.2,9)*{} **\crv{(-20,10) & (10,-20) & (18,9)}; ?(0.7)*\dir{>};}};
 (11.1,-12.4)*{\doran\xybox{(-28,-24)*{};(11.9,-9)*{} **\crv{(-30,-15) & (0,5) & (6,-9)}; ?(0.2)*\dir{<};}};
 (31.7,0)*{\doran\xybox{(8,-15)*{};(27,15)*{} **\crv{(-5,-15) & (3,15)}; ?(0.5)*\dir{<};}};
 (17,-0.9)*{\dturq\xybox{(-18,-24)*{};(12,24)*{} **\crv{(0,-3) & (3,3)}; ?(0.5)*\dir{<};}};
 (15.1,-9.25)*{\dturq\xybox{(-22.2,-3)*{};(16.25,-2.55)*{} **\crv{(-6,-3) & (0,-30) & (10,-3)}; ?(0.35)*\dir{>};}};
 (18.8,6.6)*{\dturq\xybox{(-19.7,2.45)*{};(19.2,3)*{} **\crv{(-5,3) & (15,25) & (7,3)}; ?(0.3)*\dir{<};}};
 %(-14.4,11)*{\dturq\xybox{(52,24)*{};(-0.8,3)*{} **\crv{(52,14) & (10,3)}; ?(0)*\dir{<};}};
 (-43,-26)*{\scs n}; (-33,-26)*{\scs r+1 }; (-28,-26)*{ \scs 1};(-18,-26)*{\scs i-2}; (-8,-26)*{\scs i-1}; (2,-26)*{\scs i}; (12,-26)*{\scs i+1}; (22,-26)*{\scs i+2}; (32,-26)*{\scs r};(37,-26)*{\scs i-1};(42,-26)*{\scs i-1}; (47,-26)*{\scs i};(52,-26)*{\scs i}; (57,-26)*{\scs i-1};(62,-26)*{\scs i-1}; (-43,26)*{\scs i+1};(-38,26)*{\scs i+1}; (-33,26)*{\scs i};(-28,26)*{\scs i}; (-23,26)*{\scs i+1};(-18,26)*{\scs i+1};
 (72,0)*{ (1^r)}; 
  \endxy.
\vspace*{2ex}
\end{equation*}
We have to prove that these two diagrams are equivalent. 

First consider the diagram above which is equivalent to $\Sigma_{n,r} \Bigl( \figins{-7}{0.3}{6mvert-slide-d.eps}  \,  \Bigr)$. 
Apply Relation~\eqref{eq_r3_extra} to the triangle in the center formed by three $i$-colored strands. Since 
$\lambda = (1,\dots,1,0,1,2,0,1,\dots,1,0,\dots,0)$, where $2$ is in the $i+1$st position and the last $1$ in the $r+1$st position, that relation is equal to 
\begin{equation} \label{R3red}
\text{$
 \vcenter{
 \xy 0;/r.17pc/:
 (0,0)*{\dturq\xybox{
    (-4,-4)*{};(4,4)*{} **\crv{(-4,-1) & (4,1)}?(1)*\dir{>};
    (4,-4)*{};(-4,4)*{} **\crv{(4,-1) & (-4,1)}?(1)*\dir{<};
    ?(0)*\dir{<};
    (4,4)*{};(12,12)*{} **\crv{(4,7) & (12,9)}?(1)*\dir{>};
    (12,4)*{};(4,12)*{} **\crv{(12,7) & (4,9)}?(1)*\dir{>};
    (-4,12)*{};(4,20)*{} **\crv{(-4,15) & (4,17)};
    (4,12)*{};(-4,20)*{} **\crv{(4,15) & (-4,17)}?(1)*\dir{>};
    (-4,4)*{}; (-4,12) **\dir{-};
    (12,-4)*{}; (12,4) **\dir{-};
    (12,12)*{}; (12,20) **\dir{-};
}};
  (12,0)*{\lambda};
  (-10,-11)*{\scs i};
  ( 2.5,-11)*{\scs i};
  ( 9.5,-11)*{\scs i};
\endxy}
=\;
   \vcenter{
 \xy 0;/r.17pc/:
(0,0)*{\dturq\xybox{ 
   (4,-4)*{};(-4,4)*{} **\crv{(4,-1) & (-4,1)}?(1)*\dir{>};
    (-4,-4)*{};(4,4)*{} **\crv{(-4,-1) & (4,1)}?(0)*\dir{<};
    (-4,4)*{};(-12,12)*{} **\crv{(-4,7) & (-12,9)}?(1)*\dir{>};
    (-12,4)*{};(-4,12)*{} **\crv{(-12,7) & (-4,9)}?(1)*\dir{>};
    (4,12)*{};(-4,20)*{} **\crv{(4,15) & (-4,17)};
    (-4,12)*{};(4,20)*{} **\crv{(-4,15) & (4,17)}?(1)*\dir{>};
    (4,4)*{}; (4,12) **\dir{-} ?(.5)*\dir{<};
    (-12,-4)*{}; (-12,4) **\dir{-};
    (-12,12)*{}; (-12,20) **\dir{-};
}};
  (12,0)*{\lambda};
  (10  ,-11)*{\scs i};
  (-2.5,-11)*{\scs i};
  (-9.5,-11)*{\scs i};
\endxy}
$}
\end{equation}
Then apply Relation~\eqref{eq_r3_easy-gen} to the two triangles formed by strands colored $(i-1, i, i+1)$. Both triangles are slightly to the right of the center 
and one is higher and the other is lower than the center. Sliding the $i$-colored strands to the left using this relation creates four bigons, 
two between strands colored $i$ and $i+1$ and the other two between strands colored $i-1$ and $i+1$. 
The first two bigons can be solved using Relation~\eqref{eq_downup_ij-gen}, the other two using Relation \eqref{eq_r2_ij-gen}. Finally, 
we apply Relation~\eqref{eq_r3_easy-gen} to the top and bottom central triangles of the diagram. This proves that 
$\Sigma_{n,r} \Bigl( \figins{-7}{0.3}{6mvert-slide-d.eps}  \,  \Bigr)$ is equivalent to

\begin{equation*}
 \xy 
 (-28,0)*{\dblue\xybox{(-8,24);(-38,-24); **\dir{-} ?(0)*\dir{<};}};
 (-37,-22)*{\cdots};
 (-9,22)*{\cdots};
 (-18,0)*{\dpink\xybox{(2,24);(-28,-24); **\dir{-} ?(0)*\dir{<};}};
 (-13,0)*{\dgreen\xybox{(7,24);(-23,-24); **\dir{-} ?(0)*\dir{<};}};
 (-22,-22)*{\cdots};
 (6,22)*{\cdots};
 (-3,0)*{\dgrey\xybox{(17,24);(-13,-24); **\dir{-} ?(0)*\dir{<};}};
 (37,0)*{\dbrun\xybox{(32,24);(2,-24); **\dir{-} ?(0)*\dir{<};}};
 (28,-22)*{\cdots};
 (56,22)*{\cdots};
 (47,0)*{\dred\xybox{(42,24);(12,-24); **\dir{-} ?(0)*\dir{<};}};
 %(12,-4)*{\dturq\xybox{(-3,-8)*{};(27,-8)*{} **\crv{(-2,0) & (27,0)}; ?(.5)*\dir{<};}};
 %(14.5,0)*{\dturq\xybox{(7,8)*{};(22,-8)*{} **\crv{(5,0) & (24,6)}; ?(1)*\dir{>};}};
 %(-8,3.5)*{\dyellow\xybox{(-23,8)*{};(17,8)*{} **\crv{(-23,0) & (17,0)}; ?(.5)*\dir{<};}};
 (-5.5,11)*{\dyellow\xybox{(-18,24)*{};(7,15)*{} **\crv{(-18,20) & (2,15)}; ?(0)*\dir{<};}};
 (-10,11)*{\dyellow\xybox{(-23,24)*{};(3.1,9)*{} **\crv{(-23,17) & (-4,9)}; ?(1)*\dir{>};}};
 (-14.4,11)*{\dturq\xybox{(-28,24)*{};(-0.8,3)*{} **\crv{(-28,14) & (-10,3)}; ?(0)*\dir{<};}};
 (-18.7,10)*{\dturq\xybox{(-33,24)*{};(-4.2,-3)*{} **\crv{(-33,11) & (-16,-3)}; ?(1)*\dir{>};}};
 (-23.1,7)*{\dyellow\xybox{(-38,24)*{};(-8.2,-9)*{} **\crv{(-38,8) & (-22,-9)}; ?(0)*\dir{<};}};
 (-27.6,4)*{\dyellow\xybox{(-43,24)*{};(-12,-15)*{} **\crv{(-43,5) & (-28,-15)}; ?(1)*\dir{>};}};
 (31.6,-12.5)*{\doran\xybox{(18,-24)*{};(8,-15)*{} **\crv{(18,-20) & (13,-15)}; ?(0)*\dir{<};}};
 (36.1,-12.5)*{\doran\xybox{(23,-24)*{};(11.9,-9)*{} **\crv{(23,-17) & (19,-9)}; ?(1)*\dir{>};}};
 (40.5,-12.5)*{\dturq\xybox{(28,-24)*{};(15.8,-3)*{} **\crv{(28,-14) & (25,-3)}; ?(0)*\dir{<};}};
 (44.8,-10.5)*{\dturq\xybox{(33,-24)*{};(19.2,3)*{} **\crv{(33,-11) & (31,3)}; ?(1)*\dir{>};}};
 (49.2,-7.5)*{\doran\xybox{(38,-24)*{};(23.2,9)*{} **\crv{(38,-8) & (37,9)}; ?(0)*\dir{<};}};
 (53.7,-4.5)*{\doran\xybox{(43,-24)*{};(27,15)*{} **\crv{(43,-5) & (43,15)}; ?(1)*\dir{>};}};
%merdier au milieu
 (-2,-0.5)*{\dyellow\xybox{(-8.2,-9)*{};(3.1,9)*{} **\crv{(5,-9) & (5,9)}; ?(0.5)*\dir{<};}};
 (24.5,10.95)*{\dyellow\xybox{(42,24)*{};(7,15)*{} **\crv{(32,20) & (20,15)}; ?(0.5)*\dir{>};}};
 (-0.2,-13)*{\dyellow\xybox{(12,-24)*{};(-12,-15)*{} **\crv{(2,-20) & (-7,-15)}; ?(0.5)*\dir{<};}};
 (31,11.6)*{\doran\xybox{(2,24)*{};(23.2,9)*{} **\crv{(2,20) & (11,9)}; ?(0.5)*\dir{>};}};
 (11.1,-12.4)*{\doran\xybox{(-28,-24)*{};(11.9,-9)*{} **\crv{(-28,-20) & (-10,-9)}; ?(0.5)*\dir{<};}};
 (34.2,0)*{\doran\xybox{(8,-15)*{};(27,15)*{} **\crv{(0,-15) & (8,15)}; ?(0.5)*\dir{<};}};
 (17,-0.9)*{\dturq\xybox{(-18,-24)*{};(12,24)*{} **\crv{(0,-3) & (3,3)}; ?(0.5)*\dir{<};}};
 (15.1,2.3)*{\dturq\xybox{(-22.2,-3)*{};(16.25,-2.6)*{} **\crv{(-6,-3) & (10,20) & (10,-3)}; ?(0.7)*\dir{>};}};
 (18.8,-1.5)*{\dturq\xybox{(-19.7,2.45)*{};(19.2,3)*{} **\crv{(-5,3) & (-17,-15) & (7,3)}; ?(0.7)*\dir{<};}};
 %(-14.4,11)*{\dturq\xybox{(52,24)*{};(-0.8,3)*{} **\crv{(52,14) & (10,3)}; ?(0)*\dir{<};}};
 (-43,-26)*{\scs n}; (-33,-26)*{\scs r+1 }; (-28,-26)*{ \scs 1};(-18,-26)*{\scs i-2}; (-8,-26)*{\scs i-1}; (2,-26)*{\scs i}; (12,-26)*{\scs i+1}; (22,-26)*{\scs i+2}; (32,-26)*{\scs r};(37,-26)*{\scs i-1};(42,-26)*{\scs i-1}; (47,-26)*{\scs i};(52,-26)*{\scs i}; (57,-26)*{\scs i-1};(62,-26)*{\scs i-1}; (-43,26)*{\scs i+1};(-38,26)*{\scs i+1}; (-33,26)*{\scs i};(-28,26)*{\scs i}; (-23,26)*{\scs i+1};(-18,26)*{\scs i+1};
 (72,0)*{ (1^r)}; 
  \endxy
\vspace*{2ex}
\end{equation*}
Now apply Relation \eqref{eq_r3_hard-gen}
\begin{equation*}
 \vcenter{
 \xy 0;/r.18pc/:
(0,0)*{\dturq\xybox{
    (-4,-4)*{};(4,4)*{} **\crv{(-4,-1) & (4,1)}?(1)*\dir{<};
    (4,4)*{};(12,12)*{} **\crv{(4,7) & (12,9)}?(1)*\dir{<};
    (12,4)*{};(4,12)*{} **\crv{(12,7) & (4,9)}?(1)*\dir{<};
    (4,12)*{};(-4,20)*{} **\crv{(4,15) & (-4,17)}?(1)*\dir{<};
    (12,-4)*{}; (12,4) **\dir{-};
    (12,12)*{}; (12,20) **\dir{-};
}};
(-4,0)*{\doran\xybox{
    (4,-4)*{};(-4,4)*{} **\crv{(4,-1) & (-4,1)}?(1)*\dir{<};
    (-4,12)*{};(4,20)*{} **\crv{(-4,15) & (4,17)}?(1)*\dir{<};
    (-4,4)*{}; (-4,12) **\dir{-};
}};
  (12,0)*{\lambda};
  (-10,-11)*{\scs i};
  (3.5,-11)*{\scs i-1};
  (9.5,-11)*{\scs i};
\endxy}
\quad = \quad
 \vcenter{
 \xy 0;/r.18pc/:
(0,0)*{\dturq\xybox{
    (4,-4)*{};(-4,4)*{} **\crv{(4,-1) & (-4,1)}?(1)*\dir{<};
    (-4,4)*{};(-12,12)*{} **\crv{(-4,7) & (-12,9)}?(1)*\dir{<};
    (-12,4)*{};(-4,12)*{} **\crv{(-12,7) & (-4,9)}?(1)*\dir{<};
    (-4,12)*{};(4,20)*{} **\crv{(-4,15) & (4,17)}?(1)*\dir{<};
    (-12,-4)*{}; (-12,4) **\dir{-};
    (-12,12)*{}; (-12,20) **\dir{-};
}};
(4,0)*{\doran\xybox{
    (-4,-4)*{};(4,4)*{} **\crv{(-4,-1) & (4,1)}?(1)*\dir{<};
    (4,12)*{};(-4,20)*{} **\crv{(4,15) & (-4,17)}?(1)*\dir{<};
    (4,4)*{}; (4,12) **\dir{-};
}};
  (12,0)*{\lambda};
  (10,-11)*{\scs i};
  (-4,-11)*{\scs i-1};
  (-9.5,-11)*{\scs i};
\endxy}
 \;\; -\;\;\;\; 
\xy 0;/r.18pc/:
(10,0)*{\dturq\xybox{
  (4,12);(4,-12) **\dir{-}?(.5)*\dir{>};
  (22,12);(22,-12) **\dir{-}?(.5)*\dir{>}?(.25)*\dir{}+(0,0)*{}+(10,0)*{\scs};
}};
(3.5,0)*{\doran\xybox{
  (-4,12);(-4,-12) **\dir{-}?(.5)*\dir{>}?(.25)*\dir{}+(0,0)*{}+(-3,0)*{\scs };
}};
  (20,0)*{\lambda}; 
  (-5.6,-11)*{\scs i};
  (0.5,-11)*{\scs i-1};
  (15.2,-11)*{\scs i};
 \endxy
\end{equation*}
to the triangle in the central right part of the diagram. This gives us two terms. The second term is killed because it contains a bigon between 
two $i$-colored strands with the same orientation, which is zero by Relation \eqref{eq_nil_rels}. In the remaining term we can slide the vertical $i$-colored 
strand to the left using Relation \eqref{eq_r3_extra}, as we did in~\eqref{R3red}. 
This leaves us with a bigon between two $i$-colored strands with opposite orientations. Use Relation~\eqref{eq_ident_decomp} in order to remove this bigon. 
Note that this relation is equal to   
\begin{eqnarray*}
 \vcenter{\xy 0;/r.18pc/:
  (0,0)*{\dturq\xybox{
  (-8,0)*{};(8,0)*{};
  (-4,10)*{}="t1";
  (4,10)*{}="t2";
  (-4,-10)*{}="b1";
  (4,-10)*{}="b2";
  "t1";"b1" **\dir{-} ?(.5)*\dir{>};
  "t2";"b2" **\dir{-} ?(.5)*\dir{<};}};
  (-6,-8)*{\scs i};(6,-8)*{\scs i};
  (10,2)*{\lambda};
    \endxy}
&\quad = \quad&
   \vcenter{\xy 0;/r.18pc/:
    (0,0)*{\dturq\xybox{
    (-4,-4)*{};(4,4)*{} **\crv{(-4,-1) & (4,1)}?(1)*\dir{<};?(0)*\dir{<};
    (4,-4)*{};(-4,4)*{} **\crv{(4,-1) & (-4,1)}?(1)*\dir{>};
    (-4,4)*{};(4,12)*{} **\crv{(-4,7) & (4,9)}?(1)*\dir{>};
    (4,4)*{};(-4,12)*{} **\crv{(4,7) & (-4,9)};}};
    (10,2)*{\lambda};(-6.8,-7)*{\scs i};(6,-7)*{\scs i};
 \endxy}, 
\end{eqnarray*}
since $\lambda = (1,\dots,1,0,2,1,0,1,\dots,1,0,\dots,0)$ with $2$ in the $i$th position and the last $1$ in the $r+1$st position. 

This tells us that $\Sigma_{n,r} \Bigl( \figins{-7}{0.3}{6mvert-slide-d.eps}  \,  \Bigr)$ is equivalent to
\begin{equation}\label{diagfinal}
 \xy 
 (-28,0)*{\dblue\xybox{(-8,24);(-38,-24); **\dir{-} ?(0)*\dir{<};}};
 (-37,-22)*{\cdots};
 (-9,22)*{\cdots};
 (-18,0)*{\dpink\xybox{(2,24);(-28,-24); **\dir{-} ?(0)*\dir{<};}};
 (-13,0)*{\dgreen\xybox{(7,24);(-23,-24); **\dir{-} ?(0)*\dir{<};}};
 (-22,-22)*{\cdots};
 (6,22)*{\cdots};
 (-3,0)*{\dgrey\xybox{(17,24);(-13,-24); **\dir{-} ?(0)*\dir{<};}};
 (37,0)*{\dbrun\xybox{(32,24);(2,-24); **\dir{-} ?(0)*\dir{<};}};
 (28,-22)*{\cdots};
 (56,22)*{\cdots};
 (47,0)*{\dred\xybox{(42,24);(12,-24); **\dir{-} ?(0)*\dir{<};}};
 %(12,-4)*{\dturq\xybox{(-3,-8)*{};(27,-8)*{} **\crv{(-2,0) & (27,0)}; ?(.5)*\dir{<};}};
 %(14.5,0)*{\dturq\xybox{(7,8)*{};(22,-8)*{} **\crv{(5,0) & (24,6)}; ?(1)*\dir{>};}};
 %(-8,3.5)*{\dyellow\xybox{(-23,8)*{};(17,8)*{} **\crv{(-23,0) & (17,0)}; ?(.5)*\dir{<};}};
 (-5.5,11)*{\dyellow\xybox{(-18,24)*{};(7,15)*{} **\crv{(-18,20) & (2,15)}; ?(0)*\dir{<};}};
 (-10,11)*{\dyellow\xybox{(-23,24)*{};(3.1,9)*{} **\crv{(-23,17) & (-4,9)}; ?(1)*\dir{>};}};
 (-14.4,11)*{\dturq\xybox{(-28,24)*{};(-0.8,3)*{} **\crv{(-28,14) & (-10,3)}; ?(0)*\dir{<};}};
 (-18.7,10)*{\dturq\xybox{(-33,24)*{};(-4.2,-3)*{} **\crv{(-33,11) & (-16,-3)}; ?(1)*\dir{>};}};
 (-23.1,7)*{\dyellow\xybox{(-38,24)*{};(-8.2,-9)*{} **\crv{(-38,8) & (-22,-9)}; ?(0)*\dir{<};}};
 (-27.6,4)*{\dyellow\xybox{(-43,24)*{};(-12,-15)*{} **\crv{(-43,5) & (-28,-15)}; ?(1)*\dir{>};}};
 (31.6,-12.5)*{\doran\xybox{(18,-24)*{};(8,-15)*{} **\crv{(18,-20) & (13,-15)}; ?(0)*\dir{<};}};
 (36.1,-12.5)*{\doran\xybox{(23,-24)*{};(11.9,-9)*{} **\crv{(23,-17) & (19,-9)}; ?(1)*\dir{>};}};
 (40.5,-12.5)*{\dturq\xybox{(28,-24)*{};(15.8,-3)*{} **\crv{(28,-14) & (25,-3)}; ?(0)*\dir{<};}};
 (44.8,-10.5)*{\dturq\xybox{(33,-24)*{};(19.2,3)*{} **\crv{(33,-11) & (31,3)}; ?(1)*\dir{>};}};
 (49.2,-7.5)*{\doran\xybox{(38,-24)*{};(23.2,9)*{} **\crv{(38,-8) & (37,9)}; ?(0)*\dir{<};}};
 (53.7,-4.5)*{\doran\xybox{(43,-24)*{};(27,15)*{} **\crv{(43,-5) & (43,15)}; ?(1)*\dir{>};}};
%merdier au milieu
 (-2,-0.5)*{\dyellow\xybox{(-8.2,-9)*{};(3.1,9)*{} **\crv{(5,-9) & (5,9)}; ?(0.5)*\dir{<};}};
 (24.5,10.95)*{\dyellow\xybox{(42,24)*{};(7,15)*{} **\crv{(32,20) & (20,15)}; ?(0.5)*\dir{>};}};
 (-0.2,-13)*{\dyellow\xybox{(12,-24)*{};(-12,-15)*{} **\crv{(2,-20) & (-7,-15)}; ?(0.5)*\dir{<};}};
 (31,11.6)*{\doran\xybox{(2,24)*{};(23.2,9)*{} **\crv{(2,20) & (11,9)}; ?(0.5)*\dir{>};}};
 (11.1,-12.4)*{\doran\xybox{(-28,-24)*{};(11.9,-9)*{} **\crv{(-28,-20) & (-10,-9)}; ?(0.5)*\dir{<};}};
 (34.2,0)*{\doran\xybox{(8,-15)*{};(27,15)*{} **\crv{(0,-15) & (8,15)}; ?(0.5)*\dir{<};}};
 (17,-0.9)*{\dturq\xybox{(-18,-24)*{};(12,24)*{} **\crv{(0,-3) & (3,3)}; ?(0.5)*\dir{<};}};
 (15,-2.3)*{\dturq\xybox{(-22.2,-3.5)*{};(16.25,-3)*{}; **\dir{-} ?(0)*\dir{<};}};
 (18.6,0.7)*{\dturq\xybox{(-19.7,3)*{};(19.2,3.5)*{}; **\dir{-} ?(1)*\dir{>};}};
 %(-14.4,11)*{\dturq\xybox{(52,24)*{};(-0.8,3)*{} **\crv{(52,14) & (10,3)}; ?(0)*\dir{<};}};
 (-43,-26)*{\scs n}; (-33,-26)*{\scs r+1 }; (-28,-26)*{ \scs 1};(-18,-26)*{\scs i-2}; (-8,-26)*{\scs i-1}; (2,-26)*{\scs i}; (12,-26)*{\scs i+1}; (22,-26)*{\scs i+2}; (32,-26)*{\scs r};(37,-26)*{\scs i-1};(42,-26)*{\scs i-1}; (47,-26)*{\scs i};(52,-26)*{\scs i}; (57,-26)*{\scs i-1};(62,-26)*{\scs i-1}; (-43,26)*{\scs i+1};(-38,26)*{\scs i+1}; (-33,26)*{\scs i};(-28,26)*{\scs i}; (-23,26)*{\scs i+1};(-18,26)*{\scs i+1};
 (72,0)*{ (1^r)}; 
  \endxy
\vspace*{2ex}
\end{equation}

Let us now prove that $\Sigma_{n,r} \Bigl( \figins{-7}{0.3}{6mvert-slide-u.eps} \, \Bigr)$ is equivalent to this same diagram. First apply 
Relation~\eqref{eq_r3_extra} to the triangle formed by the three $i-1$-colored strands just right of the center. Since 
$\lambda = (1,\dots,1,2,0,0,1,\dots,1,0,\dots,0)$ with $2$ in the $i$th position and the last $1$ in the $r+1$st position, 
this relation becomes 
\begin{equation*} 
\text{$
 \vcenter{
 \xy 0;/r.17pc/:
 (0,0)*{\doran\xybox{
    (-4,-4)*{};(4,4)*{} **\crv{(-4,-1) & (4,1)}?(1)*\dir{>};
    (4,-4)*{};(-4,4)*{} **\crv{(4,-1) & (-4,1)}?(1)*\dir{<};
    ?(0)*\dir{<};
    (4,4)*{};(12,12)*{} **\crv{(4,7) & (12,9)}?(1)*\dir{>};
    (12,4)*{};(4,12)*{} **\crv{(12,7) & (4,9)}?(1)*\dir{>};
    (-4,12)*{};(4,20)*{} **\crv{(-4,15) & (4,17)};
    (4,12)*{};(-4,20)*{} **\crv{(4,15) & (-4,17)}?(1)*\dir{>};
    (-4,4)*{}; (-4,12) **\dir{-};
    (12,-4)*{}; (12,4) **\dir{-};
    (12,12)*{}; (12,20) **\dir{-};
}};
  (12,0)*{\lambda};
  (-11.5,-11)*{\scs i-1};
  ( 4,-11)*{\scs i-1};
  ( 12,-11)*{\scs i-1};
\endxy}
=\;
   \vcenter{
 \xy 0;/r.17pc/:
(0,0)*{\doran\xybox{ 
   (4,-4)*{};(-4,4)*{} **\crv{(4,-1) & (-4,1)}?(1)*\dir{>};
    (-4,-4)*{};(4,4)*{} **\crv{(-4,-1) & (4,1)}?(0)*\dir{<};
    (-4,4)*{};(-12,12)*{} **\crv{(-4,7) & (-12,9)}?(1)*\dir{>};
    (-12,4)*{};(-4,12)*{} **\crv{(-12,7) & (-4,9)}?(1)*\dir{>};
    (4,12)*{};(-4,20)*{} **\crv{(4,15) & (-4,17)};
    (-4,12)*{};(4,20)*{} **\crv{(-4,15) & (4,17)}?(1)*\dir{>};
    (4,4)*{}; (4,12) **\dir{-} ?(.5)*\dir{<};
    (-12,-4)*{}; (-12,4) **\dir{-};
    (-12,12)*{}; (-12,20) **\dir{-};
}};
  (12.5,0)*{\lambda};
  (11.5,-11)*{\scs i-1};
  (-3.5,-11)*{\scs i-1};
  (-11.5,-11)*{\scs i-1};
\endxy}
$}
\end{equation*}
Then apply Relation~\eqref{eq_r3_easy-gen} to the two central triangles formed by strands colored $(i-1, i, i+1)$. 
Sliding the $i+1$-colored strands to the left using this relation creates two bigons between strands colored $i-1$ and $i+1$ respectively. 
These bigons can be removed using Relation~\eqref{eq_r2_ij-gen}. 

In this way, we have proved that $\Sigma_{n,r} \Bigl( \figins{-7}{0.3}{6mvert-slide-u.eps} \, \Bigr)$ is equivalent to
\begin{equation*}
 \xy 
 (-28,0)*{\dblue\xybox{(-8,24);(-38,-24); **\dir{-} ?(0)*\dir{<};}};
 (-37,-22)*{\cdots};
 (-9,22)*{\cdots};
 (-18,0)*{\dpink\xybox{(2,24);(-28,-24); **\dir{-} ?(0)*\dir{<};}};
 (-13,0)*{\dgreen\xybox{(7,24);(-23,-24); **\dir{-} ?(0)*\dir{<};}};
 (-22,-22)*{\cdots};
 (6,22)*{\cdots};
 (-3,0)*{\dgrey\xybox{(17,24);(-13,-24); **\dir{-} ?(0)*\dir{<};}};
 (37,0)*{\dbrun\xybox{(32,24);(2,-24); **\dir{-} ?(0)*\dir{<};}};
 (28,-22)*{\cdots};
 (56,22)*{\cdots};
 (47,0)*{\dred\xybox{(42,24);(12,-24); **\dir{-} ?(0)*\dir{<};}};
 %(12,-4)*{\dturq\xybox{(-3,-8)*{};(27,-8)*{} **\crv{(-2,0) & (27,0)}; ?(.5)*\dir{<};}};
 %(14.5,0)*{\dturq\xybox{(7,8)*{};(22,-8)*{} **\crv{(5,0) & (24,6)}; ?(1)*\dir{>};}};
 %(-8,3.5)*{\dyellow\xybox{(-23,8)*{};(17,8)*{} **\crv{(-23,0) & (17,0)}; ?(.5)*\dir{<};}};
 (-5.5,11)*{\dyellow\xybox{(-18,24)*{};(7,15)*{} **\crv{(-18,20) & (2,15)}; ?(0)*\dir{<};}};
 (-10,11)*{\dyellow\xybox{(-23,24)*{};(3.1,9)*{} **\crv{(-23,17) & (-4,9)}; ?(1)*\dir{>};}};
 (-14.4,11)*{\dturq\xybox{(-28,24)*{};(-0.8,3)*{} **\crv{(-28,14) & (-10,3)}; ?(0)*\dir{<};}};
 (-18.7,10)*{\dturq\xybox{(-33,24)*{};(-4.2,-3)*{} **\crv{(-33,11) & (-16,-3)}; ?(1)*\dir{>};}};
 (-23.1,7)*{\dyellow\xybox{(-38,24)*{};(-8.2,-9)*{} **\crv{(-38,8) & (-22,-9)}; ?(0)*\dir{<};}};
 (-27.6,4)*{\dyellow\xybox{(-43,24)*{};(-12,-15)*{} **\crv{(-43,5) & (-28,-15)}; ?(1)*\dir{>};}};
 (31.6,-12.5)*{\doran\xybox{(18,-24)*{};(8,-15)*{} **\crv{(18,-20) & (13,-15)}; ?(0)*\dir{<};}};
 (36.1,-12.5)*{\doran\xybox{(23,-24)*{};(11.9,-9)*{} **\crv{(23,-17) & (19,-9)}; ?(1)*\dir{>};}};
 (40.5,-12.5)*{\dturq\xybox{(28,-24)*{};(15.8,-3)*{} **\crv{(28,-14) & (25,-3)}; ?(0)*\dir{<};}};
 (44.8,-10.5)*{\dturq\xybox{(33,-24)*{};(19.2,3)*{} **\crv{(33,-11) & (31,3)}; ?(1)*\dir{>};}};
 (49.2,-7.5)*{\doran\xybox{(38,-24)*{};(23.2,9)*{} **\crv{(38,-8) & (37,9)}; ?(0)*\dir{<};}};
 (53.7,-4.5)*{\doran\xybox{(43,-24)*{};(27,15)*{} **\crv{(43,-5) & (43,15)}; ?(1)*\dir{>};}};
%merdier au milieu
 (0.5,-0.5)*{\dyellow\xybox{(-8.2,-9)*{};(3.1,9)*{} **\crv{(15,-9) & (10,9)}; ?(0.5)*\dir{<};}};
 (24.5,10.95)*{\dyellow\xybox{(42,24)*{};(7,15)*{} **\crv{(32,20) & (20,15)}; ?(0.5)*\dir{>};}};
 (-0.2,-13)*{\dyellow\xybox{(12,-24)*{};(-12,-15)*{} **\crv{(2,-20) & (-7,-15)}; ?(0.5)*\dir{<};}};
 (24,9.35)*{\doran\xybox{(2,24)*{};(23.2,9)*{} **\crv{(-20,10) & (10,-20) & (18,9)}; ?(0.7)*\dir{>};}};
 (11.1,-12.4)*{\doran\xybox{(-28,-24)*{};(11.9,-9)*{} **\crv{(-30,-15) & (0,5) & (6,-9)}; ?(0.2)*\dir{<};}};
 (35,0)*{\doran\xybox{(8,-15)*{};(27,15)*{} **\crv{(3,-15) & (11,15)}; ?(0.5)*\dir{<};}};
 (17,-0.9)*{\dturq\xybox{(-18,-24)*{};(12,24)*{} **\crv{(0,-3) & (3,3)}; ?(0.5)*\dir{<};}};
 (15.1,-9.25)*{\dturq\xybox{(-22.2,-3)*{};(16.25,-2.55)*{} **\crv{(-6,-3) & (0,-30) & (10,-3)}; ?(0.35)*\dir{>};}};
 (18.8,6.6)*{\dturq\xybox{(-19.7,2.45)*{};(19.2,3)*{} **\crv{(-5,3) & (15,25) & (7,3)}; ?(0.3)*\dir{<};}};
 %(-14.4,11)*{\dturq\xybox{(52,24)*{};(-0.8,3)*{} **\crv{(52,14) & (10,3)}; ?(0)*\dir{<};}};
 (-43,-26)*{\scs n}; (-33,-26)*{\scs r+1 }; (-28,-26)*{ \scs 1};(-18,-26)*{\scs i-2}; (-8,-26)*{\scs i-1}; (2,-26)*{\scs i}; (12,-26)*{\scs i+1}; (22,-26)*{\scs i+2}; (32,-26)*{\scs r};(37,-26)*{\scs i-1};(42,-26)*{\scs i-1}; (47,-26)*{\scs i};(52,-26)*{\scs i}; (57,-26)*{\scs i-1};(62,-26)*{\scs i-1}; (-43,26)*{\scs i+1};(-38,26)*{\scs i+1}; (-33,26)*{\scs i};(-28,26)*{\scs i}; (-23,26)*{\scs i+1};(-18,26)*{\scs i+1};
 (72,0)*{ (1^r)}; 
  \endxy
\vspace*{2ex}
\end{equation*}

Next, apply Relation~\eqref{eq_r3_hard-gen} to the central left part of the diagram. Locally we end up with a sum of two terms:
\begin{equation*}
\vcenter{
 \xy 0;/r.18pc/:
(0,0)*{\doran\xybox{
    (4,-4)*{};(-4,4)*{} **\crv{(4,-1) & (-4,1)}?(1)*\dir{<};
    (-4,4)*{};(-12,12)*{} **\crv{(-4,7) & (-12,9)}?(1)*\dir{<};
    (-12,4)*{};(-4,12)*{} **\crv{(-12,7) & (-4,9)}?(1)*\dir{<};
    (-4,12)*{};(4,20)*{} **\crv{(-4,15) & (4,17)}?(1)*\dir{<};
    (-12,-4)*{}; (-12,4) **\dir{-};
    (-12,12)*{}; (-12,20) **\dir{-};
}};
(4,0)*{\dturq\xybox{
    (-4,-4)*{};(4,4)*{} **\crv{(-4,-1) & (4,1)}?(1)*\dir{<};
    (4,12)*{};(-4,20)*{} **\crv{(4,15) & (-4,17)}?(1)*\dir{<};
    (4,4)*{}; (4,12) **\dir{-};
}};
  (12,0)*{\lambda};
  (11.5,-11)*{\scs i-1};
  (-2,-11)*{\scs i};
  (-11.5,-11)*{\scs i-1};
\endxy}
\quad = \quad
 \vcenter{
 \xy 0;/r.18pc/:
(0,0)*{\doran\xybox{
    (-4,-4)*{};(4,4)*{} **\crv{(-4,-1) & (4,1)}?(1)*\dir{<};
    (4,4)*{};(12,12)*{} **\crv{(4,7) & (12,9)}?(1)*\dir{<};
    (12,4)*{};(4,12)*{} **\crv{(12,7) & (4,9)}?(1)*\dir{<};
    (4,12)*{};(-4,20)*{} **\crv{(4,15) & (-4,17)}?(1)*\dir{<};
    (12,-4)*{}; (12,4) **\dir{-};
    (12,12)*{}; (12,20) **\dir{-};
}};
(-4,0)*{\dturq\xybox{
    (4,-4)*{};(-4,4)*{} **\crv{(4,-1) & (-4,1)}?(1)*\dir{<};
    (-4,12)*{};(4,20)*{} **\crv{(-4,15) & (4,17)}?(1)*\dir{<};
    (-4,4)*{}; (-4,12) **\dir{-};
}};
  (12,0)*{\lambda};
  (-11.5,-11)*{\scs i-1};
  (2.5,-11)*{\scs i};
  (11.5,-11)*{\scs i-1};
\endxy}
 \;\; -\;\;\;\; 
\xy 0;/r.18pc/:
(10,0)*{\doran\xybox{
  (4,12);(4,-12) **\dir{-}?(.5)*\dir{>};
  (22,12);(22,-12) **\dir{-}?(.5)*\dir{>}?(.25)*\dir{}+(0,0)*{}+(10,0)*{\scs};
}};
(3.5,0)*{\dturq\xybox{
  (-4,12);(-4,-12) **\dir{-}?(.5)*\dir{>}?(.25)*\dir{}+(0,0)*{}+(-3,0)*{\scs };
}};
  (20,0)*{\lambda}; 
  (-7.5,-11)*{\scs i-1};
  (2,-11)*{\scs i};
  (17.5,-11)*{\scs i-1};
 \endxy
\end{equation*}
The second term is killed because it contains an $i-1$-colored curl which is zero by Relation \eqref{eq_nil_rels}. Note that $\lambda = (1,\dots,1,2,0,0,1,\dots,1,0,\dots,0)$ where $2$ is in the $i$th position and the last $1$ is in the $r+1$st position, 
so $\bar{\lambda}_{i-1} = -1$. The first term contains a bigon between two $i-1$-colored strands with opposite orientations. Remove this bigon using 
Relation~\eqref{eq:EF}, which in this case equals 
\begin{eqnarray*}
 \vcenter{\xy 0;/r.18pc/:
  (0,0)*{\doran\xybox{
  (-8,0)*{};(8,0)*{};
  (-4,10)*{}="t1";
  (4,10)*{}="t2";
  (-4,-10)*{}="b1";
  (4,-10)*{}="b2";
  "t1";"b1" **\dir{-} ?(.5)*\dir{<};
  "t2";"b2" **\dir{-} ?(.5)*\dir{>};}};
  (-8,-8)*{\scs i-1};(8,-8)*{\scs i-1};
  (10,2)*{\lambda};
    \endxy}
&\quad = \quad&
   \vcenter{\xy 0;/r.18pc/:
    (0,0)*{\doran\xybox{
    (-4,-4)*{};(4,4)*{} **\crv{(-4,-1) & (4,1)}?(1)*\dir{>};?(0)*\dir{>};
    (4,-4)*{};(-4,4)*{} **\crv{(4,-1) & (-4,1)}?(1)*\dir{<};
    (-4,4)*{};(4,12)*{} **\crv{(-4,7) & (4,9)}?(1)*\dir{<};
    (4,4)*{};(-4,12)*{} **\crv{(4,7) & (-4,9)};}};
    (10,2)*{\lambda};(-7.8,-7)*{\scs i-1};(8,-7)*{\scs i-1};
 \endxy}.  
\end{eqnarray*}

Now apply Relation~\eqref{eq_r3_easy-gen} to the top central and bottom central parts of the diagram which we have obtained so far. Apply 
Relation~\eqref{eq_other_r3_1} to the top right and bottom right parts of the diagram. In this way we get two more bigons between strands 
colored $i-1$ and $i$ with opposite orientations, which we remove using Relation~\eqref{eq_downup_ij-gen}. 
This finishes our proof that $\Sigma_{n,r} \Bigl( \figins{-7}{0.3}{6mvert-slide-u.eps} \, \Bigr)$ is equivalent to the same diagram in~\eqref{diagfinal}.
\\

Let us now consider Relation~\eqref{eq:slide6mv} when one of the strands has color $r$ (Relation \eqref{eq:slide6mv2} can be dealt with in a similar way). Three 
cases have to be considered: when the bottom strands are colored $(r-2,r-1,r-2)$, $(r,1,r)$ or $(r-1,r,r-1)$. 

Using the same sort of reasoning 
as above, one can show that $\Sigma_{n,r} \Bigl( \figins{-7}{0.3}{6mvert-slide-d.eps}  \,  \Bigr)$ and 
$\Sigma_{n,r} \Bigl( \figins{-7}{0.3}{6mvert-slide-u.eps} \, \Bigr)$ both reduce to the same diagram. We omit the precise computations, but just 
give the diagram to which both images can be reduced. 

In the case of the colors $(r-2,r-1,r-2)$, one can reduce both images to the following diagram
\begin{equation*}
 \xy
(-75,12.5)*{\dblue\xybox{(-75,25);(-55,0) **\crv{(-75,15) & (-55,10)} ?(1)*\dir{>};}};
(-65,12.5)*{\dpink\xybox{(-65,25);(-45,0) **\crv{(-65,15) & (-45,10)} ?(1)*\dir{>};}};
(-30,23)*{\cdots};
(-55,23)*{\cdots};
(-80,23)*{\cdots};
(-45,2)*{\cdots};
(-60,2)*{\cdots};
(-42.5,12.5)*{\dblue\xybox{(-35,25);(-50,25) **\crv{(-35,20) & (-50,20)}; ?(1)*\dir{>};}};
(-42.5,12.5)*{\dpink\xybox{(-25,25);(-60,25) **\crv{(-25,15) & (-60,15)}; ?(1)*\dir{>};}};
(-42.5,12.5)*{\dred\xybox{(-20,25);(-65,25) **\crv{(-20,12.5) & (-65,12.5)}; ?(1)*\dir{>};}};
(-25,2.5)*{\dyellow\xybox{(0,0);(-20,0) **\crv{(0,5) & (-20,5)}; ?(1)*\dir{>};}};
(-27.5,12.5)*{\doran\xybox{(-65,25);(-30,0) **\crv{(-65,5) & (-30,10)} ?(1)*\dir{>};}};
(-22.5,12.5)*{\doran\xybox{(-65,25);(-30,0) **\crv{(-65,5) & (-30,10)} ?(0)*\dir{<};}};
(-20,27)*{\scs r}; (-25,27)*{\scs r+1}; (-35,27)*{\scs n}; (-40,27)*{\scs r-1};  (-45,27)*{\scs r-1}; (-50,27)*{\scs n};
(-60,27)*{\scs r+1};(-65,27)*{\scs r}; (-70,27)*{\scs r}; (-75,27)*{\scs r+1};  (-85,27)*{\scs n}; 
(10,23)*{\cdots};
(-15,12.5)*{\dgreen\xybox{(-20,25)*{};(-90,0)*{} **\crv{(-20,0) & (-90,10)}; ?(1)*\dir{>};}};
(-5,23)*{\cdots};
(-5,12.5)*{\dduck\xybox{(-10,25);(-80,0) **\crv{(-10,0) & (-80,10)}; ?(1)*\dir{>};}};
(17.5,12.5)*{\dyellow\xybox{(-5,25);(-40,0) **\crv{(-5,5) & (-40,10)}; ?(1)*\dir{>};}};
(5,12.5)*{\doran\xybox{(0,25);(-70,0) **\crv{(0,0) & (-70,10)}; ?(1)*\dir{>};}};
(25,23)*{\cdots};
(-47.5,12.5)*{\dred\xybox{(-60,25);(-15,0) **\crv{(-60,10) & (-15,10)}; ?(1)*\dir{>};}};
(2.5,12.5)*{\dblue\xybox{(-35,25);(-40,25) **\crv{(-35,20) & (-40,20)}; ?(1)*\dir{>};}};
(-7.5,2.5)*{\dyellow\xybox{(-15,0);(-40,0) **\crv{(-15,5) & (-40,5)}; ?(1)*\dir{>};}};
(2.5,12.5)*{\dpink\xybox{(-25,25);(-50,25) **\crv{(-25,15) & (-50,15)}; ?(1)*\dir{>};}};
(15,12.5)*{\dred\xybox{(5,25);(-55,25) **\crv{(5,10) & (-55,10)}; ?(1)*\dir{>};}};
(-15,27)*{\scs r};(-10,27)*{\scs r+1}; (0,27)*{\scs n}; (5,27)*{\scs n}; (15,27)*{\scs r+1};
(20,27)*{\scs 1}; (30,27)*{\scs r-3}; (35,27)*{\scs r-2}; (40,27)*{\scs r-1}; (45,27)*{\scs r};
(5,-2)*{\scs r-2}; (-15,-2)*{\scs r-2}; 
(30,5)*{ (1^r)};
\endxy.
\end{equation*}

In the case of colors $(r,1,r)$, we get 

\begin{equation*}
 \xy
(-125,2)*{\cdots};
(-92.5,12.5)*{\dturq\xybox{(15,0)*{};(50,25)*{} **\crv{(15,20) & (50,15)}; ?(0)*\dir{<};}};
(-72.5,12.5)*{\dgreen\xybox{(20,0)*{};(105,25)*{} **\crv{(20,20) & (105,10)}; ?(0)*\dir{<};}};
(-62.5,12.5)*{\dpurple\xybox{(20,0)*{};(105,25)*{} **\crv{(20,20) & (105,10)}; ?(0)*\dir{<};}};
(-100,2)*{\cdots};
(-52.5,12.5)*{\doran\xybox{(30,0);(115,25) **\crv{(30,20) & (115,10)}; ?(0)*\dir{<};}};
(-12.5,12.5)*{\dred\xybox{(35,0);(50,25) **\crv{(35,20) & (50,15)}; ?(0)*\dir{<};}};
(-80,2)*{\cdots};
(-55,2)*{\cdots};
(-77.5,2.5)*{\dred\xybox{(35,0);(60,0) **\crv{(35,5) & (60,5)}; ?(0)*\dir{<};}};
(-62.5,12.5)*{\dturq\xybox{(20,25);(35,25) **\crv{(20,20) & (35,20)}; ?(0)*\dir{<};}};
(-90,3.75)*{\dpink\xybox{(5,0);(65,0) **\crv{(5,7.5) & (65,7.5)}; ?(0)*\dir{<};}};
(-90,5)*{\dblue\xybox{(-5,0);(75,0) **\crv{(-5,10) & (75,10)}; ?(0)*\dir{<};}};
(-5,2)*{\cdots};
(-40,12.5)*{\dblue\xybox{(-20,0)*{};(-30,25)*{} **\crv{(-20,20) & (-30,15)}; ?(0)*\dir{<};}};
(-30,2)*{\cdots};
(-30,12.5)*{\dpink\xybox{(-10,0);(-20,25) **\crv{(-10,20) & (-20,15)}; ?(0)*\dir{<};}};
(-40,23)*{\cdots};
(-15,23)*{\cdots};
(-42.5,2.5)*{\dred\xybox{(-25,0);(-80,0) **\crv{(-25,5) & (-80,5)}; ?(1)*\dir{>};}};
(-37.5,12.5)*{\dturq\xybox{(-20,25);(-45,25) **\crv{(-20,20) & (-45,20)}; ?(1)*\dir{>};}};
(-42.5,3.75)*{\dpink\xybox{(-25,0);(-90,0) **\crv{(-25,7.5) & (-90,7.5)}; ?(1)*\dir{>};}};
(-42.5,5)*{\dblue\xybox{(-15,0);(-100,0) **\crv{(-15,10) & (-100,10)}; ?(1)*\dir{>};}};
(-55,12.5)*{\dgreen\xybox{(-20,0)*{};(-40,25)*{} **\crv{(-20,20) & (-40,15)}; ?(0)*\dir{<};}};
(-50,12.5)*{\dgreen\xybox{(-20,0)*{};(-40,25)*{} **\crv{(-20,20) & (-40,15)}; ?(1)*\dir{>};}};
(0,-2)*{\scs n}; (-10,-2)*{\scs r+1}; (-15,-2)*{\scs r}; (-20,-2)*{\scs r}; (-25,-2)*{\scs r+1}; (-35,-2)*{\scs n}; (-40,-2)*{\scs 1}; (-50,-2)*{\scs n}; (-45,-2)*{\scs 1}; (-60,-2)*{\scs r+1}; (-65,-2)*{\scs r}; (-70,-2)*{\scs r}; (-75,-2)*{\scs r+1}; (-85,-2)*{\scs n}; (-90,-2)*{\scs r}; (-95,-2)*{\scs r-1}; (-105,-2)*{\scs 3}; (-110,-2)*{\scs 2}; (-115,-2)*{\scs 1}; (-120,-2)*{\scs r+1}; (-130,-2)*{\scs n}; (-25,27)*{\scs 2};  (-55,27)*{\scs 2};
(-3,12)*{ (1^r)};
\endxy.
\end{equation*}

Finally, in the case of colors $(r-1,r,r-1)$, we get 

\begin{equation*}
  \xy
(0,23)*{\cdots};
(-65,12.5)*{\dblue\xybox{(-75,25);(-55,0) **\crv{(-75,15) & (-55,10)} ?(1)*\dir{>};}};
(-55,12.5)*{\dpink\xybox{(-65,25);(-45,0) **\crv{(-65,15) & (-45,10)} ?(1)*\dir{>};}};
(-50,12.5)*{\dred\xybox{(-65,25);(-45,0) **\crv{(-65,15) & (-45,10)} ?(1)*\dir{>};}};
(-45,12.5)*{\dred\xybox{(-65,25);(-45,0) **\crv{(-65,15) & (-45,10)} ?(0)*\dir{<};}};
(-40,12.5)*{\dpink\xybox{(-65,25);(-45,0) **\crv{(-65,15) & (-45,10)} ?(0)*\dir{<};}};
(-30,12.5)*{\dblue\xybox{(-65,25);(-45,0) **\crv{(-65,15) & (-45,10)} ?(0)*\dir{<};}};
(-67.5,12.5)*{\dblue\xybox{(-20,25)*{};(-105,0)*{} **\crv{(-20,15) & (-105,10)}; ?(1)*\dir{>};}};
(-57.5,12.5)*{\dpink\xybox{(-20,25)*{};(-105,0)*{} **\crv{(-20,15) & (-105,10)}; ?(1)*\dir{>};}};
(-20,23)*{\cdots};
(-47.5,12.5)*{\dturq\xybox{(-10,25);(-95,0) **\crv{(-10,15) & (-95,10)}; ?(1)*\dir{>};}};
(-37.5,12.5)*{\dyellow\xybox{(-10,25);(-95,0) **\crv{(-10,15) & (-95,10)}; ?(1)*\dir{>};}};
(-27.5,12.5)*{\dred\xybox{(-10,25);(-95,0) **\crv{(-10,15) & (-95,10)}; ?(1)*\dir{>};}};
(-2.5,12.5)*{\doran\xybox{(-10,25);(-35,0) **\crv{(-10,15) & (-35,10)}; ?(1)*\dir{>};}};
(-90,12.5)*{\dgreen\xybox{(-10,25);(-20,0) **\crv{(-10,15) & (-20,10)}; ?(1)*\dir{>};}};
(-45,23)*{\cdots};
(-70,23)*{\cdots};
(-105,2)*{\cdots};
(-85,2)*{\cdots};
(-25,2)*{\cdots};
(-50,2)*{\cdots};
(-20,12.5)*{\dgreen\xybox{(-30,25);(-50,25) **\crv{(-30,20) & (-50,20)}; ?(1)*\dir{>};}};
(-37.5,2.5)*{\doran\xybox{(-10,0);(-65,0) **\crv{(-10,5) & (-65,5)}; ?(1)*\dir{>};}};
(-67.5,2.5)*{\doran\xybox{(-30,0);(-45,0) **\crv{(-30,5) & (-45,5)}; ?(1)*\dir{>};}};
(-57.5,12.5)*{\dgreen\xybox{(-20,25);(-65,25) **\crv{(-20,20) & (-65,20)}; ?(1)*\dir{>};}};
(15,27)*{\scs r}; (10,27)*{\scs r-1}; (5,27)*{\scs r-2}; (-5,27)*{\scs 2}; (-10,27)*{\scs 1}; (-15,27)*{\scs r+1}; (-25,27)*{\scs n}; (-35,27)*{\scs 1}; (-30,27)*{\scs 1}; (-40,27)*{\scs n}; (-50,27)*{\scs r+1}; (-55,27)*{\scs r}; (-60,27)*{\scs r}; (-65,27)*{\scs r+1}; (-75,27)*{\scs n}; (-80,27)*{\scs 1}; (-85,27)*{\scs 1};
(-10,-2)*{\scs r-1}; (-60,-2)*{\scs r-1};
(3,9)*{ (1^r)};
\endxy.
\end{equation*}

\noindent $\bullet$ Box relations. Just as for Relations~\eqref{eq:dumbrot}--\eqref{eq:dumbdumbsquare}, 
some of the box relations with $i=r$ or $j=r$ follow from the same box relations for $i,j \neq r$ together with some other 
box relations. Taking into account the observations in Remark~\ref{uselessrels} too, we see that it suffices to prove 
Relations \eqref{eq:box12} and \eqref{eq:box32} here.

Let us start with Relation \eqref{eq:box32}. It is sufficient to prove 
this relation for $i = r-1$, because we can write 
$$\bbox{i+1}=\bbox{i+1}-\bbox{i+2}+\bbox{i+2}-\bbox{i+3}+\ldots+\bbox{r-1}-\bbox{r}+\bbox{r}$$
and
$$\bbox{i}=\bbox{i}-\bbox{i+1}+\bbox{i+1}-\bbox{i+2}+\ldots+\bbox{r-2}-
\bbox{r-1}+\bbox{r-1}$$
and use Relations~\eqref{eq:slidedotdist-md},~\eqref{eq:slidedotdist-mu} 
and~\eqref{eq:box1}. 

Let us prove Relation \eqref{eq:box32} for $i = r-1$, i.e.  
\begin{equation*}
 \;
\xy 0;/r.18pc/:
 (0,-1)*{\dred\ccbub{\black -1}{\black r}};
  %(8,4)*{\scs(1^r)};
 \endxy
 \;
 \xy 
 (-28,0)*{\dblue\xybox{(-28,8);(-28,-8); **\dir{-} ?(.5)*\dir{<};}};
 (-23,0)*{\cdots};
 (-18,0)*{\dpink\xybox{(-18,8);(-18,-8); **\dir{-} ?(.5)*\dir{<};}};
 (-13,0)*{\dgreen\xybox{(-13,8);(-13,-8); **\dir{-} ?(.5)*\dir{<};}};
 (-8,0)*{\cdots};
 (-3,0)*{\dred\xybox{(-3,8);(-3,-8); **\dir{-} ?(.5)*\dir{<};}};
 (-28,-10)*{\scs n}; (-18,-10)*{\scs r+1 }; (-13,-10)*{ \scs 1};(-3,-10)*{\scs r};
 %(5,0)*{ (1^r)}; 
  \endxy
\; =  \;
 \xy 
 (-28,0)*{\dblue\xybox{(-28,8);(-28,-8); **\dir{-} ?(.5)*\dir{<};}};
 (-23,0)*{\cdots};
 (-18,0)*{\dpink\xybox{(-18,8);(-18,-8); **\dir{-} ?(.5)*\dir{<};}};
 (-13,0)*{\dgreen\xybox{(-13,8);(-13,-8); **\dir{-} ?(.5)*\dir{<};}};
 (-8,0)*{\cdots};
 (-3,0)*{\dred\xybox{(-3,8);(-3,-8); **\dir{-} ?(.5)*\dir{<};}};
 (-28,-10)*{\scs n}; (-18,-10)*{\scs r+1 }; (-13,-10)*{ \scs 1};(-3,-10)*{\scs r};
 %(5,0)*{ (1^r)}; 
  \endxy
\quad - \;
\xy
(0,0)*{\dduck\xybox{%
    (3,0);(-3,0) **\crv{(3,4.2) & (-3,4.2)};
    ?(.05)*\dir{>} ?(1)*\dir{>}; 
    (3,0);(-3,0) **\crv{(3,-4.2) & (-3,-4.2)} ?(.3)*\dir{}+(4,0)*{\bscs r-1};
}};
%(6,3)*{\scs (1^r)},
\endxy
\; + \; 
\xy 0;/r.18pc/:
 (0,-1)*{\dred\ccbub{\black -1}{\black r}};
  %(8,4)*{\scs(1^r)};
 \endxy.
\end{equation*}

By Relations~\eqref{eq:slidedotdist-md} and \eqref{eq:slidedotdist-mu}, 
we have   
\begin{equation*}
\xy 
 (0,-1.75)*{\dred\xybox{(-3,0)*{};(3,0)*{} **\crv{(-3,-5) & (3,-5)}; ?(.5)*\dir{>};}};
 (0,-9.5)*{\dblue\xybox{(-15,0)*{};(15,0)*{} **\crv{(-15,-20) & (15,-20)}; ?(.5)*\dir{>};}};
 (-9,0)*{\cdots};
 (9,0)*{\cdots};
 (0,1.75)*{\dred\xybox{(-3,0)*{};(3,0)*{} **\crv{(-3,5) & (3,5)}; ?(.5)*\dir{<};}};
 (0,9.5)*{\dblue\xybox{(-15,0)*{};(15,0)*{} **\crv{(-15,20) & (15,20)}; ?(.5)*\dir{<};}};
 (17,-2)*{\scs n}; (4,-2)*{\scs r }; 
 %(17,10)*{ \scs (1^r)}; 
  \endxy
 \;
 \xy 
 (-28,0)*{\dblue\xybox{(-28,8);(-28,-8); **\dir{-} ?(.5)*\dir{<};}};
 (-23,0)*{\cdots};
 (-18,0)*{\dpink\xybox{(-18,8);(-18,-8); **\dir{-} ?(.5)*\dir{<};}};
 (-13,0)*{\dgreen\xybox{(-13,8);(-13,-8); **\dir{-} ?(.5)*\dir{<};}};
 (-8,0)*{\cdots};
 (-3,0)*{\dred\xybox{(-3,8);(-3,-8); **\dir{-} ?(.5)*\dir{<};}};
 (-28,-10)*{\scs n}; (-18,-10)*{\scs r+1 }; (-13,-10)*{ \scs 1};(-3,-10)*{\scs r};
 %(5,0)*{ (1^r)}; 
  \endxy
\; =  \;
 \xy 
 (-28,0)*{\dblue\xybox{(-28,8);(-28,-8); **\dir{-} ?(.5)*\dir{<};}};
 (-23,0)*{\cdots};
 (-18,0)*{\dpink\xybox{(-18,8);(-18,-8); **\dir{-} ?(.5)*\dir{<};}};
 (-13,0)*{\dgreen\xybox{(-13,8);(-13,-8); **\dir{-} ?(.5)*\dir{<};}};
 (-8,0)*{\cdots};
 (-3,0)*{\dred\xybox{(-3,8);(-3,-8); **\dir{-} ?(.5)*\dir{<};}};
 (-28,-10)*{\scs n}; (-18,-10)*{\scs r+1 }; (-13,-10)*{ \scs 1};(-3,-10)*{\scs r};
 %(5,0)*{ (1^r)}; 
  \endxy
\quad
\xy
(0,0)*{\dduck\xybox{%
    (3,0);(-3,0) **\crv{(3,4.2) & (-3,4.2)};
    ?(.05)*\dir{>} ?(1)*\dir{>}; 
    (3,0);(-3,0) **\crv{(3,-4.2) & (-3,-4.2)} ?(.3)*\dir{}+(4,0)*{\bscs r-1};
}};
%(6,3)*{\scs (1^r)},
\endxy.
\end{equation*}
Therefore, it suffices to prove 
\begin{equation*}
\xy 
 (0,-1.75)*{\dred\xybox{(-3,0)*{};(3,0)*{} **\crv{(-3,-5) & (3,-5)}; ?(.5)*\dir{>};}};
 (0,-9.5)*{\dblue\xybox{(-15,0)*{};(15,0)*{} **\crv{(-15,-20) & (15,-20)}; ?(.5)*\dir{>};}};
 (-9,0)*{\cdots};
 (9,0)*{\cdots};
 (0,1.75)*{\dred\xybox{(-3,0)*{};(3,0)*{} **\crv{(-3,5) & (3,5)}; ?(.5)*\dir{<};}};
 (0,9.5)*{\dblue\xybox{(-15,0)*{};(15,0)*{} **\crv{(-15,20) & (15,20)}; ?(.5)*\dir{<};}};
 (17,-2)*{\scs n}; (4,-2)*{\scs r }; 
 %(17,10)*{ \scs (1^r)}; 
  \endxy
 \; + \;
\xy 0;/r.18pc/:
 (0,-1)*{\dred\ccbub{\black -1}{\black r}};
  %(8,4)*{\scs(1^r)};
 \endxy
 \;
 \xy 
 (-28,0)*{\dblue\xybox{(-28,8);(-28,-8); **\dir{-} ?(.5)*\dir{<};}};
 (-23,0)*{\cdots};
 (-18,0)*{\dpink\xybox{(-18,8);(-18,-8); **\dir{-} ?(.5)*\dir{<};}};
 (-13,0)*{\dgreen\xybox{(-13,8);(-13,-8); **\dir{-} ?(.5)*\dir{<};}};
 (-8,0)*{\cdots};
 (-3,0)*{\dred\xybox{(-3,8);(-3,-8); **\dir{-} ?(.5)*\dir{<};}};
 (-28,-10)*{\scs n}; (-18,-10)*{\scs r+1 }; (-13,-10)*{ \scs 1};(-3,-10)*{\scs r};
 %(5,0)*{ (1^r)}; 
  \endxy
\; = \;
 \xy 
 (-28,0)*{\dblue\xybox{(-28,8);(-28,-8); **\dir{-} ?(.5)*\dir{<};}};
 (-23,0)*{\cdots};
 (-18,0)*{\dpink\xybox{(-18,8);(-18,-8); **\dir{-} ?(.5)*\dir{<};}};
 (-13,0)*{\dgreen\xybox{(-13,8);(-13,-8); **\dir{-} ?(.5)*\dir{<};}};
 (-8,0)*{\cdots};
 (-3,0)*{\dred\xybox{(-3,8);(-3,-8); **\dir{-} ?(.5)*\dir{<};}};
 (-28,-10)*{\scs n}; (-18,-10)*{\scs r+1 }; (-13,-10)*{ \scs 1};(-3,-10)*{\scs r};
 %(5,0)*{ (1^r)}; 
  \endxy
\quad
\xy 0;/r.18pc/:
 (0,-1)*{\dred\ccbub{\black -1}{\black r}};
  %(8,4)*{\scs(1^r)};
 \endxy.
\end{equation*}
On the one hand, we observe that 
\begin{equation*}
\xy 0;/r.18pc/:
 (0,-1)*{\dred\ccbub{\black -1}{\black r}};
  %(8,4)*{\scs(1^r)};
 \endxy
 \;
 \xy 
 (-28,0)*{\dblue\xybox{(-28,8);(-28,-8); **\dir{-} ?(.5)*\dir{<};}};
 (-23,0)*{\cdots};
 (-18,0)*{\dpink\xybox{(-18,8);(-18,-8); **\dir{-} ?(.5)*\dir{<};}};
 (-13,0)*{\dgreen\xybox{(-13,8);(-13,-8); **\dir{-} ?(.5)*\dir{<};}};
 (-8,0)*{\cdots};
 (-3,0)*{\dred\xybox{(-3,8);(-3,-8); **\dir{-} ?(.5)*\dir{<};}};
 (-28,-10)*{\scs n}; (-18,-10)*{\scs r+1 }; (-13,-10)*{ \scs 1};(-3,-10)*{\scs r};
 %(5,0)*{ (1^r)}; 
  \endxy
\; = \;
 \xy 
 (-28,0)*{\dblue\xybox{(-28,8);(-28,-8); **\dir{-} ?(.5)*\dir{<};}};
 (-23,0)*{\cdots};
 (-18,0)*{\dpink\xybox{(-18,8);(-18,-8); **\dir{-} ?(.5)*\dir{<};}};
 (-13,0)*{\dgreen\xybox{(-13,8);(-13,-8); **\dir{-} ?(.5)*\dir{<};}};
 (-8,0)*{\cdots};
 (-3,0)*{\dred\xybox{(-3,8);(-3,-8); **\dir{-} ?(.5)*\dir{<};}};
 (-28,-10)*{\scs n}; (-18,-10)*{\scs r+1 }; (-13,-10)*{ \scs 1};(-3,-10)*{\scs r};
 (-18,-4)*{\dpink\bullet}
 %(5,0)*{ (1^r)}; 
  \endxy
\; - \;
\xy 
 (-28,0)*{\dblue\xybox{(-28,8);(-28,-8); **\dir{-} ?(.5)*\dir{<};}};
 (-23,0)*{\cdots};
 (-18,0)*{\dpink\xybox{(-18,8);(-18,-8); **\dir{-} ?(.5)*\dir{<};}};
 (-8,0)*{\dred\xybox{%
    (3,0);(-3,0) **\crv{(3,4.2) & (-3,4.2)};
    ?(.05)*\dir{>} ?(1)*\dir{>}; 
    (3,0);(-3,0) **\crv{(3,-4.2) & (-3,-4.2)} ?(.3)*\dir{}+(2,0)*{\bscs r}; }};
 (2,0)*{\dgreen\xybox{(2,8);(2,-8); **\dir{-} ?(.5)*\dir{<};}};
 (7,0)*{\cdots};
 (12,0)*{\dred\xybox{(12,8);(12,-8); **\dir{-} ?(.5)*\dir{<};}};
 (-28,-10)*{\scs n}; (-18,-10)*{\scs r+1 }; (2,-10)*{ \scs 1};(12,-10)*{\scs r};
 %(5,0)*{ (1^r)}; 
  \endxy
\end{equation*}
since the bubble can be slid through the first $n-r-1$ left strands 
and then bubble slide Relation~\eqref{eq:2ndbubbslide} can be applied 
to the bubble and the strand colored $r+1$. 
On the other hand, we have 
\begin{equation*}
\xy 
 (0,-1.75)*{\dred\xybox{(-3,0)*{};(3,0)*{} **\crv{(-3,-5) & (3,-5)}; ?(.5)*\dir{>};}};
 (0,-9.5)*{\dblue\xybox{(-15,0)*{};(15,0)*{} **\crv{(-15,-20) & (15,-20)}; ?(.5)*\dir{>};}};
 (-9,0)*{\cdots};
 (9,0)*{\cdots};
 (0,1.75)*{\dred\xybox{(-3,0)*{};(3,0)*{} **\crv{(-3,5) & (3,5)}; ?(.5)*\dir{<};}};
 (0,9.5)*{\dblue\xybox{(-15,0)*{};(15,0)*{} **\crv{(-15,20) & (15,20)}; ?(.5)*\dir{<};}};
 (17,-2)*{\scs n}; (4,-2)*{\scs r }; 
 %(17,10)*{ \scs (1^r)}; 
  \endxy
 \;
 \xy 
 (-28,0)*{\dblue\xybox{(-28,8);(-28,-8); **\dir{-} ?(.5)*\dir{<};}};
 (-23,0)*{\cdots};
 (-18,0)*{\dpink\xybox{(-18,8);(-18,-8); **\dir{-} ?(.5)*\dir{<};}};
 (-13,0)*{\dgreen\xybox{(-13,8);(-13,-8); **\dir{-} ?(.5)*\dir{<};}};
 (-8,0)*{\cdots};
 (-3,0)*{\dred\xybox{(-3,8);(-3,-8); **\dir{-} ?(.5)*\dir{<};}};
 (-28,-10)*{\scs n}; (-18,-10)*{\scs r+1 }; (-13,-10)*{ \scs 1};(-3,-10)*{\scs r};
 %(5,0)*{ (1^r)}; 
  \endxy
\; =  \;
\xy 
 (-28,0)*{\dblue\xybox{(-28,8);(-28,-8); **\dir{-} ?(.5)*\dir{<};}};
 (-23,0)*{\cdots};
 (-18,0)*{\dpink\xybox{(-18,8);(-18,-8); **\dir{-} ?(.5)*\dir{<};}};
 (-8,0)*{\dred\xybox{%
    (3,0);(-3,0) **\crv{(3,4.2) & (-3,4.2)};
    ?(.05)*\dir{>} ?(1)*\dir{>}; 
    (3,0);(-3,0) **\crv{(3,-4.2) & (-3,-4.2)} ?(.3)*\dir{}+(2,0)*{\bscs r}; }};
 (2,0)*{\dgreen\xybox{(2,8);(2,-8); **\dir{-} ?(.5)*\dir{<};}};
 (7,0)*{\cdots};
 (12,0)*{\dred\xybox{(12,8);(12,-8); **\dir{-} ?(.5)*\dir{<};}};
 (-28,-10)*{\scs n}; (-18,-10)*{\scs r+1 }; (2,-10)*{ \scs 1};(12,-10)*{\scs r};
 %(5,0)*{ (1^r)}; 
  \endxy
\end{equation*}
which is obtained by using repeatedly 
Relation~\eqref{eq:EF}, which for the relevant labels $\lambda$ reduces to 
\begin{equation*}
\vcenter{\xy 0;/r.18pc/:
  (0,0)*{\dturq\xybox{
  (-8,0)*{};(8,0)*{};
  (-4,10)*{}="t1";
  (4,10)*{}="t2";
  (-4,-10)*{}="b1";
  (4,-10)*{}="b2";
  "t1";"b1" **\dir{-} ?(.5)*\dir{<};
  "t2";"b2" **\dir{-} ?(.5)*\dir{>};}};
  (-6,-8)*{\scs i};(6,-8)*{\scs i};
  (10,2)*{\lambda};
    \endxy}
\, = 
    {\dturq \xy
    (0,4)*{\bbpfe{\black i}};
    (0,-4.5)*{\bbcfe{\black i}};
    (8,0.5)*{ \black \lambda };
    \endxy}.
\end{equation*}
So it only remains to prove that
\begin{equation*}
 \xy 
 (-28,0)*{\dblue\xybox{(-28,8);(-28,-8); **\dir{-} ?(.5)*\dir{<};}};
 (-23,0)*{\cdots};
 (-18,0)*{\dpink\xybox{(-18,8);(-18,-8); **\dir{-} ?(.5)*\dir{<};}};
 (-13,0)*{\dgreen\xybox{(-13,8);(-13,-8); **\dir{-} ?(.5)*\dir{<};}};
 (-8,0)*{\cdots};
 (-3,0)*{\dred\xybox{(-3,8);(-3,-8); **\dir{-} ?(.5)*\dir{<};}};
 (-28,-10)*{\scs n}; (-18,-10)*{\scs r+1 }; (-13,-10)*{ \scs 1};(-3,-10)*{\scs r};
 (-18,-4)*{\dpink\bullet}
 %(5,0)*{ (1^r)}; 
  \endxy
\; = \;
\xy 
 (-28,0)*{\dblue\xybox{(-28,8);(-28,-8); **\dir{-} ?(.5)*\dir{<};}};
 (-23,0)*{\cdots};
 (-18,0)*{\dpink\xybox{(-18,8);(-18,-8); **\dir{-} ?(.5)*\dir{<};}};
 (-13,0)*{\dgreen\xybox{(-13,8);(-13,-8); **\dir{-} ?(.5)*\dir{<};}};
 (-8,0)*{\cdots};
 (-3,0)*{\dred\xybox{(-3,8);(-3,-8); **\dir{-} ?(.5)*\dir{<};}};
 (-28,-10)*{\scs n}; (-18,-10)*{\scs r+1 }; (-13,-10)*{ \scs 1};(-3,-10)*{\scs r};
 %(5,0)*{ (1^r)}; 
  \endxy
\quad
\xy 0;/r.18pc/:
 (0,-1)*{\dred\ccbub{\black -1}{\black r}};
  %(8,4)*{\scs(1^r)};
 \endxy.
\end{equation*}
Notice that 
\begin{equation*}
 \xy 
 (-28,0)*{\dblue\xybox{(-28,8);(-28,-8); **\dir{-} ?(.5)*\dir{<};}};
 (-23,0)*{\cdots};
 (-18,0)*{\dpink\xybox{(-18,8);(-18,-8); **\dir{-} ?(.5)*\dir{<};}};
 (-13,0)*{\dgreen\xybox{(-13,8);(-13,-8); **\dir{-} ?(.5)*\dir{<};}};
 (-8,0)*{\cdots};
 (-3,0)*{\dred\xybox{(-3,8);(-3,-8); **\dir{-} ?(.5)*\dir{<};}};
 (-28,-10)*{\scs n}; (-18,-10)*{\scs r+1 }; (-13,-10)*{ \scs 1};(-3,-10)*{\scs r};
 %(5,0)*{ (1^r)}; 
  \endxy
\quad
\xy 0;/r.18pc/:
 (0,-1)*{\dred\ccbub{\black -1}{\black r}};
  %(8,4)*{\scs(1^r)};
 \endxy
\; = \;
\xy 
 (-28,0)*{\dblue\xybox{(-28,8);(-28,-8); **\dir{-} ?(.5)*\dir{<};}};
 (-23,0)*{\cdots};
 (-18,0)*{\dpink\xybox{(-18,8);(-18,-8); **\dir{-} ?(.5)*\dir{<};}};
 (-13,0)*{\dgreen\xybox{(-13,8);(-13,-8); **\dir{-} ?(.5)*\dir{<};}};
 (-8,0)*{\cdots};
 (-3,0)*{\dred\xybox{(-3,8);(-3,-8); **\dir{-} ?(.5)*\dir{<};}};
 (-28,-10)*{\scs n}; (-18,-10)*{\scs r+1 }; (-13,-10)*{ \scs 1};(-3,-10)*{\scs r};
 %(5,0)*{ (1^r)}; 
  \endxy
\quad
\xy 0;/r.18pc/:
 (0,-1)*{\dred\cbub{\black 1}{\black r}};
  %(8,4)*{\scs(1^r)};
 \endxy,
\end{equation*}
by the infinite Grassmannian relation. We can 
apply Relation~\eqref{eq:EF} to the bubble and the strand 
colored $r$. The first term that appears is killed, 
because the weight inside the bigon has a negative entry. The bubble 
appearing in the second term satisfies
\begin{equation*}
 \xy
 (-12,0)*{\dred\cbub{\black -2}{\black r}};
 (-8,8)*{\lambda};
 \endxy
  = -1,
\end{equation*}
since $\lambda = (1, \dots,1 , 0 , 1 , 0 , \dots, 0)$ 
with the last one in $r+1$st position. 
Thus we get
\begin{equation*}
 \xy 
 (-28,0)*{\dblue\xybox{(-28,8);(-28,-8); **\dir{-} ?(.5)*\dir{<};}};
 (-23,0)*{\cdots};
 (-18,0)*{\dpink\xybox{(-18,8);(-18,-8); **\dir{-} ?(.5)*\dir{<};}};
 (-13,0)*{\dgreen\xybox{(-13,8);(-13,-8); **\dir{-} ?(.5)*\dir{<};}};
 (-8,0)*{\cdots};
 (-3,0)*{\dred\xybox{(-3,8);(-3,-8); **\dir{-} ?(.5)*\dir{<};}};
 (-28,-10)*{\scs n}; (-18,-10)*{\scs r+1 }; (-13,-10)*{ \scs 1};(-3,-10)*{\scs r};
 %(5,0)*{ (1^r)}; 
  \endxy
\quad
\xy 0;/r.18pc/:
 (0,-1)*{\dred\ccbub{\black -1}{\black r}};
  %(8,4)*{\scs(1^r)};
 \endxy
\; = \;
 \xy 
 (-28,0)*{\dblue\xybox{(-28,8);(-28,-8); **\dir{-} ?(.5)*\dir{<};}};
 (-23,0)*{\cdots};
 (-18,0)*{\dpink\xybox{(-18,8);(-18,-8); **\dir{-} ?(.5)*\dir{<};}};
 (-13,0)*{\dgreen\xybox{(-13,8);(-13,-8); **\dir{-} ?(.5)*\dir{<};}};
 (-8,0)*{\cdots};
 (-3,0)*{\dred\xybox{(-3,8);(-3,-8); **\dir{-} ?(.5)*\dir{<};}};
 (-28,-10)*{\scs n}; (-18,-10)*{\scs r+1 }; (-13,-10)*{ \scs 1};(-3,-10)*{\scs r};
 (-3,-4)*{\dred\bullet}
 %(5,0)*{ (1^r)}; 
  \endxy
\end{equation*}
Finally, we have to verify that
\begin{equation}
\label{eqexpy}
 \xy 
 (-28,0)*{\dblue\xybox{(-28,8);(-28,-8); **\dir{-} ?(.5)*\dir{<};}};
 (-23,0)*{\cdots};
 (-18,0)*{\dpink\xybox{(-18,8);(-18,-8); **\dir{-} ?(.5)*\dir{<};}};
 (-13,0)*{\dgreen\xybox{(-13,8);(-13,-8); **\dir{-} ?(.5)*\dir{<};}};
 (-8,0)*{\cdots};
 (-3,0)*{\dred\xybox{(-3,8);(-3,-8); **\dir{-} ?(.5)*\dir{<};}};
 (-28,-10)*{\scs n}; (-18,-10)*{\scs r+1 }; (-13,-10)*{ \scs 1};(-3,-10)*{\scs r};
 (-18,-4)*{\dpink\bullet}
 %(5,0)*{ (1^r)}; 
  \endxy
\; = \;
 \xy 
 (-28,0)*{\dblue\xybox{(-28,8);(-28,-8); **\dir{-} ?(.5)*\dir{<};}};
 (-23,0)*{\cdots};
 (-18,0)*{\dpink\xybox{(-18,8);(-18,-8); **\dir{-} ?(.5)*\dir{<};}};
 (-13,0)*{\dgreen\xybox{(-13,8);(-13,-8); **\dir{-} ?(.5)*\dir{<};}};
 (-8,0)*{\cdots};
 (-3,0)*{\dred\xybox{(-3,8);(-3,-8); **\dir{-} ?(.5)*\dir{<};}};
 (-28,-10)*{\scs n}; (-18,-10)*{\scs r+1 }; (-13,-10)*{ \scs 1};(-3,-10)*{\scs r};
 (-3,-4)*{\dred\bullet}
 %(5,0)*{ (1^r)}; 
  \endxy
\end{equation}
which is true: 
just slide the left dotted strand over all the strands colored 
$1, \dots , r-1$, using the first case of Relation~\eqref{eq_r2_ij-gen}, 
and then apply the second case of Relation~\eqref{eq_r2_ij-gen}. 
Observe that the term with the bigon is killed, 
because the weight inside the bigon has a negative entry. 
Note that this argument is not valid if $r = n-1$. Indeed in this case, $r+1$ and $1$ are adjacent colors, hence the left dotted strand cannot be simply slid over the strand colored $1$. In order to prove this remaining case together with Relation~\eqref{eq:box12}, let us remark that the right hand side of Equation~\eqref{eqexpy} can also be expressed as follows:
\begin{multline}
\label{eqexpy2}
\xy 
 (-28,0)*{\dblue\xybox{(-28,8);(-28,-8); **\dir{-} ?(.5)*\dir{<};}};
 (-23,0)*{\cdots};
 (-18,0)*{\dpink\xybox{(-18,8);(-18,-8); **\dir{-} ?(.5)*\dir{<};}};
 (-13,0)*{\dgreen\xybox{(-13,8);(-13,-8); **\dir{-} ?(.5)*\dir{<};}};
 (-8,0)*{\cdots};
 (-3,0)*{\dred\xybox{(-3,8);(-3,-8); **\dir{-} ?(.5)*\dir{<};}};
 (-28,-10)*{\scs n}; (-18,-10)*{\scs r+1 }; (-13,-10)*{ \scs 1};(-3,-10)*{\scs r};
 (-3,-4)*{\dred\bullet}
 %(5,0)*{ (1^r)}; 
  \endxy
\; = \;
 \xy 
 (-28,0)*{\dblue\xybox{(-28,8);(-28,-8); **\dir{-} ?(.5)*\dir{<};}};
 (-23,0)*{\cdots};
 (-18,0)*{\dpink\xybox{(-18,8);(-18,-8); **\dir{-} ?(.5)*\dir{<};}};
 (-13,0)*{\dgreen\xybox{(-13,8);(-13,-8); **\dir{-} ?(.5)*\dir{<};}};
 (-8,0)*{\cdots};
 (-3,0)*{\dred\xybox{(-3,8);(-3,-8); **\dir{-} ?(.5)*\dir{<};}};
 (-28,-10)*{\scs n}; (-18,-10)*{\scs r+1 }; (-13,-10)*{ \scs 1};(-3,-10)*{\scs r};
 (-18,-4)*{\dpink\bullet}
 %(5,0)*{ (1^r)}; 
  \endxy
\ +    \\
\left( 
\sum\limits_{j=1}^{r-1}\
\xy
(0,0)*{\dpurple\xybox{%
    (3,0);(-3,0) **\crv{(3,4.2) & (-3,4.2)};
    ?(.05)*\dir{>} ?(1)*\dir{>}; 
    (3,0);(-3,0) **\crv{(3,-4.2) & (-3,-4.2)} ?(.3)*\dir{}+(2,0)*{\bscs j};
}};
(6,3)*{\scs (1^r)},
\endxy- \xy 0;/r.18pc/:
 (0,-1)*{\dred\ccbub{\black -1}{\black r}};
  (8,4)*{\scs(1^r)};
 \endxy +
\xy 0;/r.18pc/:
 (0,-1)*{\dblue\ccbub{\black 1}{\black n}};
  (8,4)*{\scs(1^r)};
 \endxy + \xy (0,0)*{\bbox{y}} \endxy 
\right)
\xy 
 (-28,0)*{\dblue\xybox{(-28,8);(-28,-8); **\dir{-} ?(.5)*\dir{<};}};
 (-23,0)*{\cdots};
 (-18,0)*{\dpink\xybox{(-18,8);(-18,-8); **\dir{-} ?(.5)*\dir{<};}};
 (-13,0)*{\dgreen\xybox{(-13,8);(-13,-8); **\dir{-} ?(.5)*\dir{<};}};
 (-8,0)*{\cdots};
 (-3,0)*{\dred\xybox{(-3,8);(-3,-8); **\dir{-} ?(.5)*\dir{<};}};
 (-28,-10)*{\scs n}; (-18,-10)*{\scs r+1 }; (-13,-10)*{ \scs 1};(-3,-10)*{\scs r}
 %(5,0)*{ (1^r)}; 
  \endxy 
\end{multline}
This expression is obtained using repeatedly kink resolutions and bubble slides. Indeed the kink on the left hand side of Relation \eqref{eq:redtobubbles} for $i=r$ is equal to zero here since the label inside the kink possesses a negative entry. Hence one can express the dotted $r$ strand as a non-dotted strand times a bubble colored $r$ on the left. This bubble can then be slid through the $r-1$ strand using Relation \eqref{eq:extrabubblelast}, creating two terms: one is a dotted $r-1$ strand while the other is a non-dotted $r-1$ strand times a bubble colored $r$ on the left. This bubble can be slid all the way to the left using \eqref{eq:extrabubblelast} making appear an extra term which is the dotted $r+1$ strand on the right hand side of \eqref{eqexpy2}. One applies this same trick successively to the dotted strands for colors $r-1$ to $1$. The only exception is that, in the end, to slide the bubble colored $1$ through the strand colored $n$, we have to use the deformed Relation \eqref{eq:extrabubble461n}, which brings out the $y$ term in \eqref{eqexpy2}. Finally the dotted $n$ strand that thus appears can also be expressed as a non-dotted strand times a $n$-colored bubble on the left using Relation \eqref{eq:redtobubbles}.

Therefore, by~\eqref{eqexpy2}, it suffices to prove
\begin{equation}
\label{eqexpy3}
  \sum\limits_{j=1}^{r-1}\
\xy
(0,0)*{\dpurple\xybox{%
    (3,0);(-3,0) **\crv{(3,4.2) & (-3,4.2)};
    ?(.05)*\dir{>} ?(1)*\dir{>}; 
    (3,0);(-3,0) **\crv{(3,-4.2) & (-3,-4.2)} ?(.3)*\dir{}+(2,0)*{\bscs j};
}};
(6,3)*{\scs (1^r)},
\endxy- \xy 0;/r.18pc/:
 (0,-1)*{\dred\ccbub{\black -1}{\black r}};
  (8,4)*{\scs(1^r)};
 \endxy +
\xy 0;/r.18pc/:
 (0,-1)*{\dblue\ccbub{\black 1}{\black n}};
  (8,4)*{\scs(1^r)};
 \endxy +\xy (0,0)*{\bbox{y}} \endxy  = 0
\end{equation}
in order to show~\eqref{eqexpy}. In Section~\ref{sec:2rep} we 
define a $2$-representation of $\Scat(n,r)^*_{[y]}$. Its restriction 
to $\mbox{END}({\mathbf 1}_r)$ gives an algebra homomorphism 
$$\mathcal{F}'\colon \mbox{END}({\mathbf 1}_r)\to \Q[y,x_1,\ldots,x_{r}]$$
which on the elements of degree two is determined by 
\begin{eqnarray*}
\bbox{y}\;{\scs (1^r)}&\mapsto&y\\
\xy 
(-8,0)*{\dpurple\xybox{%
    (3,0);(-3,0) **\crv{(3,4.2) & (-3,4.2)};
    ?(.05)*\dir{>} ?(1)*\dir{>}; 
    (3,0);(-3,0) **\crv{(3,-4.2) & (-3,-4.2)} ?(.3)*\dir{}+(2,0)*{\bscs i}; 
}};
(-2,2)*{\scs (1^r)};
\endxy
&\mapsto&
\begin{cases}
x_{i+1}-x_{i}& 1\leq i\leq r-1\\
0& r+1\leq i\leq n-1
\end{cases}
\\
\xy 0;/r.18pc/:
 (0,-1)*{\red\ccbub{\black -1}{\black r}};
  (8,4)*{\scs(1^r)};
 \endxy
&\mapsto& x_r\\
\xy 0;/r.18pc/:
 (0,-1)*{\dblue\ccbub{\black 1}{\black n}};
  (8,4)*{\scs(1^r)};
 \endxy
&\mapsto& x_1-y.
\end{eqnarray*}
Note that $\mathcal{F}'$ maps the l.h.s. of~\eqref{eqexpy3} to zero. 
We are going to show that this implies~\eqref{eqexpy3} by showing 
that $\mathcal{F}'$ is an isomorphism. From the definition 
it is clear that $\mathcal{F}'$ is surjective. Injectivity follows if we 
can prove that $\mathrm{END}({\mathbf 1}_r)$ is generated by 
\begin{equation}
\label{eq:bubblegenerators}
\bbox{y}\;{\scs (1^r)}, 
\quad
\xy 
(-8,0)*{\dpurple\xybox{%
    (3,0);(-3,0) **\crv{(3,4.2) & (-3,4.2)};
    ?(.05)*\dir{>} ?(1)*\dir{>}; 
    (3,0);(-3,0) **\crv{(3,-4.2) & (-3,-4.2)} ?(.3)*\dir{}+(2,0)*{\bscs i}; 
}};
(-2,2)*{\scs (1^r)};
\endxy,
\quad 
\xy 0;/r.18pc/:
 (0,-1)*{\red\ccbub{\black -1}{\black r}};
  (8,4)*{\scs(1^r)};
 \endxy,
\quad
\xy 0;/r.18pc/:
 (0,-1)*{\dblue\ccbub{\black 1}{\black n}};
  (8,4)*{\scs(1^r)};
\endxy
\end{equation}
for $i=1,\ldots,r-1$, because that implies 
that $\mathrm{END}({\mathbf 1}_r)$ is isomorphic to a 
quotient of $\Q[y,x_1,\ldots,x_r]$ by the surjectivity of $\mathcal{F}'$, 
which means that the two algebras have to be isomorphic.  

In order to prove that $\mathrm{END}({\mathbf 1}_r)$ is indeed generated by 
the $2$-morphisms in~\eqref{eq:bubblegenerators}, first note that 
$\mathrm{END}({\mathbf 1}_r)$ is generated by all counter-clockwise 
bubbles of arbitrary degree and $\bbox{y}$. It therefore suffices 
to prove that any counter-clockwise bubble is in the span 
of the 2-morphisms in~\eqref{eq:bubblegenerators}.

We first prove this fact for counter-clockwise $i$-bubbles with 
$1\leq i\leq r-1$. The result follows from the recursive formula 
\begin{equation}
\label{eq:expcci}
\xy 0;/r.18pc/:
 (0,-1)*{\dpurple\ccbub{\black t+1}{\black i}};
  (8,4)*{\scs(1^r)};
 \endxy
\;=\;-\;
\xy 0;/r.18pc/:
 (0,-1)*{\dpurple\ccbub{\black t}{\black i}};
  (8,4)*{\scs(1^r)};
 \endxy
\left(\xy 
(-8,0)*{\dturq\xybox{%
    (3,0);(-3,0) **\crv{(3,4.2) & (-3,4.2)};
    ?(.05)*\dir{>} ?(1)*\dir{>}; 
    (3,0);(-3,0) **\crv{(3,-4.2) & (-3,-4.2)} ?(.3)*\dir{}+(4,0)*{\bscs i+1}; 
}};
(-2,2)*{\scs (1^r)};
\endxy+\cdots+\xy 
(-8,0)*{\doran\xybox{%
    (3,0);(-3,0) **\crv{(3,4.2) & (-3,4.2)};
    ?(.05)*\dir{>} ?(1)*\dir{>}; 
    (3,0);(-3,0) **\crv{(3,-4.2) & (-3,-4.2)} ?(.3)*\dir{}+(4,0)*{\bscs r-1}; 
}};
(-2,2)*{\scs (1^r)};
\endxy-
\xy 0;/r.18pc/:
 (0,-1)*{\red\ccbub{\black -1}{\black r}};
  (8,4)*{\scs(1^r)};
 \endxy
\right)
\end{equation}
for $t\geq 0$. The equation 
in~\eqref{eq:expcci} can be obtained by unnesting the l.h.s. of 
\begin{equation}
\label{eq:expcci2}
 \xy 
 (0,-1.75)*{\dred\xybox{(-3,0)*{};(3,0)*{} **\crv{(-3,-5) & (3,-5)}; ?(.5)*\dir{>};}};
 (0,-9.5)*{\dpurple\xybox{(-15,0)*{};(15,0)*{} **\crv{(-15,-20) & (15,-20)}; ?(.5)*\dir{>};}};
 (-9,0)*{\cdots};
 (9,0)*{\cdots};
 (0,1.75)*{\dred\xybox{(-3,0)*{};(3,0)*{} **\crv{(-3,5) & (3,5)}; ?(.5)*\dir{<};}};
 (0,9.5)*{\dpurple\xybox{(-15,0)*{};(15,0)*{} **\crv{(-15,20) & (15,20)}; ?(.5)*\dir{<};}};
 (17,-2)*{\scs i}; (4,-2)*{\scs r }; 
 (17,10)*{ \scs (1^r)}; 
 (12,-9)*{\dpurple \bullet}; 
 (13,-11)*{\scs t};
\endxy
\;=\;0
\end{equation}
using bubble slides from the inside to the outside. 
Equation~\eqref{eq:expcci2} holds, because the region in the 
center has label $\lambda$ with $\lambda_{r+1}=-1$. 

For $i=r$ the argument is simpler, because 
$$
\xy 0;/r.18pc/:
 (0,-1)*{\red\ccbub{\black -2+t}{\black r}};
  (8,4)*{\scs(1^r)};
 \endxy
\;=\;0
$$ 
for any $t\geq 2$. This holds because the inner region has label $\lambda$ 
with $\lambda_{r+1}=-1$. 

Similarly, for $i=n$ we have 
$$
\xy 0;/r.18pc/:
 (0,-1)*{\dblue\cbub{\black -2+t}{\black n}};
  (8,4)*{\scs(1^r)};
 \endxy
\;=\;0
$$ 
for any $t\geq 2$. By the infinite Grassmannian relation, this 
implies that 
$$
\xy 0;/r.18pc/:
 (0,-1)*{\dblue\ccbub{\black t}{\black n}};
  (8,4)*{\scs(1^r)};
 \endxy
\;=\;
\left(
\xy 0;/r.18pc/:
 (0,-1)*{\dblue\ccbub{\black 1}{\black n}};
  (8,4)*{\scs(1^r)};
 \endxy
\right)^t
$$ 
for any $t\geq 2$. 

For $r+1\leq i\leq n-1$ there is nothing to prove. 
In that case, the 
counter-clockwise $i$-bubbles of positive degree are all zero, 
because their interior is labeled by $\lambda$ with $\lambda_{i+1}=-1$. 
This finishes the proof that  
$$\mbox{END}({\mathbf 1}_r)\cong \Q[y,x_1,\ldots,x_r],$$
which implies~\eqref{eqexpy3}.

Now let us consider Relation \eqref{eq:box12}. The image under $\Sigma_{n,r}$ of 
$\figins{-6}{0.25}{startenddot.eps}_{\,r}$ is 
\begin{equation*}
 \xy 
 (0,-1.75)*{\dred\xybox{(-3,0)*{};(3,0)*{} **\crv{(-3,-5) & (3,-5)}; ?(.5)*\dir{>};}};
 (0,-9.5)*{\dblue\xybox{(-15,0)*{};(15,0)*{} **\crv{(-15,-20) & (15,-20)}; ?(.5)*\dir{>};}};
 (-9,0)*{\cdots};
 (9,0)*{\cdots};
 (0,1.75)*{\dred\xybox{(-3,0)*{};(3,0)*{} **\crv{(-3,5) & (3,5)}; ?(.5)*\dir{<};}};
 (0,9.5)*{\dblue\xybox{(-15,0)*{};(15,0)*{} **\crv{(-15,20) & (15,20)}; ?(.5)*\dir{<};}};
 (17,-2)*{\scs n}; (4,-2)*{\scs r }; 
 (17,10)*{ \scs (1^r)}; 
  \endxy
= - \xy 0;/r.18pc/:
 (0,-1)*{\dred\ccbub{\black -1}{\black r}};
  (8,4)*{\scs(1^r)};
 \endxy +
\xy 0;/r.18pc/:
 (0,-1)*{\dblue\ccbub{\black 1}{\black n}};
  (8,4)*{\scs(1^r)};
 \endxy 
\end{equation*}
This expression can be obtained using repeatedly bubble slide Relation \eqref{eq:extrabubble3}. Observe that, at each step but the first one, only one term survives, the second being systematically zero since it includes a real bubble whose inside is a label with a negative entry. 

If one replaces, in this expression, the bubble colored $n$ using Equation \eqref{eqexpy3}, one recognizes precisely the image under $\Sigma_{n,r}$ of $  \bbox{1} -\bbox{r} - \bbox{y}$.

We have checked that $\Sigma_{n,r}$ preserves all the relations of $\debim_{\hat{A}_{r-1}}^*$, 
so this ends the proof.

\end{proof}

\section{A 2-representation of $\Scat(n,r)^*_{[y]}$}\label{2rep}
\label{sec:2rep}

In this section we define a $2$--category $\esbim_{\hat{A}_{r-1}}$ and a $2$-functor
$$\mathcal{F}': \Scat{(n,r)}^{*}_{[y]}\to {\esbim^{*}_{\hat{A}_{r-1}}}.$$

The $2$-category $\esbim_{\hat{A}_{r-1}}$ is an extension of the 
category of singular Soergel bimodules in affine type $A$ 
considered by Williamson in~\cite{Wi} (see also \cite{MSVcolhom, MSVschur}). 
The $2$--functor $\mathcal{F}'$ is a generalization of Khovanov and Lauda's 
2-representation $\Gamma^G_r$ defined in~\cite{KL3, KLer}.

\subsection{Extended singular bimodules}\label{extbim}

Let $R=\Q[y][x_1,\ldots,x_r]$. As in 
Section~\ref{bim}, there is a grading on $R$ defined by $\deg y=\deg x_i=2$, 
for $i=1,\ldots, r$, and a degree preserving action of 
$\hat{\mathcal{W}}_{\hat{A}_{r-1}}$ on $R$. 
For any partition $(i_1,\ldots,i_k)$ of $r$, let 
$$S_{i_1}\times\cdots\times S_{i_k}\subseteq W_{A_{r-1}}\subset 
\hat{\mathcal{W}}_{\hat{A}_{r-1}}$$
be the parabolic subgroup which is contained in the finite Weyl group.  
let $R_{i_1\cdots i_k}\subseteq R$ denote the subring of $S_{i_1}\times\cdots\times S_{i_k}$--invariant polynomials.   

Using these rings of partially symmetric polynomials, 
we can construct bimodules by induction and restriction. Induction 
is defined as follows: 
suppose $i_j$ splits into $i_j^0$ plus $i_j^1$, 
then we define the {\em induction functor} by 
$$\mbox{Ind}_{i_j}^{i_j^0, i_j^1}(R_{i_1\cdots i_k}):=
R_{i_1 \dots i_j^0 i_j^1 \cdots i_k} \otimes_{i_1\cdots i_k} R_{i_1\cdots i_k}.$$
The subscript of the tensor product means that it is taken over 
$R_{i_1\cdots i_k}$. The {\em restriction functor} is defined by  
$$\mbox{Res}_{i_j, i_{j+1}}^{i_j + i_{j+1}}(R_{i_1\cdots i_k})
:=_{R_{i_1 \dots i_j+i_{j+1}\cdots i_k}|}R_{i_1\cdots i_k}.$$

\begin{defn}
Let $\esbim_{\hat{A}_{r-1}}$ be the $2$--category whose objects are the rings 
$R_{i_1\cdots i_k}$, for all partitions $(i_1,\ldots,i_k)$ of $r$. 

For any two partitions~$(i_1,\ldots,i_k)$ and~$(j_1,\ldots,j_l)$ of~$r$, 
the $1$--morphisms between $R_{i_1\cdots i_k}$ and $R_{j_1\cdots j_l}$ are by 
definition the direct sums and shifts of tensor products of 
$R_{i_1\cdots i_k}$--$R_{j_1\cdots j_l}$--bimodules obtained by induction and 
restriction and by tensoring with the 
twisted bimodules~$R_{i_ki_1\cdots i_{k-1},\rho}$ and~$R_{i_2\cdots i_ki_1,\rho^{-1}}$, 
which we will define below. 

The $2$--morphisms are the degree-preserving bimodule maps.
\end{defn}

The twisted bimodules are defined like the bimodules 
$B_{\rho^{\pm 1}}$ of section~\ref{bim}: 
$R_{i_ki_1\cdots i_{k-1},\rho}$ (resp.~$R_{i_2\cdots i_ki_1,\rho^{-1}}$) is equal to~$R_{i_ki_1\cdots i_{k-1}}$ (resp.~$R_{i_2\cdots i_ki_1}$) as a left~$R_{i_ki_1\cdots i_{k-1}}$--module 
(resp. as a left~$R_{i_2\cdots i_ki_1}$--module) whereas the action on the right is twisted. The right action of any~$a \in R_{i_1\cdots i_k }$ on~$R_{i_ki_1\cdots i_{k-1},\rho}$ 
and~$R_{i_2\cdots i_ki_1,\rho^{-1}}$ is given by multiplication by~$\rho^{i_k}(a)$ and~$\rho^{-i_1}(a)$ respectively. 
Recall that the action of $\rho^{\pm 1}$ was defined in section~\ref{action}.
The functors defined by tensoring with twisted bimodules are denote by 
$$R_{\rho^{i_k}}(R_{i_1\cdots i_k}):=R_{i_ki_1\cdots i_{k-1},\rho}
\otimes_{i_1\cdots i_k}R_{i_1\cdots i_k}
$$
and
$$
R_{\rho^{-i_1}}(R_{i_1\cdots i_k}):=R_{i_2\cdots i_ki_1,\rho^{-1}}\otimes_{i_1\cdots i_k}
R_{i_1\cdots i_k}
$$
respectively. 

\begin{lem}
The map $\rho^{i_k}$ gives an isomorphism between $R_{i_1\cdots i_k}$ and $R_{i_k i_1\cdots i_{k-1}}$, while $\rho^{-i_1}$ gives an isomorphism between 
$R_{i_1\cdots i_k}$ and $ R_{ i_2\cdots  i_{k}i_1}$. 

This implies that the twisted bimodules~$R_{i_k i_1\cdots i_{k-1},\rho}$ and $R_{ i_2 \cdots i_{k}i_1,\rho^{-1}}$ are well-defined.
\end{lem}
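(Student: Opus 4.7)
The plan is to deduce both isomorphisms from the conjugation relation \eqref{W4}, $\rho\sigma_j\rho^{-1}=\sigma_{j+1}$ (indices mod $r$), together with the fact that $\rho^{\pm 1}$ is a graded ring automorphism of $R$ (which is clear from the formulas in Section~\ref{action}, since each generator is sent to a polynomial of the same degree). Once the map is shown to be a ring automorphism restricting to a bijection between the two invariant subrings, the well-definedness of the twisted bimodules will be immediate.

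First I would identify $S_{i_1}\times\cdots\times S_{i_k}$ as the subgroup of $W_{A_{r-1}}$ generated by $\{\sigma_j : j\in J\}$, where $J=\{1,\ldots,r-1\}\setminus\{i_1,\,i_1+i_2,\,\ldots,\,i_1+\cdots+i_{k-1}\}$. Iterating \eqref{W4} gives $\rho^{i_k}\sigma_j\rho^{-i_k}=\sigma_{j+i_k\bmod r}$. A short bookkeeping argument shows that the shifted set $\{j+i_k\bmod r : j\in J\}$ equals $\{1,\ldots,r-1\}\setminus\{i_k,\,i_k+i_1,\,\ldots,\,i_k+i_1+\cdots+i_{k-2}\}$, i.e.\ the generating set of the parabolic subgroup $S_{i_k}\times S_{i_1}\times\cdots\times S_{i_{k-1}}$ (the position $i_k+i_1+\cdots+i_{k-1}=r\equiv 0$ is automatically excluded, because $\sigma_r$ does not appear in the finite Weyl group). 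Hence conjugation by $\rho^{i_k}$ is a group isomorphism between the two parabolic subgroups.

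From this conjugation statement, for any $a\in R_{i_1\cdots i_k}$ and any $w'\in S_{i_k}\times S_{i_1}\times\cdots\times S_{i_{k-1}}$ of the form $w'=\rho^{i_k}w\rho^{-i_k}$, one computes
\begin{equation*}
w'\bigl(\rho^{i_k}(a)\bigr)=\rho^{i_k}\bigl(w(a)\bigr)=\rho^{i_k}(a),
\end{equation*}
so $\rho^{i_k}(a)\in R_{i_k i_1\cdots i_{k-1}}$. The same computation applied to $\rho^{-i_k}$ gives the inverse, so $\rho^{i_k}$ restricts to a graded ring isomorphism $R_{i_1\cdots i_k}\xrightarrow{\sim} R_{i_k i_1\cdots i_{k-1}}$. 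The case of $\rho^{-i_1}$ is entirely analogous: conjugation by $\rho^{-i_1}$ implements the inverse cyclic shift of the blocks, carrying $S_{i_1}\times\cdots\times S_{i_k}$ isomorphically onto $S_{i_2}\times\cdots\times S_{i_k}\times S_{i_1}$ and hence $R_{i_1\cdots i_k}$ onto $R_{i_2\cdots i_k i_1}$.

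For the final clause about well-definedness of the twisted bimodules, I would observe that the right action of $R_{i_1\cdots i_k}$ on $R_{i_k i_1\cdots i_{k-1},\rho}$ given by $a\cdot b:=a\,\rho^{i_k}(b)$ makes sense because, by the isomorphism just established, $\rho^{i_k}(b)$ lies in $R_{i_k i_1\cdots i_{k-1}}$; commutativity of $R$ then ensures this right action commutes with the left multiplication action of $R_{i_k i_1\cdots i_{k-1}}$. The same remark applies to $R_{i_2\cdots i_k i_1,\rho^{-1}}$. There is no real obstacle to this proof; the only step requiring care is the mod-$r$ combinatorial check in paragraph two that the conjugated set of generators really is the expected cyclically shifted one, in particular that the wrap-around index $r$ drops out cleanly.
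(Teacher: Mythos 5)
Your proof is correct, but it takes a genuinely different route from the paper's. The paper argues by brute force: it lists the elementary symmetric polynomials in each block of variables as generators of $R_{i_1\cdots i_k}$, applies $\rho^{i_k}$ to each, and for the last block expands $\sum (x_{q_1}-y)\cdots(x_{q_{p_k}}-y)$ explicitly as a $\Q[y]$-combination of the $\epsilon_j(x_1,\ldots,x_{i_k})$ to see that the image lands in $R_{i_k i_1\cdots i_{k-1}}$. You instead deduce everything from the conjugation relation \eqref{W4}: since $\rho^{i_k}$ is a ring automorphism of $R$ and conjugation by it carries the parabolic subgroup $S_{i_1}\times\cdots\times S_{i_k}$ onto $S_{i_k}\times S_{i_1}\times\cdots\times S_{i_{k-1}}$, the general identity $\phi(R^H)=R^{\phi H\phi^{-1}}$ does the rest. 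Your combinatorial check is right: the boundary positions of $(i_1,\ldots,i_k)$, viewed in $\Z/r$ together with $0$, are carried by the shift $j\mapsto j+i_k$ exactly onto those of $(i_k,i_1,\ldots,i_{k-1})$, and the only index that could land on $r$ is $i_1+\cdots+i_{k-1}$, which is itself a boundary position and hence not a generator index. Your argument is shorter, makes the cyclic permutation of the blocks conceptually transparent, and sidesteps the symmetric-function manipulation entirely; its only implicit input is that the formulas of Section~\ref{action} define a genuine action of $\hat{\W}_{\hat{A}_{r-1}}$ by ring automorphisms, which the paper asserts (indeed the action is stated to be faithful). What the paper's computation buys in exchange is an explicit description of where the generators go --- in particular how $y$ enters the image of the last block --- which is mildly informative but not needed for the lemma.
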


\begin{proof}
We only prove the lemma for $\rho^{i_k}$. The proof for $\rho^{-i_1}$ is similar and is left to the reader. 

It is clear that $\rho^{i_k}$ is a bijection. What remains to be shown is that the image of $R_{i_1\cdots i_k}$ is indeed $R_{i_k i_1\cdots i_{k-1}}$. 

The ring $R_{i_1\cdots i_k}$ is generated by the following elementary symmetric polynomials:
\begin{multline*}
 \epsilon_{p_j}(x_{i_1 + \dots + i_{j-1} + 1}, \dots, x_{i_1 + \dots + i_{j-1} + i_{j}}) = \\
\underset{\substack{q_1 < \dots < q_{p_j}\\ q_l \in \{ 1,\dots, i_j \} } }{\sum}x_{i_1 + \dots + i_{j-1} + q_1} \dots x_{i_1 + \dots + i_{j-1} + q_{p_j}}
\end{multline*}
for all $j = 1, \dots, k$ and $p_j =1, \dots, i_j$.

Similarly, the ring $R_{i_k i_1\cdots i_{k-1}}$ is generated by the elementary symmetric polynomials 
\begin{multline*}
 \epsilon_{p_j}(x_{i_k + i_1 + \dots + i_{j-1} + 1}, \dots, x_{i_k + i_1 + \dots + i_{j-1} + i_{j}}) = \\
 \underset{\substack{q_1 < \dots < q_{p_j}\\ q_l \in \{ 1,\dots, i_j \} } }{\sum}x_{i_1 + \dots + i_{j-1} + q_1 + i_k} \dots x_{i_1 + \dots + i_{j-1} + q_{p_j} + i_k}
\end{multline*}
and 
$$ \epsilon_{p_{k}}(x_1, \dots, x_{i_k}) = \underset{\substack{q_1 < \dots < q_{p_k}\\q_l \in \{ 1,\dots, i_k \}}}{\sum}x_{q_1} \dots x_{q_{p_k}}$$
for all $j = 1, \dots, k-1$, $p_j =1, \dots, i_{j}$ and $p_k = 1, \dots, i_k$.

Note that 
$$
\rho^{i_k}(x_q)=\begin{cases}
x_{q+i_k}&\text{for}\;q = 1, \dots, i_1 + \dots + i_{k-1}\\
x_{q - i_1 - \dots - i_{k-1} } -y&\text{for}\;q = i_1 + \dots + i_{k-1} +1, \dots, i_1 + \dots + i_{k}
\end{cases}.
$$

Let us look at the image under $\rho^{i_k}$ of the elementary symmetric polynomials generating $R_{i_1\cdots i_k}$. For $j = 1, \dots, k-1$, we have  
\begin{eqnarray*}
\rho^{i_k}(\epsilon_{p_j}(x_{i_1 + \dots + i_{j-1} + 1}, \dots, x_{i_1 + \dots + i_{j-1} + i_{j}}))&=\\ 
 \epsilon_{p_j}(x_{i_1 + \dots + i_{j-1} + 1 + i_k}, \dots, x_{i_1 + \dots + i_{j-1} + i_{j} + i_k}),&
\end{eqnarray*}
which indeed belongs to $R_{i_k i_1\cdots i_{k-1}}$. 

For $j=k$ and $p_k =1, \dots, i_k$, we get 
\begin{eqnarray*}
\rho^{i_k}(\epsilon_{p_k}(x_{i_1 + \dots + i_{k-1} + 1}, \dots, x_{i_1 + \dots +  i_{k}}))&=\\ 
\underset{\substack{q_1 < \dots < q_{p_k}\\q_l \in \{ 1,\dots, i_k \}}}{\sum}(x_{q_1}-y) \dots (x_{q_{p_k}}-y).&
\end{eqnarray*}
This product is equal to
\begin{multline*} \underset{\substack{q_1 < \dots < q_{p_k}\\q_l \in \{ 1,\dots, i_k \}}}{\sum}x_{q_1} \dots x_{q_{p_k}} -y \underset{\substack{q_1 < \dots < q_{p_k-1}\\q_l \in \{ 1,\dots, i_k \}}}{\sum}x_{q_1} \dots x_{q_{p_k -1}}  \\
+y^2 \underset{\substack{q_1 < \dots < q_{p_k-2}\\q_l \in \{ 1,\dots, i_k \}}}{\sum}x_{q_1} \dots x_{q_{p_k-2}} + \dots + (-1)^{p_k -1}y^{p_k-1} (x_1 + \dots + x_{i_k})+(-1)^{p_k}y^{p_k} = \\
\epsilon_{p_k}(x_1, \dots, x_{i_k})-y\epsilon_{p_k-1}(x_1, \dots, x_{i_k}) +y^2\epsilon_{p_k-2}(x_1, \dots, x_{i_k}) \\
+ \dots+ (-1)^{p_k -1}y^{p_k-1} \epsilon_{p_k-2}(x_1, \dots, x_{i_k})+(-1)^{p_k}y^{p_k},
\end{multline*}
which again belongs to $R_{i_k i_1\cdots i_{k-1}}$. 

This shows that $\rho^{i_k}$ sends the ring $R_{i_1\cdots i_k}$ isomorphically to the ring $R_{i_k i_1\cdots i_{k-1}}$.
\end{proof}

The proof of the following lemma is straightforward and is left to the reader. 
\begin{lem}\label{comsingbim}
We have the following isomorphisms of bimodules 
relating twisting, induction and restriction:

$$R_{\rho^{-i_k}}R_{\rho^{i_k}}(R_{i_1 \cdots i_{k}})
\cong R_{\rho^{i_1}}R_{\rho^{-i_1}}(R_{i_1 \cdots i_{k}})\cong R_{i_1 \cdots i_{k}} ,$$

$$\mbox{Ind}_{i_j}^{i_j^0 , i_j^1} R_{\rho^{i_k}}(R_{i_1 \cdots i_{k}}) \cong 
R_{\rho^{i_k}}\mbox{Ind}_{i_j}^{i_j^0 , i_j^1}(R_{i_1 \cdots i_{k}})\quad
\text{for}\; j \neq k,$$

$$
\mbox{Ind}_{i_k}^{i_k^0 , i_k^1}R_{\rho^{i_k}}(R_{i_1 \cdots i_{k}})
 \cong R_{\rho^{i_k^0}}R_{\rho^{i_k^1}}\mbox{Ind}_{i_k}^{i_k^0 , i_k^1}(R_{i_1 \cdots i_{k}}),
$$

$$\mbox{Res}_{i_j, i_{j+1}}^{i_j + i_{j+1}}R_{\rho^{i_k}} (R_{i_1 \cdots i_{k}})
\cong R_{\rho^{i_k}}\mbox{Res}_{i_j, i_{j+1}}^{i_j + i_{j+1}}(R_{i_1 \cdots i_{k}})
\quad\text{for}\; j \neq k-1,$$

$$
\mbox{Res}_{i_{k-1}, i_{k}}^{i_{k-1} + i_{k}}R_{\rho^{i_{k-1}}}R_{\rho^{i_k}} (R_{i_1 \cdots i_{k}})
\cong R_{\rho^{i_{k-1}+i_k}}\mbox{Res}_{i_{k-1}, i_{k}}^{i_{k-1} + i_{k}}(R_{i_1 \cdots i_{k}}).
$$

There exist analogous isomorphisms for the negative twists. 
\end{lem}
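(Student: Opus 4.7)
The plan is to establish each of the five isomorphisms by writing down an explicit degree-preserving bimodule map sending $1\otimes 1$ (or $1$, where appropriate) to a distinguished element, and then checking well-definedness and bijectivity. Throughout, the key technical input is the previous lemma, which shows that $\rho^{\pm i_k}$ restricts to a ring isomorphism between the appropriate rings of partial invariants; the $y$--corrections coming from the action of $\rho$ on $x_r$ (and of $\rho^{-1}$ on $x_1$) are absorbed into the twist and so do not interfere with the verification.

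For the first identity, I would use that $\rho^{i_k}\colon R_{i_1\cdots i_k}\to R_{i_k i_1\cdots i_{k-1}}$ and $\rho^{-i_k}$ are mutually inverse. Concretely, the map $a\otimes b\otimes c\mapsto a\rho^{-i_k}(b)c$ from the double twisted bimodule to $R_{i_1\cdots i_k}$ is a two-sided module map because the right action of $x\in R_{i_1\cdots i_k}$ on the left factor is twisted by $\rho^{-i_k}\circ\rho^{i_k}=\mathrm{id}$. The analogous argument handles the second composite.

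For isomorphisms (2) and (4), the point is that when $j\neq k$ (respectively $j\neq k-1$), the block on which $\mathrm{Ind}$ (respectively $\mathrm{Res}$) operates is disjoint from the ``last block of size $i_k$'' affected by the twist $\rho^{i_k}$. Because $\rho^{i_k}$ permutes the variables cyclically by $i_k$ positions (plus a $y$--shift on the overflow), it preserves the subring generated by symmetric polynomials in the block indexed by $j$, up to relabeling the indices. Sending $a\otimes b\mapsto a\otimes b$ under suitable identifications gives a bimodule map, and bijectivity is immediate since both sides are free modules of the same rank. For isomorphism (3), the relation $\rho^{i_k^0+i_k^1}=\rho^{i_k^0}\circ \rho^{i_k^1}$ on $R$ translates, at the level of twisted bimodules, into the identity $R_{\rho^{i_k^0}}R_{\rho^{i_k^1}}\cong R_{\rho^{i_k}}$; the induction just enlarges the parabolic and is unaffected by this factorization. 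Isomorphism (5) is dual, using $R_{\rho^{i_{k-1}}}R_{\rho^{i_k}}\cong R_{\rho^{i_{k-1}+i_k}}$, with the restriction combining the last two blocks into a block of size $i_{k-1}+i_k$.

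The main obstacle is not conceptual but bookkeeping: one must keep careful track of which cyclic permutation of the $x_i$ is being applied on each side, and verify that the $y$--terms introduced by $\rho$ when a variable wraps around (i.e.\ passes through $x_r\leftrightarrow x_1$) appear on both sides of each identification. This is where Lemma 3.0.X (the one just proved, on how $\rho^{i_k}$ sends generators of $R_{i_1\cdots i_k}$ to those of $R_{i_ki_1\cdots i_{k-1}}$) does all the work: it guarantees that the naive map ``$a\otimes b\mapsto a\otimes b$'' actually lands in the claimed bimodule rather than in something only isomorphic up to a further twist. Once these compatibilities are recorded, each isomorphism follows from a short diagram chase.
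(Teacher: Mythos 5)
Your proof is correct. The paper gives no argument here at all --- it states only that the proof is straightforward and leaves it to the reader --- and what you describe is precisely the intended verification: untwisting via $a\otimes b\otimes c\mapsto a\,\rho^{-i_k}(b)\,c$ for the first identity, and maps of the form $a\otimes b\mapsto a\otimes\rho^{i_k}(b)$ (exactly as in the proof of Lemma~\ref{isobim}, which is what your phrase ``under suitable identifications'' must amount to) for the commutation of twists with induction and restriction, with well-definedness resting on the fact that $\rho^{\pm i_k}$ restricts to an isomorphism between the relevant rings of partial invariants.
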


\begin{lem}
 The category $\ebim_{\hat{A}_{r-1}}$ is a full subcategory of $\esbim_{\hat{A}_{r-1}}$.
\end{lem}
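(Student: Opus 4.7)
The plan is to exhibit $\ebim_{\hat{A}_{r-1}}$ as the full sub-$2$-category of $\esbim_{\hat{A}_{r-1}}$ whose unique object is $R_{1^r}=R=\Q[y][x_1,\dots,x_r]$, by identifying the generating $1$-morphisms of $\ebim_{\hat{A}_{r-1}}$ with specific bimodules already present among the $1$-morphisms of $\esbim_{\hat{A}_{r-1}}$. Since the $2$-morphisms on both sides are by definition the degree-preserving bimodule maps, fullness at the level of $2$-morphisms is automatic once the matching at the level of $1$-morphisms is established.

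First, I would send the (unique) object of $\ebim_{\hat{A}_{r-1}}$ to $R_{1^r}$, and map the twisted bimodules $B_{\rho^{\pm 1}}$ to $R_{\rho^{\pm 1}}$, which are literally among the generating $1$-morphisms of $\esbim_{\hat{A}_{r-1}}$. For each $i\in\{1,\dots,r-1\}$, the bimodule $B_i=R\otimes_{R^{\sigma_i}} R$ is identified with $\mathrm{Ind}_2^{1,1}\mathrm{Res}_{1,1}^{2}(R_{1^r})$ where the merge/split is performed at positions $(i,i+1)$; this is exactly Williamson's identification in the non-extended affine case and requires only the observation that $R^{\sigma_i}$ coincides with the corresponding partially symmetric subring, which is the object $R_{1,\dots,1,2,1,\dots,1}$ of $\esbim_{\hat{A}_{r-1}}$.

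The key step is $B_r$, since the ``merge'' at positions $(r,1)$ is not one of the elementary inductions/restrictions in $\esbim_{\hat{A}_{r-1}}$. Here I would use the isomorphism \eqref{tiso1} of Lemma~\ref{isobim} applied at $i=r-1$, which gives
$$
B_r \;\cong\; B_\rho \otimes_R B_{r-1} \otimes_R B_{\rho^{-1}},
$$
and therefore send $B_r$ to
$$
R_{\rho}\otimes_R \mathrm{Ind}_{2}^{1,1}\mathrm{Res}_{1,1}^{2}(R_{1^r})\otimes_R R_{\rho^{-1}},
$$
which is a $1$-morphism in $\esbim_{\hat{A}_{r-1}}$ by the closure axioms (induction, restriction, and the twists $R_{\rho^{\pm 1}}$ are all allowed, and Lemma~\ref{comsingbim} ensures that tensoring them gives again a $1$-morphism of $\esbim_{\hat{A}_{r-1}}$). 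One then extends the assignment monoidally to tensor products, direct sums, and grading shifts.

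To conclude, I would verify that this assignment is well-defined by checking that the structural isomorphisms \eqref{S1}--\eqref{S4} of $\ebim_{\hat{A}_{r-1}}$ hold in $\esbim_{\hat{A}_{r-1}}$: \eqref{S1}--\eqref{S3} are inherited from the non-extended Soergel case, and \eqref{S4} is precisely the content of Lemma~\ref{isobim} combined with the isomorphisms of Lemma~\ref{comsingbim}. Faithfulness is immediate, as the images are the same bimodules as in $\ebim_{\hat{A}_{r-1}}$; fullness is automatic because the $2$-morphisms in $\esbim_{\hat{A}_{r-1}}$ between these bimodules are exactly the degree-preserving bimodule maps, which are the $2$-morphisms of $\ebim_{\hat{A}_{r-1}}$. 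The main technical obstacle will be verifying cleanly that the ``rotated'' expression for $B_r$ given above reproduces, as a concrete $R$-bimodule, the bimodule $R\otimes_{R^{\sigma_r}} R$; this boils down to identifying $\rho^{-1}(R^{\sigma_{r-1}})$ with $R^{\sigma_r}$, which is a direct computation using the explicit action of $\rho$ and the description of $R^{\sigma_r}$ given in Subsection~\ref{bim}.
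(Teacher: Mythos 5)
Your proposal is correct and follows essentially the same route as the paper: both send $B_i$ ($i<r$) and $B_{\rho^{\pm 1}}$ to their obvious counterparts and identify $B_r$ with $B_{\rho}\otimes_R B_{r-1}\otimes_R B_{\rho^{-1}}$ via Lemma~\ref{isobim}, with fullness then automatic since $2$-morphisms on both sides are degree-preserving bimodule maps. (Only a cosmetic slip in your last sentence: the relevant identity is $\rho(R^{\sigma_{r-1}})=R^{\sigma_r}$, not $\rho^{-1}(R^{\sigma_{r-1}})=R^{\sigma_r}$.)
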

\begin{proof}
For $i=1,\ldots, r-1$, the full embedding of $\ebim_{\hat{A}_{r-1}}$ into $\esbim_{\hat{A}_{r-1}}$ sends 
$B_i$ to $B_i$ and $B_{\rho^{\pm 1}}$ to $B_{\rho^{\pm 1}} = R_{(1^r),\rho^{\pm 1}}$. For $i=r$, the bimodule $B_r$ is sent to 
$$B_{\rho} \ot_R B_{r-1} \ot_R B_{\rho^{-1}}\in\esbim_{\hat{A}_{r-1}}.$$ 
The fact that the isomorphism in $\ebim_{\hat{A}_{r-1}}$ between $B_r$ and 
$B_{\rho} \ot_R B_{r-1} \ot_R B_{\rho^{-1}} $ is unique 
ensures that $\ebim_{\hat{A}_{r-1}}$ is a full subcategory of $\esbim_{\hat{A}_{r-1}}$.
\end{proof}

\subsection{The $2$-representation}~\label{functSchSBim}

We will mostly refer the reader to~\cite{KL3, KLer, MSVschur} for 
the definition of 
$$\mathcal{F}' : \Scat(n,r)^{*}_{[y]} \to \esbim_{\hat{A}_{r-1}}^{*},$$  
since $\mathcal{F}'$ is a straigthforward generalization of the 
equivariant 
Khovanov-Lauda $2$--representations discussed in those papers. 

\begin{rem}
\label{rem:relationLuGV}
Khovanov and Lauda used the equivariant cohomology rings of 
the varieties of partial flags in $\C^r$ for the definition of their 
equivariant $2$--representations. These cohomology rings are 
isomorphic to the finite type $A$ singular Soergel bimodules 
which were used in~\cite{MSVschur}. 

We do not know if the $2$--representation in this paper, which 
we define using the extended affine singular Soergel bimodules, can be 
defined in terms of equivariant cohomology rings of the varieties of 
cyclic partial flags (or periodic lattices) in 
$\C[\epsilon,\epsilon^{-1}]^r$ defined in~\cite{LuAff} and~\cite{GV}.      
\end{rem}

\subsubsection{Definition of $\mathcal{F}'$}
Note that for $y=0$ the restriction of $\F'\circ \Psi_{n,r}$ to 
$\mathcal{U}(\mathfrak{sl}_n)$ is simply 
equal to $\Gamma^G_r$, where 
$$\Psi_{n,r}\colon \mathcal{U}(\hat{\mathfrak{sl}_n})^*_{[y]}\to 
\hat{\mathcal{S}}(n,r)^*_{[y]}$$ was defined 
just before Proposition~\ref{prop:affslnquotient}.  

We will define the $2$--functor $\F'$ on all objects and $1$--morphisms of $\Scat(n,r)^{*}_{[y]}$, give explicitly the images of 
the $2$--morphisms for which the color $n$ appears and explain how they are related to Khovanov and Lauda's $2$--representation $\Gamma^G_r$. Here we are using their notation $k_i=\lambda_1+\dots+\lambda_i$, for $i=1,\dots,n$, with the convention that $k_0 = 0$.

\noindent $\bullet$. On objects $\lambda\in\Lambda(n,r)$, the 2-functor $\F'$ is given by:
$$\lambda=(\lambda_1,\cdots,\lambda_n) \mapsto R_{\lambda_1\cdots \lambda_n}.$$

\noindent $\bullet$. On $1$-morphisms we define $\F'$ as follows:
$$\onel \{t\}\mapsto R_{\lambda_1\cdots \lambda_n}\{t\}.$$

For $i=1,\ldots,n-1$ and $t \in \Z$ we define:

$$\cal{E}_{i}\onel\{t\}\mapsto \mbox{Res}_{\lambda_i,1}^{\lambda_i +1}\mbox{Ind}_{\lambda_{i+1}}^{1,\lambda_{i+1}-1}\left(R_{\lambda_1\cdots \lambda_n} \{t+1+k_{i-1}+k_i-k_{i+1}\}\right)$$
and 
$$\cal{E}_{-i}\onel\{t\}\mapsto 
\mbox{Res}_{1,\lambda_{i+1}}^{\lambda_{i+1} +1}\mbox{Ind}_{\lambda_i}^{\lambda_i-1,1}\left(R_{\lambda_1\cdots \lambda_n} \{t+1-k_i\}\right).
$$

For $i=n$ and $t \in \Z$ we define:

\begin{multline*}
\cal{E}_{n}\onel\{t\}\mapsto\\ 
\mbox{Res}_{\lambda_n, 1}^{\lambda_n +1}R_{\rho^{-1}}\mbox{Ind}_{\lambda_1}^{1,\lambda_1 -1}\left(R_{\lambda_1\cdots \lambda_n} \{t+ n - (r+k_1) -(k_1 + \cdots + k_{n-2}) \}\right)
\end{multline*}
and 
$$
\cal{E}_{-n}\onel\{t\}\mapsto 
\mbox{Res}_{1,\lambda_1}^{\lambda_1 +1}R_{\rho} \mbox{Ind}_{\lambda_n}^{\lambda_n-1,1}\left(R_{\lambda_1\cdots \lambda_n} \{t+ k_1 + \cdots + k_{n-1} \}\right).
$$

\begin{rem}
Let us explain where the shifts in the image of the new $1$--morphisms come from. We will denote by $A_n(\lambda)$ the shift appearing in 
$\F'(\cal{E}_{ n}\onel)$ and by $B_n(\lambda)$ the one appearing in $\F'(\cal{E}_{- n}\onel)$. To understand their origin, 
let us go back to the decategorified level (see Section~\ref{SchAlg}), where the embedding of $\hat{\SD}(n,r)$ into $\hat{\SD}(n+1,r)$ sends $E_n1_{\lambda}$ to $E_nE_{n+1}1_{(\lambda,0)}$ and $E_{-n}1_{\lambda}$ to $E_{-(n+1)}E_{-n}1_{(\lambda,0)}$. We want this embedding to have a categorical analogue. Although we will not 
work out the details of the corresponding functor in this paper, 
a necessary condition for the existence of such a functor is that 
the shifts satisfy the following recurrence relations:
\begin{eqnarray*}
A_n(\lambda) & = & k_{n-1} - 1 + A_{n+1}(\lambda) \\
B_n(\lambda) & = & B_{n+1}(\lambda') +1 - k_n
\end{eqnarray*}
where $\lambda' = (\lambda_1, \cdots, \lambda_{n-1}, \lambda_n -1,1)$. This determines the value of $A_n(\lambda)$ and $B_n(\lambda)$ up to a constant which 
does not depend on $n$. To fix these constants, we use the fact that we want the triangle of lemma~\ref{lem:comm} to be commutative and the $2$--functor $\F'$ to be degree preserving. 
Note that $\F(-)$ has a shift equal to zero, so $\F'(\mathcal{E}_{r} \dots \mathcal{E}_{1} \mathcal{E}_{r+1} \dots \mathcal{E}_{n}{\mathbf 1}_r) =\F' \Sigma_{n,r}(-)$ 
should have a shift equal to zero too. In this way we obtain a constraint on $A_n((1^r))$ and we deduce that the aforementioned overall constant is 
equal to $-(r+k_1)$. Similarly $\F(+)$ has a shift equal to zero, so $\F'(\mathcal{E}_{-n} \dots \mathcal{E}_{-r-1} \mathcal{E}_{-1} \dots \mathcal{E}_{-r}{\mathbf 1}_r)=\F' \Sigma_{n,r}(+)$ has to have a zero-shift too. This gives us a constraint on $ B_n((1^r)+\bar{\alpha}_n)$ and allows us to deduce that the aforementioned overall 
constant is equal to zero. The reader can verify that this choice of overall constants also fits with the $-1$--shift of 
$\F'(\mathcal{E}_{-n} \dots \mathcal{E}_{-r} \mathcal{E}_{r} \dots \mathcal{E}_{n}{\mathbf 1}_r)= \F'\Sigma_{n,r}(r)$.
\end{rem}

\noindent $\bullet$. To define $\F'$ on $2$-morphisms, we have to give the bimodule maps which correspond 
to the generating $2$-morphisms of $\Scat(n,r)^{*}_{[y]}$. 
The ones not involving color $n$ have the same image as under Khovanov and Lauda's 2-representation~\cite{KL3, KLer, MSVschur}. 

When the color $n$ occurs in a generating $2$--morphism, the $2$--functor $\F'$ is as follows. Here $\epsilon_{\alpha}$ and $\eta_{\alpha}$ 
denote the elementary and the complete symmetric polynomials respectively. 
$$
\begin{array}{l}
    \mathcal{F}'\left(\;\xy
    (0,0)*{\dblue\bbpef{\black n}};
    (6,-2)*{ \lambda};
    \endxy \; \right)
    \colon 1 \mapsto \\ \\
   \xsum{f=0}{\lambda_n}(-1)^{\lambda_n-f} \; x_{1}^{f} \otimes 1 \otimes 1 \otimes  \epsilon_{\lambda_n-f}(x_{r-\lambda_n+1}+y, \dots, x_{r}+y) =\\
\xsum{f=0}{\lambda_n}(-1)^{\lambda_n-f} \; (x_{1}-y)^{f}\otimes 1 \otimes 1 \otimes \epsilon_{\lambda_n-f}(x_{r-\lambda_n+1}, \dots, x_{r})
 %    \label{def_eq_FE_G}
\end{array}
$$

$$
\begin{array}{l}
   \mathcal{F}'\left(\;\xy
    (0,0)*{\dblue\bbpfe{\black n}};
    (6,-2)*{ \lambda};
    \endxy\;\right)
     \colon 
        1 \mapsto \\ \\
    (-1)^{\lambda_{1}}\xsum{f=0}{\lambda_{1}}(-1)^{\lambda_{1}-f} \; (x_{r}+y)^{f}\otimes 1 \otimes 1 \otimes \epsilon_{\lambda_{1}-f}(x_{1}, \dots, x_{\lambda_1}) = \\
(-1)^{\lambda_{1}}\xsum{f=0}{\lambda_{1}}(-1)^{\lambda_{1}-f} \; x_{r}^{f}\otimes 1 \otimes 1 \otimes \epsilon_{\lambda_{1}-f}(x_{1}-y, \dots, x_{\lambda_1}-y)
 %    \label{def_eq_EF_G}
\end{array}
$$

$$
\begin{array}{l}
  \mathcal{F}'\left(\;\xy
    (0,0)*{\dblue\bbcef{\black n}};
    (6,2)*{ \lambda};
    \endxy\;\right)
\colon  x_{1}^{\alpha_1} \otimes 1 \otimes 1\otimes x_{1}^{\alpha_2}
\mapsto \\ \\
(-1)^{\lambda_{1}+1} \eta_{\alpha_1+\alpha_2+1-\lambda_{1}}(x_{1}, \dots, x_{\lambda_1})= \\
(-1)^{\lambda_{1}+1} \xsum{p=0}{\alpha_{1}} \xsum{q=0}{\alpha_{2}}\binom{\alpha_{1}}{p} \binom{\alpha_{2}}{q} y^{\alpha_1+\alpha_2-p-q} 
\eta_{p+q+1-\lambda_{1}}(x_{1}-y, \dots, x_{\lambda_1}-y)
 %  \label{def_FE_cap}
\end{array}
$$

$$
\begin{array}{l}
  \mathcal{F}'\left(\;\xy
    (0,0)*{\dblue\bbcfe{\black n}};
    (6,2)*{ \lambda};
    \endxy \;\right)
\colon 
  x_{r}^{\alpha_1}\otimes 1 \otimes 1\otimes x_{r}^{\alpha_2}
\mapsto \\ \\
\xsum{p=0}{\alpha_{1}} \xsum{q=0}{\alpha_{2}}\binom{\alpha_{1}}{p} \binom{\alpha_{2}}{q} (-y)^{\alpha_1+\alpha_2-p-q} 
\eta_{\alpha_1+\alpha_2+1-\lambda_{n}}(x_{r-\lambda_n+1}+y, \dots, x_{r}+y)= \\
\eta_{\alpha_1+\alpha_2+1-\lambda_{n}}(x_{r-\lambda_n+1}, \dots, x_{r})
%\label{def_EF_cap}
 %%
\end{array}
$$

If $|n - j|>1$, 
\begin{eqnarray*}
\mathcal{F}'\left(
 \xy
  (0,0)*{\xybox{
    (0,0)*{\dblue\xybox{(-4,-4)*{};(4,4)*{} **\crv{(-4,-1) & (4,1)}?(1)*\dir{>}}} ;
    (0,0)*{\dpink\xybox{(4,-4)*{};(-4,4)*{} **\crv{(4,-1) & (-4,1)}?(1)*\dir{>}}};
    (-5,-3)*{\scs n};
     (5.1,-3)*{\scs j};
     (8,1)*{ \lambda};
     (-7,0)*{};(9,0)*{};
     }};
  \endxy
  \right)
 &\maps &
    \begin{array}{ccl}
       x_{r}^{\alpha_1}\otimes 1 \otimes x_{k_{j}+1}^{\alpha_2} & \mapsto &  x_{k_{j}}^{\alpha_2}\otimes 1 \otimes  (x_{1}-y)^{\alpha_1}  \\ 
     \end{array} 
     \nn
\end{eqnarray*}

\begin{eqnarray*}
\mathcal{F}'\left(
 \xy
  (0,0)*{\xybox{
    (0,0)*{\dpink\xybox{(-4,-4)*{};(4,4)*{} **\crv{(-4,-1) & (4,1)}?(1)*\dir{>}}} ;
    (0,0)*{\dblue\xybox{(4,-4)*{};(-4,4)*{} **\crv{(4,-1) & (-4,1)}?(1)*\dir{>}}};
    (-5,-3)*{\scs j};
     (5.1,-3)*{\scs n};
     (8,1)*{ \lambda};
     (-7,0)*{};(9,0)*{};
     }};
  \endxy
  \right)
 &\maps &
    \begin{array}{ccl}
       x_{k_{j}}^{\alpha_1}\otimes 1 \otimes x_{1}^{\alpha_2} & \mapsto &  (x_{r}+y)^{\alpha_2}\otimes 1 \otimes x_{k_{j}+1}^{\alpha_1}  \\ 
     \end{array} 
     \nn
\end{eqnarray*}

\begin{eqnarray*}
\mathcal{F}'\left(
\xy
  (0,0)*{\xybox{
    (0,0)*{\dblue\xybox{(-4,-4)*{};(4,4)*{} **\crv{(-4,-1) & (4,1)}?(0)*\dir{<}}} ;
    (0,0)*{\dpink\xybox{(4,-4)*{};(-4,4)*{} **\crv{(4,-1) & (-4,1)}?(0)*\dir{<}}};
    (-6,-3)*{\scs n};
     (6,-3)*{\scs j};
     (8,1)*{ \lambda};
     (-7,0)*{};(9,0)*{};
     }};
  \endxy
  \right)
 &\maps &
    \begin{array}{ccl}
       x_{1}^{\alpha_1}\otimes 1 \otimes x_{k_{j}}^{\alpha_2} & \mapsto &  x_{k_{j}+1}^{\alpha_2}\otimes 1 \otimes  (x_{r}+y)^{\alpha_1}  \\ 
     \end{array} 
     \nn
\end{eqnarray*}

\begin{eqnarray*}
\mathcal{F}'\left(
\xy
  (0,0)*{\xybox{
    (0,0)*{\dpink\xybox{(-4,-4)*{};(4,4)*{} **\crv{(-4,-1) & (4,1)}?(0)*\dir{<}}} ;
    (0,0)*{\dblue\xybox{(4,-4)*{};(-4,4)*{} **\crv{(4,-1) & (-4,1)}?(0)*\dir{<}}};
    (-6,-3)*{\scs j};
     (6,-3)*{\scs n};
     (8,1)*{ \lambda};
     (-7,0)*{};(9,0)*{};
     }};
  \endxy
  \right)
 &\maps &
    \begin{array}{ccl}
       x_{k_{j}1}^{\alpha_1}\otimes 1 \otimes x_{r}^{\alpha_2} & \mapsto &  (x_{1}-y)^{\alpha_2}\otimes 1 \otimes  x_{k_{j}}^{\alpha_1}  \\ 
     \end{array} 
     \nn
\end{eqnarray*}
%In the case $i =j=n$,
$$ 
\begin{array}{l}
\mathcal{F}'\left(
 \xy
  (0,0)*{\xybox{
    (0,0)*{\dblue\xybox{(-4,-4)*{};(4,4)*{} **\crv{(-4,-1) & (4,1)}?(1)*\dir{>}}} ;
    (0,0)*{\dblue\xybox{(4,-4)*{};(-4,4)*{} **\crv{(4,-1) & (-4,1)}?(1)*\dir{>}}};
    (-5,-3)*{\scs n};
     (5.1,-3)*{\scs n};
     (8,1)*{ \lambda};
     (-7,0)*{};(9,0)*{};
     }};
  \endxy
  \right)
 \colon x_{r}^{\alpha_1}\otimes 1 \otimes 1 \otimes x_{1}^{\alpha_2} \mapsto \\ \\
(x_r +y)^{\alpha_2}\xsum{p=0}{\alpha_1}\xsum{f=0}{p-1} \binom{\alpha_{1}}{p} (-y)^{\alpha_1-p}(x_{r}+y)^{p-1-f}\otimes 1 \otimes 1 \otimes x_{1}^{f} \\ 
-\; x_{r}^{\alpha_1}\xsum{g=0}{\alpha_2-1}(x_{r}+y)^{\alpha_2-1-g}\otimes 
1 \otimes 1 \otimes x_{1}^{g} = \\
(x_r +y)^{\alpha_2}\xsum{f=0}{\alpha_1-1} x_{r}^{\alpha_1-1-f}\otimes 1 \otimes 1 \otimes (x_{1}-y)^{f}\\ 
-\;x_{r}^{\alpha_1}\xsum{q=0}{\alpha_2}\xsum{g=0}{q-1}\binom{\alpha_{2}}{q} y^{\alpha_2-q}x_{r}^{q-1-g}\otimes 1 \otimes 1 \otimes (x_{1}-y)^{g}   
\end{array}
$$

$$
\begin{array}{l}
\mathcal{F}'\left(
 \xy
  (0,0)*{\xybox{
    (0,0)*{\dblue\xybox{(-4,-4)*{};(4,4)*{} **\crv{(-4,-1) & (4,1)}?(0)*\dir{<}}} ;
    (0,0)*{\dblue\xybox{(4,-4)*{};(-4,4)*{} **\crv{(4,-1) & (-4,1)}?(0)*\dir{<}}};
    (-6,-3)*{\scs n};
     (6,-3)*{\scs n};
     (8,1)*{ \lambda};
     (-7,0)*{};(9,0)*{};
     }};
  \endxy
  \right)
 \colon x_{1}^{\alpha_1}\otimes 1 \otimes 1 \otimes x_{r}^{\alpha_2}  \mapsto \\ \\
x_1^{\alpha_1}\xsum{p=0}{\alpha_2}\xsum{f=0}{p-1} 
\binom{\alpha_{2}}{p} (-y)^{\alpha_2-p}x_{1}^{p-1-f}\otimes 1 \otimes 1 \otimes (x_{r}+y)^{f} \\
-\;(x_{1}-y)^{\alpha_2}\xsum{g=0}{\alpha_1-1}x_{1}^{\alpha_1-1-g}\otimes 1 \otimes 1 \otimes (x_{r}+y)^{g} = \\
x_1^{\alpha_1}\xsum{f=0}{\alpha_2-1} (x_{1}-y)^{\alpha_2-1-f}\otimes 1 \otimes 1 \otimes x_{r}^{f} \\ 
-\; (x_{1}-y)^{\alpha_2}\xsum{q=0}{\alpha_1}\xsum{g=0}{q-1}\binom{\alpha_{1}}{q} y^{\alpha_1-q}(x_{1}-y)^{q-1-g}\otimes 1 \otimes 1 \otimes x_{r}^{g}   
\end{array}
$$
%In the case $i -1=j$,
$$
\begin{array}{l}
\mathcal{F}'\left(
 \xy
  (0,0)*{\xybox{
    (0,0)*{\dgreen\xybox{(-4,-4)*{};(4,4)*{} **\crv{(-4,-1) & (4,1)}?(1)*\dir{>}}} ;
    (0,0)*{\dblue\xybox{(4,-4)*{};(-4,4)*{} **\crv{(4,-1) & (-4,1)}?(1)*\dir{>}}};
    (-5,-3)*{\scs 1};
     (5.1,-3)*{\scs n};
     (8,1)*{ \lambda};
     (-7,0)*{};(9,0)*{};
     }};
  \endxy
  \right)
 \colon 
       x_{\lambda_1}^{\alpha_1}\otimes1 \otimes x_{1}^{\alpha_2}  \mapsto \\ \\
\left( (x_{r}+y)^{\alpha_2} \otimes 1\otimes x_{\lambda_1+1}^{\alpha_1+1}- 
(x_{r}+y)^{\alpha_2+1}\otimes1 \otimes     x_{\lambda_1+1}^{\alpha_1} \right) \{ -1\} 
=\\ 
\left( (x_{r}+y)^{\alpha_2} \otimes 1\otimes x_{\lambda_1+1}^{\alpha_1}(x_{\lambda_1+1}-y)- 
x_{r}(x_{r}+y)^{\alpha_2}\otimes1 \otimes     x_{\lambda_1+1}^{\alpha_1} \right) \{ -1\} 
\end{array}              
$$
$$
\begin{array}{l}
\mathcal{F}'\left(
 \xy
  (0,0)*{\xybox{
    (0,0)*{\dblue\xybox{(-4,-4)*{};(4,4)*{} **\crv{(-4,-1) & (4,1)}?(1)*\dir{>} }};
    (0,0)*{\dred\xybox{(4,-4)*{};(-4,4)*{} **\crv{(4,-1) & (-4,1)}?(1)*\dir{>}}};
    (-5,-3)*{\scs n};
     (7.1,-3)*{\scs n-1};
     (8,1)*{ \lambda};
     (-7,0)*{};(9,0)*{};
     }};
  \endxy
  \right)
 \colon 
        x_{r}^{\alpha_1}\otimes1 \otimes x_{r-\lambda_n+1}^{\alpha_2}  \mapsto \\ \\  
\left( (x_{r-\lambda_n}+y)^{\alpha_2} \otimes 1\otimes (x_{1}-y)^{\alpha_1}x_1-
(x_{r-\lambda_n}+y)x_{r-\lambda_n}^{\alpha_2}\otimes1 \otimes     (x_{1}-y)^{\alpha_1} 
\right) \{ -1\}=\\ 
\left( (x_{r-\lambda_n}+y)^{\alpha_2} \otimes 1\otimes (x_{1}-y)^{\alpha_1+1}- 
x_{r-\lambda_n}^{\alpha_2+1}\otimes1 \otimes     (x_{1}-y)^{\alpha_1} \right) \{ -1\} 
\end{array}      
$$
         
\begin{eqnarray*}
\mathcal{F}'\left(
 \xy
  (0,0)*{\xybox{
    (0,0)*{\dgreen\xybox{(-4,-4)*{};(4,4)*{} **\crv{(-4,-1) & (4,1)}?(0)*\dir{<} }};
    (0,0)*{\dblue\xybox{(4,-4)*{};(-4,4)*{} **\crv{(4,-1) & (-4,1)}?(0)*\dir{<}}};
    (-6,-3)*{\scs 1};
     (6,-3)*{\scs n};
     (8,1)*{ \lambda};
     (-7,0)*{};(9,0)*{};
     }};
  \endxy
  \right)
 &\maps &
        \begin{array}{ccl}
       x_{\lambda_1+1}^{\alpha_1}\otimes1 \otimes x_{r}^{\alpha_2} & \mapsto  & \left( (x_{1}-y)^{\alpha_2} \otimes 1\otimes x_{\lambda_1}^{\alpha_1} \right) \{ -1\} \\ 
     \end{array} 
\end{eqnarray*}              

\begin{eqnarray*}
\mathcal{F}'\left(
 \xy
  (0,0)*{\xybox{
    (0,0)*{\dblue\xybox{(-4,-4)*{};(4,4)*{} **\crv{(-4,-1) & (4,1)}?(0)*\dir{<}}} ;
    (0,0)*{\dred\xybox{(4,-4)*{};(-4,4)*{} **\crv{(4,-1) & (-4,1)}?(0)*\dir{<}}};
    (-6,-3)*{\scs n};
     (8,-3)*{\scs n-1};
     (8,1)*{ \lambda};
     (-7,0)*{};(9,0)*{};
     }};
  \endxy
  \right)
 &\maps &
        \begin{array}{ccl}
       x_{1}^{\alpha_1}\otimes1 \otimes x_{r-\lambda_n}^{\alpha_2} & \mapsto  & \left( x_{r-\lambda_n+1}^{\alpha_2} \otimes 1\otimes (x_{r}+y)^{\alpha_1} \right) \{ -1\} \\ 
     \end{array} 
\end{eqnarray*}   
%In the case $i +1=j$, 
\begin{eqnarray*}
\mathcal{F}'\left(
 \xy
  (0,0)*{\xybox{
    (0,0)*{\dblue\xybox{(-4,-4)*{};(4,4)*{} **\crv{(-4,-1) & (4,1)}?(1)*\dir{>}}} ;
    (0,0)*{\dgreen\xybox{(4,-4)*{};(-4,4)*{} **\crv{(4,-1) & (-4,1)}?(1)*\dir{>}}};
    (-5,-3)*{\scs n};
     (5.1,-3)*{\scs 1};
     (8,3)*{ \lambda};
     (-7,0)*{};(9,0)*{};
     }};
  \endxy
  \right)
 &\maps &
        \begin{array}{ccl}
       x_{r}^{\alpha_1}\otimes1 \otimes x_{\lambda_1+1}^{\alpha_2} & \mapsto  & \left( x_{\lambda_1}^{\alpha_2} \otimes 1\otimes (x_{1}-y)^{\alpha_1} \right) \{ 1\} \\ 
     \end{array} 
\end{eqnarray*}              

\begin{eqnarray*}
\mathcal{F}'\left(
 \xy
  (0,0)*{\xybox{
    (0,0)*{\dred\xybox{(-4,-4)*{};(4,4)*{} **\crv{(-4,-1) & (4,1)}?(1)*\dir{>} }};
    (0,0)*{\dblue\xybox{(4,-4)*{};(-4,4)*{} **\crv{(4,-1) & (-4,1)}?(1)*\dir{>}}};
    (-7,-3)*{\scs n-1};
     (5.1,-3)*{\scs n};
     (8,1)*{ \lambda};
     (-7,0)*{};(9,0)*{};
     }};
  \endxy
  \right)
 &\maps &
        \begin{array}{ccl}
       x_{r-\lambda_n}^{\alpha_1}\otimes1 \otimes x_{1}^{\alpha_2} & \mapsto  & \left( (x_{r}+y)^{\alpha_2} \otimes 1\otimes x_{r-\lambda_n+1}^{\alpha_1} \right) \{ 1\} \\ 
     \end{array} 
\end{eqnarray*}   

$$
\begin{array}{l}
\mathcal{F}'\left(
 \xy
  (0,0)*{\xybox{
    (0,0)*{\dblue\xybox{(-4,-4)*{};(4,4)*{} **\crv{(-4,-1) & (4,1)}?(0)*\dir{<} }};
    (0,0)*{\dgreen\xybox{(4,-4)*{};(-4,4)*{} **\crv{(4,-1) & (-4,1)}?(0)*\dir{<}}};
    (-6,-3)*{\scs n};
     (6,-3)*{\scs 1};
     (8,1)*{ \lambda};
     (-7,0)*{};(9,0)*{};
     }};
  \endxy
  \right)
 \colon 
       x_{1}^{\alpha_1}\otimes1 \otimes x_{\lambda_1}^{\alpha_2}  \mapsto \\ \\
\left( x_{\lambda_1+1}^{\alpha_2+1} \otimes 1\otimes (x_{r}+y)^{\alpha_1}-
x_{\lambda_1+1}^{\alpha_2}\otimes1 \otimes     (x_{r}+y)^{\alpha_1+1} \right) \{ 1\}=\\ 
\left( x_{\lambda_1+1}^{\alpha_2}(x_{\lambda_1+1}-y)\otimes 1\otimes (x_{r}+y)^{\alpha_1} -
x_{\lambda_1+1}^{\alpha_2}\otimes1 \otimes    x_{r}(x_{r}+y)^{\alpha_1}  \right) \{ 1\}  
\end{array}              
$$
$$
\begin{array}{l}
\mathcal{F}'\left(
 \xy
  (0,0)*{\xybox{
    (0,0)*{\dred\xybox{(-4,-4)*{};(4,4)*{} **\crv{(-4,-1) & (4,1)}?(0)*\dir{<}}} ;
    (0,0)*{\dblue\xybox{(4,-4)*{};(-4,4)*{} **\crv{(4,-1) & (-4,1)}?(0)*\dir{<}}};
    (-8,-3)*{\scs n-1};
     (6,-3)*{\scs n};
     (8,1)*{ \lambda};
     (-7,0)*{};(9,0)*{};
     }};
  \endxy
  \right)
 \colon 
    x_{r-\lambda_n+1}^{\alpha_1}\otimes1 \otimes x_{r}^{\alpha_2}  \mapsto \\ \\  
\left( (x_{1}-y)^{\alpha_2}x_1 \otimes 1\otimes(x_{r-\lambda_n}+y)^{\alpha_1} - 
(x_{1}-y)^{\alpha_2}\otimes1 \otimes  (x_{r-\lambda_n}+y)x_{r-\lambda_n}^{\alpha_1}
\right) \{ 1\} =\\ 
\left(  (x_{1}-y)^{\alpha_2+1}\otimes 1\otimes (x_{r-\lambda_n}+y)^{\alpha_1}-
(x_{1}-y)^{\alpha_2}\otimes1 \otimes  x_{r-\lambda_n}^{\alpha_1+1}   \right) \{ 1\} 
\end{array}      
$$
\begin{eqnarray*} %\label{eq_gamma_updot}
  \mathcal{F}'\left(
 \xy
  (0,0)*{\dblue\lineu{\black n}};
  (-0.25,4)*{\dblue\txt\large{$\bullet$}};
 (6,0)*{ \lambda};
 %(-8,0)*{ \lambda+i_X};
 (-10,0)*{};(10,0)*{};
 \endxy\right)
\quad &\maps&
  1 \otimes 1 \mapsto   \quad x_{r} \otimes 1 = 1 \otimes (x_1 -y)
\end{eqnarray*}

\begin{eqnarray*}
 \mathcal{F}'\left(
 \xy
  (0,0)*{\dblue\lined{\black n}};
  (-0.25,4)*{\dblue\txt\large{$\bullet$}};
 %(-6,0)*{ \lambda};
 (8,0)*{ \lambda};
 (-10,0)*{};(10,0)*{};
 \endxy\right)
\quad &\maps&
  1 \otimes 1 \mapsto   \quad  (x_1-y)\otimes 1 = 1 \otimes x_{r}
\end{eqnarray*}

\begin{rem}\label{conjtrick}
 It is natural to wonder how the previous images relate to Khovanov and Lauda's $2$--representation $\Gamma^G_r$. Indeed, take a generating $2$--morphism 
with $n$--strands. 
By Lemma~\ref{comsingbim}, the images of its source and target $1$--morphisms are isomorphic, up to a same shift, to conjugates of the images 
of $1$--morphisms which do not contain factors $\mathcal{E}_{\pm n}$. 
Here conjugation means conjugation by certain invertible twisted bimodules.
One can thus ask if the image of the $2$--morphism can be obtained 
by conjugating a $2$-morphism which does not contain strands of color $n$.
Before answering this question, 
let us do an example to make things more concrete (we omit the shifts here). 
Consider the $2$--morphism
$$
 \xy
  (0,0)*{\xybox{
    (0,0)*{\dblue\xybox{(-4,-4)*{};(4,4)*{} **\crv{(-4,-1) & (4,1)}?(1)*\dir{>} ;}};
    (0,0)*{\dpink\xybox{(4,-4)*{};(-4,4)*{} **\crv{(4,-1) & (-4,1)}?(1)*\dir{>};}};
    (-5,-3)*{\scs n};
     (5.1,-3)*{\scs j};
     (8,1)*{ \lambda};
     (-12,0)*{};(12,0)*{};
     }};
  \endxy
$$
with $|n - j|>1$. The image of its source and target $1$--morphisms are 
$$\mbox{Res}_{\lambda_n,1}^{\lambda_n +1}R_{\rho^{-1}}\mbox{Ind}_{\lambda_1}^{1,\lambda_1 -1} \mbox{Res}_{\lambda_j,1}^{\lambda_j +1} \mbox{Ind}_{\lambda_{j+1}}^{1,\lambda_{j+1} -1} \left(R_{\lambda_1,\ldots,\lambda_n}\right) $$
and
$$ \mbox{Res}_{\lambda_j,1}^{\lambda_j +1} \mbox{Ind}_{\lambda_{j+1}}^{1,\lambda_{j+1} -1} \mbox{Res}_{\lambda_n,1}^{\lambda_n +1}R_{\rho^{-1}}\mbox{Ind}_{\lambda_1}^{1,\lambda_1 -1}\left(R_{\lambda_1,\ldots,\lambda_n}\right). $$
These are isomorphic to  
\begin{equation}
\label{eq:twistexample}
R_{\rho^{-(\lambda_n +1)}} \mbox{Res}_{\lambda_n,1}^{\lambda_n +1}  \mbox{Ind}_{\lambda_1}^{1,\lambda_1 -1} \mbox{Res}_{\lambda_j,1}^{\lambda_j +1} \mbox{Ind}_{\lambda_{j+1}}^{1,\lambda_{j+1} -1}   R_{\rho^{\lambda_n}}\left(R_{\lambda_1,\ldots,\lambda_n}\right)
\end{equation}
and 
\begin{equation}
\label{eq:twistexample2}
R_{\rho^{-(\lambda_n +1)}} \mbox{Res}_{\lambda_j,1}^{\lambda_j +1} \mbox{Ind}_{\lambda_{j+1}}^{1,\lambda_{j+1} -1}  \mbox{Res}_{\lambda_n,1}^{\lambda_n +1}  \mbox{Ind}_{\lambda_1}^{1,\lambda_1 -1} R_{\rho^{\lambda_n}}\left(R_{\lambda_1,\ldots,\lambda_n}\right)
\end{equation}
respectively, where in both cases the isomorphism is given by 
\begin{equation}
 \label{eq:twistiso}
a \ot 1 \ot b \mapsto a \ot 1 \ot 1 \ot \rho^{\lambda_n}(b) = 1 \ot \rho^{\lambda_n+1}(a)  \ot \rho^{\lambda_n}(b)\ot 1.
\end{equation}
The inverse is given by 
\begin{equation}
 \label{eq:twistiso2}
 1 \ot a \ot b \ot 1= \rho^{-( \lambda_n +1)}(a) \ot 1 \ot \rho^{-\lambda_n}(b) = 1 \ot \rho^{- \lambda_n }(a)  \ot \rho^{-\lambda_n}(b).
\end{equation}
Note that the tensor factors
$$\mbox{Res}_{\lambda_n ,1}^{\lambda_n +1}\mbox{Ind}_{\lambda_1}^{1,\lambda_1 -1}\mbox{Res}_{\lambda_j,1}^{\lambda_j +1} \mbox{Ind}_{\lambda_{j+1}}^{1,\lambda_{j+1} -1} (R_{\lambda_n,\lambda_1\ldots,\lambda_{n-1}})$$ 
and 
$$\mbox{Res}_{\lambda_j,1}^{\lambda_j +1} \mbox{Ind}_{\lambda_{j+1}}^{1,\lambda_{j+1} -1} \mbox{Res}_{\lambda_n ,1}^{\lambda_n +1}\mbox{Ind}_{\lambda_1}^{1,\lambda_1 -1}(R_{\lambda_n,\lambda_1\ldots,\lambda_{n-1}})$$ 
in the middle of~\eqref{eq:twistexample} and~\eqref{eq:twistexample2} are, up to a same shift, the images of $\mathcal{E}_{1}\mathcal{E}_{j+1}{\mathbf 1}_{\lambda_n,\lambda_1,\ldots,\lambda_{n-1}}$ and $\mathcal{E}_{j+1} \mathcal{E}_{1} {\mathbf 1}_{\lambda_n,\lambda_1,\ldots,\lambda_{n-1}}$ under $\mathcal{F}'$. 

One can see that if one applies the isomorphism \eqref{eq:twistiso} to $x_{r}^{\alpha_1}\otimes 1 \otimes x_{k_{j}+1}^{\alpha_2}$ followed by the tensor product of the identity on the two twisted bimodules in~\eqref{eq:twistexample} and of the bimodule map on the central tensor factor given by the image of   
$$
 \xy
  (0,0)*{\xybox{
    (0,0)*{\dgreen\xybox{(-4,-4)*{};(4,4)*{} **\crv{(-4,-1) & (4,1)}?(1)*\dir{>} ;}};
    (0,0)*{\dyellow\xybox{(4,-4)*{};(-4,4)*{} **\crv{(4,-1) & (-4,1)}?(1)*\dir{>};}};
    (-5,-3)*{\scs 1};
     (7.1,-3)*{\scs j+1};
     (8,1)*{ \lambda'};
     (-12,0)*{};(12,0)*{};
     }};
  \endxy,
$$
where $\lambda'=(\lambda_n,\lambda_1,\ldots,\lambda_{n-1})$, and 
finally followed by the inverse isomorphism \eqref{eq:twistiso2}, one gets
\begin{multline}
 x_{r}^{\alpha_1}\otimes 1 \otimes x_{k_{j}+1}^{\alpha_2} \mapsto \\
1 \ot (x_{\lambda_n +1}-y)^{\alpha_1} \ot x_{k_{j}+\lambda_n+1}^{\alpha_2} \ot 1 = \\ 
\xsum{p=0}{\alpha_1} \binom{\alpha_{1}}{p} (-y)^{\alpha_1-p}\ot x_{\lambda_n +1}^{p}\otimes x_{k_{j}+\lambda_n+1}^{\alpha_2} \ot 1  
\mapsto  \\
\xsum{p=0}{\alpha_1} \binom{\alpha_{1}}{p} (-y)^{\alpha_1-p}\ot  x_{k_{j}+\lambda_n+1}^{\alpha_2} \ot x_{\lambda_n +1}^{p}\otimes 1 =\\ 
1 \ot  x_{k_{j}+\lambda_n+1}^{\alpha_2} \ot (x_{\lambda_n +1}-y)^{\alpha_1} \ot 1 \mapsto x_{k_{j}}^{\alpha_2}\otimes 1 \otimes  (x_{1}-y)^{\alpha_1}
\end{multline}
which is precisely the image under $\F'$ of our original $2$--morphism with 
the $n$--strand. So in this example one obtains indeed 
the same result using this conjugation trick turning $n$ into $1$. 

Of course one could have used a similar conjugation trick turning 
$n$ into $n-1$, i.e. writing everything as conjugates 
$R_{\rho^{\lambda_1 -1}} \ot - \ot R_{\rho^{-\lambda_1}}$ and using 
the bimodule map corresponding to 
$$
 \xy
  (0,0)*{\xybox{
    (0,0)*{\dred\xybox{(-4,-4)*{};(4,4)*{} **\crv{(-4,-1) & (4,1)}?(1)*\dir{>} ;}};
    (0,0)*{\dturq\xybox{(4,-4)*{};(-4,4)*{} **\crv{(4,-1) & (-4,1)}?(1)*\dir{>};}};
    (-7,-3)*{\scs n-1};
     (7.1,-3)*{\scs j-1};
     (8,1)*{ \lambda''};
     (-12,0)*{};(12,0)*{};
     }};
  \endxy,
$$
where $\lambda''=(\lambda_2,\ldots,\lambda_{n},\lambda_1)$. In this case one 
obtains again the image under $\F'$ of our original $2$--morphism.

In fact, the image of any generating $2$--morphism containing $n$--strands 
can be obtained by either one of the conjugation tricks, i.e. turning $n$ into 
$1$ or $n-1$, except for the dotted $n$--identities. 
The images of these two $2$--morphisms (up and downward) can be 
obtained by the conjugation trick which turns $n$ into $n-1$, but not 
by the one which turns $n$ into $1$. 
Indeed, if one writes $\F'( \mathcal{E}_{\pm n}{\mathbf 1}_{\lambda})$ as 
$ R_{\rho^{-(\lambda_n \pm 1)}} \ot \F'( \mathcal{E}_{\pm 1}{\mathbf 1}_{\lambda'}) \ot 
R_{\rho^{\lambda_n}}$, one obtains
\begin{equation*} 
  1 \otimes 1 \mapsto  (x_{r}+y) \otimes 1 = 1 \otimes x_1 \quad (\mbox{resp.} \ x_1\otimes 1 = 1 \otimes (x_{r}+y))
\end{equation*} 
which differs from the image under $\F'$ of the oriented upward 
(resp. downward) dotted $n$--identity.

As for the non-dotted generating $2$--morphisms of color $n$, 
both conjugation tricks give the same bimodule maps which we 
have used in the definition of $\F'$. Whenever only one expression was given 
in our definition of $\F'$, it is because the expressions obtained from 
both conjugation tricks were obviously equal. Whenever two expressions 
are given, let us prove that they are equal.  

We start with the image under $\F'$ of the right $n$-cup. Note that 
in the bimodule 
$\mbox{Res}_{1,\lambda_1-1}^{\lambda_1}R_{\rho} \mbox{Ind}_{\lambda_n+1}^{\lambda_n,1}
\mbox{Res}_{\lambda_n, 1}^{\lambda_n +1}R_{\rho^{-1}}\mbox{Ind}_{\lambda_1}^{1,\lambda_1 -1}\left(R_{\lambda_1\cdots \lambda_n}\right)$ 
we have $x_1\otimes 1 \ot 1 \otimes p=1\otimes (x_r+y)\ot 1 \otimes p$, 
for any polynomial $p$. Therefore, we have to show that
\begin{gather*}
\sum_{f=0}^{\lambda_n}(-1)^{\lambda_n -f} 
\ot (x_r+y)^f\otimes 1 \ot \epsilon_{\lambda_n-f}(x_{r-\lambda_n+1}+y,\ldots,x_r +y)=\\
\sum_{f=0}^{\lambda_n}(-1)^{\lambda_n -f} \ot 
x_r^f\otimes 1 \ot \epsilon_{\lambda_n-f}(x_{r-\lambda_n+1},\ldots,x_r).
\end{gather*}
For a fixed power of $x_r$, say $k$, this amounts to showing that 
\begin{gather*}
 1 \ot x_r^k\otimes 1 \ot \sum_{i=0}^{\lambda_n-k}(-1)^i \binom{k+i}{i} y^i\epsilon_{\lambda_n-k-i}
(x_{r-\lambda_n+1}+y,\ldots, x_r+y)=\\
1 \ot x_r^k\otimes 1 \ot  \epsilon_{\lambda_n-k}(x_{r-\lambda_n+1},\ldots, x_r).
\end{gather*} 
This follows from the following lemma.  
\begin{lem}
For any $0\leq k\leq n$, we have 
$$
\epsilon_{n-k}(a_1,\ldots,a_n)=\sum_{i=0}^{n-k}(-1)^i \binom{k+i}{i} y^i\epsilon_{n-k-i}(a_1+y,\ldots,a_n+y).$$
\end{lem}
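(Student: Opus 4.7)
The plan is to deduce the identity by manipulating the generating function for elementary symmetric polynomials. Recall that
\begin{equation*}
\sum_{j\geq 0}\epsilon_j(b_1,\ldots,b_n)\,t^j \;=\; \prod_{i=1}^n (1+b_i t).
\end{equation*}
I would first establish the auxiliary shift formula: for any scalar $c$ and any $0\leq m\leq n$,
\begin{equation*}
\epsilon_m(b_1+c,\ldots,b_n+c) \;=\; \sum_{k=0}^m \binom{n-m+k}{k}\,c^k\,\epsilon_{m-k}(b_1,\ldots,b_n).
\end{equation*}

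To prove this auxiliary formula, I would write
\begin{equation*}
\prod_{i=1}^n\bigl(1+(b_i+c)t\bigr) \;=\; (1+ct)^n \prod_{i=1}^n\!\left(1+\frac{b_i t}{1+ct}\right) \;=\; \sum_{j\geq 0}\epsilon_j(b_1,\ldots,b_n)\,t^j\,(1+ct)^{n-j},
\end{equation*}
expand $(1+ct)^{n-j}$ by the binomial theorem, and read off the coefficient of $t^m$.

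With the auxiliary formula in hand, the lemma follows immediately by specializing $b_i := a_i+y$ and $c := -y$, and then setting $m := n-k$, which produces $\binom{n-(n-k)+i}{i}=\binom{k+i}{i}$ and the sign $(-1)^i$. There is no real obstacle here; the only thing to be careful about is bookkeeping of the binomial coefficient, ensuring that the index shift $m\mapsto n-k$ turns $\binom{n-m+k}{k}$ into $\binom{k+i}{i}$ with $i$ as the summation index. The identity is a purely formal consequence of the generating function calculation and is independent of the specific values of $n$ and $k$.
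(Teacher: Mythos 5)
Your proof is correct, but it takes a genuinely different route from the paper's. The paper proves the identity by induction on $n$: it splits $\epsilon_{n-k}(a_1,\ldots,a_n)=\epsilon_{n-k}(a_1,\ldots,a_{n-1})+a_n\epsilon_{n-1-k}(a_1,\ldots,a_{n-1})$, applies the induction hypothesis to each piece (with $k$ and $k-1$ respectively), writes $a_n=-y+(a_n+y)$, and then recombines the binomial coefficients via Pascal's rule $\binom{k-1+i}{i}+\binom{k-1+i}{i-1}=\binom{k+i}{i}$. Your generating-function argument is shorter and arguably more transparent: the manipulation
\begin{equation*}
\prod_{i=1}^n\bigl(1+(b_i+c)t\bigr)=\sum_{j\geq 0}\epsilon_j(b_1,\ldots,b_n)\,t^j\,(1+ct)^{n-j}
\end{equation*}
is valid, and extracting the coefficient of $t^m$ gives $\sum_{j}\binom{n-j}{m-j}c^{m-j}\epsilon_j(b)$, which after the reindexing $j=m-k$ is exactly your auxiliary formula; the specialization $b_i=a_i+y$, $c=-y$, $m=n-k$ then yields the lemma with the correct sign and binomial coefficient. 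A further advantage of your version is that the auxiliary formula holds for an arbitrary shift $c$, so it simultaneously covers the variant with $-y$ in place of $y$ that the paper invokes immediately afterwards for the left $n$-cup. What the paper's inductive proof buys in exchange is that it stays entirely within elementary polynomial identities and Pascal's rule, with no need to justify the formal power-series rewriting; but both are complete and rigorous.
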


\begin{proof}
By induction w.r.t. $n$. For $n=0$ there is nothing to prove. 

Suppose $n>0$. Write 
$$
\epsilon_{n-k}(a_1,\ldots,a_n)=\epsilon_{n-k}(a_1,\ldots,a_{n-1})+
a_n\epsilon_{n-1-k}(a_1,\ldots,a_{n-1}).$$
Note that $n-k=(n-1)-(k-1)$, so by induction the sum above is equal to  
\begin{gather*}
\sum_{i=0}^{n-k}(-1)^i\binom{k-1+i}{i}y^i\epsilon_{n-k-i}(a_1+y,\ldots,a_{n-1}+y) 
+ \\
a_n\sum_{i=0}^{n-1-k}(-1)^i\binom{k+i}{i}y^i\epsilon_{n-1-k-i}
(a_1+y,\ldots,a_{n-1}+y).
\end{gather*}
Write $a_n=-y+a_n+y$. Then we get 
\begin{gather}
\label{eq:firstsum}
\sum_{i=0}^{n-k}(-1)^i\binom{k-1+i}{i}y^i\epsilon_{n-k-i}(a_1+y,\ldots,a_{n-1}+y) 
-\\
\label{eq:secondsum}
\sum_{i=0}^{n-1-k}(-1)^i\binom{k+i}{i}y^{i+1}\epsilon_{n-1-k-i}
(a_1+y,\ldots,a_{n-1}+y)
+ \\
\label{eq:thirdsum}
(a_n+y)\sum_{i=0}^{n-1-k}(-1)^i\binom{k+i}{i}y^i\epsilon_{n-1-k-i}
(a_1+y,\ldots,a_{n-1}+y).
\end{gather}
After reindexing the sum in~\eqref{eq:secondsum}, the difference of the sums in~\eqref{eq:firstsum} and~\eqref{eq:secondsum} 
becomes
\begin{gather*}
\epsilon_{n-k}(a_1+y,\ldots,a_{n-1}+y)+\\
\sum_{i=1}^{n-k}(-1)^i\left(\binom{k-1+i}{i}+\binom{k-1+i}{i-1}\right)
y^i\epsilon_{n-k-i}(a_1+y,\ldots,a_{n-1}+y)=
\\
\sum_{i=0}^{n-k}(-1)^i \binom{k+i}{i}
y^i\epsilon_{n-k-i}(a_1+y,\ldots,a_{n-1}+y).
\end{gather*}
Together with the sum in~\eqref{eq:thirdsum}, we now get 
\begin{gather*}
\sum_{i=0}^{n-k}(-1)^i \binom{k+i}{i}
y^i\left(\epsilon_{n-k-i}(a_1+y,\ldots,a_{n-1}+y)+\right. \\
\left. (a_n+y)\epsilon_{n-1-k-i}
(a_1+y,\ldots,a_{n-1}+y)\right)=
\\
\sum_{i=0}^{n-k}(-1)^i \binom{k+i}{i}
y^i\epsilon_{n-k-i}(a_1+y,\ldots,a_{n-1}+y,a_n+y).
\end{gather*}
\end{proof}

The proof for the image of the left $n$--cup is similar, using $x_r\otimes 1 \ot 1 \ot 1 =1\otimes (x_1-y) \ot 1 \ot 1 $ and the lemma above with $-y$ instead of $y$.  

The result for the cups implies the result for the caps, because of the 
biadjointness relations \eqref{eq_biadjoint1} and \eqref{eq_biadjoint2}. If two maps corresponding to a cap 
both satisfy the biadjointness relations w.r.t. one fixed map 
associated to the corresponding cup, then the two maps have to be equal. 

The result for the upward oriented $n$-crossing follows from the fact that both bimodule maps satisfy (easy calculations):
\begin{enumerate}
\item $f(1\otimes 1 \ot 1 \otimes 1)=0$;
\item for any $\alpha_1,\alpha_2\in\mathbb{N}$, 
$$(x_r\otimes 1\otimes 1 \ot 1 )f(x_r^{\alpha_1}\otimes 1 \ot 1 \otimes x_1^{\alpha_2})-
f(x_r^{\alpha_1}\otimes 1 \ot 1\otimes x_1^{\alpha_2}(x_1-y))=x_r^{\alpha_1}\otimes 1 \ot  1\otimes x_1^{\alpha_2};$$
\item for any $\alpha_1,\alpha_2\in\mathbb{N}$, 
$$f(x_r^{\alpha_1+1}\otimes 1 \ot  1\otimes x_1^{\alpha_2})-(1\otimes1 \ot 1 \otimes (x_1-y))
f(x_r^{\alpha_1}\otimes 1 \ot 1\otimes x_1^{\alpha_2})=x_r^{\alpha_1}\otimes 1 \ot 1 \otimes x_1^{\alpha_2}.$$
\end{enumerate}
These three properties determine the maps completely by recursion, so they have to be equal.  

A similar argument proves the result for downward oriented $n$-crossings and the remaining cases of the crossings colored $(1,n)$ and $(n,n-1)$ are easy 
computations.

Just to summarize, all generating $2$--morphisms containing $n$-strands can be 
obtained by the conjugation trick which turns $n$ into $n-1$, 
while only the ones whose $n$--strands have no dots can also be obtained 
by the conjugation trick which turns $n$ into $1$.
\end{rem}

\begin{prop}
$\F' : \Scat(n,r)^{*}_{[y]} \to \esbim_{\hat{A}_{r-1}}^{*}$ is a well-defined degree preserving $2$--functor.
\end{prop}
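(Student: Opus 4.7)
The plan is to follow the strategy used in Section~6 of~\cite{MSVschur} for the finite type $A$ case, combined with the \emph{conjugation trick} explained in Remark~\ref{conjtrick} to reduce all checks involving strands of color $n$ to analogous checks involving strands of color $1$ (or $n-1$). First I would verify that $\F'$ is well-defined on objects and $1$-morphisms: the target bimodules are automatically objects of $\esbim_{\hat{A}_{r-1}}^{*}$, and a direct computation shows that the grading shifts in the definition of $\F'$ on $\cal{E}_{\pm i}\onel\{t\}$ exactly match the expected weights, as already dictated by the finite type $A$ case for $i\ne n$ and by the recurrence relations $A_n(\lambda)=k_{n-1}-1+A_{n+1}(\lambda)$ and $B_n(\lambda)=B_{n+1}(\lambda')+1-k_n$ for $i=n$, whose overall constants are pinned down by the requirement that $\F'\circ\Sigma_{n,r}$ be degree-preserving.

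For the $2$-morphisms, degree preservation is a straightforward bookkeeping check on the generators (there are only finitely many types), so the core of the proof is showing that $\F'$ respects all relations in $\Scat(n,r)^{*}_{[y]}$. I would split the relations into two families. All relations not involving color $n$ and not involving the $y$-deformation are identical to the relations treated in~\cite{MSVschur}, and can be imported verbatim since both the source $2$-morphisms and their images under $\F'$ coincide with those of $\Gamma^G_r$ in the finite type $A$ setting. The relations involving color $n$ would then be handled via the conjugation trick: by Lemma~\ref{comsingbim}, for any $1$-morphism $x$ whose expression contains factors $\cal{E}_{\pm n}$, the image $\F'(x)$ is isomorphic (up to the correct shift) to a bimodule of the form $R_{\rho^{-a}}\otimes y \otimes R_{\rho^{b}}$ where $y$ is an image under $\F'$ of a $1$-morphism without color $n$ and involving color $n-1$ (or $1$) instead. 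By Remark~\ref{conjtrick}, the image of each generating $2$-morphism containing an $n$-strand is obtained from its $(n-1)$-analogue (or, when the generator contains no dots on $n$-strands, also from its $1$-analogue) via this conjugation. Consequently, every relation among generating $2$-morphisms with $n$-strands can be rewritten, by conjugating the whole diagram, as a relation among generating $2$-morphisms in $\esbim_{\hat{A}_{r-1}}^{*}$ with $(n-1)$-strands instead, which is then already known.

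The remaining, genuinely new, relations to check are the $y$-deformed ones, namely the KL-relation~\eqref{eq_r2_ij-gen} when $\{i,j\}=\{1,n\}$, together with the $y$-deformed bubble slides~\eqref{eq:2ndbubbslide1n}--\eqref{eq:extrabubble461n}. These would be verified by direct computation on the generating bimodule maps, using the explicit formulas above for the cups, caps and crossings of color $n$, and in particular the identities $x_1\otimes 1=1\otimes(x_r+y)$ and $x_r\otimes 1=1\otimes(x_1-y)$ which encode the appearance of $y$ on the twisted side of $R_{\rho^{\pm 1}}$. For each such relation, one side can be expanded using the first formula for the relevant cap/cup (involving $(x_1-y)$ or $(x_r+y)$) while the other uses the second formula, and the Lemma proved in Remark~\ref{conjtrick} (expressing $\epsilon_{n-k}(a_1,\ldots,a_n)$ as an alternating sum of $\epsilon_{n-k-i}(a_1+y,\ldots,a_n+y)$) ensures that the two expansions agree.

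The main obstacle will be the $y$-deformed bubble-slide relations~\eqref{eq:2ndbubbslide1n}--\eqref{eq:extrabubble461n}, because the conjugation trick does not give control of bubbles directly: a counter-clockwise $n$-bubble cannot be turned into an $(n-1)$-bubble without producing additional terms coming from the twisted bimodule structure. One therefore has to verify these relations by hand, exploiting the infinite Grassmannian relation~\eqref{eq_infinite_Grass} together with the explicit formulas above to reduce each bubble slide, modulo powers of $y$, to its undeformed counterpart. The surjectivity/injectivity computation at the end of the proof of the box relations (where we showed $\mathrm{END}({\mathbf 1}_r)\cong\Q[y,x_1,\ldots,x_r]$) gives precisely the consistency check needed for these deformed bubble relations at the level of $\F'$, and I would use the same style of argument here: all the $y$-terms in the image must match under $\F'$, which they do by explicit comparison with the $\rho^{\pm 1}$-twisted identities on $R$.
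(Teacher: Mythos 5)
Your overall strategy is the same as the paper's: import the relations without $n$-colored strands from the finite type $A$ results of Khovanov--Lauda and \cite{MSVschur}, and reduce the relations with $n$-colored strands to those via the conjugation trick of Remark~\ref{conjtrick}. There is, however, one imprecision that leaves a small gap. The conjugation trick turning $n$ into $n-1$ rotates \emph{all} colors by $i\mapsto i-1$, so it sends the color pair $\{1,n\}$ to $\{n,n-1\}$; a relation involving both colors $1$ and $n$ therefore does \emph{not} become a relation ``with $(n-1)$-strands instead'' but still contains an $n$-strand, and your blanket claim that every conjugated relation ``is then already known'' fails precisely for these cases. The two defining relations of this type are \eqref{eq_r2_ij-gen} and \eqref{eq_dot_slide_ij-gen} for $\{i,j\}=\{1,n\}$. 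You correctly single out the first (for the slightly different reason that it carries the extra $-y$ term), but you miss the second, which by your stated criterion (it is not $y$-deformed) would wrongly be absorbed into the conjugation argument; it must instead be verified directly on the explicit bimodule maps, as the paper does.

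Conversely, the $y$-deformed bubble slides \eqref{eq:2ndbubbslide1n}--\eqref{eq:extrabubble461n} that you identify as ``the main obstacle'' need no separate verification: they are derived consequences of the defining relations of $\Uglcataffy$, so once $\F'$ preserves the defining relations they hold automatically in the image. Redirecting that effort to the direct check of \eqref{eq_dot_slide_ij-gen} for $\{i,j\}=\{1,n\}$ (a short computation with the explicit formulas for the $(1,n)$- and $(n,n-1)$-crossings and the dotted $n$-identities) would close the gap and make your argument match the paper's.
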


\begin{proof}
All relations between $2$--morphisms which do not have $n$--colored strands are satisfied by the results in Section 6 in~\cite{KL3}.  
Since no relation in $\Scat(n,r)_{[y]}$ involves all colors at the same time, 
the proof that $\F'$ preserves a given relation can always be reduced 
to the fact that $\F'$ preserves the same relation with colors belonging to 
$\{1,\ldots, n-1\}$ by using the conjugation trick which turns $n$ into $n-1$, 
except in the case of relations~\eqref{eq_r2_ij-gen} 
and~\eqref{eq_dot_slide_ij-gen} for $\{ i,j\} = \{1,n\}$. 
The proof that $\F'$ preserves these relations is straightforward and 
is left to the reader. 
\end{proof}

\begin{rem}
Note that the twisted bimodules $B_{\rho} = R_{(1^r),\rho}$ and $B_{\rho^{-1}} = R_{(1^r),\rho^{-1}}$ are isomorphic to 
the images under $\F'$ of respectively 
$$\mathcal{E}_{-n} \dots \mathcal{E}_{-r-1} \mathcal{E}_{-1} \dots \mathcal{E}_{-r}{\mathbf 1}_r\quad\text{and}\quad 
\mathcal{E}_{r} \dots \mathcal{E}_{1} \mathcal{E}_{r+1} \dots \mathcal{E}_{n}{\mathbf 1}_r.$$ 

As a matter of fact, any twisted singular bimodule is isomorphic to the image under $\F'$ of a certain product of categorified divided powers 
(see \cite{KLMS} and \cite{MSVschur} for more details on divided powers and extended graphical calculus). 
Indeed, let $n>r$ and let $\lambda\in \Lambda(n,r)$ be arbitrary. 
At least one entry of 
$\lambda$ is equal to zero, let us assume it is $\lambda_i$. 
Then 
$$R_{\lambda_n \lambda_1 \dots \lambda_{n-1}, \rho}\cong\F'\left(\mathcal{E}_{-i-1}^{(\lambda_{i+1})} \dots \mathcal{E}_{-n}^{(\lambda_{n})} \mathcal{E}_{-1}^{(\lambda_{1})} \dots \mathcal{E}_{-i+1}^{(\lambda_{i-1})} \onel \right)$$ 
and 
$$R_{ \lambda_{2} \dots  \lambda_n \lambda_1, \rho^{-1}}\cong
\F'\left(\mathcal{E}_{i-2}^{(\lambda_{i-1})} \dots \mathcal{E}_{1}^{(\lambda_{2})} \mathcal{E}_{n}^{(\lambda_{1})} \dots \mathcal{E}_{i}^{(\lambda_{i+1})} \onel\right).$$
\end{rem}

\section{The Grothendieck group of $\Scat(n,r)_{[y]}$}
\label{GrAlg}
The following Lemma is the affine analogue of Lemma 6.6 in~\cite{MSVschur}. 
\begin{lem}
\label{lem:comm}
The following diagram commutes  
\begin{equation*}
\xymatrix{
\debim_{\hat{A}_{r-1}}^* \ar[rr]^{\F}\ar[dr]_{\Sigma_{n,r}} && \esbim_{\hat{A}_{r-1}}^*
\\
& \Scat(n,r)^*_{[y]}((1^r),(1^r))\ar[ur]_{\F'} &
}
\end{equation*}
\end{lem}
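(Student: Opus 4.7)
Since $\F$, $\Sigma_{n,r}$ and $\F'$ are all $\Q$-linear, monoidal, and degree-preserving, it suffices to verify that $\F = \F' \circ \Sigma_{n,r}$ on the generating objects and generating 2-morphisms of $\debim_{\hat{A}_{r-1}}^*$. On objects, for $i \in \{1,\ldots,r-1\}$ both paths yield $B_i\{-1\}$; for the symbols $\pm$ one checks that
$$
\F'\bigl(\Sigma_{n,r}(\pm)\bigr) \;\cong\; R_{(1^r),\rho^{\pm 1}} \;=\; B_{\rho^{\pm 1}},
$$
which is precisely the isomorphism indicated at the end of Section~\ref{functSchSBim}. For the color $r$, one uses the isomorphism $B_r \cong B_\rho \otimes_R B_{r-1} \otimes_R B_{\rho^{-1}}$ in $\kebim_{\hat{A}_{r-1}}$ together with the previous identification and the fact that $\F'(\Sigma_{n,r}(r)) = \F'(\mathcal{E}_{-n}\dots\mathcal{E}_{-r}\mathcal{E}_r\dots\mathcal{E}_n \mathbf{1}_r)$ fits naturally into this tensor decomposition via Lemma~\ref{comsingbim}.

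For those generating 2-morphisms which involve neither an $r$-colored strand nor an oriented strand, commutativity is exactly the content of Lemma 6.6 of~\cite{MSVschur}, since on such generators $\Sigma_{n,r}$ and the restriction of $\F'$ to the finite-type sub-$2$-category coincide with the corresponding constructions used there. The remaining generators (namely those involving an $r$-colored strand, one or more oriented strands, or a mixed 4-valent vertex) must be checked one by one.

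The key tool for the case-by-case verification is the conjugation trick spelled out in Remark~\ref{conjtrick}: for each such generator, $\Sigma_{n,r}$ produces a diagram whose $\F'$-image is, after applying the natural isomorphisms of Lemma~\ref{comsingbim}, a bimodule map of the form $\mathrm{id}_{R_{\rho^{-k}}} \otimes \phi \otimes \mathrm{id}_{R_{\rho^{k}}}$, where $\phi$ is a bimodule map between bimodules not involving the color $n$. The map $\phi$ corresponds under $\F'$ to the ``finite type'' analogue of the generator under consideration (with $n$ replaced by $n-1$, say), and the outer twists $R_{\rho^{\pm k}}$ absorb precisely into the appropriate $B_{\rho^{\pm k}}$ tensor factors on the target side. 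Matching the result with the explicit formulas for $\F$ in Section 3.2.2 is then a routine but finite check for each generator; the shifts have been set up in the definition of $\F'$ exactly so that these checks produce the correct grading.

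The main obstacle is the verification for the box morphisms. Recall that $\Sigma_{n,r}$ sends $\mathrm{box}_i$ to a telescoping sum of counter-clockwise $j$-bubbles (for $j \geq i$) plus a correction $r$-bubble. The images under $\F'$ of these bubbles are, by the computations in Section~\ref{functSchSBim} (and the identification $\mathrm{END}(\mathbf{1}_r)\cong \Q[y,x_1,\ldots,x_r]$ established during the proof of well-definedness of $\Sigma_{n,r}$), multiplication by $x_{j+1}-x_j$ and $x_r$ respectively; the telescoping sum then collapses to multiplication by $x_i$, which matches $\F(\mathrm{box}_i)$. Similarly, $\mathrm{box}_y \mapsto \bbox{y}\,\mathbf{1}_{(1^r)} \mapsto y\cdot \mathrm{id}_R = \F(\mathrm{box}_y)$. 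Once this is settled, all other generators (including the startdot/enddot for color $r$, which reduce to the same bubble calculations after applying the appropriate cap/cup composition) follow without further difficulty, completing the verification.
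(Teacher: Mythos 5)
Your proposal is correct and takes essentially the same approach as the paper: the paper's own proof of this lemma is literally the one-line assertion that it ``is straightforward and follows from checking the definitions carefully,'' and your outline is a faithful (and considerably more explicit) account of what that check amounts to --- verification on generating objects and $2$-morphisms, with the finite-type generators delegated to \cite{MSVschur}, the oriented and $r$-colored generators handled via the conjugation trick of Remark~\ref{conjtrick} together with Lemma~\ref{comsingbim}, and the box morphisms reduced to the bubble computations already carried out in the well-definedness proof for $\Sigma_{n,r}$. Nothing in your argument conflicts with the definitions in the paper; in particular the telescoping computation $-\sum_{j=i}^{r-1}(x_{j+1}-x_j)+x_r=x_i$ for $\mathrm{box}_i$ is exactly right.
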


\begin{proof}
The proof is straightforward and follows from checking the definitions 
carefully.  
\end{proof}

Note that the $2$-hom spaces of $\Ucataffy$ are 
finite-dimensional $\Q$-vector spaces, because the original $2$-HOM spaces 
in $\Ucataff^*$ are finite-dimensional in each degree, their grading 
is bounded below and $\deg(y)=2>0$. Therefore, the Karoubi envelope 
(or idempotent completion) of $\Ucataffy$, denoted 
$\mathrm{Kar}(\Ucataffy)$, is Krull-Schmidt. The same holds for 
$\Uglcataffy$ and $\Scat(n,r)_{[y]}$, of course. 

By the same arguments, we see that the ideal generated by $y$ is virtually 
nilpotent (for virtually nilpotent ideals and basic facts about them, 
see Section 3.8.1 and 3.8.2 in~\cite{KL3}). This proves that 
\begin{equation}
\label{eq:virnilGrot}
K_0^{\Q(q)}(\mathrm{Kar}\mathcal{C}_{[y]})\cong K_0^{\Q(q)}(\mathrm{Kar}
\mathcal{C}),
\end{equation}
where $\mathcal{C}$ is $\Ucataff$, $\Uglcataff$ or $\Scat(n,r)$.

\begin{cor}
\label{cor:embed}
The algebra homomorphism 
$$K_0^{\Q(q)}(\Sigma_{n,r})\colon \hat{\He}_{\hat{A}_{r-1}}\to 
K_0^{\Q(q)}(\mathrm{Kar}{\Scat(n,r)_{[y]}})$$
is an embedding. 
\end{cor}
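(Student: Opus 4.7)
The plan is to deduce the corollary directly from the commutative triangle of Lemma~\ref{lem:comm} by applying the Karoubi envelope and the (rationalised) split Grothendieck group functor, and then invoking the equivalence of Theorem~\ref{thm:diameqbim}.

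First I would observe that $\debim_{\hat{A}_{r-1}}$ and the one-object sub-$2$-category $\Scat(n,r)_{[y]}((1^r),(1^r))$ are $\Q$-linear graded additive categories whose hom-spaces in each degree are finite dimensional and whose gradings are bounded below, so their Karoubi envelopes are Krull--Schmidt; the degree-preserving functor $\Sigma_{n,r}$ of Section~\ref{sec:embed} therefore extends to these Karoubi envelopes and induces a homomorphism of $\Z[q,q^{-1}]$-algebras on split Grothendieck groups, which after tensoring with $\Q(q)$ yields the map appearing in the statement. By Theorem~\ref{thm:diameqbim} combined with Theorem~\ref{catexthe}, the domain of this map is canonically identified with $\hat{\He}_{\hat{A}_{r-1}}$.

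Next, I would apply $K_0^{\Q(q)}\circ\mathrm{Kar}(-)$ to the triangle of Lemma~\ref{lem:comm}, which is functorial with respect to degree-preserving additive functors. This yields the commutative triangle of $\Q(q)$-algebra homomorphisms
$$
K_0^{\Q(q)}(\F)=K_0^{\Q(q)}(\F')\circ K_0^{\Q(q)}(\Sigma_{n,r}).
$$
By Theorem~\ref{thm:diameqbim} the functor $\F$ is an equivalence between $\kdebim_{\hat{A}_{r-1}}$ and $\kebim_{\hat{A}_{r-1}}$, so $K_0^{\Q(q)}(\F)$ is an isomorphism; in particular it is injective. Since a composition of linear maps is injective only if the first factor is injective, $K_0^{\Q(q)}(\Sigma_{n,r})$ must itself be injective, which is exactly the assertion of the corollary.

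There is no real obstacle in this argument, since all the substantive work is already in place: the well-definedness of $\Sigma_{n,r}$ verified in Section~\ref{sec:embed}, the equivalence of Theorem~\ref{thm:diameqbim}, H\"arterich's categorification result extended to Theorem~\ref{catexthe}, and the compatibility of the $2$-representations expressed in Lemma~\ref{lem:comm}. Once these are granted, the corollary is a one-line diagram chase, and it is worth noting that fullness of $K_0^{\Q(q)}(\Sigma_{n,r})$ (the conjectural statement that $\Sigma_{n,r}$ is full at the categorical level) is not needed here.
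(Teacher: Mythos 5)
Your argument follows the paper's own proof: factor $\F$ as $\F'\circ\Sigma_{n,r}$ via Lemma~\ref{lem:comm}, show that $K_0^{\Q(q)}(\F)$ is injective, and conclude that the first factor $K_0^{\Q(q)}(\Sigma_{n,r})$ is injective; your closing remark that fullness of $\Sigma_{n,r}$ is not needed is also correct and matches the paper, which only conjectures it. The one step you should tighten is the assertion that $K_0^{\Q(q)}(\F)$ is an isomorphism because $\F$ is an equivalence. The functor $\F$ appearing in the commutative triangle has target $\esbim_{\hat{A}_{r-1}}^*$, not $\ebim_{\hat{A}_{r-1}}^*$, so it is not an equivalence and the induced map on Grothendieck groups is far from surjective (the target also contains the classes of the genuinely singular bimodules $R_{i_1\cdots i_k}$). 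What the diagram chase actually needs is only injectivity, and that holds because the equivalence $\debim_{\hat{A}_{r-1}}^*\simeq\ebim_{\hat{A}_{r-1}}^*$ of Theorem~\ref{thm:diameqbim} is followed by the inclusion of $\ebim_{\hat{A}_{r-1}}$ as a \emph{full} sub-$2$-category of $\esbim_{\hat{A}_{r-1}}$: for Krull--Schmidt Karoubi envelopes, a full additive subcategory closed under summands induces an injection on split Grothendieck groups, since the classes of indecomposables form a basis and non-isomorphic indecomposables remain non-isomorphic in the larger category. This fullness of the inclusion is precisely the extra ingredient the paper's proof cites alongside Theorem~\ref{thm:diameqbim}; with that sentence added, your proof coincides with the paper's.
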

\begin{proof}
We already know that $K_0^{\Q(q)}(\F)$ is injective, by Theorem~\ref{thm:diameqbim} 
and the fact that $\ebim_{\hat{A}_{r-1}}$ is a full sub-2-category of $\esbim_{\hat{A}_{r-1}}$. The result 
now follows from the commutativity of the diagram in Lemma~\ref{lem:comm}.
\end{proof}

Theorem~\ref{thm:diameqbim} and Lemma~\ref{lem:comm} also 
imply that $\Sigma_{n,r}$ is faithful. We do not know if 
it is full, as in the finite type $A$ case (Proposition 6.9 
in~\cite{MSVschur}), but we conjecture that to be true. 
\begin{conj}
\label{prop:fulfaith}
The functor 
$$\Sigma_{n,r} :\debim_{\hat{A}_{r-1}} \to \Scat(n,r)_{[y]}((1^r),(1^r))$$ 
is an equivalence of 2-categories.
\end{conj}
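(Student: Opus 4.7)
The plan is to break the equivalence into three tasks: faithfulness, essential surjectivity on $1$-morphisms, and fullness on $2$-morphisms. Faithfulness is immediate from Lemma~\ref{lem:comm} and Theorem~\ref{thm:diameqbim}, since the composite $\F'\circ\Sigma_{n,r}=\F$ is already a faithful functor (in fact an equivalence onto a full sub-$2$-category of $\esbim_{\hat{A}_{r-1}}^*$). This is essentially the observation already made in the paragraph preceding the conjecture.

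Essential surjectivity hinges on first establishing the categorification $\hat{\He}_{\hat{A}_{r-1}}\cong K_0^{\Q(q)}(\mathrm{Kar}\Scat(n,r)_{[y]}((1^r),(1^r)))$, which is the goal of Section~\ref{GrAlg} via Proposition~\ref{alg}, Corollary~\ref{cor:embed} and the virtual nilpotency~\eqref{eq:virnilGrot}. Once one knows the Grothendieck groups match and both sides are Krull--Schmidt, the already-established faithfulness of $\Sigma_{n,r}$ forces each indecomposable object in the target to come from an indecomposable in the source. Since the source indecomposables are exactly $B_\rho^{\otimes k}\otimes B_w$ for $k\in\Z$ and $w\in\W_{\hat{A}_{r-1}}$ by Theorem~\ref{catexthe}, this yields essential surjectivity on all $1$-morphisms after taking direct sums and grading shifts.

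For fullness---the main obstacle---I would follow the finite-type blueprint of Proposition~6.9 of \cite{MSVschur}. Using the isomorphisms $(\pm,i)\cong(i\pm 1,\pm)$ and $(\pm,\mp)\cong\emptyset$ together with Remark~\ref{homsp}, one reduces to comparing $\Hom$-spaces between pairs of $1$-morphisms $\Sigma_{n,r}(X'),\Sigma_{n,r}(Y')$ whose labels lie in $\{1,\ldots,r\}$ and that have equal total charge. On the source side these hom-spaces are accessible through Elias--Williamson's diagrammatics (Theorem~6.28 of~\cite{EW}) and Libedinsky's generating set, so it remains to prove a matching upper bound on the graded dimensions of the corresponding target hom-spaces. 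The cleanest route, and the one I would pursue, is to promote Proposition~\ref{alg} to a $2$-categorical statement by showing that the $2$-representation $\F'$ of Section~\ref{sec:2rep} is faithful on $\Scat(n,r)^*_{[y]}((1^r),(1^r))$; combined with Lemma~\ref{lem:comm} and Theorem~\ref{thm:diameqbim} this would immediately yield fullness of $\Sigma_{n,r}$ by a three-term diagram chase.

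The hardest step is thus establishing this $2$-categorical faithfulness of $\F'$ at the one-object $((1^r),(1^r))$ block, which is the affine, $y$-deformed, singular-bimodule analogue of the tensor-space non-degeneracy Lemma~\ref{lem:nondeg}. One natural strategy is to pair each nonzero $2$-morphism $\alpha\in\End_{\Scat(n,r)_{[y]}}(\Sigma_{n,r}(X'))$ against $\rho(\alpha)$ as in the proof of Lemma~\ref{lem:non-deg} and to exhibit, under $\F'$, an explicit element of $R=\Q[y][x_1,\ldots,x_r]$ on which $\alpha\,\rho(\alpha)$ acts nontrivially. The subtlety is that $\F'$ involves the twisted bimodules $R_{\rho^{\pm i_k}}$ of Section~\ref{extbim}, so the required non-degeneracy cannot simply be inherited from the finite-type computation, and the extended affine Hecke action on tensor space must be used in a genuinely new way. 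Carrying this out is where the technical weight of the argument would lie.
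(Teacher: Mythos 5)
This statement is labelled \textbf{Conjecture}~\ref{prop:fulfaith} in the paper: the authors explicitly write, just above it, that they can prove faithfulness of $\Sigma_{n,r}$ (from Theorem~\ref{thm:diameqbim} and Lemma~\ref{lem:comm}) but that they ``do not know if it is full.'' There is therefore no proof in the paper to compare against, and your proposal does not supply one either. Your faithfulness paragraph correctly reproduces the argument the paper does give. But the two remaining ingredients are not established. For fullness you reduce, via the (correct) three-term diagram chase $\F=\F'\circ\Sigma_{n,r}$, to showing that $\F'$ is faithful on $\Scat(n,r)^*_{[y]}((1^r),(1^r))$ --- and then you defer exactly that step (``carrying this out is where the technical weight of the argument would lie''). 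That deferred step \emph{is} the open problem: it is an affine, $y$-deformed, singular-Soergel nondegeneracy statement for the Khovanov--Lauda calculus of the Schur quotient, and nothing in the paper (Lemma~\ref{lem:nondeg} and Lemma~\ref{lem:non-deg} are decategorified statements about tensor space) gets you there. A plan that names the hard lemma without proving it is not a proof.

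The essential surjectivity argument also has a logical gap. From $K_0^{\Q(q)}(\mathrm{Kar}\,\Scat(n,r)_{[y]}((1^r),(1^r)))\cong\hat{\He}_{\hat A_{r-1}}$, Krull--Schmidt on both sides, and faithfulness of $\Sigma_{n,r}$, you cannot conclude that every indecomposable of the target is isomorphic to the image of one of the indecomposables $B_\rho^{\otimes k}\otimes B_w$. Matching Grothendieck groups only identifies the \emph{classes} of indecomposables up to a change of basis; to transport actual idempotents and direct-sum decompositions across $\Sigma_{n,r}$ you need fullness (or at least surjectivity on endomorphism rings of the relevant objects), which is precisely what is still missing. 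So the argument is circular as written: essential surjectivity leans on fullness, and fullness is the unproven core. The honest summary is that your proposal isolates the right obstruction and the right reduction, but leaves the conjecture as open as the paper does.
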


\begin{thm}
The algebra homomorphism 
$$\gamma\colon \hat{\SD}(n,r)\to K_0^{\Q(q)}(\mathrm{Kar}{\Scat(n,r)_{[y]}})$$
defined by 
$$\gamma(E_{\pm i}1_{\lambda}):=[\mathcal{E}_{\pm i}\onel]$$
is an isomorphism. 
\end{thm}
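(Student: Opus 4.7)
The plan is to derive the theorem by assembling three ingredients already in place: Khovanov--Lauda's categorification of $\Ucataff$, the full and essentially surjective $2$-functor $\Psi_{n,r}\colon \Ucataffy\to \Scat(n,r)_{[y]}$ of Proposition~\ref{prop:affslnquotient}, and the algebraic criterion in Proposition~\ref{alg}. A key preliminary observation is that by equation~\eqref{eq:virnilGrot} the ideal generated by $y$ is virtually nilpotent, so $K_0^{\Q(q)}(\mathrm{Kar}\,\Ucataffy)\cong K_0^{\Q(q)}(\mathrm{Kar}\,\Ucataff)\cong \Uaff$, where the last isomorphism is Khovanov--Lauda's theorem for $\hat{\mathfrak{sl}}_n$.

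First I would show that $\gamma$ is a well-defined surjective algebra homomorphism. Since $\Psi_{n,r}$ is full and essentially surjective, every indecomposable of $\mathrm{Kar}\,\Scat(n,r)_{[y]}$ is (up to shift) the image of an indecomposable of $\mathrm{Kar}\,\Ucataffy$, so $K_0^{\Q(q)}(\Psi_{n,r})$ gives a surjective algebra homomorphism $\Uaff\twoheadrightarrow K_0^{\Q(q)}(\mathrm{Kar}\,\Scat(n,r)_{[y]})$. By construction of $\Scat(n,r)_{[y]}$, every $2$-morphism (in particular every identity) whose region label lies outside $\Lambda(n,r)$ is killed, so the objects $\onem$ with $\phi_{n,r}(\mu)=\ast$ become zero in the Karoubi envelope. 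This forces the composition to vanish on the kernel of $\psi_{n,r}\colon \Uaff\twoheadrightarrow \hat{\SD}(n,r)$ described in~\eqref{eq:psi}, and therefore to factor through $\hat{\SD}(n,r)$, producing a surjective algebra homomorphism that agrees with $\gamma$ on the generators $E_{\pm i}1_{\lambda}$.

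Next I would prove injectivity via Proposition~\ref{alg}. Since $n>r$, it suffices to show that $\gamma$ is injective on $1_{(1^r)}\hat{\SD}(n,r)1_{(1^r)}\cong \hat{\He}_{\hat{A}_{r-1}}$, where the isomorphism is $\sigma_{n,r}$. Applying $K_0^{\Q(q)}$ to the commutative triangle of Lemma~\ref{lem:comm}, $\F=\F'\circ \Sigma_{n,r}$, we get that the restriction of $\gamma$ to $1_{(1^r)}\hat{\SD}(n,r)1_{(1^r)}$, precomposed with $\sigma_{n,r}$, equals $K_0^{\Q(q)}(\Sigma_{n,r})$. Corollary~\ref{cor:embed} states precisely that $K_0^{\Q(q)}(\Sigma_{n,r})$ is injective, so $\gamma$ restricts to an embedding on the Hecke corner, and Proposition~\ref{alg} then upgrades this to injectivity on the whole of $\hat{\SD}(n,r)$.

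The main obstacle, conceptually, is the well-definedness step: one must make sure that the map $K_0^{\Q(q)}(\Psi_{n,r})$ descends along the quotient $\psi_{n,r}$. In practice this is forced by the way $\Scat(n,r)_{[y]}$ was cut out of $\Uglcataffy$ (regions outside $\Lambda(n,r)$ are zero), but one has to be slightly careful with fake bubbles, which must first be rewritten in terms of real bubbles via~\eqref{eq_infinite_Grass} before applying the vanishing rule. Once this is settled, no further computation is required: the machinery of Proposition~\ref{alg} bypasses any need for a direct comparison of bases and reduces the whole isomorphism statement to Corollary~\ref{cor:embed}, which is itself the nontrivial input and rests on the careful diagrammatic categorification of $\hat{\He}_{\hat{A}_{r-1}}$ developed in Sections~\ref{sec:catHeck} and~\ref{sec:embed}.
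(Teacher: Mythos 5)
Your proposal is correct and follows essentially the same route as the paper: surjectivity is obtained from Khovanov--Lauda's surjectivity theorem for $\Uaff$ together with the quotient structure of $\Scat(n,r)_{[y]}$ (the paper phrases this via the arguments of Lemma 7.7 of~\cite{MSVschur} and passes through the $y=0$ case using~\eqref{eq:virnilGrot}, while you work directly in the $[y]$ setting via $\Psi_{n,r}$), and injectivity is reduced, exactly as in the paper, to Corollary~\ref{cor:embed} by means of Proposition~\ref{alg} applied with $A=K_0^{\Q(q)}(\mathrm{Kar}\Scat(n,r)_{[y]})$. The only quibble is that the identity $\gamma\circ\sigma_{n,r}=K_0^{\Q(q)}(\Sigma_{n,r})$ on the Hecke corner is a direct check against the definitions of $\sigma_{n,r}$ and $\Sigma_{n,r}$ rather than a consequence of Lemma~\ref{lem:comm}, which only records $\F=\F'\circ\Sigma_{n,r}$.
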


\begin{proof}
Khovanov and Lauda proved surjectivity of the homomorphism 
$$\Uaff \to K_0^{\Q(q)}(\Ucataff)$$
in Theorem 1.1 in~\cite{KL3}. The same arguments which proved Lemma $7.7$ 
in~\cite{MSVschur} can thus be used to prove  
that 
$$\gamma\colon \hat{\SD}(n,r)\to K_0^{\Q(q)}(\mathrm{Kar}{\Scat(n,r)})$$
is surjective. By~\eqref{eq:virnilGrot} this implies that 
the analogous homomorphism 
$$\gamma\colon \hat{\SD}(n,r)\to K_0^{\Q(q)}(\mathrm{Kar}{\Scat(n,r)_{[y]}})$$
is surjective. 

The rest of the proof follows from Lemma~\ref{alg} and 
Corollary~\ref{cor:embed} with 
$A = K_0^{\Q(q)}(\mathrm{Kar}\Scat(n,r)_{[y]})$.
\end{proof}

\bibliographystyle{alpha}
\bibliography{biblioAFFdef3}
\vspace{0.1in}

\noindent M.M.: { \sl \small Center for Mathematical Analysis, Geometry, and Dynamical Systems, Departamento de Matem\'{a}tica, Instituto Superior T\'{e}cnico, 
1049-001 Lisboa, Portugal; Departamento de Matem\'{a}tica, FCT, Universidade do Algarve, Campus de Gambelas, 8005-139 Faro, Portugal} 
\newline \noindent {\tt \small email: mmackaay@ualg.pt}

\noindent A.-L.T.: { \sl \small Center for Mathematical Analysis, Geometry, and Dynamical Systems, Departamento de Matem\'{a}tica, Instituto Superior T\'{e}cnico, 
1049-001 Lisboa, Portugal; Matematiska Institutionen, Uppsala Universitet, 75106 Uppsala, Sverige} 
\newline \noindent {\tt \small email: anne-laure.thiel@math.uu.se}
\end{document}